\documentclass{amsart}

\usepackage{amssymb, amsmath,mathtools,mathabx}
\usepackage{mathrsfs}
\usepackage{amscd}
\usepackage{verbatim}
\usepackage{stmaryrd}
\usepackage{bbm}
\usepackage[dvipsnames]{xcolor}

\usepackage{enumerate}

\usepackage[colorlinks,linkcolor={blue},citecolor={blue},urlcolor={purple},]{hyperref}

\usepackage[colorinlistoftodos,prependcaption,textsize=tiny]{todonotes}

\usepackage{tabularx}
\newcolumntype{L}{>{\arraybackslash}X}
\usepackage{multirow}

\theoremstyle{plain}
\newtheorem{theorem}{Theorem}[section]
\theoremstyle{remark}
\newtheorem{remark}[theorem]{Remark}
\newtheorem{example}[theorem]{Example}
\theoremstyle{plain}
\newtheorem{corollary}[theorem]{Corollary}
\newtheorem{lemma}[theorem]{Lemma}
\newtheorem{proposition}[theorem]{Proposition}
\newtheorem{definition}[theorem]{Definition}

\newtheorem{assumption}[theorem]{Assumption}

\numberwithin{equation}{section}

\def\N{{\mathbb N}}
\def\Z{{\mathbb Z}}

\def\R{{\mathbb R}}

\renewcommand{\P}{\mathbf{P}}
\newcommand{\E}{\mathbf{E}}
\newcommand{\F}{\mathscr{F}}

\newcommand{\p}{\mathbb{P}}
\newcommand{\q}{\mathbb{Q}}
\newcommand{\T}{\mathbb{T}}
\renewcommand{\S}{\mathbb{S}}

\newcommand{\calL}{\mathscr{L}}

\newcommand{\h}{\mathrm{H}}
\renewcommand{\l}{\mathrm{L}}
\newcommand{\Ls}{\mathbb{L}}
\newcommand{\Hs}{\mathbb{H}}
\renewcommand{\H}{\Hs^\sigma_0} 
\newcommand{\HL}{\mathcal{H}} 
\newcommand{\HS}{\mathscr{L}_2}

\newcommand{\Borel}{\mathscr{B}}

\renewcommand{\emptyset}{\varnothing}

\newcommand{\g}{\gamma}

\newcommand{\om}{\omega}
\renewcommand{\O}{\Omega}

\newcommand{\z}{\mathcal{Z}}

\newcommand{\Dom}{\mathcal{O}}
\renewcommand{\t}{\mathcal{T}}
\newcommand{\A}{\mathcal{A}}
\renewcommand{\v}{V}

\newcommand{\vu}{\underline{v}}
\newcommand{\vut}{\underline{v_t}}
\newcommand{\wt}{\widetilde}
\newcommand{\wc}{\widecheck}
\newcommand{\wh}{\widehat}
\newcommand{\supp}{\mathrm{supp}\,}
\renewcommand{\sp}{\mathrm{span}}
\newcommand{\one}{\mathbf{1}}
\newcommand{\dd}{\mathrm{d}}
\newcommand{\Do}{\mathsf{D}}
\newcommand{\X}{\mathcal{X}}
\newcommand{\XX}{\mathbb{X}}
\newcommand{\x}{\mathbf{x}}
\newcommand{\qq}{\mathbf{q}}
\newcommand{\kk}{\mathbf{k}}
\renewcommand{\a}{\mathbf{a}}
\newcommand{\n}{n}
\newcommand{\Id}{\mathrm{Id}}
\newcommand{\loc}{{\rm loc}}

\newcommand{\Tcal}{\mathcal{O}}
\newcommand{\plane}{\mathcal{P}}
\newcommand{\svec}{\mathrm{span}}
\renewcommand{\Pr}{\mathrm{Pr}_2}
\newcommand{\Ran}{\mathcal{R}}
\newcommand{\Ma}{\mathscr{M}^{{\rm L}}_t}
\newcommand{\Mat}{\mathscr{M}^{{\rm L}}_T}
\renewcommand{\L}{{\rm L}}
\newcommand{\Leb}{\mathrm{Leb}}
\newcommand{\zl}{\mathcal{J}_\l}
\newcommand{\Jv}{\wt{J}}
\newcommand{\sone}{\gamma}
\newcommand{\vd}{v^{\rm{det}}}
\newcommand{\invl}{\wt{\mu}}
\newcommand{\B}{\mathcal{S}}
\newcommand{\Op}{\mathcal{O}}
\newcommand{\Pl}{L}
\newcommand{\Ito}{\mathscr{J}}
\newcommand{\Df}{\mathcal{O}_f}
\newcommand{\Fun}{\Phi}

\newcommand{\Energy}{\mathcal{E}}
\newcommand{\fvel}{F^{{\rm vel}}}
\newcommand{\fpr}{F^{{\rm pr}}}

\newcommand{\D}{\mathscr{D}}

\newcommand{\hh}{\mathscr{H}}
\newcommand{\ww}{\mathscr{W}}
\newcommand{\WW}{\mathbb{W}}
\newcommand{\V}{\mathcal{V}}
\newcommand{\U}{\mathcal{U}}
\newcommand{\uu}{\mathscr{U}}
\newcommand{\KK}{\mathcal{N}}
\newcommand{\Set}{\mathscr{Q}}
\newcommand{\dist}{\mathrm{dist}_{\partial\Dom}}
\newcommand{\sign}{\mathrm{sign}}

\newcommand{\embed}{\hookrightarrow}

\newcommand{\vin}{\eta}
\newcommand{\vf}{\xi}

\allowdisplaybreaks

\begin{document}

\author{Antonio Agresti}
\address{Delft Institute of Applied Mathematics\\
Delft University of Technology \\ P.O. Box 5031\\ 2600 GA Delft\\The
Netherlands} 
\curraddr{Department of Mathematics Guido Castelnuovo, Sapienza University of Rome,
P.le Aldo Moro 2, 00185 Rome, Italy}
\email{antonio.agresti92@gmail.com}
\thanks{The author has received funding from the VICI subsidy VI.C.212.027 of the Netherlands Organisation for Scientific Research (NWO). The author is a member of GNAMPA (IN$\delta$AM)}

\date\today

\title[Lagrangian chaos and unique ergodicity for stochastic PE\lowercase{s}]{Lagrangian chaos and unique ergodicity\\ for stochastic primitive equations}

\keywords{Lagrangian chaos, Lyapunov exponents, primitive equations, random dynamical systems, critical spaces, strong Feller, invariant measures, Malliavin calculus.}

\subjclass[2010]{Primary: 37H15, Secondary: 35Q86, 60H15, 76M35, 76U60}

\begin{abstract}
We show that the Lagrangian flow associated with the stochastic 3D primitive equations (PEs) with nondegenerate noise is chaotic, i.e., the corresponding top Lyapunov exponent is strictly positive almost surely. This result builds on the landmark work by Bedrossian, Blumenthal, and Punshon-Smith \cite{BBPS2022} on Lagrangian chaos in stochastic fluid mechanics. Our primary contribution is establishing an instance where Lagrangian chaos can be proven for a fluid flow with supercritical energy, a key characteristic of 3D fluid dynamics.
For the 3D PEs, establishing the existence of the top Lyapunov exponent is already a challenging task. We address this difficulty by deriving new estimates for the invariant measures of the 3D PEs, which capture the anisotropic smoothing in the dynamics of the PEs.
As a by-product of our results, we also obtain the first uniqueness result for invariant measures of stochastic 3D PEs.
\end{abstract}

\maketitle
\setcounter{tocdepth}{1}

\vspace{-0.85cm}

\tableofcontents
\vspace{-1.3cm}

\section{Introduction}
\label{s:intro}
In this paper, we consider the three-dimensional stochastic primitive equations (PEs) with additive noise on $\Dom=\T^2_{x,y}\times (0,1)$:
\begin{equation}
\label{eq:primitive_full}
\left\{
\begin{aligned}
\partial_t v_t+ (u_t\cdot \nabla)v_t  &= -\nabla_{x,y} p_t +\Delta v_t    + Q\dot{W}_t, \quad &\text{ on }&\Dom,\\
\partial_{z} p_t&=0, \quad &\text{ on }&\Dom,\\
\nabla\cdot u_t&=0, \quad &\text{ on }&\Dom,\\
v_0&=v, \quad &\text{ on }&\Dom,
\end{aligned}
\right.
\end{equation}
where $\T^2_{x,y}$ denotes the two-dimensional torus in the horizontal variables $(x,y)$, $\nabla_{x,y}=(\partial_x,\partial_y)$ is the gradient in the horizontal variables, $u_t=(v_t,w_t)$ the unknown velocity field, $v_t: \O\times \Dom\to \R^2$ and $w_t: \O\times \Dom\to \R$ are the corresponding horizontal and vertical components, respectively; while $p_t: \O\times\T^2_{x,y}\to \R$ denotes the pressure. Finally, $Q\dot{W}_t$ is a given  (nondegenerate) white in time and colored in space Gaussian noise on a given filtered probability space (see Subsection \ref{ss:solutions_def} for details). Throughout the manuscript, the above system is complemented with the following boundary conditions:
\begin{align}
\label{eq:primitive_full1}
v_t (\cdot,0)=v_t (\cdot,1)\quad \text{ and }\quad w_t(\cdot,0)=w_t(\cdot,1)=0 \quad \text{ on }\ \T^2_{x,y}.
\end{align}
Hence, the horizontal velocity field $v_t$ is periodic in all directions, while the Dirichlet condition on $w_t$ ensures the impermeability of the boundary $\partial\Dom$.
In particular, $v_t$ can be considered as a map on the three-dimensional torus $\T^3$.

\smallskip
The PEs \eqref{eq:primitive_full} are one of the fundamental models for geophysical flows used to describe oceanic and atmospheric dynamics, where, in a typical region, the vertical scale is much smaller than the horizontal ones. 
In such domains, the PEs well approximate the 3D Navier-Stokes equations. In the deterministic case (i.e., $Q=0$),
rigorous justifications by means of the small aspect ratio limit can be found in \cite{MR1884725,KGHHKW20,LT19}. Detailed information on the geophysical background for the various versions of the deterministic PEs can be found in, e.g., \cite{K21_global,Ped,Vallis06}. 

The PEs have been widely studied in recent years, therefore, it is impossible to give a complete overview here. However, we highlight some key references. In the deterministic setting, we point to the seminal work by Lions, Temam, Wang \cite{LiTeWa1,LiTeWa2}, as well as the breakthrough result by Cao and Titi \cite{CT07} on global well-posedness in $H^1$. Extensions in various directions can be found in \cite{GGHHK20_bounded,GGHK21_scaling,HH20_fluids_pressure,HK16,Ju17,Kukavica_2007}.
In the stochastic setting, global well-posedness was first obtained by Debussche, Glatt-Holtz, Temam and Ziane in  \cite{Debussche_2012} (see also \cite{DEBUSSCHE20111123}). 
For further developments, the reader is referred to \cite{A23_primitive3, primitive1, primitive2, BS21, debussche2025stochastic, hu2025local} and the references therein.

\smallskip

In contrast to well-posedness, the dynamical properties of solutions to PEs are far less understood. The primary goal of this manuscript is to prove that the Lagrangian trajectories associated with the solution $u_t$ of the stochastic PEs \eqref{eq:primitive_full} are \emph{chaotic} in the sense that they are highly sensitive to initial conditions. This phenomenon, often referred to as \emph{Lagrangian chaos}, is established in the following.

\begin{theorem}[Lagrangian chaos PEs -- Informal statement]
\label{t:intro}
Under suitable smoothness and nondegeneracy assumptions on the noise $QW_t$, there exists a \emph{deterministic} constant $\lambda_+>0$ such that for all sufficiently regular initial data $v$ the (random) Lagrangian flow $\phi^t$ associated with the stochastic {\normalfont{PEs}} \eqref{eq:primitive_full} is chaotic, i.e., 
\begin{equation}
\label{eq:lagrangian_flow}
\frac{\dd}{\dd t}\phi^t(\x)= u_t(\phi^t(\x)), \qquad \phi^0(\x)=\x\in \Dom,
\end{equation}
satisfies
\begin{equation}
\label{eq:positive_lyap_exp}
\lim_{t\to \infty} \frac{1}{t}\log|\nabla \phi^t(\x)|=\lambda_+>0  \text{ for all $\x\in \Dom$ with probability }1.
\end{equation}
\end{theorem}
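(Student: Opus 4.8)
The plan is to follow the Furstenberg–Khasminskii / random dynamical systems (RDS) program that was developed by Bedrossian, Blumenthal, and Punshon-Smith in \cite{BBPS2022} for Navier–Stokes and Stokes flows, and adapt it to the anisotropic setting of the 3D primitive equations. First I would set up the joint Markov process $(u_t,\phi^t,\nabla\phi^t)$ — more precisely the projective process $(u_t,\x,v)$ where $v=\nabla\phi^t\,v_0/|\nabla\phi^t\,v_0|$ lives on the projective bundle $\T^3\times\p\R^3$ — and establish that this joint process admits a stationary measure $\nu$ projecting onto the (already known to exist) invariant measure $\mu$ of the PEs. The top Lyapunov exponent is then given by the Furstenberg–Khasminskii formula $\lambda_+=\int Q(u,\x,v)\,\dd\nu$, where $Q$ is the infinitesimal expansion rate; by Oseledets and the ergodic theorem the limit in \eqref{eq:positive_lyap_exp} exists and equals this deterministic number, so the crux is to prove $\lambda_+>0$.

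The positivity argument proceeds by contradiction: if $\lambda_+\le 0$ then, by the standard dichotomy (see \cite{BBPS2022}, building on Baxendale and Ledrappier–Young), the only way the top exponent can fail to be positive is if the derivative cocycle has a deterministic invariant structure — i.e. the stationary measure $\nu$ on the projective bundle is supported on an invariant sub-bundle, equivalently the Lagrangian flow admits a (measurable) invariant $1$- or $2$-dimensional distribution of directions. To rule this out one uses a \emph{projective hypoellipticity / Hörmander-type} argument: one shows that the generator of the projective process satisfies a parabolic Hörmander condition, so its transition kernels have a density and the process is topologically irreducible on $\T^3\times\p\R^3$, which forces any stationary $\nu$ to have full support and precludes an invariant sub-bundle. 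This is where the nondegeneracy of $Q\dot W$ enters crucially: the noise must excite enough Fourier modes that, after computing iterated Lie brackets of the drift and the (finitely many) diffusion vector fields on the enlarged phase space, one generates the whole tangent space. The key new estimates promised in the abstract — the anisotropic smoothing bounds on the invariant measures of the 3D PEs — feed in here to guarantee that the stationary measure $\mu$ is supported on sufficiently regular velocity fields (so that the Lagrangian ODE \eqref{eq:lagrangian_flow} is well-posed, $\phi^t$ is a diffeomorphism, and the Malliavin matrix computations make sense), and they compensate for the supercriticality of the 3D energy.

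The main obstacle — and the reason this is not a routine transcription of \cite{BBPS2022} — is twofold. First, the PEs live in a \emph{supercritical} regularity regime relative to the natural energy estimate: unlike 2D Navier–Stokes, one does not have an a priori bound in a space that controls $\nabla u$ uniformly, so the very existence of a finite top Lyapunov exponent (i.e. integrability of $Q$ against $\nu$, which requires $\sup_t\log^+|\nabla\phi^t|$ to be controlled, hence a uniform-in-time bound on $\|\nabla u_t\|_{L^\infty}$ or a suitable Sobolev norm under the stationary measure) is already hard. This is precisely what the new anisotropic invariant-measure estimates are designed to overcome — exploiting that the vertical direction is diffusively smoothed while the horizontal dynamics carry the transport, one gets moment bounds on $\|u\|$ in an anisotropic Sobolev space strong enough to run the argument. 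Second, the hypoellipticity verification for the projective process is more delicate than in the Navier–Stokes case because the pressure term $-\nabla_{x,y}p_t$ is a nonlocal constraint ($p$ is $z$-independent and determined by the barotropic mode), so one must carefully track how the nonlocal projection interacts with the Lie-bracket generation; one expects to circumvent this by working with the divergence-free, $z$-averaged formulation and checking that the noise still spans the relevant directions after projection. I would organize the proof as: (i) regularity and moment estimates for the stationary measure (the new anisotropic bounds); (ii) construction of the RDS and the projective/Lagrangian cocycle, plus the Furstenberg–Khasminskii formula; (iii) hypoellipticity and irreducibility of the projective process via Hörmander's bracket condition, using nondegeneracy of $Q$; (iv) the Baxendale-type rigidity argument concluding $\lambda_+>0$; and (v) the a.s.\ pointwise-in-$\x$ statement \eqref{eq:positive_lyap_exp}, which follows from continuity of $\phi^t$ in $\x$ together with the ergodic theorem applied along a countable dense set and an equicontinuity argument. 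The hardest single step is (i) combined with the integrability needed in (ii) — making the supercritical energy compatible with a finite, well-defined Lyapunov exponent.
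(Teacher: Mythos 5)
Your outline tracks the paper's actual strategy closely — RDS and cocycle setup, integrability for the multiplicative ergodic theorem, hypoellipticity/strong Feller for the projective process, irreducibility via controllability, and an application of the rigidity criterion from \cite{BBPS2022} — so the high-level plan is sound. However, several of your specific claims misidentify what is actually needed, and a few of these would derail the argument if carried through as written.

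First, your statement that finiteness of $\lambda_+$ "requires $\sup_t\log^+|\nabla\phi^t|$ to be controlled, hence a uniform-in-time bound on $\|\nabla u_t\|_{L^\infty}$ or a suitable Sobolev norm under the stationary measure" misidentifies the integrability condition. The condition from \cite{BBPS2022} (H2) amounts to $\E_{\mu}\int_{\T^3}|\nabla u(\x)|\,\dd\x<\infty$ — a \emph{spatial $L^1$} bound on $\nabla u=\nabla(v,w(v))$ against the invariant measure, obtained by Gr\"onwall on the cocycle and Fubini using incompressibility. The paper establishes this only in $L^p$ for $p<\frac{4}{3}$ (Theorem \ref{t:integrability_MET}) and explicitly expects it to \emph{fail} for $p=2$; an $L^\infty$ bound is far out of reach and not needed. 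The difficulty is that $w$ in \eqref{eq:def_w_intro} involves one more horizontal derivative of $v$, so $|\nabla u|$ depends on $\nabla^2 v$, and the estimate exploits anisotropic maximal regularity in mixed $L^p_{x,y}(H^s_z)$ spaces (Lemma \ref{l:max_reg_anisotropic_Laplacian}), not uniform bounds.

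Second, for the hypoellipticity you flag the nonlocal pressure as the main new obstacle, but the more serious extra degeneracy is that the vertical velocity $w_t$ is \emph{not directly forced by noise} — it is a diagnostic quantity recovered from the divergence-free constraint. The third component of the Lagrangian ODE \eqref{eq:lagrangian_flow} therefore receives noise only after two layers of propagation (from $v$ to $w$ to $\phi^t$), and the spanning/H\"ormander verification has to exploit the specific algebraic structure of the vectors $\gamma^{(u)}_{(\kk,\ell)}=(\gamma_{(\kk,\ell)},\gamma^{(w)}_{(\kk,\ell)})$ induced by the incompressibility constraint (Lemmas \ref{l:spanning_condition} and \ref{l:spanning_condition1}). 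The pressure projection is comparatively harmless.

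Third, your step (v) — upgrading the $\mu\otimes\Leb$-a.e.\ Lyapunov statement from the MET to all $(\x,v)$ via "continuity of $\phi^t$ in $\x$... and an equicontinuity argument" — would not work: there is no reason for $t^{-1}\log|\nabla\phi^t(\x)|$ to be equicontinuous in $\x$ uniformly in $t$. The correct upgrade, as in \cite[Remark 2.2]{BBPS2022} and in the proof of Theorems \ref{t:positive_lyapunov_exponent} and \ref{t:positive_lyapunov_exponent_II}, uses the strong Feller property of the Lagrangian/projective process applied to the indicator of the event where the limit attains $\lambda_+$, combined with the Markov property and the invertibility of $\nabla\phi^t$. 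Finally, you implicitly assume uniqueness of the invariant measure $\mu$ for the PEs (so that "the" stationary measure of the projective process is well-defined), but this is itself one of the new contributions of the paper (Theorem \ref{t:uniqueness_regularity_invariant}) and requires its own strong Feller argument on $\Hs^\sigma$ together with the high-order energy estimates of Theorem \ref{t:high_order_estimates}; it cannot be taken for granted.
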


The rigorous formulation of the above is given in Theorem \ref{t:positive_lyapunov_exponent}. 
Equation \eqref{eq:positive_lyap_exp} shows that nearby particles are exponentially separated by the flow $\phi^t$.
The exponential expansion rate $\lambda_+$ is usually called the (top) \emph{Lyapunov exponent} of $\phi^t$.
Consequences on scalar turbulence of Theorem \ref{t:intro} or, more precisely, a version of \eqref{eq:positive_lyap_exp} with $\nabla \phi^t(\x)$ replaced by $(\nabla \phi^t (\x))^{-\top}$, are given in Corollary \ref{cor:gradient_expansion}.

Chaotic behaviours are commonly observed in the study of the ocean and play a crucial role in its dynamics, see e.g.,  \cite{abraham2002chaotic, brown1991ocean, prants2014chaotic, sivakumar2004chaos, yang1996chaotic,yang1997three}. To the best of our knowledge, Theorem \ref{t:intro} provides the first rigorous result concerning the chaotic behaviour of a \emph{three-dimensional} oceanic model, such as the stochastic PEs \eqref{eq:primitive_full}.

\smallskip

The proof of Theorem \ref{t:intro} builds upon the foundational work of Bedrossian, Blumenthal, and Punshon-Smith \cite{BBPS2022} on Lagrangian chaos in stochastic fluid dynamics models. To avoid interrupting the presentation, we postpone to Subsection \ref{ss:discussion_intro} the discussion of the novelties of this work and its connection to existing results.

\smallskip

A key element in the proof of Theorem \ref{t:intro} is the existence of a \emph{unique} invariant measure for the stochastic PEs \eqref{eq:primitive_full}. Invariant measures play a crucial role in understanding the long-time behaviour of solutions to a given SPDE \cite{DPZ_ergodicity}, and their uniqueness has significant implications, including ergodicity and the complete characterisation of long-time dynamics through the invariant measure.
For definitions, the reader is referred to Subsection \ref{ss:global_well_statement}. While the existence of ergodic measures for PEs was established by Glatt-Holtz, Kukavica, Vicol and Ziane in \cite{GHKVZ14}, their uniqueness remains an open problem (see the comments below Theorem C in \cite{BGN23} and the recent work \cite{lin2025averaging}). 
The main obstacle in establishing uniqueness is the weak bounds available for PEs in their natural state space $H^\sigma$ with $\sigma\geq 1$, cf., \cite[Section II]{GHKVZ14} and \cite[Theorem 3.6 and Remark 3.10]{primitive2}. Here, the weakness is with respect to integrability in the probability space and invariant measures.

In this manuscript, by leveraging on new high-order energy estimates for \eqref{eq:primitive_full} established in Section \ref{s:well_posedness_Hsigma}, we are able to overcome these challenges. The following is an informal version and a special case of Proposition \ref{prop:regularity_invariant_measure} and Theorem \ref{t:uniqueness_regularity_invariant}.

\begin{theorem}[Uniqueness and regularity of invariant measures -- Informal statement]
\label{t:uniqueness_invariant_intro}
The following assertions hold:
\begin{enumerate}[{\rm(1)}]
\item\label{it:uniqueness_invariant_intro1} If $QW_t$ takes values in $H^\sigma(\T^3;\R^2)$ a.s.\ for some $\sigma\geq 2$, then all invariant measure $\mu$ for the stochastic {\normalfont{PEs}} \eqref{eq:primitive_full} are \emph{concentrated} on 
$
\textstyle\bigcap_{\sigma'<\sigma+1}H^{\sigma'}(\T^3;\R^2).
$
\item  If $QW_t$ takes values in $H^\sigma(\T^3;\R^2)$ a.s.\ for some $\sigma\geq 2$ and is nondegenerate, then there exists a \emph{unique} invariant measure for the stochastic {\normalfont{PEs}} \eqref{eq:primitive_full}.
\end{enumerate}
\end{theorem}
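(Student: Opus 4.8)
The plan is to treat the two assertions in sequence, with the regularity statement \eqref{it:uniqueness_invariant_intro1} serving as the backbone for the uniqueness claim. For the regularity part, I would start from any invariant measure $\mu$ and exploit the stationarity of the associated process. The key input is the new high-order energy estimates announced as coming from Section \ref{s:well_posedness_Hsigma}: these should give, for the stationary solution $v_t$ with law $\mu$ at each time, a bound of the form $\E\,\|v_t\|_{H^{\sigma'}}^{q}<\infty$ (or at least a weaker moment, e.g.\ a logarithmic or small-power moment, which is typical for PEs given the ``weak energy bounds'' the introduction emphasizes) for every $\sigma'<\sigma+1$. Concretely I would: (i) run the dynamics from $\mu$, (ii) apply the $H^{\sigma'}$ a priori estimate — presumably combining the anisotropic parabolic smoothing $\Delta v$ gains one derivative off the noise regularity $H^\sigma$, with a Gronwall/bootstrap argument to transfer $L^2$-type control (coming from the classical PE energy inequality and the existence theory of \cite{Debussche_2012,GHKVZ14}) up the Sobolev scale, (iii) average in time over a stationary window $[0,T]$ and use stationarity to conclude $\mu$ charges $H^{\sigma'}$ for each $\sigma'<\sigma+1$, hence $\mu\big(\bigcap_{\sigma'<\sigma+1}H^{\sigma'}\big)=1$. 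The anisotropy of the PEs (horizontal vs.\ vertical) means the right function spaces are likely the $\Hs^\sigma_0$-type spaces introduced in the notation, and I would carry the estimate in those rather than in isotropic $H^\sigma$, only deducing the stated $H^{\sigma'}$ conclusion at the end by embedding.

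For the uniqueness part, the strategy is the standard Doob/Khasminskii dichotomy: an SPDE has a unique invariant measure once the Markov semigroup is (a) irreducible and (b) strong Feller (or asymptotically strong Feller) on a sufficiently regular state space, since then any two invariant measures are mutually equivalent and a $0$-$1$ law forces them to coincide. The reason the regularity statement is needed first is precisely that these smoothing/irreducibility properties of the PE semigroup are only available — or only tractable via Malliavin calculus — on the regular phase space $\bigcap_{\sigma'<\sigma+1}H^{\sigma'}$ (or a suitable $H^{s}$ with $s$ large enough to run the PE well-posedness theory pathwise); part \eqref{it:uniqueness_invariant_intro1} guarantees that \emph{every} invariant measure lives there, so it suffices to prove uniqueness among measures supported on that set. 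I would then: (i) invoke the nondegeneracy of $Q$ to get irreducibility, either by a control-theoretic argument (approximate controllability of the PEs steered by the noise range, using the global well-posedness from Section \ref{s:well_posedness_Hsigma}) or by direct Gaussian support arguments; (ii) establish the strong Feller property via Malliavin calculus — showing the Malliavin matrix of the solution is a.s.\ invertible with inverse moments controlled, which yields a Bismut–Elworthy–Li type integration-by-parts formula and hence smoothing of $P_t$; here the high-order estimates again enter to control the Malliavin derivatives along the flow; (iii) combine irreducibility and strong Feller on the regular set with part \eqref{it:uniqueness_invariant_intro1} via the classical argument to conclude uniqueness.

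The main obstacle, and the place where the real work lies, is step (ii) of the regularity part together with step (ii) of the uniqueness part — i.e.\ obtaining the high-order energy estimates with enough integrability to be useful, in the presence of the supercritical scaling the abstract stresses. For the $L^2$-in-$\Omega$ story the PE nonlinearity $(u\cdot\nabla)v$ with $w$ recovered by the divergence constraint $w=-\int_0^z \nabla_{x,y}\cdot v$ is genuinely delicate: the vertical transport term is only borderline controllable, which is exactly why only weak (e.g.\ polynomially-in-time or low-moment) bounds are classically available, and pushing this to $H^{\sigma'}$ with $\sigma'$ close to $\sigma+1$ while keeping a usable moment requires carefully exploiting the anisotropic structure — the Cao–Titi-type cancellations and the fact that the pressure is $z$-independent — rather than brute-force Sobolev estimates. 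I expect the paper handles this by working in the anisotropic scale, using maximal regularity / stochastic parabolic estimates in $L^p_t L^q_x$ with mixed-norm (horizontal vs vertical) integrability, and a bootstrap that first upgrades integrability in $\Omega$ at low regularity and then trades it for Sobolev regularity; the Malliavin-calculus strong Feller estimate then has to be run in the same anisotropic framework, which is where the bulk of the technical novelty over \cite{BBPS2022} presumably sits.
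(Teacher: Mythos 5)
The proposal captures the paper's two-stage strategy essentially exactly: bootstrap regularity of the stationary law from high-order energy estimates plus stationarity, then run the Doob--Khasminskii dichotomy (strong Feller via a Malliavin-calculus / Bismut--Elworthy--Li argument, irreducibility via controllability and a stability estimate). You also correctly anticipate that only weak moments are available: Proposition~\ref{prop:regularity_invariant_measure} delivers iterated-logarithmic moments, not algebraic ones, precisely because of the supercritical energy bounds you flag. Where the proposal drifts is in the attribution of the anisotropic mixed-norm machinery. In the paper those estimates (Section~\ref{s:integrability_inv_measure_u}, in particular Lemma~\ref{l:max_reg_anisotropic_Laplacian}) are deployed to prove the $L^1$-integrability of $\nabla u$ needed for the multiplicative ergodic theorem (Proposition~\ref{prop:integrability_MET_L1}), not for the regularity of invariant measures and not for the Malliavin strong Feller bound. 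The bootstrap behind Proposition~\ref{prop:regularity_invariant_measure} is carried out in the \emph{isotropic} scale: it combines the pathwise estimate of Lemma~\ref{l:pathwise_energy_estimate} with weighted maximal $L^p$-regularity for the heat semigroup on $H^{\eta'-1}(\T^3)$, gaining $1/2$ a Sobolev derivative per iteration starting from the $\log$-moment of~\cite[Theorem~1.7]{GHKVZ14}. Likewise the Malliavin argument for Proposition~\ref{prop:strong_feller_v} is isotropic, and its subtlety is in matching regularity indices rather than anisotropy: the nondegeneracy hypothesis~\eqref{eq:generalization2} only gives $Q^{-1}:\Hs^{\sone}_0\to\Ls^2_0$ for $\sone<\sigma+2$, which must be met by the Jacobian regularization threshold $\sigma_0<\sigma+2$ of Lemma~\ref{l:instantaneous_regularization_J_appendix}; you should make this threshold-matching explicit rather than bury it in ``controlling the Malliavin derivatives along the flow.'' Finally, the parenthetical ``(or asymptotically strong Feller)'' is a route the paper explicitly rules out for PEs, exactly because of the weak energy bounds, so it should not be offered as an interchangeable alternative.
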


The regularity result for invariant measures of \eqref{eq:primitive_full} in Theorem \ref{t:uniqueness_invariant_intro}\eqref{it:uniqueness_invariant_intro1} is optimal from a parabolic point of view. 
From the above, it follows that $\mu$ is concentrated on smooth functions, provided $QW_t$ is smooth in space. The latter fact answers positively to the question posed below
\cite[Theorem 1.9]{GHKVZ14}, at least in the case of additive noise. Further details on this point are given below Proposition \ref{prop:regularity_invariant_measure}.

To the best of our knowledge, Theorem \ref{t:uniqueness_invariant_intro} presents the \emph{first} uniqueness result for invariant measures of PEs \eqref{eq:primitive_full}. Our proof of uniqueness follows the well-established strategy involving strong Feller and irreducibility properties of transition Markov kernels, as used in \cite{FlaMas1995} for the 2D Navier-Stokes equations. For the relevant terminology, we refer the reader to Subsection \ref{ss:global_well_statement} or \cite{DPZ_ergodicity}.
In our setting, the high-order energy estimates established in Section \ref{s:well_posedness_Hsigma} play a central role. It is worth noting that, due to the use of the strong Feller approach, we require that the noise acts on \emph{all} Fourier modes, cf., \eqref{eq:generalization2} and Subsection \ref{sss:probabilistic_set_up}.
Establishing the uniqueness of invariant measures in the case of degenerate noise--where some Fourier modes are not randomly forced--appears to be extremely challenging due to the above-mentioned weak energy bounds. In particular, known methods, such as asymptotic couplings \cite{GHMR17,hairer2011asymptotic} (see \cite[Subsection 3.2]{GHMR17} for the two-dimensional variant of \eqref{eq:primitive_full}), asymptotic strong Feller \cite{HM06_annals}, or the $e$-property \cite{MR2663632}, do not seem to be applicable in this context.

\smallskip

Below, we focus on describing the challenges in the proof of Theorem \ref{t:intro} and highlight its connections with existing literature. In Subsections \ref{ss:future_challenges} and \ref{ss:overview}, we describe future challenges and provide an overview of the manuscript, respectively. 
The main results of this work are collected in Section \ref{ss:global_unique_erg}, where we also present additional findings. Subsection \ref{ss:proofs_further_results} offers a detailed discussion of the proofs.

\subsection{Background and contribution}
\label{ss:discussion_intro}
In the influential work \cite{BBPS2022}, a sufficient condition was established for the positivity of the top Lyapunov exponent
\begin{equation}
\label{eq:top_Lyapunov_exponent}
\lim_{t\to \infty} \frac{1}{t}\log|\nabla \phi^t(\x)|,
\end{equation}
where $\phi^t$ is the flow generated by the solution of a stochastic fluid model. The same authors then used this result as a foundation to study mixing and enhanced dissipation for passive scalars advected by a stochastic fluid \cite{BBPS2022_PTRF,BBPS2022_AOP}. These results were subsequently employed to prove a cumulative version of the Batchelor prediction in passive scalar turbulence \cite{BBPS2022_CPAM}.
Although the framework presented in the above-mentioned references is rather general, several questions remain open, particularly regarding the applications of these results to more realistic fluid models, such as the 3D PEs \eqref{eq:primitive_full}, which, as noted earlier, are commonly used in the study of oceanic and atmospheric dynamics. Further discussion of these issues can be found in Subsection \ref{ss:future_challenges} below and in \cite[Subsection 1.4]{BBPS2022_CPAM}. 
In particular, fluid models usually manifest the following complications: 
\begin{itemize} 
\item Energy supercriticality. 
\item Presence of boundary conditions. 
\end{itemize} 
Recall that energy supercriticality is typical of three-dimensional models and produces weak $\O$-bounds for the solution of the fluid flow in the well-posedness space. Let us emphasise that the latter also appears in two-dimensional models with boundary noise (see Subsection \ref{ss:future_challenges} for more details). 

The stochastic PEs \eqref{eq:primitive_full} serve as a prototypical example of globally well-posed 3D fluid models in which both of the aforementioned conditions are simultaneously satisfied. While the boundary conditions are explicitly given in  \eqref{eq:primitive_full1}, the energy supercriticality can be seen from a scaling analysis; see \cite[Subsection 1.2]{A23_primitive3} for details. It is important to note that the PEs \eqref{eq:primitive_full} possess the scaling of the 3D Navier-Stokes equations, and thus, they are energy supercritical.
The energy supercriticality of the PEs is essentially reflected in the weak bounds mentioned above Theorem \ref{t:uniqueness_invariant_intro}, and therefore related difficulties are already present at the level of the fluid equation. Meanwhile, the presence of the impermeability condition \eqref{eq:primitive_full1} forces the flow $\phi^t$ in \eqref{eq:lagrangian_flow} to live in the non-compact space $\Dom$. 
The latter fact will be particularly relevant when dealing with ergodic measures of the so-called Lagrangian process $(u_t,\phi^t(\x))_t$, as compactness methods (e.g., Krein-Milman's theorem) cannot be applied to ensure ergodicity of the invariant measure $\mu\otimes \Leb$ (here, $\mu$ is as in Theorem \ref{t:uniqueness_invariant_intro}). 
To address this difficulty, we establish auxiliary results on Markov semigroups in Appendix \ref{app:useful_results}.

\smallskip

The essential ingredients in the application of the general framework in \cite[Section 4]{BBPS2022} to \eqref{eq:primitive_full} can be (roughly) summarised as follows: 
\begin{enumerate}
\item\label{it:top_positive_1} An integrability condition for the ergodic measures $\mu$ of the fluid model.
\item\label{it:top_positive_2} Smoothing of the transition kernels of the Lagrangian process
$(u_t,\phi^t(\x))_t$.
\item\label{it:top_positive_11} Ergodicity of $\mu\otimes \Leb$ for the Lagrangian process $(u_t,\phi^t(\x))_t$.
\item\label{it:top_positive_3} Approximate controllability results for:
\begin{itemize}
\item The Jacobian process
$
\big(u_t,\phi^t(\x), \nabla \phi^t(\x)\big)_t .
$
\item The projective process
$
\big(u_t,\phi^t(\x), \nabla \phi^t(\x)\xi/ |\nabla \phi^t(\x)\xi|\big)_t .
$
\end{itemize}
\end{enumerate} 
In the above list, $\mu$ is the unique invariant measure for the stochastic PEs \eqref{eq:primitive_full} provided by Theorem \ref{t:uniqueness_invariant_intro}, $\x\in\Dom$ and $\xi\in\R^3\setminus\{0\}$.

The precise details are provided in Subsection \ref{ss:proofs_further_results}, and the reader is referred to \cite[Proposition 4.17]{BBPS2022} for the general framework.
We now provide a brief discussion of the above points and highlight the main difficulties encountered in their proofs in case $u_t$ is the solution to the stochastic PEs \eqref{eq:primitive_full}.

\smallskip

\eqref{it:top_positive_1}: 
According to the multiplicative ergodic theorem \cite[Theorem 3.4.1]{A98_Random}, the \emph{existence} of the top Lyapunov exponent \eqref{eq:top_Lyapunov_exponent} is (essentially) equivalent to the following condition on the ergodic measure $\mu$ of \eqref{eq:primitive_full}:
\begin{equation}
\label{eq:integrability_intro}
\E_{\mu}\int_{\T^3}|\nabla u(\x)|\,\dd \x<\infty,
\end{equation}
where $\E_{\mu}$ is the expected values with respect to $\mu$.
Unlike the 2D Navier-Stokes equations (or the hyperviscous models discussed in \cite{BBPS2022}), where the condition \eqref{eq:integrability_intro} follows directly from the standard energy balance, proving \eqref{eq:integrability_intro} for the stochastic PEs \eqref{eq:primitive_full} is far from trivial. We now clarify the main difficulty behind the proof of \eqref{eq:integrability_intro} in the context of PEs. First, note that \eqref{eq:primitive_full} does \emph{not} include an evolution equation for the third component of the velocity field 
$u$, which we denoted by $w$. The latter can, however, be recovered from the divergence-free condition in \eqref{eq:primitive_full} and the boundary condition \eqref{eq:primitive_full1}:
\begin{equation}
\label{eq:def_w_intro}
w_t(\x)=-\int_0^{z}\nabla_{x,y} \cdot v_t(x,y,z')\,\dd z', \ \quad \x=(x,y,z)\in \Dom.
\end{equation}
Here, $\nabla_{x,y}\cdot $ is the divergence operator with respect to the horizontal variables $(x,y)$; see below for the notation.
Consequently, \eqref{eq:integrability_intro} requires the integrability of quantities involving \emph{second-order} derivatives of the horizontal velocity $v_t$. However, in the existing literature, only logarithmic bounds on second-order derivatives are available; see \cite[Theorem 1.9]{GHKVZ14}. The latter estimates are too weak to ensure \eqref{eq:integrability_intro}, therefore ensuring the existence of the Lyapunov exponent \eqref{eq:top_Lyapunov_exponent}.

In this work, we address these issues by using $L^p$-estimates and exploiting the \emph{anisotropic} smoothing effect associated with the nonlinearity $w\partial_z v$, where $w$ is as in \eqref{eq:def_w_intro}. 
For further details, the reader is referred to Theorem \ref{t:integrability_MET} and the comments below it. Additionally, we obtain \eqref{eq:integrability_intro} with spatial integrability $L^1$ replaced by $L^p$ for any $p<\frac{4}{3}$. We also expect that the $L^2$-case of \eqref{eq:integrability_intro} does \emph{not} hold.

\smallskip

\eqref{it:top_positive_2}: 
There are several ways to capture the smoothing effects of transition kernels of Markov semigroups. In this work, as in \cite{BBPS2022}, we will demonstrate that the Lagrangian process induces a \emph{strong Feller} semigroup on its natural state space; see Proposition \ref{prop:strong_feller_projective} for the precise statement. The reader is referred to either Subsection \ref{ss:proofs_further_results} or \cite[Chapter 4]{DPZ_ergodicity} for the definition of strong Feller.

As in \cite{BBPS2022}, we assume that the noise $Q\dot{W}_t$ acting on the horizontal velocity is nondegenerate in Fourier space (see Assumption \ref{ass:Q}), and thus the main challenge in achieving strong Feller for the Lagrangian process $(u_t,\phi^t(\x))_t$ is the lack of noise in the flow equation \eqref{eq:lagrangian_flow}. In other words, there is no direct source of noise in \eqref{eq:lagrangian_flow}. However, randomness will be transferred to $\phi^t(\x)$ from $v_t$ on which the noise acts directly, see \eqref{eq:primitive_full}. 
Compared to the fluid models considered in \cite{BBPS2022}, our situation is even more degenerate because, in the Lagrangian flow equation \eqref{eq:lagrangian_flow}, the third component is driven by 
$w_t$, which is \emph{not} directly driven by the noise (see \eqref{eq:def_w_intro} and the preceding discussion).
To address this, in Section \ref{s:strong_Feller}, we will investigate the algebraic properties of the system described by \eqref{eq:primitive} and \eqref{eq:lagrangian_flow} through an H\"ormander-type condition. The details are provided in Subsection \ref{ss:nondegeneracy}.

Before discussing the subsequent item, let us comment on other possible choices of smoothing for the Markov kernel of the Lagrangian process. 
In the recent work \cite{CR24_degenerate}, the authors proved that in the context of the 2D Navier-Stokes equations the results in \cite{BBPS2022} (as well as subsequent works \cite{BBPS2022_PTRF,BBPS2022_AOP,BBPS2022_CPAM}) can be extended to the case where the noise acts on only \emph{finitely} many Fourier modes (i.e., degenerate noise). This extension is achieved through the use of the asymptotic strong Feller property, which was introduced in \cite{HM06_annals} and used to show unique ergodicity of the 2D Navier-Stokes equations with degenerate noise. However, as noted below Theorem \ref{t:uniqueness_invariant_intro}, it remains unclear how to establish asymptotic strong Feller for the stochastic PEs \eqref{eq:primitive_full} itself in the case of degenerate noise. Therefore, the extension presented in \cite{CR24_degenerate} of the results in \cite{BBPS2022} to the case of degenerate noise does not currently seem applicable in the context of the stochastic PEs \eqref{eq:primitive_full}.

\smallskip

\eqref{it:top_positive_11}: 
Although the invariance of the measure $\mu \otimes \Leb$ under the Lagrangian dynamics $(u_t, \phi^t(\x))_t$ follows directly from the divergence-free condition on $u_t$, the question of ergodicity is more delicate. Unlike in \cite{BBPS2022}, due to the non-compactness of $\Dom$, it is difficult to obtain uniqueness of invariant measures for the Lagrangian process $(u_t,\phi^t(\x))_t$, and hence, ergodicity of $\mu\otimes \Leb$. 
Indeed, in \cite{BBPS2022}, the compactness of the underlined domain allows one to invoke the Krein-Milman theorem along with the uniqueness and extremality of ergodic measures (see, e.g., \cite[Proposition 3.2.7]{DPZ_ergodicity}). 
Moreover, this issue cannot be solved by considering the flow $\phi^t$ on the compact set $\overline{\Dom}$. Indeed, in the latter situation, it is possible to construct ergodic measures for the Lagrangian process that are supported on $\supp \mu \times \partial \Dom$. Combined with the Krein-Milman theorem and the invariance of $\mu \otimes \Leb$, this implies the existence of multiple ergodic measures for the Lagrangian process $(u_t, \phi^t(\x))_t$ where $\phi^t$ is considered on $\overline{\Dom}$.
Nevertheless, by employing a topological argument along with the strong Feller property of the process $(u_t, \phi^t)_t$ with $\x\in \Dom$, we can still establish ergodicity of $\mu\otimes \Leb$ via the results in Appendix \ref{app:useful_results}.

\smallskip

\eqref{it:top_positive_3}: The approximate controllability of the Jacobian and projective processes, as formulated in Proposition \ref{prop:approximate_controllability}, is also influenced by the noise degeneracy in the vertical component 
$w_t$ of the velocity field $u_t$, as described in \eqref{eq:def_w_intro}. To address this challenge, we construct ad-hoc flows that control the evolution of the PEs. Our approach extends the ``shear flows'' method used in \cite[Section 7]{BBPS2022} to the context of PEs. For further details, the reader is referred to Section \ref{s:irreduc_controllability}.

\smallskip

To conclude, let us emphasise that, as in \cite{BBPS2022}, we will obtain a stronger result than \eqref{eq:positive_lyap_exp} in Theorem \ref{t:intro}. Specifically, using the random multiplicative ergodic theorem \cite[Theorem III.1.2]{K86_ergodic_theory} (also used in \cite{BBPS2022}, see Proposition 3.16 therein), one can derive from Theorem \ref{t:intro} the following stronger result:
For all $\xi\in \R^3\setminus\{0\}$,
\begin{equation}
\label{eq:intro_expansion}
\lim_{t\to \infty} \frac{1}{t}\log|\nabla \phi^t(\x)\xi|=\lambda_+>0  \text{ for all $\x\in \Dom$ with probability }1.
\end{equation}
Hence, \eqref{eq:intro_expansion} shows the trajectory of the flow $\phi^t$ induced by the stochastic PEs \eqref{eq:primitive_full} expands exponentially \emph{all} (deterministic) directions $\xi$.
By means of the random multiplicative ergodic theorem, \eqref{eq:intro_expansion} follows from Theorem \ref{t:intro} if we obtain
\begin{enumerate}
\setcounter{enumi}{4}
\item\label{it:top_positive_5} Uniqueness of invariant measures for the projective process, whose projection onto the first two components coincides with $\mu \otimes \Leb$.
\end{enumerate}
As in \eqref{it:top_positive_11}, we are unable to prove uniqueness of the invariant measure for the projective process in general, but only within the class of measures whose projection onto the first two components coincides with $\mu \otimes \Leb$. This limitation again stems from the non-compactness of $\Dom$.
The claim \eqref{it:top_positive_5} will be established by proving uniqueness of ergodic measures for the projective process, together with the ergodicity of $\mu \otimes \Leb$ for the Lagrangian dynamics. The uniqueness of ergodic measures is obtained via the strong Feller property of the projective process, as shown in Propositions \ref{prop:strong_feller_projective} and \ref{prop:weak_irreducibility}.
We emphasise that the strong Feller property established here is a stronger result than what is required in \eqref{it:top_positive_2} from the previous list.

\subsection{Future challenges}
\label{ss:future_challenges}
In this subsection, we discuss potential future extensions of our results, with a particular focus on PEs and, more generally, geophysical flows.
\begin{itemize}
\item \emph{Unique ergodicity with degenerate noise.} The extension of the uniqueness result for invariant measures in Theorem \ref{t:uniqueness_invariant_intro} (or, more precisely, Theorem \ref{t:uniqueness_regularity_invariant}) to the case where 
the noise $Q\dot{W_t}$ acts only on finitely many Fourier modes seems at the moment out of reach. As noted above, the primary challenge, in this case, arises from the relatively weak energy bounds for the stochastic PEs \eqref{eq:primitive_full}, which might require a refinement of the known strategies for proving the uniqueness of invariant measures in the degenerate case (for the latter see the comments below Theorem \ref{t:uniqueness_invariant_intro}).

\item \emph{Mixing and enhanced dissipation of passive scalars.} 
As shown in the works \cite{BBPS2022_PTRF,BBPS2022_AOP}, the positivity of the top Lyapunov exponent of the flow map associated with a stochastic fluid model often implies mixing and enhanced dissipation results for passive and diffusive scalars, respectively. It is important to note that these results are stronger than the exponential growth for passive scalars presented in Corollary \ref{cor:gradient_expansion}. While the positivity of the top Lyapunov exponent for the induced flow map is a key element in \cite{BBPS2022_PTRF,BBPS2022_AOP}, these results also rely on the existence of Lyapunov functions that provide quantitative control over high deviations of the velocity
$u_t$, as discussed in \cite[Subsection 2.4]{BBPS2022_AOP}. The construction of such Lyapunov functions remains an open problem for stochastic PEs \eqref{eq:primitive_full}.

\item \emph{Other geophysical flows: Boundary noise.}
The interaction between the wind and the ocean surface plays a crucial role in ocean dynamics, see  \cite{gill2016atmosphere,Ped,pedlosky1996ocean}. In the case of highly oscillating wind, and under the assumption of a time-scale separation, the wind's effect can be idealised as noise acting on the boundary. Recent well-posedness results for fluid dynamics models with boundary noise are presented in \cite{ABL24_boundary,AL24_boundary,MR4701784}. The physical relevance of extending results on Lagrangian chaos and scalar advection, as obtained in \cite{BBPS2022_PTRF,BBPS2022_AOP,BBPS2022_CPAM,BBPS2022}, to boundary-driven models was discussed in \cite[p. 1254]{BBPS2022_CPAM}.

Establishing Lagrangian chaos for a fluid model with white-in-time boundary noise presents significant challenges. Besides the presence of the boundary, even in a two-dimensional setting, the energy estimates are weak in the probability space (cf., \cite{AL24_boundary}), and therefore integrability condition \eqref{eq:integrability_intro} for the existence of the top Lyapunov exponent \eqref{eq:top_Lyapunov_exponent} is non-trivial. We expect that some of the techniques developed in this work could be applied to this context as well.

Finally, note that a colored-in-time boundary noise--such as replacing the white noise with an Ornstein-Uhlenbeck process--can simplify the proof of the condition \eqref{eq:integrability_intro}. However, in many situations, the white-in-time noise often leads to important insights for the colored-in-time and space case; see \cite[Subsection 2.12]{CR24_degenerate} and \cite[p. 249]{BBPS2022_AOP}.
\end{itemize}

\subsection{Overview}
\label{ss:overview}
This manuscript is organised as follows.

\begin{itemize}
\item Section \ref{s:Preliminaries} introduces preliminary concepts for PEs, including the reformulation of PEs as an evolution equation for the horizontal velocity $v_t$ and the definition of $H^\sigma$-solutions to the stochastic PEs \eqref{eq:primitive_full}, along with the probabilistic framework.
\item Section \ref{ss:global_unique_erg} presents the main results of this paper: Unique ergodicity, Lagrangian chaos, and scalar advection by stochastic PEs in Subsection \ref{ss:global_well_statement} and Section \ref{ss:main_results}, with an overview of the proofs in Section \ref{ss:proofs_further_results}.
\item Section \ref{s:integrability_inv_measure_u} proves the integrability condition \eqref{eq:integrability_intro} for invariant measures of the stochastic PEs \eqref{eq:primitive_full}.
\item Section \ref{s:well_posedness_Hsigma} proves Theorem \ref{t:uniqueness_invariant_intro} and complementary global well-posedness results for the stochastic PEs in high-order Sobolev spaces.
\item Section \ref{s:strong_Feller} establishes the strong Feller property for the projective processes.
\item Section \ref{s:irreduc_controllability} proves approximate controllability for the Jacobian and projective processes, along with weak irreducibility for the projective processes.
\item Appendix \ref{app:useful_results} provides complementary results on Markov semigroups.
\end{itemize}

\subsubsection*{Notation}
Here, we collect the main notations used in this manuscript. For two quantities $x$ and $y$, we write $x\lesssim y$ (resp.\ $x\gtrsim y$), if there exists a constant $C$ independent of the relevant parameters such that $x\leq Cy$ (resp.\ $x\geq C y $). We write $x\eqsim y$, whenever $x\gtrsim y$ and $y\lesssim x$.

An element in the domain $\Dom$ is denoted by $\x=(x,y,z)$ where $(x,y)\in \T^2_{x,y}$ and $z\in (0,1)$ denotes the horizontal and the vertical directions, respectively. For $\x=(x,y,z)\in \Dom$, we denote by $\dist(\x)$ its distance from the boundary:
$$
\dist (\x)\stackrel{{\rm def}}{=}z \wedge (1-z).
$$
In the above, $\T=\R/\Z$ and $\T^2_{x,y}=\T\times \T$ and the subscript `$x,y$' serving solely to indicate the corresponding coordinates. 
A similar decomposition is only employed on $\T^3= \T^2_{x,y}\times \T_z$ and $\T^2_{x,y}=\T_x\times \T_y$. 
The above splitting also induced a decomposition of the standard operators $\nabla=(\partial_x,\partial_y,\partial_z)$ into the horizontal and vertical components given by $\nabla_{x,y}=(\partial_x,\partial_y)$ and $\partial_z$, respectively. In particular, for a vector field $v=(v_x,v_y)$ on either $\Dom$ or $\T^3$, the operator $\nabla_{x,y}\cdot$ denotes the horizontal divergence and the action on $v$ is defined as
$
\nabla_{x,y}\cdot v =\partial_x v_x+\partial_y v_y.
$

Let $H$ and $K$ be Hilbert spaces. $\Borel(H)$ denotes the Borel sets of $H$, and 
$\calL_2(H_1,H_2)$ stands for the space of Hilbert-Schmidt operators from $H$ to $K$. 

Throughout this manuscript, $(\Omega, \mathscr{A},(\mathscr{F}_t)_{t\geq 0}, \P)$ denotes a filtered probability space. Further details on the probability setting are given in Subsection \ref{ss:solutions_def} below. As usual, we write $\E$ for the expectation on $(\Omega, \mathscr{A}, \P)$ and $\mathscr{P}$ for the progressive $\sigma$-field. A process $\phi:[0,\infty)\times \O\to H$ is progressively measurable if $\phi|_{[0,t]\times \O}$ is $\Borel([0,t])\otimes \F_t$ measurable for all $t\geq 0$, where $\Borel$ is the Borel $\sigma$-algebra on $[0,t]$.

Finally, we give a list of some symbols which will appear in the manuscript.
\begin{itemize}
\item Hydrostastic Helmholtz projection $\p$, $\q=\Id-\p$ and associated spaces $\Ls^2(\T^3)$, $\Ls^2_0(\T^3)$, $\Hs^\rho(\T^3)$ and $\Hs^\rho_0(\T^3)$ of vector-fields -- Subsection \ref{ss:reformulation}.
\item $\z$, $\plane$, $\g_{(\kk,\ell)}$ and $e_\kk$ -- Subsection \ref{sss:probabilistic_set_up}. 
\item Auxiliary processes $\x_t$, $\A_t$, $\xi_t$ and $\wc{\xi}_t$ -- Subsection \ref{ss:proofs_further_results}.
\item Augmented processes $V_t$ and $V_t^{(r)}$, and corresponding state space $\HL$ -- Subsection \ref{ss:strong_feller_via_cut_off}.
\item Projective Markov kernel or semigroups $\Pl_t$, $P_t$ and $P_t^{(r)}$ -- Subsection \ref{ss:proofs_further_results} and \ref{ss:strong_feller_via_cut_off}, respectively.
\item Malliavin derivatives $\D$ and $\D_g$, Skorohod operator $\delta$, and  partial Malliavin matrix $\Ma$ -- Subsection \ref{ss:Malliavin_calculus} and \ref{ss:gradient_estimates}, respectively. 
\end{itemize}

\section{Preliminaries and set-up}
\label{s:Preliminaries}
In this section, we gathered the main assumptions on the noise and definitions.

\subsection{Reformulation and function analytic setting}
\label{ss:reformulation}
As is well-known in the study of PEs, they can be conveniently reformulated in terms of the only unknown $v$. 
To see this, note that the boundary condition at $w_t=0$ on $\T_{x,y}^2\times \{0\}$ in \eqref{eq:primitive_full}, combined with the incompressibility condition $\nabla_{x,y}\cdot v_t +\partial_z w_t=0$ leads to
\begin{equation}
\label{eq:def_w}
w_t=[w(v_t)] \stackrel{{\rm def}}{=}
-\int_{0}^{z} (\nabla_{x,y}\cdot v_t)(x,y,z')\,\dd z', 
\end{equation}  
for $\x=(x,y,z)\in \T^3$. Moreover, the boundary condition $w_t=0$ on $\T_{x,y}^2\times \{1\}$ in \eqref{eq:primitive_full} in \eqref{eq:primitive_full} also yields
\begin{equation}
\label{eq:incompressibility_v}
\int_{\T_z} \nabla_{x,y}\cdot v_t(\cdot,z)\,\dd z=0 \ \ \text{ on }\T^2_{x,y},
\end{equation}
that is a divergence free condition for the \emph{barotropic} mode $\int_{\T_z} v_t(\cdot,z)\,\dd z=0$.

Note also that the second of \eqref{eq:primitive_full} implies that the pressure $p_t=p_t(x,y)$ is two-dimensional. As it does not depend on the vertical variable $z$, the pressure $p_t$ is often referred to as \emph{surface pressure}.

Summarising the above discussion, the PEs \eqref{eq:primitive_full} can be reformulated in terms of the horizontal component of the velocity field  $v:\R_+\times \O\times \T^3\to \R^2$ and the surface pressure $p:\R_+\times \O\times \T_{x,y}^2 \to \R$ as
\begin{equation}
	\label{eq:primitive_p}
\left\{
\begin{aligned}
		&\partial_t v_t+ (v_t\cdot \nabla_{x,y})v_t +w(v_t)\partial_z v_t 
		=-\nabla_{x,y} P_t+  \Delta v_t   + Q\dot{W}_t,  &\text{on }&\T^3,\\
&
\textstyle\int_{\T_z}\nabla_{x,y}\cdot v_t(\cdot,z)\,\dd z=0,  &\text{on }&\T^2_{x,y},\\
&v_0=v,  &\text{ on }&\T^3.
\end{aligned}
\right.
\end{equation}

Next, we reformulate the problem only in terms of $v$ as the incompressibility condition, in the second of \eqref{eq:primitive_p}, solely determines $p$. To this end, we employ the \emph{hydrostatic Helmholtz projection}.
For each $f\in L^2(\T^2;\R^2)$, let $\Psi_f\in H^1(\T^2)$ be the unique solution of the following elliptic problem:
\begin{equation}
\label{eq:def_q_xy}
\textstyle
\Delta_{x,y} \Psi_f =\nabla_{x,y}  \cdot f\  \text{ in }\D'(\T_{x,y}^2)
\quad \text{ and } \quad \int_{\T_{x,y}^2} \Psi_f \,\dd x\,\dd y=0.
\end{equation}
Set $\q_{x,y} f\stackrel{{\rm def}}{=}\nabla_{x,y}\Psi_f$. 
One can readily check that $\q_{x,y}\in \calL(L^2(\T^2_{x,y};\R^2))$.
The hydrostatic Helmholtz projection $\p:L^2(\T^3;\R^2)\to L^2(\T^3;\R^2)$ is defined as
\begin{equation}
\label{eq:def_p_q}
\textstyle
\p f\stackrel{{\rm def}}{=} f- \q f \quad \text{ where }\quad \q f \stackrel{{\rm def}}{=} \q_{x,y}\Big[\int_{\T_z} f(\cdot,z)\,\dd z\Big],
\end{equation}
where $f\in L^2(\T^3;\R^2)$.
One can check that $\p$ and $\q$ can be restrcted or uniquely extended to orthogonal projections $H^\rho(\T^3;\R^2)\to H^\rho(\T^3;\R^2)$ for all $\rho\in\R$, which will still be denote by $\p$ and $\q$, respectively.

By applying the hydrostatic Helmholtz projection in the first of \eqref{eq:primitive_p} we can reformulate the primitive equations \eqref{eq:primitive_p} in terms of the only unknown $v$:
\begin{equation}
\label{eq:primitive}
\left\{
\begin{aligned}
\partial_t v_t &= \Delta v_t- \p[(v_t\cdot \nabla_{x,y})v_t +w(v_t)\partial_z v_t] + Q\dot{W}_t,\qquad & \text{on }&\T^3,\\
v_0&=v, \qquad &\text{ on }&\T^3.
\end{aligned}
\right.
\end{equation}
To see the equivalence between \eqref{eq:primitive} and \eqref{eq:primitive_p} note that 
$\Delta$ commutes with $\p$, $\p (Q\dot{W})=Q\dot{W}$ (see the assumptions in Subsection \ref{sss:probabilistic_set_up} below) and the incompressibility condition \eqref{eq:incompressibility_v} is satisfied as 
$\int_{\T_z}\nabla_{x,y}\cdot v(\cdot,z)\,\dd z=0$ 
and such a condition is preserved by the flow induced via \eqref{eq:primitive}. 

From now on, we will consider the system \eqref{eq:primitive} instead of \eqref{eq:primitive_full}.
More precisely, we will consider \eqref{eq:primitive} as a stochastic evolution equation on the Sobolev-type space $\Hs_0^\sigma(\T^3)$ with an appropriate $\sigma\geq 2$ depending on the smoothness of the noise. Here, $\Hs_0^\sigma(\T^3)$ is a Sobolev space of vector field on $\T^3$ with values in $\R^2$, mean zero and divergence-free barotropic mode, i.e., satisfying \eqref{eq:incompressibility_v}. More precisely, we let 
\begin{align}
\label{eq:def_Ls}
\Ls^2(\T^3)
&\stackrel{{\rm def}}{=} \Big\{f\in L^2(\T^3;\R^2)\,:\,   \nabla_{x,y}\cdot \Big(\int_{\T_z}f(\cdot,z)\,\dd z\Big)=0\text{ in }\D'(\T^2_{xy}) \Big\},\\
\label{eq:def_Ls_0}
\Ls^2_0(\T^3)&\stackrel{{\rm def}}{=}\Big\{f\in \Ls^2(\T^3)\,:\, \int_{\T^3} f(\x)\,\dd \x=0\Big\},\\
\label{eq:def_Ls_1}
\Hs_0^\rho(\T^3)&\stackrel{{\rm def}}{=}H^{\rho}(\T^3;\R^2)\cap \Ls_0^2(\T^3)\ \  \text{ for }\  \rho>0,
\end{align}
endowed with natural norms. Similarly, to \eqref{eq:def_Ls_1}, one defines $\Hs^\rho(\T^3)$ as $H^{\rho}(\T^3;\R^2)\cap \Ls^2(\T^3)$. Moreover, if no confusion seems likely, we use the shorthand notation $\Ls^2$, $\Ls^2_0$ and $\Hs^\rho$ in place of $\Ls^2(\T^3)$, $\Ls^2_0(\T^3)$ and $\Hs^\rho(\T^3)$, respectively.

\subsection{Probabilistic set-up and $H^\sigma$ solutions}
\label{ss:solutions_def}
Let $\Z^3_+\subseteq \Z^3$ be such that 
\begin{equation*}
\Z^3_+\cup \Z^3_- =\Z^3_0\qquad \text{ and }\qquad \Z^3_+\cap \Z^3_-=\emptyset,
\end{equation*}
where 
$\Z^3_-\stackrel{{\rm def}}{=}-\Z^3_+$ and $\Z^3_0\stackrel{{\rm def}}{=}\Z^3\setminus \{0\}$.  
Similar to \cite{BBPS2022,EM01_finite}, we define the following real Fourier basis on $\T^3$ (as above, $\T=\R/\Z$):
\begin{equation*}
e_{\kk}(\x)
\stackrel{{\rm def}}{=}2
\left\{
\begin{aligned}
&\cos (2\pi \kk\cdot \x)  &\ \ \  &\kk\in \Z^3_+\ \text{ and } \ \x\in \T^3,\\
&\sin (2\pi \kk\cdot \x)  &\ \ \  &\kk\in \Z^3_-\ \text{ and }\ \x\in \T^3.
\end{aligned}\right.
\end{equation*}
Let $\plane\stackrel{{\rm def}}{=}\{\kk\in \Z^3_0\,:\,k_z=0 \}$ where $k_z$ denotes the third component of the vector $\kk$, i.e., $\kk=(k_x,k_y,k_z)$.
Correspondingly, we define the set of indices
\begin{equation}
\label{eq:index}
\z \stackrel{{\rm def}}{=} \big[(\Z^3_0\setminus \plane) \times \{1,2\}\big]\cup \big[\plane\times \{1\}\big].
\end{equation}
Thus, an element of $\z$ is a pair $(k,\ell)$ where $k\in \Z^d_0$ and $\ell\in \{1,2\}$. The cartesian product between $\plane$  and the singleton $\{1\}$ in \eqref{eq:index} is only for notational convenience.

For $(\kk,\ell)\in\z$, we let
\begin{equation}
\label{eq:definition_gamma_kk_ell}
\g_{(\kk,\ell)}\stackrel{{\rm def}}{=}
\left\{
\begin{aligned}
&a_{\ell}  &\qquad \text{ if }\ &(\kk,\ell)\in (\Z_0^3\setminus \plane)\times \{1,2\},\\
 &\sign(\kk)\kk^{\perp}/|\kk| &\qquad\text{ if }\  &(\kk,\ell)\in  \plane\times \{1\},
\end{aligned}
\right.
\end{equation}
where $\kk^\perp=(k_y,-k_x)^\top$ for $\kk\in \plane$ and $a_1=(1,0)^\top$, $a_2=(0,1)^\top$ denotes the standard basis of $\R^2$, and $\sign(\kk)=1 $ if $  \kk\in \Z^3_+$  and $\sign(\kk)=-1$ otherwise.
In particular, $\g_{(\kk,\ell)}=\g_{(-\kk,\ell)}$.
One can check that 
$$(\g_{(\kk,\ell)}e_{\kk})_{(\kk,\ell)\in \z}$$ constitutes an complete orthonormal basis of $\Ls_0^2(\T^3)$.

\smallskip

Before proceeding, we comment on the anisotropic behaviour of the index set $\z$. 
The lack of a second component for indices in  
$\plane$ reflects the fact that the deterministic 3D PEs reduce to the 2D Navier-Stokes when the initial data is two-dimensional (corresponding to fields of the form $\g e_{\kk}$ with $\g\cdot (k_x,k_y)=0$).

\subsubsection{$H^\sigma$-solutions}
Throughout the manuscript, $(W_t)_t$ denotes an $\Ls_0^2(\T^3)$-cylindri\-cal Brownian motion on a given filtered probability space $(\O,\mathscr{A},(\F_t)_{t\geq 0},\P)$, see e.g., \cite[Definition 2.11]{AV19_QSEE_1} or \cite{DPZ}.  
In particular, there exists a family of standard independent Brownian motions $(W^{(\kk,\ell)}_t)_{(\kk,\ell)\in\z}$ on $(\O,\F,(\F_t)_{t\geq 0},\P)$ such that 
\begin{equation}
\label{eq:L20cylindrical_noise}
\textstyle
W_t=\sum_{(\kk,\ell)\in\z} \g_{(\kk,\ell)}e_{\kk}\,W^{(\kk,\ell)}_t.
\end{equation}
We can now define $H^\sigma$-solutions to \eqref{eq:primitive_full} with $\sigma\geq 1$.

\begin{definition}[Local, unique and global $H^\sigma$-solutions]
\label{def:solution}
Assume that $\sigma\geq 1$,
$Q\in \calL_2(\Ls_0^2(\T^3),\Hs_0^\sigma(\T^3))$ and $v\in L^2_{\F_0}(\O;\Hs_0^\sigma(\T^3))$.
Let
$$
\tau:\O\to [0,\infty]\qquad \text{ and }\qquad 
(v_t)_t:[0,\tau)\times \O\to \Hs^{1+\sigma}(\T^3)
$$ 
be a stopping time and a stochastic process, respectively. 
\begin{itemize}
\item $((v_t)_t ,\tau)$ is said to be a \emph{local $H^\sigma$-solution} to \eqref{eq:primitive_full} if the following hold.
\vspace{0.1cm}
\begin{itemize}
\item $(v_t)_t\in L^2_{\loc}([0,\tau);H^{1+\sigma}(\T^3;\R^3))$ a.s.;
\vspace{0.1cm}
\item $((v_t\cdot\nabla_{x,y}) v_t+ w(v_t)\partial_z v_t)_t \in L^2_{\loc}([0,\tau);H^{-1+\sigma}(\T^3;\R^3))$ a.s.;
\vspace{0.1cm}
\item a.s.\ for all $t\in [0,\tau)$,
$$
v_t= v + \int_0^t \big(\Delta v_r + \p[(v_r\cdot\nabla_{x,y}) v_r+ w(v_r)\partial_z v_r ]\big)\,\dd r+ Q W_t.
$$
\end{itemize}
\item $((v_t)_t,\tau)$ is said to be a \emph{maximal unique $H^\sigma$-solution} if it is a local $H^\sigma$ solution and for any other local $H^\sigma$-solution
$((v_t')_t,\tau')$ we have $\tau'\leq \tau$ a.s.\ and $v_t=v_t'$ a.s.\ for all $t\in [0,\tau')$.
\item A maximal unique $H^\sigma$-solution $((v_t)_t,\tau)$ is called \emph{global unique $H^\sigma$-solution} if 
$\tau=\infty$ a.s.\ In this case, we simply write $(v_t)_t$ instead of $((v_t)_t,\tau)$.
\end{itemize}
\end{definition}

In the above, $QW_t=\sum_{(\kk,\ell)\in \z} q_{(\kk,\ell)} W^{(\kk,\ell)}_t$, where $q_{(\kk,\ell)}\stackrel{{\rm def}}{=} Q (\g_{(\kk,\ell)} e_{\kk})$.
The lower bound $\sigma\geq 1$ is consistent with the local well-posedness literature on stochastic PEs \eqref{eq:primitive_full}, cf., \cite{primitive1,primitive2,BS21,Debussche_2012}. 
Note also that for any $H^\sigma$-local solution $((v_t)_t,\tau)$, we have 
$
\int_{\T^3} v_t(\x)\,\dd \x=0\text{  a.s.\ for all }t<\tau,
$ due to the mean-zero condition of $v_0$ and $Q$.

\subsubsection{Probabilistic set-up}
\label{sss:probabilistic_set_up}
In the following, we formulate the assumption on $Q$ needed to prove Lagrangian chaos for PEs; see Theorem \ref{t:intro} and Subsection \ref{ss:main_results}. 

\begin{assumption} 
\label{ass:Q}
There exists 
$\alpha>6$ such that the operator $Q$ is diagonalisable with respect to the basis $( \g_{(\kk,\ell)}e_{\kk})_{(\kk,\ell)\in \z} $ of $\Ls^2_0(\T^3)$ with singular values $( q_{(\kk,\ell)})_{(\kk,\ell)\in \z} $ satisfying
\begin{equation}
\label{eq:coloring_assumption_noise}
q_{(\kk,\ell)} \eqsim  |\kk|^{-\alpha}  \ \ \text{ for all }\ (\kk,\ell)\in\z.
\end{equation}
\end{assumption}

All the results of the current work hold under Assumption \ref{ass:Q}. However, we can weaken it for unique ergodicity and regularity results; see Subsection \ref{ss:global_well_statement} below.

Comparing the above assumption with the one used in \cite{BBPS2022} (see Assumptions 1 and 2 in Subsection 1.1.1 there), one can expect Lagrangian chaos for PEs to hold even if $q_{(\kk,\ell)}=0$ for $1< |\kk|< N$ with $N>1$ (i.e., mid-frequency noise degeneracy).
For simplicity, we do not pursue this level of generality here. However, we expect that the methods presented here and the existing literature on SPDEs with finite-dimensional noise degeneracy \cite{EckHai2001,RX11,EM01_finite} can be combined to extend the results of the current manuscript to the latter situation.  
Let us mention that we will, anyhow, encounter finite-dimensional noise degeneracy in the study of strong Feller for the Lagrangian and projective processes; see Section \ref{s:strong_Feller}. 
\smallskip

If Assumption \ref{ass:Q} holds, then we fix $\sigma$ such that 
\begin{equation}
\label{eq:coloring_assumption_noise_2}
\sigma>\frac{9}{2}, \quad \text{ and }\quad \alpha-2 <\sigma<\alpha-\frac{3}{2}.
\end{equation}
As Proposition \ref{prop:global_well_posedness} below shows, the choice of $\sigma$ determines the pathwise regularity of the solution to the stochastic PEs \eqref{eq:primitive_full}.
More precisely,
the condition $\sigma<\alpha-\frac{3}{2}$ ensures  $Q\in \calL_2(\Ls^2_0(\T^3),\Hs^\sigma_0(\T^3))$ and therefore
 the solution $(v_t)_t$ to \eqref{eq:primitive_full} takes values in $C([0,\infty);H^\sigma(\T^3;\R^2))$. The latter combined with $\sigma>\frac{9}{2}$ and $
H^\sigma(\T^3) \subseteq C^{\eta}(\T^3)$, where $ \eta =\sigma-\tfrac{3}{2}>3$,
ensures the well-definedness of the projective process in \eqref{eq:projective_1}-\eqref{eq:projective_2}. Finally, $\sigma>\alpha-\frac{3}{2}$ guarantees the uniqueness of the invariant measure for on \eqref{eq:primitive_full} on $\Hs^\sigma(\T^3)$; see Theorem \ref{t:uniqueness_regularity_invariant} and Example \ref{ex:operator_Q}.

\section{Statement of the main results and strategy}
\label{ss:global_unique_erg}
This section presents the main results and is organised as follows:
\begin{itemize}
\item Subsection \ref{ss:global_well_statement} -- Global well-posedness and unique ergodicity of PEs. 
\item Subsection \ref{ss:main_results} -- Lagrangian chaos and scalar advection. 
\item Subsection \ref{ss:proofs_further_results} -- Proofs outline and auxiliary results.
\end{itemize}
The proofs are given in the later sections. 

\subsection{Global well-posedness in $H^\sigma$ and unique ergodicity} 
\label{ss:global_well_statement}
We begin by stating the global well-posedness of \eqref{eq:primitive_full} in high-order Sobolev spaces.
Recall that the Markov process $(v_t)_t$ is said to be Feller on $\Hs_0^\sigma(\T^3)$ if the mapping
$$
\Hs_0^\sigma(\T^3)\ni v\mapsto \E_v[\phi(v_t)]  \ \text{ is continuous for all $\phi\in C_{{\rm b}}(\Hs_0^\sigma(\T^3);\R)$},
$$
where the subscript in the expected value emphasises that $v_0=v$.

\begin{proposition}[Global well-posedness in higher-order Sobolev spaces]
\label{prop:global_well_posedness}
Let $Q\in \calL_2(\Ls_0^2(\T^3),\Hs_0^\sigma(\T^3))$ for some $\sigma\geq 2$. Then, for each $v\in \Hs_0^\sigma(\T^3)$, there exists a \emph{global unique $H^\sigma$-solution} to \eqref{eq:primitive_full} and satisfies
\begin{equation}
\label{eq:pathwise_regularity_w}
(v_t)_t\in C([0,\infty);H^\sigma(\T^3;\R^2)) \text{ a.s.}
\end{equation}
Finally, $(v_t)_{t}$ is a Feller process on $\Hs_0^\sigma(\T^3)$.
\end{proposition}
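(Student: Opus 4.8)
The plan is to prove global well-posedness by a now-standard combination of (i) local well-posedness in the critical/subcritical setting for the stochastic PEs reformulated as \eqref{eq:primitive}, and (ii) an a priori estimate in $\Hs_0^\sigma(\T^3)$ that rules out blow-up. First I would invoke the existing local well-posedness theory for stochastic PEs (e.g.\ the stochastic maximal $L^p$-regularity framework as in \cite{primitive1,primitive2,AV19_QSEE_1}, applicable since $\sigma\geq 2$ puts us well inside the subcritical range and $Q\in\calL_2(\Ls_0^2,\Hs_0^\sigma)$ guarantees the stochastic convolution $\int_0^t e^{(t-r)\Delta}Q\,\dd W_r$ has paths in $C([0,\infty);\Hs_0^\sigma)$). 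This produces a maximal unique $H^\sigma$-solution $((v_t)_t,\tau)$ with the regularity $(v_t)_t\in L^2_{\loc}([0,\tau);H^{1+\sigma})\cap C([0,\tau);H^\sigma)$ a.s., together with a blow-up criterion: on $\{\tau<\infty\}$, $\limsup_{t\uparrow\tau}\|v_t\|_{H^\sigma}=\infty$ a.s. (or the integrated Serrin-type version).

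Second, and this is the crux, I would establish that $\|v_t\|_{H^\sigma}$ cannot blow up in finite time. The key input is that, by the celebrated Cao–Titi $H^1$ global bound \cite{CT07} and its stochastic analogue \cite{Debussche_2012}, the solution stays bounded in $H^1$ (pathwise, on $[0,T]$ for each $T$, with the bound depending on $\omega$ through $\sup_{[0,T]}\|QW_t\|$ and $\|v_0\|_{H^1}$). Given this global $H^1$ control, one bootstraps to $H^\sigma$: writing $v_t = e^{t\Delta}v_0 + \int_0^t e^{(t-r)\Delta}\p[\,\cdot\,]\,\dd r + Q W_t$ and applying parabolic smoothing of the heat semigroup together with the anisotropic product estimates for the nonlinearity $(v\cdot\nabla_{x,y})v + w(v)\partial_z v$ (here one exploits that $w(v)$ involves only horizontal derivatives, cf.\ \eqref{eq:def_w}, so the loss in the vertical direction is controlled), one closes a Gronwall argument for $\|v_t\|_{H^\sigma}$ on $[0,T]$ with an $\omega$-dependent but a.s.\ finite bound. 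Equivalently, one can argue by stopping times: on $\{\tau<\infty\}$ the $H^1$ bound contradicts the $H^\sigma$ blow-up criterion, forcing $\tau=\infty$ a.s. The pathwise regularity \eqref{eq:pathwise_regularity_w} is then part of the output of the local theory propagated to all times. The main obstacle I anticipate is making the bootstrap from $H^1$ to $H^\sigma$ fully rigorous at the stochastic level — the deterministic bootstrap is classical, but one must handle the stochastic convolution term carefully and ensure all estimates are pathwise (or in expectation with suitable localization), which is precisely where the high-order energy estimates of Section \ref{s:well_posedness_Hsigma} (assumed available) are used.

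Finally, for the Feller property, I would use continuous dependence on initial data: for $v^{(n)}\to v$ in $\Hs_0^\sigma(\T^3)$, the corresponding solutions satisfy $v_t^{(n)}\to v_t$ in $H^\sigma$ a.s.\ (or in probability), uniformly on compact time intervals, by a Gronwall estimate on the difference $v^{(n)}-v$ driven by the same noise — the nonlinearity is locally Lipschitz in $H^\sigma$ (using $H^\sigma\hookrightarrow C^1$ since $\sigma\geq 2>3/2+1$ on $\T^3$... more precisely $\sigma>5/2$ suffices, and for $\sigma=2$ one argues with the $H^1$-a priori bound to control the constants), and the a priori bounds just established keep the solutions in a fixed ball on $[0,T]$ with high probability. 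Then for $\phi\in C_{\rm b}(\Hs_0^\sigma;\R)$, dominated convergence gives $\E_{v^{(n)}}[\phi(v_t)]\to\E_v[\phi(v_t)]$, which is the Feller property. This last step is routine given the a priori estimates; no serious obstacle is expected there.
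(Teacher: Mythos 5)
Your proposal follows the same broad route as the paper: local well-posedness via stochastic maximal regularity, a blow-up criterion, an $H^1\to H^\sigma$ a priori bound, and Feller via continuous dependence. The substantive gap is in the bootstrap step. You describe it as a ``classical'' parabolic-smoothing-plus-Gr\"onwall argument, deferring the stochastic subtlety to ``the high-order energy estimates of Section~\ref{s:well_posedness_Hsigma} (assumed available)''---but those estimates \emph{are} the content of this proposition, so invoking them is circular. The paper emphasizes (and this is exactly what makes the result new) that a standard bootstrap fails: the Gr\"onwall coefficient arising from the bilinear bound $\|b(v,v)\|_{H^1}\lesssim\|v\|_{H^{5/2}}\|v\|_{H^{3/2}}$ of Lemma~\ref{l:estimate_nonlinearity}\eqref{it:estimate_nonlinearity_H10}, namely $M_t=\|v_t\|_{H^2}^2\|v_t\|_{H^1}^2$, inherits from the critical $H^1$ theory of \cite{primitive2} only triple-iterated-logarithmic tail bounds in probability, so one cannot simply take expectations. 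The paper closes the argument (Step 2 of Theorem~\ref{t:high_order_estimates}) with the stochastic Gr\"onwall inequality applied between stopping times, Lenglart's domination principle (Lemma~\ref{l:lenglart}), and the tail estimate on $\int_0^T M_t\,\dd t$ from \cite[Remark 3.10]{primitive2}; this probabilistic closure is the step your sketch does not supply.

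A smaller point concerns the Feller property. Your argument via a local-Lipschitz estimate on the nonlinearity in $H^\sigma$ is not tight at the endpoint $\sigma=2$: as you note, $H^2(\T^3)\not\embed C^1$, and more importantly the bilinear estimate $H^{1+\rho+\delta}\times H^{1+\rho+\delta}\to H^\rho$ requires $\delta>0$ for $\rho\in[1,2)$ (Lemma~\ref{l:estimate_nonlinearity}\eqref{it:estimate_nonlinearity_H1}), so a naive pathwise Gr\"onwall on the $H^2$-difference of two solutions driven by the same noise does not close. The paper instead obtains continuous dependence (hence Feller) in Theorem~\ref{t:high_order_estimates}\eqref{it:high_order_estimates3} as a by-product of the just-established a priori estimate, following the stability argument of \cite[Theorem 3.7]{primitive2} rather than a direct Lipschitz bound.
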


From the proof, it is clear that the above also holds without the mean-zero conditions on $v_0$ and $Q$. 

Although the global well-posedness of PEs with initial data in $\Hs^1(\T^3)$ has been extensively studied in the literature, see e.g., \cite{primitive2,primitive1,BS21,Debussche_2012}, the above is new. 
The main difficulty behind the proof of Proposition \ref{prop:global_well_posedness} is the criticality of $H^1$ for the PEs; see \cite[Subsection 1.2]{A23_primitive3}. In particular, the above result \emph{cannot} be obtained by a standard bootstrap method, and the Feller property requires the proof of new energy estimates for the stochastic PEs \eqref{eq:primitive_full}, which will be presented in Section \ref{s:well_posedness_Hsigma}. 
The obtained estimate in the non-critical space $H^\sigma$ with $\sigma\geq 2$ will be also of fundamental importance in the later proofs of leading to Lagrangian chaos of Theorem \ref{t:intro} (cf., Lemma \ref{l:strong_feller_reduction} in Section \ref{s:strong_Feller}). The case $\sigma\in (1,2)$ seems more complicated.

\smallskip

Next, we present our results on the regularity and uniqueness of invariant measures for stochastic PEs \eqref{eq:primitive_full}. Let $\rho\geq0$. 
A Borel probability measure $\mu$ on $\Hs_0^\rho(\T^3)$ is called an \emph{invariant} (or stationary) measure  for \eqref{eq:primitive_full} on $\Hs_0^\rho(\T^3)$ if 
for all $t>0$ and for all continuous observable  
$\phi:\Hs_0^\rho(\T^3)\to \R$, it holds that 
$$
\textstyle\int_{\Hs_0^\rho(\T^3)} \phi(v)\,\dd \mu(v)= \int_{\Hs_0^\rho(\T^3)} \E_v[\phi(v_t)]\,\dd \mu(v) .
$$

\begin{proposition}[Regularity of invariant measures]
\label{prop:regularity_invariant_measure}
Let $Q\in \calL_2(\Ls_0^2(\T^3),\Hs_0^\sigma(\T^3))$ for some $\sigma\geq 2$, and fix $\rho\in [1,\sigma]$.
Then there exists a strictly increasing continuous mapping $\Fun:[0,\infty)\to [0,\infty)$ such that $\lim_{R\to \infty} \Fun(R)=\infty$ and for all invariant measures $\mu$ of the stochastic {\normalfont{PEs}} \eqref{eq:primitive_full} on $\Hs^{\rho}_0(\T^3)$ satisfy, for all $\sigma'<\sigma+1$,
\begin{equation}
\label{eq:integrability_invariant_measure_Hrho}
\int_{\Hs^\rho_0(\T^3)} \Fun(\|v\|_{\Hs^{\sigma'}(\T^3)})\,\dd \mu(v)\leq C(\|Q\|_{\calL_2(\Ls_0^2(\T^3),\Hs_0^\sigma(\T^3))}).
\end{equation}
In particular, all invariant measures of the stochastic {\normalfont{PEs}} \eqref{eq:primitive_full} on $\Hs^\rho_0(\T^3)$ are concentrated on $\bigcap_{\sigma'<1+\sigma}\,\Hs^{\sigma'}_0(\T^3)$.
\end{proposition}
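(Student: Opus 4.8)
\textbf{Proof strategy for Proposition \ref{prop:regularity_invariant_measure}.}

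The plan is to derive an a priori moment bound along the (stationary) dynamics that is uniform in time and then pass to the limit against the invariant measure via a standard Krylov--Bogoliubov / Fatou argument. Concretely, let $\mu$ be an invariant measure on $\Hs^\rho_0(\T^3)$ and let $(v_t)_t$ denote the stationary solution of \eqref{eq:primitive_full} with $\mathrm{Law}(v_0)=\mu$; by Proposition \ref{prop:global_well_posedness} this solution is global and lies in $C([0,\infty);H^\sigma)$ a.s. The key analytic input is the \emph{new} high-order energy estimate of Section \ref{s:well_posedness_Hsigma}: I expect it to provide, for any $\sigma'<\sigma+1$, a bound of the form $\E\,\Lambda(\|v_t\|_{\Hs^{\sigma'}})\lesssim \E\,\Psi(\|v_0\|_{\Hs^\rho}) e^{-ct}+C(\|Q\|_{\HS(\Ls^2_0,\Hs^\sigma_0)})$ for suitable increasing functions $\Lambda,\Psi$ — i.e. an exponentially-decaying-memory estimate with a forcing-dependent constant. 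The gain of one spatial derivative over $\sigma$ is the parabolic smoothing: $\Delta$ instantaneously regularizes, and the noise only inputs regularity up to $H^\sigma$, so the invariant measure should see $H^{\sigma+1-\eps}$ but not $H^{\sigma+1}$ itself, which is exactly the asserted concentration on $\bigcap_{\sigma'<1+\sigma}\Hs^{\sigma'}$.

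The steps, in order: (1) Fix $\sigma'\in(\sigma,\sigma+1)$ and invoke the energy estimate from Section \ref{s:well_posedness_Hsigma}, applied to the truncated/stopped solution, to get $\E\big[\Lambda(\|v_{t\wedge\tau_n}\|_{\Hs^{\sigma'}})\one_{\{v_0\in B_R\}}\big]$ controlled uniformly in $t$ and $n$, where $\tau_n$ is a localizing sequence of stopping times and $B_R$ is the ball of radius $R$ in $\Hs^\rho_0$; here one uses that parabolic smoothing produces an integrable-in-time singularity $t^{-\beta}$ with $\beta<1$ near $t=0$, so integrating from, say, $1$ to $t$ gives a clean bound. (2) Use stationarity: since $\mathrm{Law}(v_t)=\mu$ for every $t$, the left side equals $\int_{B_R}\Lambda(\|v\|_{\Hs^{\sigma'}})\,\dd\mu(v)$ (after removing the stopping via monotone convergence / the a.s. global existence in Proposition \ref{prop:global_well_posedness}). (3) Let $R\to\infty$ by monotone convergence to obtain \eqref{eq:integrability_invariant_measure_Hrho} with the right-hand side depending only on $\|Q\|_{\HS}$. (4) Since $\Lambda$ is strictly increasing, continuous, and $\Lambda(R)\to\infty$, finiteness of $\int\Lambda(\|v\|_{\Hs^{\sigma'}})\,\dd\mu$ forces $\|v\|_{\Hs^{\sigma'}}<\infty$ for $\mu$-a.e. $v$; intersecting the corresponding full-measure sets over a sequence $\sigma'_k\uparrow\sigma+1$ yields concentration on $\bigcap_{\sigma'<1+\sigma}\Hs^{\sigma'}$.

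The main obstacle is Step (1): establishing the \emph{uniform-in-time}, $H^\sigma$-independent-of-$H^1$-criticality energy estimate with the parabolic gain of one derivative. The difficulty specific to the PEs, as flagged in the introduction, is that the energy bounds are weak in the probability variable because of the supercritical scaling, and the nonlinearity $w(v)\partial_z v$ involves a nonlocal term $w(v)=-\int_0^z\nabla_{x,y}\cdot v$ costing a horizontal derivative; so the estimate must exploit the \emph{anisotropic} structure — treating horizontal and vertical derivatives with different $L^p$-integrability, and absorbing the top-order nonlinear contribution using the dissipation $\|\nabla v\|^2$ together with the $t^{-\beta}$ smoothing factor rather than a Gronwall-type exponential that would blow up. Once that estimate is in hand (which is precisely the content of Section \ref{s:well_posedness_Hsigma}, assumed here), Steps (2)--(4) are routine measure theory. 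One mild technical point in Step (2) is justifying that $\mu$, a priori only a measure on $\Hs^\rho_0$, is genuinely invariant for the $H^\sigma$-valued Markov process so that stationarity at the regularity level $\sigma'$ is legitimate; this follows because the $H^\sigma$-solution from Proposition \ref{prop:global_well_posedness} is the unique $H^\rho$-solution as well (by uniqueness in Definition \ref{def:solution}), so the two Markov semigroups agree on the common domain and $\mu$-a.e. instantaneous regularization places $v_t$ in $H^\sigma$ for $t>0$.
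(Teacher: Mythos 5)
Your high-level skeleton (stationarity plus parabolic smoothing plus a limiting argument) is the right flavor, but the energy input you posit in Step (1) is considerably stronger than what is actually available, and the paper's proof is structured quite differently for exactly this reason. You expect a bound of the form $\E\,\Lambda(\|v_t\|_{\Hs^{\sigma'}})\lesssim \E\,\Psi(\|v_0\|_{\Hs^\rho})\,e^{-ct}+C(\|Q\|_{\HS})$, i.e., an exponentially decaying moment estimate that goes from $H^\rho$ directly to any $H^{\sigma'}$, $\sigma'<\sigma+1$, in one step. But Theorem \ref{t:high_order_estimates}\eqref{it:high_order_estimates2} (the key new estimate of Section \ref{s:well_posedness_Hsigma}) only provides a \emph{tail} bound of the form $\P(\sup_t\|v_t\|_{H^\sigma}^2+\int\|v_t\|_{H^{1+\sigma}}^2\,\dd t\geq\g)\lesssim(1+\|v\|_{H^\sigma}^4)/\log\log\log\g$; there is no moment bound, no exponential contraction, and the supercritical energy balance of the 3D PEs is precisely why nothing stronger is at hand. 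Because of this, the function $\Lambda$ in \eqref{eq:integrability_invariant_measure_Hrho} cannot be chosen to be a power or exponential — the paper takes $\Lambda(R)=\log^{[N]}(1+R)$, an $N$-fold composition of logs, and even this requires work.

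The actual proof reflects this weakness: it starts from $\mu(\Hs^2)=1$ together with the iterated-log bound \eqref{eq:H2_regularity_invariant_measure} (imported from \cite[Theorem 1.7]{GHKVZ14}), and then bootstraps the regularity in increments of $1/2$ derivative at a time, $\eta\mapsto\eta'=(\eta+\tfrac12)\wedge\sigma'$, via weighted maximal $L^p$-regularity applied to $v_t'=v_t-\Gamma_t$ (here $\Gamma_t$ is the stochastic convolution, and the time weight $t^\kappa$ with $\kappa$ near $p/2-1$ is tuned so that both $\eta+1-2/p>\eta'$ and $\eta'+1-2(1+\kappa)/p<\eta$ hold). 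Each half-derivative gain costs additional compositions of $\log$ through Lemma \ref{l:pathwise_energy_estimate} (which bounds $\sup_t\|v_t\|_{H^\eta}^2$ by $\exp^{[\g_0]}$ of the data), so $\g'$ grows with each bootstrap step. None of this incremental structure, nor the need to track the degradation of $\Lambda$ from step to step, appears in your outline; and because the energy estimate you rely on does not exist, your Step (1) as written cannot be carried out. Your Steps (2)--(4) and the observation about invariance on $\Hs^\rho$ versus $H^\sigma$ are fine and are essentially what the paper does, once the bootstrap structure is in place.
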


By parabolic regularity, the above is optimal.
We will obtain \eqref{eq:integrability_invariant_measure_Hrho} with $\Fun(R)= \log^{[ N]} (1+R)$ where $N\geq 1$ depends only on $\sigma$ and $\log^{[N]} (1+R)$ denotes the $N$-time composition of the mapping $R\mapsto \log(1+R)$. In case $\sigma$ is close to $2$, the previous bound is weaker than the one in \cite[Theorems 1.7 and 1.9]{GHKVZ14}.
We do not expect this to be optimal, but it is sufficient for our purposes.

Proposition \ref{prop:regularity_invariant_measure} and Sobolev embeddings also yield
\begin{equation*}
Q\in\cap_{\sigma\geq 1} \,\calL_2(\Ls_0^2(\T^3),\Hs_0^{\sigma}(\T^3))
\quad \Longrightarrow \quad
\mu(C^{\infty}(\T^3;\R^2)\cap \Ls_0^2(\T^3))=1.
\end{equation*}
The previous result provides an affirmative answer to a question posed by Glatt-Holtz, Kukavica, Vicol, and Ziane, stated below in \cite[Theorem 1.9]{GHKVZ14}.

Next, we turn our attention to the uniqueness of invariant measures.
As mentioned before Theorem \ref{t:uniqueness_invariant_intro}, the following is the first uniqueness result for invariant measure for \eqref{eq:primitive_full}. Recall that 
$
v\in \supp\mu$ if $\mu(B_{\Hs^\rho_0(\T^3)}(v,\delta))>0$ for all $\delta>0$.

\begin{theorem}[Uniqueness and support of invariant measures]
\label{t:uniqueness_regularity_invariant}
Suppose that for some $\sigma\geq 2$ we have $Q\in \calL_2(\Ls_0^2(\T^3),\Hs_0^\sigma(\T^3))$, the range of $Q$ is dense, and there exists $\sone<\sigma+2$ such that 
\begin{align}
\label{eq:generalization2}
\|f\|_{\Ls_0^2(\T^3)}\lesssim \|Q f\|_{\Hs_0^{\sone}(\T^3)} \ \text{ for all }f\in  \Ls_0^2(\T^3).
\end{align}
Then the following hold:
\begin{itemize}
\item\label{it:uniqueness_regularity_invariant_uniqueness} 
There exists a unique invariant measure $\mu$ for \eqref{eq:primitive_full} on $\Hs_0^\sigma(\T^3)$.

\item The invariant measure $\mu$ has full support on $\Hs_0^\sigma(\T^3)$, i.e.,  
$
\supp\,\mu=\Hs_0^\sigma(\T^3).
$
\end{itemize}  
\end{theorem}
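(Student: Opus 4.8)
The plan is to establish Theorem \ref{t:uniqueness_regularity_invariant} by the classical Doob--Khasminskii route: \emph{strong Feller} $+$ \emph{irreducibility} $\Rightarrow$ at most one invariant measure, together with existence (which follows from the regularity estimate of Proposition \ref{prop:regularity_invariant_measure} plus a Krylov--Bogoliubov argument applied to the Feller semigroup of Proposition \ref{prop:global_well_posedness}). Concretely, first I would record that Proposition \ref{prop:regularity_invariant_measure} furnishes an invariant measure $\mu$ concentrated on $\bigcap_{\sigma'<\sigma+1}\Hs^{\sigma'}(\T^3)$; in particular any invariant measure on $\Hs_0^\sigma$ has this extra regularity, which is what allows the coupling/overlap arguments below to take place in a space where the dynamics is genuinely smoothing. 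Then the two analytic inputs are: (i) the transition semigroup $P_t$ of the stochastic PEs \eqref{eq:primitive_full} is strong Feller on $\Hs_0^\sigma(\T^3)$, and (ii) $P_t$ is irreducible there, i.e.\ $P_t(v,B_{\Hs_0^\sigma}(v_*,\delta))>0$ for every $v,v_*$ and every $\delta>0$.

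For the strong Feller property (i), the natural tool is the Bismut--Elworthy--Li formula / Malliavin calculus for the SPDE \eqref{eq:primitive}: since $Q$ is assumed to have dense range and to satisfy the nondegeneracy bound \eqref{eq:generalization2} with a \emph{loss of at most two derivatives} ($\sone<\sigma+2$), every direction in the state space can be reached by the noise after paying a parabolic-smoothing cost of two derivatives — exactly the cost that the heat semigroup $e^{t\Delta}$ recovers. The scheme is: linearize \eqref{eq:primitive} around a solution, estimate the derivative flow $\partial_v v_t\cdot h$ in $\Hs^\sigma$ using the high-order energy estimates of Section \ref{s:well_posedness_Hsigma} (here the $H^\sigma$-estimates of Proposition \ref{prop:global_well_posedness} with $\sigma\ge 2$ are essential, since $H^1$ is critical and would not close), construct the Malliavin-type control as in the 2D Navier--Stokes case of \cite{FlaMas1995}, and bound $\|\nabla P_t\phi\|_\infty$ by $C(t,\|v\|)\|\phi\|_\infty$. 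The gradient bound must be shown locally uniform in the initial datum, which is why one wants the pathwise $C([0,\infty);H^\sigma)$ regularity and the Feller property already in hand.

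For irreducibility (ii), the approach is an approximate-controllability argument for the deterministic control system obtained by replacing $Q\dot W_t$ by a control $Qf(t)$: given a target $v_*$ (which, by the regularity of $\mu$, may be taken in $\Hs^{\sigma'}$ with $\sigma'>\sigma$), first let the uncontrolled equation relax toward the attractor for a while, then steer close to $v_*$ by choosing a control supported on the dense range of $Q$; density of the range of $Q$ is exactly what makes the set of reachable states dense. Combining (i) and (ii) by the standard argument (strong Feller makes all transition probabilities $P_t(v,\cdot)$ mutually equivalent on the support, irreducibility makes that support everything), one gets uniqueness of $\mu$ on $\Hs_0^\sigma$, and the support statement $\supp\mu=\Hs_0^\sigma(\T^3)$ follows because irreducibility forces $\mu(B_{\Hs_0^\sigma}(v_*,\delta))>0$ for every $v_*$ and $\delta$.

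The main obstacle I expect is input (i), the strong Feller property, and specifically making the Malliavin/derivative estimates close in the supercritical setting of the PEs. Unlike 2D Navier--Stokes, here the linearized equation contains the dangerous term $w(\partial_v v_t\cdot h)\,\partial_z v_t + w(v_t)\,\partial_z(\partial_v v_t\cdot h)$, where $w$ is the vertically-integrated horizontal divergence \eqref{eq:def_w}; controlling this requires the anisotropic high-order energy estimates of Section \ref{s:well_posedness_Hsigma} together with the extra regularity $\mu\big(\bigcap_{\sigma'<\sigma+1}\Hs^{\sigma'}\big)=1$ from Proposition \ref{prop:regularity_invariant_measure}, and it is the reason the hypothesis \eqref{eq:generalization2} is allowed the generous two-derivative loss $\sone<\sigma+2$ rather than the naive one-derivative loss. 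A secondary technical point is that the loss in \eqref{eq:generalization2} must be absorbed by the parabolic gain, so the Bismut--Elworthy--Li integration-by-parts has to be set up at the level of $\Hs^\sigma$ with the control living two derivatives below — this is where one uses $\sigma\ge 2$ in an essential way.
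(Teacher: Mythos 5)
Your proposal tracks the paper's proof very closely: the paper also runs the Doob--Khasminskii route, proving strong Feller (Proposition \ref{prop:strong_feller_v}) via Malliavin-calculus gradient estimates together with irreducibility (Proposition \ref{prop:irreducibility_Appendix}) via controllability, and then invoking \cite[Proposition 4.1.1]{DPZ_ergodicity}; existence and concentration on $\Hs_0^\sigma$ come from \cite{GHKVZ14} combined with Proposition \ref{prop:regularity_invariant_measure}, as you say.

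Two places where your plan differs in flavor from what is actually done. First, the paper does not literally invoke the Bismut--Elworthy--Li formula; instead it constructs an explicit progressive Cameron--Martin shift $g_t = 2T^{-1}\one_{[T/2,T]}(t)\,Q^{-1}\Jv_{0,t}h$ so that the Malliavin derivative $\D_g v_T$ equals $Dv_T h$ \emph{exactly} (no remainder), and then integrates by parts to bound $|DP_T\phi(v)h|$ by $\|\phi\|_\infty\,\E\|g\|_{L^2}$. Whether this closes hinges entirely on the Jacobian-regularization estimate of Lemma \ref{l:instantaneous_regularization_J_appendix}, which is proved by a two-step maximal-regularity bootstrap: $\sigma\to\sigma_1<\sigma+1$ by stochastic maximal $L^p$-regularity on $(0,T)$, then $\sigma_1\to\sigma_0<\sigma+2$ by deterministic maximal $L^2$-regularity on $(T/2,T)$. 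That is precisely where the generous two-derivative loss $\sone<\sigma+2$ you flagged is absorbed. Second, for irreducibility the controllability Lemma \ref{l:controllability_primitive} does not ``relax toward the attractor''; it runs the free deterministic PEs to exploit parabolic \emph{analyticity} (so the state becomes $C^\infty$ after a short time), then interpolates linearly in time between the smoothed state and the target via a cutoff, and reads the control off the equation using $Q^{-1}$ (well-defined thanks to the dense range and \eqref{eq:generalization2}). Neither difference affects the validity of the overall strategy — the approaches are essentially equivalent — but the paper's version of the gradient bound avoids a remainder estimate, and its controllability argument uses smoothing rather than decay.
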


Additional results on the transition Markov kernels associated with the stochastic PEs \eqref{eq:primitive_full} are given in Remark \ref{r:strongly_mixing_etc}.
Proposition \ref{prop:regularity_invariant_measure} and Theorem \ref{t:uniqueness_regularity_invariant} show that the ergodic measure in $\Hs_0^1(\T^3)$ constructed in \cite[Theorem 1.6]{GHKVZ14} for \eqref{eq:primitive} is unique in case $Q$ satisfies the assumptions in Theorem \ref{t:uniqueness_regularity_invariant}.

Next, we provide an example of $Q$ satisfying the assumptions of Theorem \ref{t:uniqueness_regularity_invariant}.

\begin{example}
\label{ex:operator_Q}
With the notation of Subsection \ref{sss:probabilistic_set_up}, the diagonal $Q$ defined as 
$$
\textstyle
Qf =\sum_{(\kk,\ell)\in \z} q_{(\kk,\ell)} \g_{(\kk,\ell)} e_\kk (f,\g_{(\kk,\ell)} e_\kk )_{L^2},
$$ 
where $ |q_{(\kk,\ell)}|\eqsim |\kk|^{-\gamma}$ and $\g>\frac{7}{2}$, satisfies the assumption of Theorem \ref{t:uniqueness_regularity_invariant} on $\Hs^\sigma_0(\T^3)$ with $\sigma\geq 2$ satisfying 
$
\gamma-2<\sigma<\gamma-\frac{3}{2}.
$
\end{example}

The proofs of Propositions \ref{prop:global_well_posedness}, \ref{prop:regularity_invariant_measure} and Theorem \ref{t:uniqueness_regularity_invariant} are given in Section \ref{s:well_posedness_Hsigma}.
In the next subsection, we state the main result of this manuscript. 

\subsection{Lagrangian chaos for stochastic PEs}
\label{ss:main_results}
Let $(\phi_t)_t$ be the random flow on $\T^3$ associated to $u_t=(v_t,w(v_t))$ via \eqref{eq:lagrangian_flow} where $v_t$ and $w(v_t)$ satisfy the stochastic PEs \eqref{eq:primitive_full} and is as in \eqref{eq:def_w}, respectively. Note that, due to \eqref{eq:pathwise_regularity_w}, the random flow $(\phi_t)_t$ is a.s.\ well-defined. 
Our first main result proves that the latter is chaotic.

\begin{theorem}[Positive top Lyapunov exponent]
\label{t:positive_lyapunov_exponent}
Let $Q$ be as in Subsection \ref{sss:probabilistic_set_up}. Then there exists a deterministic constant $\lambda_+>0$ such that, for every initial position $\x\in \Dom$ and initial velocity $v\in \Hs^\sigma_0(\T^3)$, 
\begin{align}
\label{eq:positive_lyapunov_exponent1}
\lambda_+=\lim_{t\to \infty} \frac{1}{t}\log |\nabla \phi_t (\x) |>0 \ \text{ a.s.\ } 
\end{align}
Moreover, the flow expands exponentially in \emph{all} directions with rate $\lambda$, i.e., for every $\x\in \Dom$, $v\in \Hs^\sigma_0(\T^3)$ and $\xi\in \R^3\setminus\{0\}$, 
\begin{align}
\label{eq:positive_lyapunov_exponent2}
\lambda_+=\lim_{t\to \infty} \frac{1}{t}\log |\nabla \phi_t (\x)\xi | >0\ \text{ a.s. }  
\end{align}
\end{theorem}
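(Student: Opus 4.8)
\textbf{Proof proposal for Theorem \ref{t:positive_lyapunov_exponent}.}

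The plan is to invoke the general framework of Bedrossian--Blumenthal--Punshon-Smith \cite[Section 4]{BBPS2022}, which reduces the positivity of the top Lyapunov exponent to a small number of structural properties of the Lagrangian/Jacobian/projective processes. Concretely, I would first verify that the triple $(u_t,\phi_t(\x),\nabla\phi_t(\x))$ generates a well-defined Markov process on the appropriate augmented state space: this uses Proposition \ref{prop:global_well_posedness}, which gives $(v_t)_t\in C([0,\infty);H^\sigma(\T^3;\R^2))$ a.s.\ with $\sigma>\tfrac92$, so that $u_t=(v_t,w(v_t))\in C^{\eta}$ with $\eta=\sigma-\tfrac32>3$, hence the flow equation \eqref{eq:lagrangian_flow} and its linearization are classically solvable. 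Second, I would check the integrability condition \eqref{eq:integrability_intro} for the (an) invariant measure, i.e.\ $\E_\mu\int_{\T^3}|\nabla u|\,\dd\x<\infty$ — this is exactly the content of the results announced as Theorem \ref{t:integrability_MET} (via Proposition \ref{prop:regularity_invariant_measure} and the anisotropic $L^p$-estimates, using that $\mu$ is concentrated on $\bigcap_{\sigma'<\sigma+1}\Hs^{\sigma'}$, which more than suffices for an $L^1$-gradient bound). By the multiplicative ergodic theorem \cite[Theorem 3.4.1]{A98_Random} this guarantees the \emph{existence} of a deterministic $\lambda_+\in[-\infty,\infty)$ as the limit in \eqref{eq:positive_lyapunov_exponent1}, first in an $L^1$-Cesàro sense and then a.s.\ once ergodicity of the projective process is established.

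Third — and this is the heart of the matter — I would establish that $\lambda_+>0$ rather than merely $\lambda_+\ge 0$. Following \cite[Proposition 4.17]{BBPS2022}, the strict positivity follows from the combination of: (a) the strong Feller property of the projective Markov semigroup $P_t$ on its natural state space, which is Proposition \ref{prop:strong_feller_projective}; (b) weak irreducibility / approximate controllability of the projective and Jacobian processes, Propositions \ref{prop:weak_irreducibility} and \ref{prop:approximate_controllability}; and (c) uniqueness of the invariant measure $\nu$ of the projective process on $\Hs_0^\sigma(\T^3)\times\T^3\times S^2$, which follows from (a) and (b) by the Doob--Khasminskii-type argument. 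With unique ergodicity of the projective process in hand, the Furstenberg-type criterion rules out $\lambda_+=0$: if $\lambda_+$ were zero then $\nu$ would be invariant under the time-reversed cocycle as well, forcing a deterministic invariant sub-bundle; the non-degeneracy encoded in the Hörmander condition of Section \ref{s:strong_Feller} (which is what powers the strong Feller property) contradicts this. Finally, the enhancement from \eqref{eq:positive_lyapunov_exponent1} to the all-directions statement \eqref{eq:positive_lyapunov_exponent2} comes from the random multiplicative ergodic theorem \cite[Theorem III.1.2]{K86_ergodic_theory}: once the projective process is uniquely ergodic, the Oseledets splitting is trivial (a single exponent with full multiplicity in the relevant sense), so $\tfrac1t\log|\nabla\phi_t(\x)\xi|\to\lambda_+$ for every fixed deterministic $\xi\ne 0$ and every $\x$, a.s.

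The main obstacle I expect is twofold and both difficulties are peculiar to the PEs. First, verifying \eqref{eq:integrability_intro}: unlike 2D Navier--Stokes or hyperviscous models where it falls out of the basic energy balance, here $w(v_t)$ involves $\nabla_{x,y}\cdot v_t$, so $|\nabla u|$ already contains second-order horizontal derivatives of $v$, and the only a priori bounds of the right strength are the new anisotropic $L^p$-estimates ($p<\tfrac43$) — one must be careful that these genuinely close at the level of the invariant measure, not just pathwise on finite time intervals. Second, the strong Feller property for the Lagrangian (and a fortiori the projective) process is substantially more degenerate than in \cite{BBPS2022}: the noise does not act on the vertical velocity $w$ at all, since $w$ is slaved to $v$ via \eqref{eq:def_w_intro}, so the Hörmander bracket computation must propagate randomness from the horizontal modes through the nonlinearity into the full $3$D flow and its linearization — establishing the requisite hypoellipticity/Malliavin non-degeneracy (the invertibility of the partial Malliavin matrix $\Ma$) is where the real work lies, and it is carried out in Section \ref{s:strong_Feller}. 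Everything else is a matter of adapting the shear-flow controllability constructions of \cite[Section 7]{BBPS2022} to the hydrostatic setting, done in Section \ref{s:irreduc_controllability}.
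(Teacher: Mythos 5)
Your proposal correctly identifies the architecture of the proof and follows essentially the same route as the paper: set up the cocycle via the RDS structure (Proposition \ref{prop:RDS}), verify the integrability hypothesis (H2) of \cite{BBPS2022} through the $L^1$-gradient bound for invariant measures (Proposition \ref{prop:integrability_MET_L1} / Theorem \ref{t:integrability_MET}), apply \cite[Proposition 4.17]{BBPS2022} together with the multiplicative ergodic theorems of \cite{A98_Random,K86_ergodic_theory} using Propositions \ref{prop:approximate_controllability}--\ref{prop:weak_irreducibility} and Corollary \ref{cor:uniqueness_projective}, and finally upgrade from $\mu\otimes\Leb$-a.e.\ $(v,\x)$ to all $(v,\x)$ via the strong Feller property.

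One small but substantive caveat: you assert, parenthetically, that $\mu$ being concentrated on $\bigcap_{\sigma'<\sigma+1}\Hs^{\sigma'}(\T^3)$ (Proposition \ref{prop:regularity_invariant_measure}) \emph{more than suffices} for the $L^1$-gradient bound. This does not follow. The quantitative content of Proposition \ref{prop:regularity_invariant_measure} is only an iterated-logarithmic moment bound $\int \log^{[N]}(1+\|v\|_{H^{\sigma'}})\,\dd\mu<\infty$; this gives $\mu$-a.s.\ regularity but not $\mu$-integrability of $|\nabla u|$. The $L^1$ bound in Proposition \ref{prop:integrability_MET_L1} is a genuinely separate result, obtained through the anisotropic maximal-regularity argument of Section \ref{s:integrability_inv_measure_u}, and this distinction is exactly the point the paper stresses (cf.\ the discussion of \eqref{eq:integrability_intro} and of \cite[Theorem 1.9]{GHKVZ14} in the introduction). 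If you rely on the concentration result alone here, the existence of the Lyapunov exponent does not go through.
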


Of course, \eqref{eq:positive_lyapunov_exponent2} is stronger than \eqref{eq:positive_lyapunov_exponent1}.
Comments on the physical description of the above have already been given below Theorem \ref{t:intro} and \eqref{eq:intro_expansion}.

\smallskip

Following \cite[Subsection 1.2.1]{BBPS2022}, we study the growth of the gradient of a passive scalar $(f_t)_t$ advected by the solution $(u_t)=(v_t,w(v_t))$ of the stochastic PEs \eqref{eq:primitive_full}:
\begin{equation}
\label{eq:linear_scalar}
\left\{
\begin{aligned}
&\partial_t f_t + (u_t\cdot\nabla f_t)=0 &\text{ on }&\Dom,\\
&f_0=f&\text{ on }&\Dom,
\end{aligned}
\right.
\end{equation}
where $f$ is given. Due to \eqref{eq:pathwise_regularity_w} and $\sigma>\frac{9}{2}$, we have $(\phi_t)_t\in C(\R_+;C^2(\overline{\Dom}))$ a.s.\ and therefore the unique regular (in the Sobolev sense) solution to \eqref{eq:linear_scalar} is given by $f_t(\x)=f(\phi_t^{-1}(\x))$. Since $\phi_t$ is volume preserving, it follows that
\begin{equation}
\label{eq:L1_passive_scalar}
\textstyle
\|\nabla f_t\|_{L^1(\Dom)}= \int_{\Dom} |(\nabla \phi_t(\x))^{-\top} \nabla f(\x)|\,\dd \x .
\end{equation}
The RHS of the above suggests investigating the properties of $(\nabla \phi_t(\x))^{-\top}$ (here, $\top$ denotes the transpose). In analogy with Theorem \eqref{t:positive_lyapunov_exponent}, we have

\begin{theorem}[Positive top Lyapunov exponent II]
\label{t:positive_lyapunov_exponent_II}
Let $Q$ be as in Subsection \ref{sss:probabilistic_set_up}. Then there exists a deterministic constant $\overline{\lambda}_+>0$ for which the statements in \eqref{eq:positive_lyapunov_exponent1} and \eqref{eq:positive_lyapunov_exponent2} hold with $(\nabla \phi_t(\x),\lambda_+)$ replaced by $((\nabla \phi_t(\x))^{-\top},\overline{\lambda}_+)$. 
\end{theorem}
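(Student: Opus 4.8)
The claim is that the ``inverse-transpose Jacobian'' cocycle $(\nabla\phi_t(\x))^{-\top}$ has a strictly positive deterministic top Lyapunov exponent $\overline\lambda_+$, with exponential expansion in every fixed direction. The strategy is to mirror exactly the argument that yields Theorem \ref{t:positive_lyapunov_exponent} for $\nabla\phi_t(\x)$, applied now to the linearly conjugate cocycle. First I would record the ODE satisfied by $A_t(\x):=(\nabla\phi_t(\x))^{-\top}$: differentiating the relation $(\nabla\phi_t)^\top(\nabla\phi_t)^{-\top}=\Id$ together with the Jacobian equation $\frac{\dd}{\dd t}\nabla\phi_t=(\nabla u_t)(\phi_t)\,\nabla\phi_t$ gives
\begin{equation*}
\frac{\dd}{\dd t}A_t(\x)=-\big[(\nabla u_t)(\phi_t(\x))\big]^{\top}A_t(\x),\qquad A_0(\x)=\Id.
\end{equation*}
Thus $(u_t,\phi_t(\x),A_t(\x))_t$ is again a Markov process of exactly the same structural type as the Jacobian process: the cocycle is linear, driven by (the transpose of) $\nabla u_t$ evaluated along the Lagrangian trajectory, and $\mathrm{Leb}$ on $\T^3$ is still stationary for the base $(u_t,\phi_t(\x))$ since $\phi_t$ is volume preserving. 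The projective process associated to $A_t$, namely $(u_t,\phi_t(\x), A_t(\x)\xi/|A_t(\x)\xi|)_t$, is well defined by the same regularity $(v_t)_t\in C(\R_+;H^\sigma)$ with $\sigma>9/2$ used for the Jacobian case.

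\textbf{Verification of the three ingredients.} I would then check items \eqref{it:top_positive_1}--\eqref{it:top_positive_3} of the general framework of \cite[Section 4]{BBPS2022} for this new cocycle. The integrability condition \eqref{eq:integrability_intro}, $\E_\mu\int_{\T^3}|\nabla u(\x)|\,\dd\x<\infty$, is \emph{identical} to the one already established in Theorem \ref{t:integrability_MET}: the generator of the $A_t$ cocycle involves the same matrix field $\nabla u$ (merely transposed), so the same $L^p$-bound on invariant measures, capturing the anisotropic smoothing of $w\partial_z v$, applies verbatim. For the strong Feller property of the $A$-projective process, the Hörmander-type analysis of Section \ref{s:strong_Feller} carries over: the Malliavin calculus argument there controls the degeneracy coming from $w$ being determined by $v$ via \eqref{eq:def_w_intro}, and the replacement of $\nabla u$ by $-(\nabla u)^\top$ in the cocycle equation does not change the Lie-algebraic span condition (one checks the bracket computations are insensitive to this transpose, or simply notes that $A$ and $\nabla\phi$ are related by a fixed smooth change of fiber coordinates, under which hypoellipticity of the augmented generator is preserved). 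Finally, for approximate controllability of the Jacobian and projective processes associated to $A_t$, the ad-hoc ``shear flow'' constructions of Section \ref{s:irreduc_controllability} produce, along suitable control paths, arbitrary prescribed values of $\nabla\phi_t$ on a dense set; since $A\mapsto A^{-\top}$ is a diffeomorphism of $\mathrm{GL}_3(\R)$, controllability of $\nabla\phi_t$ immediately gives controllability of $A_t$, and likewise on the projective bundle after composing with the induced projective diffeomorphism. With these three (plus uniqueness of the invariant measure for the $A$-projective process, obtained from weak irreducibility and strong Feller exactly as in Propositions \ref{prop:strong_feller_projective} and \ref{prop:weak_irreducibility}), \cite[Proposition 4.17]{BBPS2022} yields a deterministic $\overline\lambda_+\geq 0$ realizing the limit in \eqref{eq:positive_lyapunov_exponent1}, and the random multiplicative ergodic theorem \cite[Theorem III.1.2]{K86_ergodic_theory} upgrades this to the directional statement \eqref{eq:positive_lyapunov_exponent2}.

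\textbf{Strict positivity and the main obstacle.} What remains — and what I expect to be the genuine content rather than bookkeeping — is ruling out $\overline\lambda_+=0$. Here I would invoke the Furstenberg-type criterion used in \cite{BBPS2022}: the top exponent of a stationary, irreducible, hypoelliptic linear cocycle over an ergodic base is strictly positive \emph{unless} the cocycle is (measurably, up to conjugacy) conformal / the projective process admits an invariant measure supported on a measurable sub-bundle of distinguished directions. The approximate controllability of the projective process excludes the existence of such an invariant sub-bundle (the control argument moves any direction to a dense set of directions while the velocity returns near equilibrium), exactly as in \cite[Section 6]{BBPS2022}. The one point requiring care is that conformality of $\nabla\phi_t$ and of $(\nabla\phi_t)^{-\top}$ are \emph{not} equivalent in dimension $3$ — a matrix can fail to be conformal while its inverse-transpose is closer to conformal — but the obstruction we exclude is the existence of an invariant \emph{line field}, and a line field invariant under $A_t=(\nabla\phi_t)^{-\top}$ corresponds (via $\xi\mapsto$ the orthogonal complement plane, or directly via the adjugate) to an invariant $2$-plane field under $\nabla\phi_t$; the same shear-flow controllability that destroys invariant line fields for $\nabla\phi_t$ also destroys invariant plane fields, so no such structure survives. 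Hence $\overline\lambda_+>0$. The main obstacle throughout is thus not any single new estimate but checking that the Malliavin/Hörmander nondegeneracy of Section \ref{s:strong_Feller} and the shear-flow controllability of Section \ref{s:irreduc_controllability}, both tailored to $\nabla\phi_t$, are robust under passing to the inverse-transpose cocycle — which, because the passage is a fixed smooth bundle automorphism, they are.
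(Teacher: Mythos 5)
Your proposal is correct in substance and faithfully re-runs the BBPS framework for the new cocycle $\wc{\A}^t=(\nabla\phi^t)^{-\top}$, but it takes a longer route than the paper at the crucial step of strict positivity. The paper does \emph{not} re-apply the Furstenberg-type criterion (i.e., \cite[Proposition 4.17]{BBPS2022}) to $\wc{\A}$, and explicitly points out that the analogue of Proposition \ref{prop:approximate_controllability}\eqref{it:approximate_controllability2} (driving $\wc{\A}_T$ to large norm) is not needed. Instead it exploits the exact algebraic relationship $\wc{\A}^t=(\A^t)^{-\top}$ together with the spectral structure of the multiplicative ergodic theorem for invertible cocycles (\cite[Theorem 5.1.1]{A98_Random} and \cite[Proposition 3.17]{BBPS2022}): if $\lambda_1\geq\lambda_2\geq\lambda_3$ are the Oseledets exponents of $\A$, then those of $\wc{\A}$ are $-\lambda_3\geq-\lambda_2\geq-\lambda_1$, and since $\A^t\in\SL_3(\R)$ forces $\lambda_1+\lambda_2+\lambda_3=0$, Theorem \ref{t:positive_lyapunov_exponent} already gives $\lambda_3\leq-\lambda_1/2=-\lambda_+/2<0$, hence $\overline\lambda_+=-\lambda_3>0$. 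What \emph{is} still needed from Propositions \ref{prop:strong_feller_projective} and \ref{prop:weak_irreducibility} for the $\wc{\xi}$-process is uniqueness of the invariant measure for the projective process, because the random MET \cite[Theorem III.1.2]{K86_ergodic_theory} producing the directional statement \eqref{eq:positive_lyapunov_exponent2} requires it; this the paper and your proposal agree on.

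Your route — pushing the shear-flow controllability through the diffeomorphism $A\mapsto A^{-\top}$ of $\mathrm{GL}_3$, and then excluding invariant line fields for $\wc{\A}$ by showing they correspond to invariant plane fields for $\nabla\phi$ — would also succeed, and your caveat that conformality of $\nabla\phi$ and of $(\nabla\phi)^{-\top}$ are inequivalent in dimension $3$ is a genuine point that needs care on that route. But the paper's argument bypasses it entirely: once $\lambda_+>0$ is known, the volume-preservation constraint $\sum\lambda_i=0$ alone forces $\overline\lambda_+>0$, with no second pass through hypoellipticity or controllability for the Jacobian part of $\wc{\A}$. So your answer is not wrong, it is just less economical — you redo a Furstenberg argument that the determinant identity already renders unnecessary.
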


As a consequence of \eqref{eq:L1_passive_scalar} and Theorem \ref{t:positive_lyapunov_exponent_II} we have (cf.,
\cite[Subsection 2.6]{BBPS2022})

\begin{corollary}[Gradient growth for passive scalars]
\label{cor:gradient_expansion}
Let $Q$ and $\overline{\lambda}_+>0$ be as in Subsection \ref{sss:probabilistic_set_up} and Theorem \ref{t:positive_lyapunov_exponent_II}, respectively. Then, for all $v\in \H(\T^3)$, $\eta\in (0,\overline{\lambda}_+)$ and a non-zero $f\in W^{1,1}(\Dom)$ such that $\int_{\Dom}f(\x)\,\dd \x=0$, there exists a constant $R=R(v,f,\eta)>0$ such that, for all $t>0$,
\begin{equation}
\label{eq:exponential_growth_passive_scalar}
\E\int_{\Dom} f_t(\x)\,\dd \x\geq R\, e^{\eta t}.
\end{equation}
\end{corollary}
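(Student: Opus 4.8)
The plan is to deduce Corollary~\ref{cor:gradient_expansion} from the identity \eqref{eq:L1_passive_scalar} together with the exponential expansion statement \eqref{eq:positive_lyapunov_exponent2} for $(\nabla\phi_t(\x))^{-\top}$ provided by Theorem~\ref{t:positive_lyapunov_exponent_II}, via a Fatou-type argument. First I would record that, since $\phi_t$ is volume-preserving and $f$ has mean zero, the representation $f_t(\x)=f(\phi_t^{-1}(\x))$ holds in the Sobolev sense (as already noted in the excerpt), so $\|\nabla f_t\|_{L^p(\T^3)}^p = \int_{\T^3} |(\nabla\phi_t(\x))^{-\top}\nabla f(\x)|^p\,\dd\x$ after the change of variables $\x\mapsto\phi_t(\x)$, generalizing \eqref{eq:L1_passive_scalar} from $p=1$ to arbitrary $p\in[1,\infty)$. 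Hence $\|\nabla f_t\|_{L^p(\O\times\T^3)}^p = \E\int_{\T^3}|(\nabla\phi_t(\x))^{-\top}\nabla f(\x)|^p\,\dd\x$.

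Next I would fix $\eta\in(0,\overline{\lambda}_+)$ and isolate the set where $\nabla f(\x)\neq 0$; this has positive Lebesgue measure since $f$ is non-constant. For each such $\x$, Theorem~\ref{t:positive_lyapunov_exponent_II} applied with $\xi=\nabla f(\x)$ gives $\frac1t\log|(\nabla\phi_t(\x))^{-\top}\nabla f(\x)|\to\overline{\lambda}_+$ a.s., hence $e^{-p\eta t}|(\nabla\phi_t(\x))^{-\top}\nabla f(\x)|^p\to\infty$ a.s. The key step is then to pass from this almost-sure divergence to a uniform-in-$t$ lower bound on the expectation $\E\int_{\T^3}e^{-p\eta t}|(\nabla\phi_t(\x))^{-\top}\nabla f(\x)|^p\,\dd\x$. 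I would do this by Fatou's lemma (in $\om$ and $\x$ jointly), combined with the fact that a sequence of nonnegative random variables tending a.s.\ to $+\infty$ cannot have expectations tending to $0$; more quantitatively, one shows $\liminf_{t\to\infty}\E\int_{\T^3}e^{-p\eta t}|(\nabla\phi_t(\x))^{-\top}\nabla f(\x)|^p\,\dd\x>0$, and then, since the map $t\mapsto \E\int_{\T^3}e^{-p\eta t}|(\nabla\phi_t(\x))^{-\top}\nabla f(\x)|^p\,\dd\x$ is positive and continuous on $[0,\infty)$ (continuity of the flow in $t$ via \eqref{eq:pathwise_regularity_w}), it is bounded below by some $r^p=r^p(v,f,\eta,p)>0$ on all of $(0,\infty)$. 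This yields \eqref{eq:exponential_growth_passive_scalar}.

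The main obstacle is the passage from the pointwise (in $\x$) and almost-sure (in $\om$) exponential growth to the integrated $L^p(\O\times\T^3)$ lower bound: Fatou gives $\liminf$ control but one must rule out that the $\liminf$ is $0$, which requires knowing the limit is genuinely $+\infty$ rather than merely bounded below --- this is exactly why the \emph{strict} positivity $\overline{\lambda}_+>0$ (not just nonnegativity) and the \emph{all-directions} statement \eqref{eq:positive_lyapunov_exponent2} are essential, since $\nabla f(\x)$ is a fixed nonzero deterministic vector for a.e.\ $\x$. A secondary technical point is ensuring measurability of $(\om,\x)\mapsto (\nabla\phi_t(\x))^{-\top}\nabla f(\x)$ and applicability of Fubini/Fatou, which follows from the regularity $(\phi_t)_t\in C(\R_+;C^2(\T^3))$ a.s.\ recorded in the excerpt. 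One should also note the lower bound $r$ is genuinely $t$-independent only after invoking continuity of $t\mapsto\|\nabla f_t\|_{L^p(\O\times\T^3)}$ near $t=0$ to control the behaviour on any compact time interval, since the $\liminf$ argument only controls large $t$.
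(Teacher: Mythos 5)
Your proposal is correct in spirit but organizes the limiting argument differently from the paper, in a way that introduces a technical point you gloss over. The paper's proof takes the $\inf$ over $t$ \emph{inside} the expectation: it defines $R(\om,\x)\stackrel{{\rm def}}{=}\one_{\Dom_0}(\x)\inf_{t>0}\bigl(|(\nabla\phi^t_{\om,v}(\x))^{-\top}\nabla f(\x)|\,e^{-\eta t}\bigr)$, observes that $R>0$ on a set of positive $\P\times\Leb$-measure (since the quantity is positive at $t=0$, stays positive by invertibility of the Jacobian, and diverges as $t\to\infty$ by \eqref{eq:positive_lyapunov_exponent2} for the inverse-transpose cocycle), and sets $r=\E\int_{\T^3}R\,\dd\x>0$. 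Bounding $|(\nabla\phi^t)^{-\top}\nabla f|\geq R e^{\eta t}$ pointwise and integrating then gives \eqref{eq:exponential_growth_passive_scalar} directly, with the reduction to $p=1$ done up front via $\|\cdot\|_{L^1}\le\|\cdot\|_{L^p}$ on the unit-mass space $\O\times\T^3$. No interchange of expectation with $t$-limits is needed.

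You instead take the expectation first, form $g(t)=\E\int e^{-p\eta t}|(\nabla\phi_t(\x))^{-\top}\nabla f(\x)|^p\,\dd\x$, use Fatou to show $\liminf_{t\to\infty}g(t)=+\infty$, and then assert that $g$ is ``positive and continuous on $[0,\infty)$'' so is bounded below on $(0,\infty)$. The positivity at each fixed $t$ and the divergence at infinity are fine. The continuity claim, however, does not follow merely from \eqref{eq:pathwise_regularity_w}: pathwise continuity of $t\mapsto(\nabla\phi_t(\x))^{-\top}$ does not by itself license passing the limit through the expectation without a uniform-integrability or domination argument. What you actually need, and what Fatou does give you for free, is lower semicontinuity of $g$; combined with $g>0$ pointwise and $g(t)\to\infty$, this yields the uniform lower bound on $[0,\infty)$ by the usual compactness-plus-tail argument. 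If you replace ``continuous'' by ``lower semicontinuous (by Fatou in $t$)'' the proof is complete. The paper's ordering of operations is slicker precisely because it sidesteps this issue: taking $\inf_t$ inside the integral means you never need to control how the expectation varies with $t$. Your direct $L^p$ identity is also correct (the Jacobian has determinant one) but unnecessary given the easy reduction to $p=1$.
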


\begin{proof}
Let $\Df\stackrel{{\rm def}}{=} \{\x\in \Dom\,:\, \nabla f(\x)\neq 0\}$. 
By assumption, $\Leb(\Df)>0$. 
Theorem \ref{t:positive_lyapunov_exponent_II} ensures that, for all $v\in \H(\T^3)$ and $\x\in \Dom$, 
\begin{equation}
\label{eq:rewriting_lemma_cocycle_T}\textstyle{
\overline{\lambda}_+=\lim_{t\to \infty} \frac{1}{t}\log |\nabla \phi^t_{\cdot,v} (\x)\nabla f(\x) | >0\ \text{ a.s. }  }
\end{equation}
where $\overline{\lambda}_+>0$ is deterministic. For all $\eta\in (0,\overline{\lambda}_+)$, $\om\in \O$ and $\x\in \Dom$, set
$$
r(\om,\x)\stackrel{{\rm def}}{=} \one_{\Df}(\x)\,\textstyle{\inf}_{t>0}\big( |\nabla \phi^t_{\om,v} (\x)\nabla f(\x) | e^{-\eta t}\big).
$$
It follows from \eqref{eq:rewriting_lemma_cocycle_T} that 
$
\P\times \Leb (r>0)=\Leb( \Df)>0.
$
Due to \eqref{eq:L1_passive_scalar} and the above reasoning, the estimate \eqref{eq:exponential_growth_passive_scalar} holds with $R=\E \int_{\Dom} r(\x)\, \dd \x >0$.
\end{proof}

Corollary \ref{cor:gradient_expansion} and the arguments in \cite[Section 8]{BBPS2022} imply the validity of Yaglom's law in the study of scalar turbulence in the Batchelor regime; see \cite[Subsection 1.2.1]{BBPS2022}. 
To avoid repetitions, we leave the details to the interested reader.

\smallskip

The proofs of Theorems \ref{t:positive_lyapunov_exponent} and \ref{t:positive_lyapunov_exponent_II} are given in Subsection \ref{ss:proofs_further_results} based on sufficient conditions for positivity of the top Lyapunov exponents proven in \cite[Section 4]{BBPS2022} and further results on the stochastic PEs \eqref{eq:primitive_full}. Details are given in Subsection \ref{ss:proofs_further_results} below.
Sections \ref{s:integrability_inv_measure_u}-\ref{s:irreduc_controllability} are devoted to the proofs of the above-mentioned additional results on \eqref{eq:primitive_full}.

\subsection{Outline and proofs of Theorems \ref{t:positive_lyapunov_exponent} and \ref{t:positive_lyapunov_exponent_II}}
\label{ss:proofs_further_results}
As mentioned in Section \ref{s:intro}, the proofs of Theorems \ref{t:positive_lyapunov_exponent}  and \ref{t:positive_lyapunov_exponent_II} are based on the sufficient condition for the positivity of the top Lyapunov exponent proven in 
\cite[Section 4]{BBPS2022}. 
For brevity, here we limit ourselves to explaining how to apply their framework to the current situation rather than presenting the results in \cite{BBPS2022} in full generality.
In this subsection, we will state several results which will be proven in the later sections. Combining the latter with the results in \cite{BBPS2022}, we will prove Theorems \ref{t:positive_lyapunov_exponent} and \ref{t:positive_lyapunov_exponent_II}. We begin by focusing on checking the assumptions in \cite[Proposition 4.17]{BBPS2022}. 
Let us begin by introducing some notation and terminology.

\smallskip

Without loss of generality (see also the proof of Proposition \ref{prop:RDSII}), here and in the following we assume that $\O=C_0([0,\infty);\WW)$ where $\WW$ is a separable Hilbert space such that $\Ls^2_0(\T^3)\embed \WW$ is Hilbert-Schmid endowed with the standard Weiner measure, which we still denote by $\P$. 
Note that, for all $t>0$ and $\om\in \O$, the mapping $\theta:[0,\infty)\times \O\to \O$ given by $(t,\om)\mapsto \theta^t\om$ where
\begin{equation}
\label{eq:definition_thetat}
\theta^t \om(s)\stackrel{{\rm def}}{=}\om(t+s)-\om(s),
\end{equation}
is measurable, measure preserving (i.e., $\P\circ (\theta^t)^{-1}=\P$) and satisfies the semigroup property $\theta^t\circ \theta^s=\theta^{s+t}$ for all $t,s\geq 0$. 

A mapping $\Ran:[0,\infty)\times \Omega\times Z\to Z$ on a polish space $Z$, written as $(t,\om,z)\mapsto \Ran_\om^t(z)$, is said to be a (continuous) \emph{random dynamical system} (RDS in the following) if the following are satisfied (recall that $\Borel$ denotes the Borel $\sigma$-algebra):
\begin{itemize}
\item (Measurability) $\Ran$ is measurable from $\Borel ([0,\infty))\otimes \F \otimes \Borel(Z)$ to $ \Borel(Z)$.
\item (Cocycle property) For all $\om\in \O$ and $s,t\geq 0$, it holds that $\Ran_\om^0=\Id_Z$ and 
$$
\Ran^{t+s}_\om =
\Ran^{t}_{\theta^s(\om)} \circ \Ran^{s}_\om.
$$
\item (Boundedness and uniform continuity) For all $\om\in \O$ and all \emph{bounded} set $U\subseteq [0,\infty)\times Z$, the set $\Ran_\om(U)$ is bounded in $Z$ and uniformly continuous.
\end{itemize}

We say that the solution operator $v\mapsto (v_t)_t$ of the stochastic PEs \eqref{eq:primitive} induces an RDS on $\Hs_0^\sigma(\T^3)$ if there exists an RDS on the same space such that, for all $t\geq 0$ and $v\in Z$, we have $v_t(\om)=\Ran_\om^t (v)$ for a.a.\ $\om\in \O$.

\begin{proposition}[RDS for PEs]
\label{prop:RDS}
Let Assumption \ref{ass:Q} be satisfied and let $\sigma$ be as in \eqref{eq:coloring_assumption_noise_2}.
Then the solution operator $v\mapsto v_t$ associated to the stochastic {\normalfont{PEs}}  induces a {\normalfont{RDS}} over $\Hs_0^\sigma(\T^3)$. Moreover, denoting by $\V$ the corresponding mapping, we have
$ \V^t_{\theta^s\om}(v)$ is independent of $\F_s$ for all $t\geq s\geq 0$ and $v\in \Hs^\sigma_0(\T^3)$.
\end{proposition}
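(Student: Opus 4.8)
The plan is to establish Proposition \ref{prop:RDS} in two stages: first the construction of a version of the solution operator that enjoys the cocycle property on a set of full measure for \emph{all} times and initial data simultaneously, and then the verification of the measurability and continuity properties together with the claimed $\F_s$-independence. The starting point is Proposition \ref{prop:global_well_posedness}, which gives, for each fixed $v\in\Hs_0^\sigma(\T^3)$, a global unique $H^\sigma$-solution $(v_t)_t$ with continuous paths in $H^\sigma(\T^3;\R^2)$. The difficulty is that a priori this is an equivalence class of processes, and the null sets where the cocycle identity could fail depend on $v$, $t$, $s$; one must upgrade this to an honest RDS in the sense defined above.

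\smallskip

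First I would exploit the Wiener-space representation $\O=C_0([0,\infty);\WW)$ fixed above, together with the shift $\theta^t$ from \eqref{eq:definition_thetat}. On this canonical space the stochastic PEs \eqref{eq:primitive} can be solved pathwise-in-the-stochastic-input via the standard Da Prato--Zabczyk splitting: write $v_t = z_t + y_t$ where $z_t = \int_0^t e^{(t-r)\Delta}Q\,\dd W_r$ is the stationary stochastic convolution (an Ornstein--Uhlenbeck-type process with continuous $H^\sigma$-paths, since $Q\in\calL_2(\Ls_0^2,\Hs_0^\sigma)$ by the choice of $\sigma$ in \eqref{eq:coloring_assumption_noise_2}), and $y_t$ solves a \emph{random} PDE with the nonlinearity shifted by $z$. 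The point is that $z$ is an additive functional of the Wiener path satisfying the perfect cocycle identity $z_{t+s}(\om)=z_t(\theta^s\om)+e^{t\Delta}z_s(\om)$ for every $\om$ (not just a.e.), by a direct computation from \eqref{eq:definition_thetat}; and the solution map of the random PDE for $y$, being deterministic given the continuous path $r\mapsto z_r(\om)$, is genuinely defined for every $\om$ in the full-measure set where $z_\cdot(\om)$ has continuous $H^\sigma$-paths. Composing, one obtains a map $\V^t_\om(v)$ defined for every $\om$ in this set, every $t\ge0$, every $v$, that automatically satisfies $\V^0_\om=\Id$ and the cocycle property $\V^{t+s}_\om=\V^t_{\theta^s\om}\circ\V^s_\om$ by uniqueness of solutions to the random PDE. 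Measurability from $\Borel([0,\infty))\otimes\F\otimes\Borel(\Hs_0^\sigma)$ to $\Borel(\Hs_0^\sigma)$ follows from continuity of $(t,v)\mapsto\V^t_\om(v)$ (a consequence of continuous dependence for the random PDE, which is standard once the local well-posedness machinery of \cite{primitive1,primitive2,Debussche_2012} is in place in the high-order space $H^\sigma$ from Proposition \ref{prop:global_well_posedness}) and measurability in $\om$, via a Carath\'eodory-type argument. The uniform continuity/boundedness requirement --- that $\V^t_\om$ maps bounded sets to bounded sets --- comes from the a priori $H^\sigma$-bounds established in Section \ref{s:well_posedness_Hsigma} that underlie the Feller property, applied pathwise on the event under consideration.

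\smallskip

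The remaining claim, that $\V^t_{\theta^s\om}(v)$ is independent of $\F_s$ for $t\ge s\ge0$, is where the canonical set-up pays off again: by the cocycle construction, $\V^t_{\theta^s\om}(v)$ is a measurable function of the increments of $\om$ over $[s,s+t]$, i.e.\ of $\{\om(s+u)-\om(s):0\le u\le t\}=\{\theta^s\om(u):0\le u\le t\}$, whereas $\F_s$ is generated (up to completion) by $\{\om(u):0\le u\le s\}$; the independence of disjoint Wiener increments then gives the conclusion. One subtlety to handle carefully is the matching of this canonical-space object with the solution on the \emph{original} filtered probability space of Definition \ref{def:solution}: one argues that both solve the same equation with the same initial data and invokes the pathwise uniqueness from Proposition \ref{prop:global_well_posedness} to identify them up to a null set, which is exactly the statement ``$v_t(\om)=\V^t_\om(v)$ for a.a.\ $\om$'' required by the definition of ``inducing an RDS''; see also the proof of Proposition \ref{prop:RDSII}.

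\smallskip

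I expect the main obstacle to be the \emph{perfection} step: turning the a.e.-in-$\om$ cocycle identity (which is all one gets for free from uniqueness on the original space) into an identity valid for every $\om$ in a single $\theta^t$-invariant full-measure set, uniformly over $t,s$ and over all initial data $v$. The standard device is to work on the canonical Wiener space and build the solution as an explicit measurable functional of the path via the split $v=z+y$, so that the cocycle property becomes a deterministic identity for the random PDE for $y$ plus the exact cocycle identity for the Gaussian part $z$ --- this is the route taken in \cite{BBPS2022} and in the RDS literature (e.g.\ \cite{A98_Random}). The technical cost is that all the well-posedness and continuous-dependence estimates from Section \ref{s:well_posedness_Hsigma} must be read as \emph{pathwise} statements for the $z$-shifted equation, which is routine given that $z$ has continuous $H^\sigma$-paths but does require care because $H^1$ is critical for the PEs, so one genuinely needs the $H^\sigma$, $\sigma\ge2$, theory rather than an energy argument at the level of $H^1$.
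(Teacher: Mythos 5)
Your proposal takes essentially the same route as the paper: Proposition \ref{prop:RDS} is derived as a special case of the slightly more general Proposition \ref{prop:RDSII}, whose proof in Subsection \ref{ss:RDS} performs exactly the Da~Prato--Zabczyk split $v_t=\Gamma_t+z_t$ (stochastic convolution plus a random PDE \eqref{eq:modified_PE} for the remainder), invokes the pathwise energy estimate of Lemma \ref{l:pathwise_energy_estimate}, and then argues as in \cite[Subsection 2.4.4]{KS12_2d} for the cocycle, measurability and $\F_s$-independence properties. Two minor points worth noting: the paper makes the ``for every $\om$'' cocycle identity for the Gaussian part rigorous via the explicit pathwise representation \eqref{eq:gamma_pathwise}, which avoids the stochastic integral entirely; and the pathwise energy estimate requires $(\Gamma_t)_t\in L^\infty_{\loc}\Hs^{\sigma'}$ for some $\sigma'>\sigma$ (not merely $\sigma'=\sigma$), which is why Proposition \ref{prop:RDSII} assumes $Q\in\calL_2(\Ls_0^2,\Hs_0^{\sigma'})$ with $\sigma'>\sigma$ and the remark following it explains that Assumption \ref{ass:Q} together with \eqref{eq:coloring_assumption_noise_2} indeed yields such a $\sigma'$.
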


The above is a special case of Proposition \ref{prop:RDSII} whose proof is given in Subsection \ref{ss:RDS}.
We now return to the proof of Theorems \ref{t:positive_lyapunov_exponent} and \ref{t:positive_lyapunov_exponent_II}. We begin with the proof of the former, as the latter can be derived by replacing $\nabla\phi_t (\x)$ with $(\nabla\phi_t (\x))^{-\top}$ in the arguments presented below.

To prove Theorem \ref{t:positive_lyapunov_exponent}, we apply \cite[Propositions 3.16 and 4.17]{BBPS2022} on $  \H(\T^3) \times \Dom$, with the above choice of $(\theta^t)_t$ and
\begin{align}
\label{eq:TA_definition_1}
\t^t_{\om}(v,\x)&=(\phi^t_{\om,v}(\x),\V^t_\om(v)), \\ 
\label{eq:TA_definition_2}
\A_{\om,(v,\x)}^t&= \nabla \phi^t_{\om,v}(\x),
\end{align}
where $\om\in\O$, $t\geq 0$, $\x\in \Dom$ and $v\in \H(\T^3)$. Moreover, for $\om\in\O$, $(\phi^t_\om)_t$ denotes the Lagrangian flow associated with the RDS in Proposition \ref{prop:RDS}:
\begin{equation}
\label{eq:lagrangian_flow_strategy}
\partial_t\phi^t_\om(\x)= \U^t_\om(\phi^t_\om(\x))\  \text{ for } t\geq 0, \qquad \phi^0_\om(\x)=\x\in \Dom,
\end{equation}
where
\begin{equation}
\U^t_\om\stackrel{{\rm def}}{=}(\V^t_\om,w(\V^t_\om))
\end{equation}
and $w(\cdot)$ is as in \eqref{eq:def_w}.
As commented below \eqref{eq:coloring_assumption_noise_2}, the matrix $\A_{\om,(v,\x)}$ is well defined.
Moreover, the fact that $\nabla \cdot \U^t_\om=0$ implies $\A_{\om,(v,\x)}^t\in \mathrm{SL}_3(\R)$  (i.e., $\A_{\om,(v,\x)}$ is invertible) for a.a.\ $\om\in \O$ and all $(v,\x)\in \Hs^\sigma_0(\T^3)\times \Dom$.

Let us recall that, for all $v\in \Hs^\sigma_0(\T^3)$, we have $\V^t_\om (v)=v_t(\om)$ for a.a.\ $\om\in \O$ and $t\geq 0$. Hence, the above definition of $\phi^t_\om$ coincides with the one in Theorem \ref{t:intro} whenever the $\O$-expectional sets are allowed to depend on the initial data $v$ (as in the case of the statement in Theorem \ref{t:intro}). 
In the following, if no confusion seems likely, we will not distinguish between $(v_t)_t$ and the generated RDS $ (\V^t(v))_t$, and similarly for $(u_t)_t$ and $(\U^t(v))_t$.

Proposition \ref{prop:RDS} implies that $\t$ is a random dynamical system on $\H(\T^3)\times \Dom$ as well, and $\A$ is a (three-dimensional) linear cocycle over $\t$ (\cite[Definition 3.11]{BBPS2022}). Moreover, Proposition \ref{prop:RDS} also ensures that the assumptions \cite[(H1) and (H3)]{BBPS2022} are satisfied. Next, we verify the integrability conditions \cite[(H2)]{BBPS2022} for the linear cocycle $\A$, i.e., (here $\log^+\cdot=\max\{\log\cdot,0\}$)
\begin{align}
\label{eq:integrability_At}
\E\int_{\H(\T^3)}\int_{\Dom} \sup_{0\leq t\leq 1} \log^+ |\A_{(v,\x)}^t|\,\dd \x\,\dd \mu(v)&<\infty,\\
\label{eq:integrability_At1}
\E\int_{\H(\T^3)}\int_{\Dom} \sup_{0\leq t\leq 1} \log^+ |(\A_{(v,\x)}^t)^{-1}|\,\dd \x\,\dd \mu(v)&<\infty.
\end{align}
In the above, we used that $\mu\otimes \Leb$ is the unique invariant (hence ergodic) measure for the Lagrangian process $(v_t,\phi^t)$; see Corollary \ref{cor:uniqueness_projective} below.

Let us stress that the conditions \eqref{eq:integrability_At}-\eqref{eq:integrability_At1} are \emph{necessary} to obtain the existence of the Lyapunov exponents for the linear cocycle $\A$ over $\t$ via the multiplicative ergodic theorem (MET), see e.g., \cite[Theorem 3.4.1]{A98_Random}.
To check the above, we employ the following special case of Theorem \ref{t:integrability_MET}.

\begin{proposition}[$L^1$-Integrability of the gradient of the velocity field]
\label{prop:integrability_MET_L1}
Let Assumption \ref{ass:Q} be satisfied and set $\Energy\stackrel{{\rm def}}{=}\|Q\|_{\calL(\Ls_0^2(\T^3),\Hs_0^1(\T^3))}$. 
Then
\begin{equation}
\label{eq:integrability_mu_L1}
\int_{\Hs^\sigma_0(\T^3)}\int_{\Dom}|\nabla u(\x)|\,\dd \x\,\dd \mu(v)\leq C(\Energy),
\end{equation}
where $u=(v,w(v))$ and $w(v)$ is as in \eqref{eq:def_w}.
\end{proposition}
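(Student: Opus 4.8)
The plan is to establish \eqref{eq:integrability_mu_L1} by combining an invariant-measure identity with the new higher-order a priori estimates for the stochastic PEs, exploiting the anisotropic structure of the vertical velocity $w(v)$. First, observe that since $u=(v,w(v))$ with $w(v)=-\int_0^z \nabla_{x,y}\cdot v(\cdot,z')\,\dd z'$, the quantity $|\nabla u|$ is controlled by $|\nabla v|$ together with $|\nabla w(v)|$; the horizontal derivatives of $w(v)$ involve $\nabla_{x,y}\nabla_{x,y}\cdot v$ (second-order horizontal derivatives of $v$), while $\partial_z w(v)=-\nabla_{x,y}\cdot v$ is only first order. Hence, by Hölder's inequality on the bounded domain $\T^3$, it suffices to bound $\int_{\T^3}|\nabla v|^p\,\dd\x$ and $\int_{\T^3}|\nabla_{x,y}^2 v|^p\,\dd\x$ in expectation with respect to $\mu$ for some $p\geq 1$ (here $p=1$, but the natural estimate will give $p<4/3$, cf.\ the remark in the introduction). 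The point is that only \emph{horizontal} second derivatives of $v$ enter, which is exactly where the anisotropic smoothing in the PEs dynamics can be brought to bear.

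Next I would set up the stationary estimate. Let $\mu$ be an invariant measure for \eqref{eq:primitive_full} on $\Hs_0^\sigma(\T^3)$ (by Proposition \ref{prop:regularity_invariant_measure} it is concentrated on high-regularity fields, so all manipulations below are justified for $\mu$-a.e.\ initial datum). Testing \eqref{eq:primitive} against a suitable function of $v$ and applying Itô's formula, one obtains a balance of the schematic form
\begin{equation*}
\frac{\dd}{\dd t}\,\E_v\,\Phi(v_t) = -\,\E_v\,\mathcal{D}(v_t) + \E_v\,\mathcal{R}(v_t),
\end{equation*}
where $\Phi$ is an energy-type functional, $\mathcal{D}$ the associated dissipation (controlling $\|v\|_{\Hs^2}$-type norms, in particular horizontal second derivatives), and $\mathcal{R}$ collects the nonlinear and noise contributions, the latter being bounded in terms of $\mathcal{E}=\|Q\|_{\calL(\Ls_0^2,\Hs_0^1)}$. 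Integrating against $\mu$ and using stationarity, the left-hand side vanishes, yielding $\int \E_v\,\mathcal{D}(v)\,\dd\mu \le \int \E_v\,\mathcal{R}(v)\,\dd\mu$. The challenge is to choose $\Phi$ so that (i) the dissipation $\mathcal{D}$ genuinely dominates $\int_{\T^3}|\nabla_{x,y}^2 v|\,\dd\x$ (up to lower-order terms) and (ii) the nonlinear terms in $\mathcal{R}$, which involve the critical transport nonlinearity $(v\cdot\nabla_{x,y})v + w(v)\partial_z v$, can be absorbed into $\mathcal{D}$ plus a constant depending only on $\mathcal{E}$. Controlling the vertical transport term $w(v)\partial_z v$ is the crux: one uses the explicit form of $w(v)$ and anisotropic Ladyzhenskaya/Gagliardo–Nirenberg estimates (integrating first in $z$, then in $(x,y)$) to trade the loss of a vertical derivative against horizontal integrability, which is precisely why the endpoint $L^2$ in space is not expected to be reachable and $L^1$ (indeed any $L^p$, $p<4/3$) is the right target. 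Here I would invoke the high-order energy estimates of Section \ref{s:well_posedness_Hsigma} (the same machinery behind Propositions \ref{prop:global_well_posedness} and \ref{prop:regularity_invariant_measure}) to make these absorptions rigorous and to guarantee the relevant moments are finite under $\mu$.

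Finally, I would assemble the pieces: the stationary inequality gives $\int_{\Hs_0^\sigma}\E_v\big(\|\nabla v\|_{L^1(\T^3)} + \|\nabla_{x,y}^2 v\|_{L^1(\T^3)}\big)\,\dd\mu \le C(\mathcal{E})$, and by the $w(v)$-decomposition above this yields $\int_{\Hs_0^\sigma}\int_{\T^3}|\nabla u(\x)|\,\dd\x\,\dd\mu(v)\le C(\mathcal{E})$, which is \eqref{eq:integrability_mu_L1}. I expect the main obstacle to be step (ii): closing the estimate for the vertical nonlinearity $w(v)\partial_z v$ without paying the full cost of a second vertical derivative, since $H^1$ is critical for the PEs and a naive energy method fails. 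The resolution should come from working with $L^p$-based energies (rather than $L^2$) and systematically exploiting that $w(v)$ depends on $v$ only through its horizontal divergence and is obtained by vertical integration, so that its contribution to the dissipation balance is strictly anisotropic — this is the mechanism that also explains why the result degrades precisely as $p\uparrow 4/3$.
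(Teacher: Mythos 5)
Your high-level observation is the right one — the $L^1$-integrability of $\nabla u$ hinges on controlling the horizontal gradient of $w(v)$, which brings in second-order horizontal derivatives — but the reduction you propose is too coarse at the crucial point, and the proof mechanism you sketch is not the one the paper uses.

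The key slip is in treating $\nabla_{x,y} w(v)$ as if it were controlled by $\nabla_{x,y}^2 v$. Writing out \eqref{eq:def_w} gives $\nabla_{x,y} w(v) = -\int_0^z \nabla_{x,y}(\nabla_{x,y}\cdot v)(\cdot,z')\,\dd z'$: the vertical integral is a genuine regularizing operator, and throwing it away loses exactly the anisotropic gain the proof needs. The paper first splits $\int_0^\cdot v\,\dd z = \big(\int_0^\cdot v\,\dd z - \underline{v}\big) + \underline{v}$ (where $\underline{v}=\int_{\T_z}v\,(1-z)\,\dd z$), because the vertical primitive does not preserve mean-zero in $z$. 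Lemma \ref{l:estimate_H_negative_mean_free} then converts the first piece into a bound on $\|\nabla_{x,y}^2 v\|_{H^{-1}_z}$ — a strictly weaker norm than $\|\nabla_{x,y}^2 v\|_{L^2_z}$ — and the second is a purely two-dimensional object. These are the two quantities \eqref{eq:inv_measure_int_1}--\eqref{eq:inv_measure_int_2} that the proof actually establishes; nowhere is $\int_{\T^3}|\nabla_{x,y}^2 v|^p\,\dd\x$ bounded directly. Indeed, that quantity sits at essentially the same level of difficulty as the open integral in \eqref{eq:integrability_second_order_derivatives_v} discussed in Remark \ref{r:L1_integrability_gradient_second_derivatives}: the relevant maximal regularity would require the endpoint case $\delta = 1$ with the $L^2_z$ component, which is precisely what fails. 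The $H^{-1}_z$ relaxation is what lets the paper stay away from this endpoint and instead estimate the nonlinearity via $L^1_z\hookrightarrow H^{-1}_z$ and H\"older as in \eqref{eq:L1_estimate_mixed_explaination}.

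The second discrepancy is methodological. You propose an It\^o-formula/Lyapunov-functional argument ("test against $\Phi(v)$, use stationarity to cancel the time derivative, read off a dissipation term"). The paper instead works with the mild formulation \eqref{eq:PE_mild_formulation} and anisotropic maximal $L^q$-regularity for the heat semigroup (Lemma \ref{l:max_reg_anisotropic_Laplacian}); stationarity enters afterwards, converting the time-average $\int_0^1\|v_t\|_\X^p\,\dd t$ back into $\|v\|_\X^p$ under $\mu$. This distinction matters: for $p<2$, an $L^p$-energy method produces a dissipation of the schematic form $\int|\nabla v|^{p-2}|\nabla^2 v|^2$ rather than $\int|\nabla_{x,y}^2 v|^p$, so it is far from clear that (i) in your sketch can be arranged, and the absorption of the transport terms in (ii) would have to be redone from scratch. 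Your instinct about the anisotropy of $w(v)$ and the $p<4/3$ threshold is correct; the missing ideas are the mean-correction splitting, the drop to $H^{-1}_z$ via Lemma \ref{l:estimate_H_negative_mean_free}, and the switch from an energy balance to a mild-solution/maximal-regularity argument.
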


Next, we show how Proposition \ref{prop:integrability_MET_L1} implies \eqref{eq:integrability_At}-\eqref{eq:integrability_At1} with $\A_{(v,\x)}^t=\A_{\cdot,(v,\x)}^t$ as in \eqref{eq:TA_definition_2}. Differentiating \eqref{eq:lagrangian_flow_strategy}, we obtain  
\begin{equation}
\label{eq:equation_A_cocycle}
\partial_t \A_{(v,\x)}^t
= \nabla u_t(\phi^t(\x))\, \A_{(v,\x)}^t\  \text{ for } t\geq 0, 
 \qquad  \A_{(v,\x)}^0= \Id.
\end{equation}
For notational convenience, let us set $\x_t\stackrel{{\rm def}}{=}\phi^t(\x)$. 
By Gr\"onwall's lemma,
$$
|\A^t_{(v,\x)}|\leq \exp \Big(\int_0^1 |\nabla u_t(\x_t)|\,\dd t\Big) \ \ \text{ for }\   t\in [0,1].
$$
Thus, the left hand side of \eqref{eq:integrability_At} can be estimate by 
\begin{align*}
\E\int_{\H(\T^3)}\int_{\Dom} \int_0^1 |\nabla u_t(\x_t)|\,\dd t\,\dd \mu(v)\,\dd \x
&\stackrel{(i)}{=}\E\int_0^1 \int_{\H(\T^3)}\int_{\Dom}  |\nabla u_t(\x)|\,\dd \mu(v)\,\dd \x\, \,\dd t\\
&\stackrel{(ii)}{=} \int_{\H(\T^3)}\int_{\Dom}  |\nabla u(\x)|\,\dd \mu(v)\,\dd \x\stackrel{\eqref{eq:integrability_mu_L1}}{<}\infty.
\end{align*}
where in $(i)$ and $(ii)$ we used that transformation $\x\mapsto \x_t=\phi_t(\x)$ is volume preserving and the invariance of $\mu$, respectively.
Finally, \eqref{eq:integrability_At1} follows similarly as
$$
\partial_t [(\A^t_{(v,\x)})^{-1}]= (\A^t_{(v,\x)})^{-1} \partial_t \A^t_{(v,\x)} (\A^t_{(v,\x)})^{-1}
\stackrel{\eqref{eq:equation_A_cocycle}}{=} (\A^t_{(v,\x)})^{-1}\nabla u_t(\x_t).
$$

Instead of the choice \eqref{eq:TA_definition_2}, to prove Theorem \ref{t:positive_lyapunov_exponent_II}, we consider the linear cocycle $\wc{\A}_{\om,(v,\x)}\stackrel{{\rm def}}{=}(\nabla \phi_{\om,v}^t (\x))^{-\top}$ which, due to the previous formula, solves
\begin{equation}
\label{eq:cocycle_hat}
\partial_t \wc{\A}_{\om,(v,\x)}^t = (\nabla \U^t_\om (\x_t))^{-\top} \wc{\A}_{\om,(v,\x)}^t \ \text{ for }t>0, \qquad   \wc{\A}_{\om,(v,\x)}^0= \Id.
\end{equation}
As for the linear cocycle $\A$, the conditions \cite[(H1) and (H3)]{BBPS2022} and \cite[(H2)]{BBPS2022} follows respectively from Propositions \ref{prop:RDS} and \ref{prop:integrability_MET_L1}, respectively.

\smallskip

Now, we continue with checking the condition of \cite[Proposition 4.17]{BBPS2022} for the linear cocycles $\A$ and $\wc{\A}$. To this end, we need the so-called projective processes. As explained in \cite[Subsections 3.2.3 and 3.3.1]{BBPS2022}, the processes 
$$
\xi_t \stackrel{{\rm def}}{=} \A^t_{(v,\x)} \xi/|\A^t_{(v,\x)} \xi|\qquad \text{ and } \qquad \wc{\xi}_t \stackrel{{\rm def}}{=} \A^t_{(v,\x)} \wc{\xi}/|\A^t_{(v,\x)}  \wc{\xi}|,
$$ 
where $\xi,\wc{\xi}\in \S^2$ are fixed directions, and play a central role in understanding the positivity of the top Lyapunov exponent. For instance, \eqref{eq:positive_lyapunov_exponent2} and its corresponding statement in Theorem \ref{t:positive_lyapunov_exponent_II} are concerned with the time asymptotic property of such processes. 
It is routine to check that $\xi_t$ and $\wc{\xi}_t$ satisfy 
\begin{align}
\label{eq:projective_1}
\partial_t \xi_t&= \Pi_{\xi_t} (\nabla u_t(\x_t)\xi_t),\\
\label{eq:projective_2}
\partial_t \wc{\xi}_t&= -\Pi_{\wc{\xi}_t} ([\nabla u_t(\x_t)]^{\top}\wc{\xi}_t),
\end{align}
with initial data $\xi_0=\xi\in\S^2$ and $\wc{\xi}_0=\wc{\xi}\in \S^2$; respectively. Here, $\S^2\subseteq\R^3$ is the two-dimensional sphere and $\Pi_{\xi}= \Id- (\xi\otimes \xi)$ is the orthogonal projection from $\R^3$ onto the tangent space of $\S^2$ at the point $\xi$.
With a slight abuse of notation, we still denote by $\xi_t$ (resp., $\wc{\xi}_t$) the equivalent class in the two-dimensional projective space $\Pr$ given by $\pi(\xi_t)$ (resp., $\pi(\wc{\xi}_t)$) where $\pi:\S^2\to \Pr$ is the usual quotient map which identifies opposite points on $\S^2$.

\smallskip

The remaining ingredients needed to apply \cite[Proposition 4.17]{BBPS2022} are provided by the following three results. 
In the following, for a metric space $Z$, we denote by $B_{Z}(z,\varepsilon)$ the ball of radius $\varepsilon>0$ with center $z\in Z$. The superscript ${\rm c}$ indicates the complement of a set. 
We denote by $\Pl_t((v,\x),\cdot)$ the Markov transition function of the Lagrangian process $(v_t,\x_t)_t$, i.e., for all $v\in\H(\T^3)$, $\x\in \Dom$ and $t>0$,
\begin{equation}
\label{eq:Pl_definition}
\Pl_t((v,\x),A)= \P((v_t,\x_t)\in A\,|\, (v_0,\x_0)=(v,\x)).
\end{equation}

\begin{proposition}[Approximate controllability]
\label{prop:approximate_controllability}
Let Assumption \ref{ass:Q} be satisfied and let $\sigma$ be as in \eqref{eq:coloring_assumption_noise_2}.
The following results hold for all $t> 0$ and $\a\in\Tcal$.
\begin{enumerate}[{\rm(1)}]
\item\label{it:approximate_controllability0}
The support of $\Pl_{t}((0,\a),\cdot)$ contains $\{0\}\times \Dom$.
\item\label{it:approximate_controllability1} For any $\eta,\eta'\in \Pr$ and $\varepsilon>0$, 
\begin{align*}
\P\big((v_t,\x_t,\xi_t)\in B_{\Hs^\sigma}(0,\varepsilon)\times B_{\Dom}(\a,\varepsilon)\times B_{\Pr}(\eta',\varepsilon)\,|\, 
(v_0,\x_0,\xi_0)=(0,\a,\eta)\big)&>0,\\
\P\big((v_t,\x_t,\wc{\xi}_t)\in B_{\Hs^\sigma}(0,\varepsilon)\times B_{\Dom}(\a,\varepsilon)\times B_{\Pr}(\eta',\varepsilon)\,|\, 
(v_0,\x_0,\wc{\xi}_0)=(0,\a,\eta)\big)&>0.
\end{align*}
\item\label{it:approximate_controllability2} For all $M>0$ and $\varepsilon>0$,
\begin{align*}
\P\big((v_t,\x_t,\A_t)\in B_{\Hs^\sigma}(0,\varepsilon)\times B_{\Dom}(\a,\varepsilon)\times
 B_{\R^{3\times 3}}^{{\rm c}}(0,M)\,|\,
 (v_0,\x_0,\A_0)=(0,\a,\Id)\big)>0.
\end{align*}
\end{enumerate}
\end{proposition}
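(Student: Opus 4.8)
\textbf{Proof proposal for Proposition \ref{prop:approximate_controllability}.}

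The plan is to reduce all three statements to a controllability property for the deterministic control system obtained by replacing the noise $Q\dot W_t$ in \eqref{eq:primitive} with an arbitrary control $g\in L^2(0,t;\Hs_0^\sigma(\T^3))$, using that the law of $(v_s)_{s\le t}$ started from $v_0=0$ charges every neighbourhood (in $C([0,t];\Hs_0^\sigma)$) of the solution driven by any such smooth $g$ — this is the standard Stroock--Varadhan support argument, which applies since $Q$ has dense range under Assumption \ref{ass:Q} (indeed \eqref{eq:coloring_assumption_noise} gives $q_{(\kk,\ell)}\eqsim|\kk|^{-\alpha}$, so the range of $Q$ contains all trigonometric polynomials and is dense). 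Thus it suffices to exhibit, for each target, a control $g$ steering the augmented deterministic system close to it. For \eqref{it:approximate_controllability0}, since $w(\cdot)$ is linear and continuous, a control driving $v$ near $0$ in $\Hs_0^\sigma$ on $[0,t]$ automatically drives $u=(v,w(v))$ small uniformly, hence $\phi_s$ stays near the identity shifted by the (small) drift; to reach an arbitrary $\a\in\T^3$ one first uses a non-small control on $[0,t/2]$ realizing a prescribed mean horizontal velocity (a constant vector field $c\in\R^2$ is an exact steady state of the nonlinearity, since $(c\cdot\nabla_{x,y})c=0$ and $w(c)=0$), transporting $\x$ by $tc/2$ — and $\{tc/2 \bmod \Z^2\,:\,c\in\R^2\}$ covers $\T^2_{x,y}$ — while the vertical coordinate is moved using shear-type flows as in \cite[Section 7]{BBPS2022}, adapted to produce a nonzero vertical velocity $w(v)$ via a horizontally-divergent $v$; then one relaxes $v$ back to a small neighbourhood of $0$ on $[t/2,t]$.

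For \eqref{it:approximate_controllability1} and \eqref{it:approximate_controllability2} the key input is the Lie-bracket/H\"ormander-type spanning already established for the projective and Jacobian systems in Section \ref{s:strong_Feller} (Subsection \ref{ss:nondegeneracy}): the family of vector fields generated by the velocity fields one can realize, together with their brackets, acts transitively on $\T^3\times\Pr$ and acts on the Jacobian with a subgroup of $\SL_3(\R)$ whose action on a fixed direction is unbounded. Concretely, for \eqref{it:approximate_controllability1} I would build $g$ in three stages: first transport $\x$ near $\a$ keeping $v$ controlled as above; then, holding $\x\approx\a$, activate a sequence of shear flows whose linearizations $\nabla u(\x_s)$ are (approximately) prescribed matrices, using the classical fact that a suitable alternating composition of rotations and one hyperbolic (shear) matrix generates a dense subgroup of $\SL_3(\R)$ — by continuity of the ODE \eqref{eq:projective_1}--\eqref{eq:projective_2} in the coefficients this steers $\xi_s$ (resp.\ $\wc\xi_s$) into $B_{\Pr}(\eta',\varepsilon)$; finally relax $v$ to $B_{\Hs^\sigma}(0,\varepsilon)$. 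For \eqref{it:approximate_controllability2} the same shear construction, iterated, makes $|\A_s|$ exceed any $M$: a single hyperbolic shear applied long enough (or repeatedly) has operator norm growing without bound, and this is preserved under the small perturbation needed to simultaneously keep $\x_s\approx\a$ and $v_s$ small, by Gr\"onwall-type continuous dependence of \eqref{eq:equation_A_cocycle} on $\nabla u$. Throughout, one must check that each control can be chosen in $\Hs_0^\sigma(\T^3)$ and yields solutions in $C([0,t];\Hs_0^\sigma)$, which follows from Proposition \ref{prop:global_well_posedness} and its deterministic analogue (local well-posedness with the same energy estimates, the control playing the role of a smooth forcing).

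\textbf{Main obstacle.} The delicate point is the vertical direction. Because $w_t=w(v_t)$ is not an independent control but the reconstructed field \eqref{eq:def_w}, and because the drift in \eqref{eq:lagrangian_flow_strategy} cannot be made to vanish identically while $v$ is moved, one cannot directly prescribe $\nabla u(\x_s)$ to be an arbitrary matrix — only those arising as $\nabla(v,w(v))$ for admissible $v$, which is a proper (anisotropic) subclass. The real work is therefore to show that the flows realizable under this constraint, together with their brackets, still generate enough of $\SL_3(\R)$ and move the base point transitively on $\T^3$; this is exactly where the shear-flow constructions of \cite[Section 7]{BBPS2022} must be extended to the PE setting, and where the explicit algebra of Section \ref{s:irreduc_controllability} enters. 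The second, more technical, obstacle is bookkeeping the errors: the support theorem only gives closeness in $C([0,t];\Hs_0^\sigma)$, so one must propagate this through the (locally Lipschitz) dependence of $\phi_s$, $\A_s$, $\xi_s$ on $u$, uniformly on the relevant bounded sets — routine given $\sigma>9/2$ and the embedding $\Hs^\sigma\embed C^{\eta}$ with $\eta>3$, but it needs to be stated carefully.
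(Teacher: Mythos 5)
Your high-level strategy---exact controllability of the deterministic control system followed by a support/stability argument---is indeed what the paper does (Lemmas \ref{l:controllability}, \ref{l:controllability_jacobian}, and \ref{l:non_degeneracy}). However, there are concrete gaps.

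First, your construction for the horizontal transport is wrong. You propose using a constant vector field $v\equiv c\in\R^2$, noting it is a steady state. But the state space is $\Hs_0^\sigma(\T^3)$, which requires $\int_{\T^3}v\,\dd\x=0$, and this constraint is preserved by the controlled dynamics: the control $Qg$ takes values in the mean-zero space $\Hs_0^\sigma(\T^3)$, the Laplacian preserves mean zero, and the nonlinearity $\p[(v\cdot\nabla_{x,y})v+w(v)\partial_z v]$ is a divergence and integrates to zero. So starting from $v_0=0$, the process never leaves $\Ls^2_0$, and a nonzero constant field is unreachable. The correct device (following \cite[Section 7]{BBPS2022}, which the paper invokes for the horizontal part) is a mean-zero shear flow, which transports particles despite having zero spatial average. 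More importantly, you defer exactly the novel part: the vertical transport. The paper's Lemma \ref{l:controllability}, Step 1, constructs an explicit $v_{1,t}(\x)=\varphi_t(0,f(z)\sin(y-b))^\top$ with $f$ mean-free and $f\equiv 1$ on $[0,c']$, so that $w(v_1)=\varphi_t\cos(y-b)F(z)$ and the vertical ODE reduces to $\dot z_t=\varphi_t F(z_t)=\varphi_t z_t$, solvable in closed form; this is the specific resolution of the obstacle you correctly identify but do not address.

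Second, your route to items \eqref{it:approximate_controllability1} and \eqref{it:approximate_controllability2} via the H\"ormander condition of Subsection \ref{ss:nondegeneracy} and density of a generated subgroup of $\SL_3(\R)$ is not the paper's argument and is, as stated, not obviously correct: the H\"ormander spanning (Lemma \ref{l:spanning_condition}) is a pointwise bracket condition used for the partial Malliavin matrix bound and the strong Feller property, and it does not by itself furnish the global, exact statement that one can return $v$ to $0$ while moving the finite-dimensional component to an arbitrary target; this is an infinite-dimensional constraint that Chow--Rashevskii-type arguments on the finite-dimensional factor do not see. The paper instead constructs, in Step 2 of Lemma \ref{l:controllability}, an explicit velocity $v_{2,t}(\x)=\psi_t(0,f(z)\cos(y-b'))^\top$ with $f(c')=0$, $\dot f(c')=1$, $\int_0^{c'}f=1$, so that $u_{2,t}(\a')=0$ (the point is fixed) and $\nabla u_{2,t}(\a')$ is a rotation generator around the $x$-axis; together with the rotation around the $z$-axis inherited from the 2D Navier--Stokes case, this yields transitive action on $\S^2$, which is all that is needed for \eqref{it:approximate_controllability1}. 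For \eqref{it:approximate_controllability2}, the paper uses a single $z$-independent 2D hyperbolic shear (Lemma \ref{l:controllability_jacobian}), which agrees with your observation that one hyperbolic matrix iterated suffices; generating a dense subgroup of $\SL_3(\R)$ is unnecessary.
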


Before stating the next result, let us recall the definition of strong Feller property. A Markov process $(z_t)_t$ on a Polish space $Z$ with Markov transition function $Q_t(z,\cdot)=\P(z_t\in \cdot\,|\, z_0=z)$ is said to \emph{strong Feller} if the mapping 
$$
\textstyle{Z\ni z\mapsto \E_z[
\varphi(z_t)]=\int_{Z} \varphi(z')\,\dd Q_t (z,\dd z')}$$ 
is \emph{continuous} for all \emph{bounded} measurable observable $\varphi:Z\to \R$.

\begin{proposition}[Strong Feller -- Projective process]
\label{prop:strong_feller_projective}
Let Assumption \ref{ass:Q} be satisfied and let $\sigma$ be as in \eqref{eq:coloring_assumption_noise_2}.
Then the Markov semigroups associated with the processes $(v_t,\x_t,\xi_t)_t$ and $(v_t,\x_t,\wc{\xi}_t)_t$ are strong Feller in 
$
\H(\T^3)\times \Dom\times \Pr.
$
\end{proposition}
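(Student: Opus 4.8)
The plan is to establish the strong Feller property for the joint (velocity, position, projective direction) process by a Malliavin calculus / Bismut--Elworthy--Li type argument, transferring the nondegeneracy of the noise acting on $v_t$ through the coupled system $(v_t, \x_t, \xi_t)$ via a H\"ormander-type condition. First I would reduce the claim to a \emph{local} version by means of a cut-off: since the velocity field $v_t$ takes values in $C([0,\infty);H^\sigma(\T^3;\R^2))$ with $\sigma > 9/2$ (Proposition \ref{prop:global_well_posedness}), one localizes on the event $\{\sup_{[0,t]}\|v_s\|_{\Hs^\sigma} \le r\}$ and introduces the augmented process $V_t^{(r)}$ on a truncated state space $\HL$ (as announced in Subsection \ref{ss:strong_feller_via_cut_off}); a standard argument then lets one pass from the strong Feller property of each $P_t^{(r)}$ to that of the full semigroup $P_t$, because the escape probabilities can be made arbitrarily small uniformly on bounded sets.

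The core step is the gradient estimate. On the truncated level I would work with the augmented process $z_t = (v_t,\x_t,\xi_t)$ (and separately $\wc{z}_t$), compute its Malliavin derivative $\D z_t$ along the noise, and introduce the partial Malliavin matrix $\Ma$ (Subsection \ref{ss:gradient_estimates}). The key structural input is that, although the noise enters \eqref{eq:primitive} only through $Q\dot W_t$ and \emph{does not} directly force $\x_t$ or $\xi_t$ (indeed the vertical component $w(v_t)$ is a nonlocal functional of $v_t$), the Lie-bracket structure of the drift vector fields in the coupled system $(v,\x,\xi)$ spreads the randomness to all components; this is the H\"ormander-type condition to be verified in Section \ref{s:strong_Feller} (Subsection \ref{ss:nondegeneracy}). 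Granting invertibility of $\Ma$ with inverse moments of every order (uniformly on the truncated state space), one obtains a Bismut-type formula
\begin{equation*}
\nabla_h \big(\E_z[\varphi(z_t)]\big) = \E_z\big[\varphi(z_t)\, \delta(\cdot)\big]
\end{equation*}
for a suitable Skorohod integrand built from $\Ma^{-1}$, $\D z_t$, and $\nabla_h z_t$, which immediately yields
$|\nabla_h \E_z[\varphi(z_t)]| \le C(z,t)\,\|\varphi\|_\infty$ and hence (local, then global) strong Feller; the case of $\wc{\xi}_t$ is identical with $\nabla u_t$ replaced by $-(\nabla u_t)^\top$ in \eqref{eq:projective_2}.

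I expect the main obstacle to be precisely the verification of the nondegeneracy/H\"ormander condition for the coupled system and the accompanying uniform inverse-moment bounds on $\Ma^{-1}$. The difficulty is twofold: first, the geometry of the index set $\z$ is anisotropic --- modes in $\plane$ carry only one polarization --- so one must check that the brackets generated from the forced modes still span $\R^3$ at the level of $\nabla\phi^t$ and its action on $\xi$, even near configurations where $\xi$ is aligned with a distinguished direction; second, the coupling to $\x_t$ and $\xi_t$ involves $\nabla u_t$ and $w(v_t)$, which are only as regular as the (finite but high) Sobolev regularity $H^\sigma$ permits, so the Malliavin differentiability and the moment estimates must be carried out with care on the truncated state space where these quantities are controlled. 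The high-order energy estimates of Section \ref{s:well_posedness_Hsigma}, together with the $L^p$-integrability for invariant measures (Proposition \ref{prop:integrability_MET_L1} / Theorem \ref{t:integrability_MET}), are exactly what make these bounds available, and I would lean on them throughout.
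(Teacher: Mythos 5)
Your high-level strategy matches the paper's: reduce to a cutoff system via the $H^\sigma$ energy estimates of Theorem \ref{t:high_order_estimates}, then prove a Bismut/Malliavin gradient bound for the truncated semigroup using a partial Malliavin matrix and a H\"ormander-type spanning argument, and handle $\wc{\xi}$ by replacing $\nabla u$ with $-(\nabla u)^\top$. You also correctly identify the two genuine structural difficulties --- the anisotropy of $\z$ (single polarization on $\plane$) and the fact that the vertical velocity $w(v)$ is a nonlocal functional of $v$, so $\x_t$ and $\xi_t$ see no direct forcing. These are exactly the issues the paper confronts in Subsection \ref{ss:nondegeneracy}.

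However, there is a genuine gap: your augmented process is $z_t = (v_t,\x_t,\xi_t)$, whereas the paper works with $V_t = (v_t,\x_t,\xi_t,\eta_t)$, where $\eta_t$ is an auxiliary $\R^6$-valued Brownian motion independent of $W^{(v)}$. This extra coordinate is not cosmetic. The truncated nonlinearity \eqref{eq:Ftruncation_nonlinearity} has the form $F^{(r)}(\v) = [1-\chi_{3r}(\|v\|_{H^\sigma})]F(\v) + \chi_r(\|v\|_{H^\sigma})H(\xi,\eta)$, so when $\|v\|_{H^\sigma}$ is large the cutoff kills the nonlinear coupling $F$ that transmits noise from $v$ to $(\x,\xi)$; the process $(\x_t,\xi_t)$ then becomes frozen and the H\"ormander bracket condition you rely on fails outright. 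The term $H(\xi,\eta)$, active precisely in that regime, injects randomness from $\eta_t$ directly into the $\x$ and $\xi$ equations and is what rescues the spanning condition (Step 1 of Lemma \ref{l:spanning_condition}), at the price of the $(1+|\eta|)^{b}$ growth appearing in Lemma \ref{l:invertibility_matrix}. Without $\eta_t$, your partial Malliavin matrix $\Ma$ is degenerate on a set of positive measure in the truncated state space, and the claimed uniform inverse-moment bounds do not hold. A secondary, more minor inaccuracy: the invariant-measure integrability of Theorem \ref{t:integrability_MET}/Proposition \ref{prop:integrability_MET_L1} is not an input to the strong Feller argument --- it is used to verify condition (H2) for the multiplicative ergodic theorem --- whereas what the cutoff reduction (Lemma \ref{l:strong_feller_reduction}) actually needs is the pathwise $H^\sigma$ tail bound of Theorem \ref{t:high_order_estimates}\eqref{it:high_order_estimates2}.
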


\begin{proposition}[Weak irreducibility -- Auxiliary processes]
\label{prop:weak_irreducibility}
Let Assumption \ref{ass:Q} be satisfied and let $\sigma$ be as in \eqref{eq:coloring_assumption_noise_2}.
Then the following holds.
\begin{itemize}
\item The support of any stationary measure for the Lagrangian process $(u_t,\x_t)$ on $\H(\T^3)\times  \Dom$ must contain $\{0\}\times \Dom$.
\item The support of any stationary measure for the projective processes $(u_t,\x_t,\xi_t)$ and $(u_t,\x_t,\wc{\xi}_t)$ on $\H(\T^3)\times \Dom\times \Pr$ must contain $\{0\}\times \Dom\times \Pr$.
\end{itemize}
\end{proposition}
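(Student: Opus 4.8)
\textbf{Proof proposal for Proposition \ref{prop:weak_irreducibility}.}

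The plan is to deduce the support statements from the approximate controllability results in Proposition \ref{prop:approximate_controllability} together with a standard argument about supports of invariant measures of strong Feller (or merely Feller) Markov processes. First I would recall the general principle: if $(z_t)_t$ is a Markov process on a Polish space $Z$ with transition kernel $Q_t$, $\nu$ is a stationary measure, and $z_*\in Z$ is such that $Q_t(z,\cdot)$ charges every neighborhood of $z_*$ for some $z$ in $\supp\nu$ and some $t>0$, then $z_*\in\supp\nu$; more generally, $\supp\nu$ is invariant under the ``reachable set'' relation, so it suffices to show every point of $Z$ is reachable (with positive probability, in the weak sense of charging arbitrary neighborhoods) from some point already known to lie in $\supp\nu$. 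Since $\nu$ is a probability measure, $\supp\nu\neq\emptyset$, so there exists at least one point $(v_0,\x_0)$ (resp.\ $(v_0,\x_0,\eta_0)$) in it.

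For the first bullet: let $\nu$ be stationary for $(u_t,\x_t)$ on $\H(\T^3)\times\T^3$ and pick $(v_0,\x_0)\in\supp\nu$. The key input is a ``reachability of zero velocity'' fact — that from any $v_0$ the velocity component can be driven arbitrarily close to $0$ with positive probability in finite time; this is exactly (or follows from) the irreducibility/controllability toward $0$ that underlies Theorem \ref{t:uniqueness_regularity_invariant} and is implicit in Proposition \ref{prop:approximate_controllability}\eqref{it:approximate_controllability0}. Concretely, I would use that $\Pl_t((v_0,\x_0),\cdot)$ charges every neighborhood of some point of the form $(0,\a_1)$; hence $(0,\a_1)\in\supp\nu$. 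Then invoke Proposition \ref{prop:approximate_controllability}\eqref{it:approximate_controllability0}: from $(0,\a_1)$ the support of $\Pl_t((0,\a_1),\cdot)$ contains $\{0\}\times\T^3$, so every $(0,\a)$ with $\a\in\T^3$ lies in $\supp\nu$. This gives $\{0\}\times\T^3\subseteq\supp\nu$.

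For the second bullet: let $\nu$ be stationary for the projective process $(u_t,\x_t,\xi_t)$ (the argument for $\wc\xi_t$ is identical, using the second line of Proposition \ref{prop:approximate_controllability}\eqref{it:approximate_controllability1}). Pick $(v_0,\x_0,\eta_0)\in\supp\nu$. First drive the velocity to $0$ as above — more precisely, since the velocity dynamics decouples from $(\x_t,\xi_t)$, the argument that $\Pl$ charges neighborhoods of $\{0\}\times(\cdot)$ lifts to the projective process, giving some $(0,\a_1,\eta_1)\in\supp\nu$. Then apply Proposition \ref{prop:approximate_controllability}\eqref{it:approximate_controllability1} with $v_0=0$: for every $\a\in\T^3$ and $\eta'\in\Pr$ and $\varepsilon>0$, the probability of landing in $B_{\Hs^\sigma}(0,\varepsilon)\times B_{\T^3}(\a,\varepsilon)\times B_{\Pr}(\eta',\varepsilon)$ starting from $(0,\a_1,\eta_1)$ is positive, whence $(0,\a,\eta')\in\supp\nu$. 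As $\a,\eta'$ are arbitrary this yields $\{0\}\times\T^3\times\Pr\subseteq\supp\nu$. I would write the ``reachability implies membership in support'' step once as a short lemma and then apply it twice; the only mild subtlety is the Markov/Chapman–Kolmogorov chaining that lets one compose ``reach a neighborhood of $(0,\a_1,\eta_1)$'' with ``from there reach a neighborhood of $(0,\a,\eta')$'' — this is routine using the Feller property and the semigroup identity for $\Pl_t$.

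The main obstacle, such as it is, is not in the support bookkeeping but in making sure the ``drive the velocity to $0$'' step is available in the precise form needed for the projective process: one must check that approximate controllability of $(v_t,\x_t)$ toward $\{0\}\times\T^3$ (Proposition \ref{prop:approximate_controllability}\eqref{it:approximate_controllability0}, together with the controllability toward $0$ of the PE dynamics used for Theorem \ref{t:uniqueness_regularity_invariant}) can be upgraded to the triple $(v_t,\x_t,\xi_t)$ with control of where $\xi_t$ lands — but this is precisely what Proposition \ref{prop:approximate_controllability}\eqref{it:approximate_controllability1} provides once $v_0$ is already near $0$, and a short continuity/concatenation argument bridges an arbitrary $v_0$ to $v_0=0$. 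Everything else is a direct consequence of the already-established controllability statements.
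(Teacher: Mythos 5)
Your proposal is correct and follows the same high-level strategy as the paper's proof, but the key ``drive the velocity to $0$'' step is implemented differently. The paper first invokes Proposition~\ref{prop:regularity_invariant_measure} to deduce that any invariant measure $\invl$ concentrates on sets $\B=\{\|v\|_{\Hs^{\sigma'}}\leq N\}\times\T^3$ for some $\sigma'\in(\sigma,\sigma+1)$, then uses the \emph{unforced} deterministic PEs (which decay to $0$ at a rate uniform over $\B$ thanks to the $\Hs^{\sigma'}$ bound) plus a stability argument to get a single deterministic time $T_\delta$ after which the noisy trajectory is near $\{0\}\times\T^3$ with positive probability uniformly over $(v,\x)\in\B$; it then integrates this against $\invl$ and applies Lemma~\ref{l:non_degeneracy}. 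You instead pick a single point $(v_0,\x_0)\in\supp\nu$, invoke the controllability toward $0$ from arbitrary $v_0\in\Hs_0^\sigma$ (which is Lemma~\ref{l:controllability_primitive} with $\vf=0$ together with the Wiener-support/stability machinery from the proof of Proposition~\ref{prop:irreducibility_Appendix} — \emph{not}, as you write, implicit in Proposition~\ref{prop:approximate_controllability}\eqref{it:approximate_controllability0}, which only treats initial data $(0,\a)$), and then chain via the Feller property and the abstract reachability-implies-membership lemma. Both routes are sound: the paper's version yields a uniform-in-initial-data reachability time (pleasantly quantitative, and the same device is used in \cite[Prop.~2.15]{BBPS2022}), at the cost of invoking the regularity result Proposition~\ref{prop:regularity_invariant_measure}; your version is more economical in that it sidesteps the regularity result entirely and reuses the controllability-to-zero already established for the unique-ergodicity proof. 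One small caveat: when lifting the controllability from the velocity process to the Lagrangian and projective processes, you should note explicitly that closeness of the noisy $v$ to the controlled $v^{(g)}$ in $C([0,T];\Hs^\sigma)$ (with $\sigma>9/2$) forces closeness of the associated flows $\phi^t$ and directions $\xi_t$; this is the same continuity step used in Lemma~\ref{l:non_degeneracy} and is routine, but should not be left tacit.
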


Propositions \ref{prop:strong_feller_projective}, \ref{prop:weak_irreducibility}, and results from Appendix \ref{app:useful_results} yield the following uniqueness result for ergodic measures of the Lagrangian and projective processes.

\begin{corollary}[Uniqueness of ergodic measures -- Auxiliary processes]
\label{cor:uniqueness_projective}
Let Assumption \ref{ass:Q} be satisfied and let $\sigma$ be as in \eqref{eq:coloring_assumption_noise_2}.
The following are satisfied.
\begin{itemize}
\item
Let $\mu$ be the unique invariant measure for \eqref{eq:primitive_full} on $\H(\T^3)$ given by Theorem \ref{t:uniqueness_regularity_invariant}. The unique ergodic measure on $\H(\T^3)\times \Dom$ for the Lagrangian process $(v_t,\x_t)$ is given by $\mu\otimes \Leb$.
\item
The projective process $(v_t,\x_t,\xi_t)_t$ has a unique ergodic measure on $\Hs^\sigma_0(\T^3)\times \Dom\times \Pr$, and it is the unique element of the set
$$
\big\{\nu\text{ is an invariant measure of $(v_t,\x_t,\xi_t)_t$ satisfying $  \nu(\cdot\times \Pr)=\mu\otimes \Leb$}\big\}.
$$
The same holds with $(v_t,\x_t,\xi_t)_t$ replaced by $(v_t,\x_t,\wc{\xi}_t)_t$.
\end{itemize}
\end{corollary}

\begin{proof}
We first consider the Lagrangian process. The invariance of $\mu\otimes \Leb$ is immediate due to $\nabla \cdot u_t=0$. Moreover, due to Proposition \ref{prop:strong_feller_projective}, $\mu\otimes \Leb$ is also ergodicity due to Lemma \ref{l:ergodicity_supp_product} (or via Corollary \ref{cor:ergodicity_supp} and Theorem \ref{t:uniqueness_regularity_invariant}).
The uniqueness part now follows from a well-known variant of Khasminskii's uniqueness result (see, for example, \cite[Proposition 4.1.1]{DPZ_ergodicity}) and Propositions \ref{prop:strong_feller_projective}-\ref{prop:weak_irreducibility}.

Second, we consider the projective process $(v_t,\x_t,\xi_t)_t$, the other being similar. 
As above, the uniqueness of ergodic measures of  $(v_t,\x_t,\xi_t)_t$ follows from Propositions \ref{prop:strong_feller_projective}-\ref{prop:weak_irreducibility} and the above-mentioned variant of the Khasminskii theorem. Let $\Set$ be the set of invariant measures of $(v_t,\x_t,\xi_t)_t$ satisfying $  \nu(\cdot\times \Pr)=\mu\otimes \Leb$. This set is not empty due to the Krylov-Bogoliubov procedure, cf., \cite[Lemma 3.15]{BBPS2022} with $\mu$ replaced by $\mu\otimes \Leb$. By the Krein-Milman and Prokhorov theorems, the convex set $\Set$ has extreme points, and $\Set$ coincides with the closed convex hull of its extreme points. By Lemma \ref{l:uniqueness_z_factor} and the first part of this corollary, it follows that the extreme points of $\Set$ are ergodic measures. Hence, an ergodic measure for $(v_t,\x_t,\xi_t)_t$ exists, is unique among ergodic measures, and is the unique extreme point of $\Set$. 
\end{proof}

Building upon Propositions \ref{prop:RDS}–\ref{prop:weak_irreducibility} and Corollary \ref{cor:uniqueness_projective}, the general construction presented in \cite[Section 4]{BBPS2022} can be applied to prove 
Theorems \ref{t:positive_lyapunov_exponent} and \ref{t:positive_lyapunov_exponent_II}.

\begin{proof}[Proof of Theorems \ref{t:positive_lyapunov_exponent} and \ref{t:positive_lyapunov_exponent_II}]
We begin by proving \eqref{eq:positive_lyapunov_exponent1} (resp., \eqref{eq:positive_lyapunov_exponent2}). The latter follows from Propositions \ref{prop:RDS}-\ref{prop:approximate_controllability} and Theorem \ref{t:uniqueness_regularity_invariant} by applying combining the multiplicative ergodic theorem  \cite[Theorem 3.4.1]{A98_Random} (resp., random multiplicative ergodic theorem \cite[Theorem III.1.2]{K86_ergodic_theory}) and
\cite[Proposition 4.17]{BBPS2022} to the linear cocycle $\A_{\om,(v,\x)}^t$ induced by \eqref{eq:primitive_full}. 
Let us note that multiplicative ergodic theorems ensure the existence of the limits \eqref{eq:positive_lyapunov_exponent1} and \eqref{eq:positive_lyapunov_exponent2} only for $\mu\otimes \Leb$-a.a.\ $(v,x)\in \H(\T^3)\times\Dom$ (here we also used Theorem \ref{t:uniqueness_regularity_invariant}). 
As noticed in \cite[Remark 2.2]{BBPS2022}, the strong Feller property as in Proposition \ref{prop:strong_feller_projective} allow to upgrade the statement to for \emph{all} $(v,\x)\in \H(\T^3)\times \Dom$. For the reader's convenience, we include some details. 
We only comment on the case \eqref{eq:positive_lyapunov_exponent1} as the other is similar. Suppose that \eqref{eq:positive_lyapunov_exponent1} holds for a.a.\ $(v,\x)\in  \H(\T^3)\times \Dom$. 
Let 
$
\KK\stackrel{{\rm def}}{=}\{(\om,v,\x)\in \O\times \H(\T^3)\times \Dom
\,:\, \lim_{t\to \infty}t^{-1}\log |\nabla \phi^t_{\om,v}(\x)|=\lambda\}
$
and set 
$$
\psi_{\KK}(v,\x)\stackrel{{\rm def}}{=} \E [\one_{\KK}(\cdot,v,\x)]=\E [\one_{\KK}(\theta^t\cdot,v,\x)] \ \ \text{ for all }t>0,
$$  
where the last equality follows from $\P\cdot(\theta^{t})^{-1}=\P$. From the assumption it follows that $\psi_{\KK}(v,\x)=1$ for $\mu\otimes \Leb$-a.a.\ $(v,\x)\in \H(\T^3)\times \Dom$.
By the latter and invariance of $\mu\otimes \Leb$ for the Lagrangian process (with $\Pl_t$ as in \eqref{eq:Pl_definition}), 
$$
\textstyle{1=\int_{\H(\T^3)\times \T^3} 
\psi_{\KK}(v,\x)\, \dd \mu(v)\, \dd \x = 
\int_{\H(\T^3)\times \T^3} 
\Pl_t \psi_{\KK}(v,\x)\, \dd \mu(v)\, \dd \x} $$
 for all $t>0$ (in the following, we pick $t=1$).
Hence, $\Pl_1 \psi_{\KK}(v,\x)=1$ for $\mu\otimes \Leb$-a.a.\ $(v,\x)\in \H(\T^3)\times \Dom$, and by strong Feller property of Proposition \ref{prop:strong_feller_projective}, we have $\Pl_1 \psi_{\KK}(v,\x)=1$ for \emph{all} $(v,\x)\in \H(\T^3)\times \Dom$. Now, the independence of $\theta^1\om$ and $(u_1(\om),\x_1(\om))$ yield
$$
\textstyle{\int_{\O}\one_{\KK}(\theta^1\om,v_1(\om),\x_1(\om))\,\dd \P(\om)=
\Pl_1 \psi_{\KK}(v,\x)= 1}.
$$
Thus, $\one_{\KK}(\theta^1\cdot,v_1,\x_1)$ a.s. 
Hence, the conclusion follows by noticing that 
$(\om,v,\x)\in \KK$ if and only if $(\theta^1\om,v_1(\om),\x_1(\om))\in \KK$ due to the invertibility of $\nabla\phi^t_{\cdot,\cdot}(\cdot)$.

Theorem \ref{t:positive_lyapunov_exponent_II} follows similarly by considering the cocycle $\wc{\A}^t_{\om,(v,\x)}$ in \eqref{eq:cocycle_hat}. Let us emphasise that an analogue of Proposition \ref{prop:approximate_controllability}\eqref{it:approximate_controllability2} is not needed for $\wc{\A}_{\om,(v,\x)}^t$ because of the relation between $\wc{\A}_{\om,(v,\x)}^t$ and $\A_{\om,(v,\x)}^t$, see  \cite[Theorem 5.1.1]{A98_Random} and \cite[Proposition 3.17]{BBPS2022}. However, Propositions \ref{prop:strong_feller_projective} and \ref{prop:weak_irreducibility} are needed also for the process $(u_t,\x_t,\wc{\xi}_t)$  via Corollary \ref{cor:uniqueness_projective} to apply \cite[Theorem III.1.2]{K86_ergodic_theory}.
\end{proof}

In summary, we have derived Theorems \ref{t:positive_lyapunov_exponent} and \ref{t:positive_lyapunov_exponent_II} under the assumption that Propositions \ref{prop:RDS}-\ref{prop:weak_irreducibility} hold. The proofs of these propositions are presented in the following sections. Specifically, Proposition \ref{prop:RDS} is proved in Subsection \ref{ss:RDS}, Propositions \ref{prop:approximate_controllability} and \ref{prop:weak_irreducibility} are covered in Section \ref{s:irreduc_controllability}, and Proposition \ref{prop:strong_feller_projective} is established in Subsection \ref{ss:strong_feller_via_cut_off}. The results in Subsection \ref{ss:global_well_statement} are proven in Section \ref{s:well_posedness_Hsigma}. 
The subsequent section is devoted to the proof of Proposition \ref{prop:integrability_MET_L1}.

\section{Integrability of invariant measures}
\label{s:integrability_inv_measure_u}
In this section, we prove Proposition \ref{prop:integrability_MET_L1}. 
As discussed in Subsection \ref{ss:proofs_further_results}, Proposition \ref{prop:integrability_MET_L1} was used to obtain the existence of the (top) Lyapunov exponent in Theorems \ref{t:intro} and \ref{t:positive_lyapunov_exponent}.
Here, we prove a more refined result compared to Proposition \ref{prop:integrability_MET_L1} under minimal requirements on $Q$.
More precisely, in this subsection, we work under the following assumption.

\begin{assumption}
\label{ass:assumption_noise_integrability}
Suppose that $W$ and $Q$ are an $\Ls^2_0(\T^3)$-cylindrical Brownian motion and a Hilbert-Schmidt operator in $ \calL_2(\Ls_0^2(\T^3),\Hs_0^1(\T^3))$, respectively.
\end{assumption}

Existence of global unique $H^1$-solution to the stochastic PEs \eqref{eq:primitive_full} (see Definition \ref{def:solution}) under Assumption \ref{ass:assumption_noise_integrability} is standard, see e.g., 
\cite{primitive1,Debussche_2012}. 
For notational convenience, as in Proposition \ref{prop:integrability_MET_L1}, we let
$$
\Energy\stackrel{{\rm def}}{=}\|Q\|_{\calL_2(\Ls_0^2(\T^3),\Hs_0^1(\T^3))}.
$$
The following stronger version of Proposition \ref{prop:integrability_MET_L1} is the main result of this section.

\begin{theorem}[$L^p$-Integrability of the velocity field]
\label{t:integrability_MET}
Suppose that Assumption \ref{ass:assumption_noise_integrability} holds.
Let $\mu$ be an invariant measure on $\Hs_0^1(\T^3)$ for the stochastic \emph{PEs} \eqref{eq:primitive_full}. Then, for all $1\leq p<\frac{4}{3}$,
\begin{equation}
\label{eq:integrability_nabla_u_p}
\int_{\Hs_0^1(\T^3)}\int_{\Dom} |\nabla u(\x)|^p\,\dd \x\,\dd \mu(v)\leq C(p,\mathcal{E})
\end{equation}
where $u=(v,w(v))$ and $w(v)$ is uniquely determined by $v$ via the formula \eqref{eq:def_w}.
\end{theorem}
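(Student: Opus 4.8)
The plan is to derive the bound from a suitable It\^o/energy identity for a stationary solution, exploiting the anisotropic smoothing of the PEs. First I would fix an invariant measure $\mu$ on $\Hs_0^1(\T^3)$ and realize a \emph{stationary} solution $(v_t)_{t\in\R}$ of \eqref{eq:primitive_full} with law $\mu$ at every time. By stationarity, any time-integrated estimate of the form $\E\int_0^1 G(v_t)\,\dd t \le C$ immediately yields $\int_{\Hs_0^1} G(v)\,\dd\mu(v)\le C$. So the goal reduces to producing the a priori bound
$$
\E\int_0^1\!\!\int_{\T^3} |\nabla u_t(\x)|^p \,\dd\x\,\dd t \le C(p,\Energy),\qquad 1\le p<\tfrac43 ,
$$
where $u_t=(v_t,w(v_t))$ and $w(v_t)=-\int_0^z\nabla_{x,y}\cdot v_t\,\dd z'$, so that $\nabla u$ involves \emph{second} horizontal derivatives of $v$. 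Since $p<\tfrac43<2$, by H\"older it suffices to control a fractional-order quantity slightly below $L^2$ in space and time, e.g.\ $\E\int_0^1 \|v_t\|_{H^{3/2-\eps}}^{2}\,\dd t$ or a Lorentz/interpolation variant; the key point is that one only needs an estimate that is \emph{subcritical} in the probabilistic integrability, which is exactly what the weak PE energy balance can afford.

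The core of the argument is the standard stochastic energy hierarchy for the PEs, but stopped at the first level that is robust under the poor integrability. Step one: the basic $L^2$ balance. Applying It\^o's formula to $\tfrac12\|v_t\|_{L^2}^2$ and using that the transport terms $(v\cdot\nabla_{x,y})v + w(v)\partial_z v$ are energy-neutral (the hydrostatic Helmholtz projection $\p$ is an $L^2$-orthogonal projection and $\int_{\T^3} u\cdot\nabla(\tfrac12|v|^2)=0$ since $\nabla\cdot u=0$), one gets
$$
\tfrac12\,\dd\|v_t\|_{L^2}^2 + \|\nabla v_t\|_{L^2}^2\,\dd t = \tfrac12\|Q\|_{\HS(\Ls^2_0,\Ls^2_0)}^2\,\dd t + (v_t,Q\,\dd W_t).
$$
Taking expectations over a stationary solution kills the time derivative and the martingale, giving $\E\|\nabla v\|_{L^2}^2 = \tfrac12\|Q\|_{\HS}^2\lesssim \Energy^2$ under $\mu$. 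Step two: this already controls \emph{first} horizontal derivatives of $v$, hence first derivatives of $w(v)$; but $\nabla u$ contains $\partial_z w=-\nabla_{x,y}\cdot v$ (fine) \emph{and} $\nabla_{x,y} w$, which is a second horizontal derivative of $v$. To reach this I would run a second It\^o computation — either on $\tfrac12\|v_t\|_{H^{1}}^2$ (equivalently $\tfrac12\|\nabla v_t\|_{L^2}^2$) as in Cao--Titi / Debussche--Glatt-Holtz--Temam--Ziane, or, more efficiently, on $\tfrac12\int_{\T^3}|\partial_z v_t|^2$ and $\tfrac12\int_{\T^2}|\bar v_t|^2$ for the barotropic mode, splitting $v=\bar v + \tilde v$. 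In these the dangerous term is $\int w(v)\,\partial_z v\cdot \partial_z^2 v$ (resp.\ its $H^1$ analogue); the anisotropic structure $w(v)=-\int_0^z\nabla_{x,y}\cdot v$ lets one integrate by parts in $z$ and estimate it by $\|\nabla_{x,y}v\|_{L^2_{x,y}L^\infty_z}\|\partial_z v\|$-type terms, which after Agmon/Ladyzhenskaya in the horizontal variables and Young's inequality is absorbed into the good terms $\|\Delta v\|_{L^2}^2$ and $\|\nabla\partial_z v\|_{L^2}^2$, at the cost of a polynomial-in-$\|\nabla v\|_{L^2}$ prefactor. Crucially, since we only want the \emph{first moment} of $\|v\|_{H^{3/2-\eps}}^2$ (not of $\|v\|_{H^2}^2$), we interpolate: $\|v\|_{H^{3/2-\eps}}^2 \le \|v\|_{H^1}^{2\theta}\|v\|_{H^2}^{2(1-\theta)}$ with $\theta\to 1$ as $\eps\to 0$, and then use H\"older in $\om$ and in $t$ together with the $\mu$-integrability of $\|\nabla v\|_{L^2}^2$ and the \emph{local-in-time} integrated bound $\E\int_0^1\|\Delta v_t\|_{L^2}^2\,\dd t<\infty$ (which holds off a null set for the stationary solution by the path regularity from \cite{CT07}-type estimates, even if its expectation is not finite — this is where one must be careful). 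The bound $p<\tfrac43$ will emerge precisely from the admissible exponent $\theta$ in the interpolation versus the $L^1(\Omega)$ integrability available: matching $\frac1p$ against the interpolation/H\"older budget gives the threshold $\frac43$.

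The main obstacle — and the reason the theorem is stated with $p<\tfrac43$ rather than $p=2$ — is the \emph{weak probabilistic integrability} of the $H^2$ (second-derivative) energy: the PEs only provide $\E\int_0^1\log(1+\|v_t\|_{H^2})\,\dd t$-type bounds (cf.\ \cite[Theorem 1.9]{GHKVZ14}) or, at best, bounds with a small power, not $\E\int_0^1\|v_t\|_{H^2}^2\,\dd t<\infty$. So one cannot simply take $p=2$ and integrate the second-order energy identity against $\mu$. The trick I would use is to \emph{trade spatial regularity for probabilistic integrability}: one needs less than a full $H^2$ derivative (only $H^{3/2-\eps}$, i.e.\ $3/2-\eps - 1 = 1/2-\eps$ more than the globally controlled $H^1$), and via interpolation this costs only a power $1-\theta$ of the $H^2$ norm, which can be made small enough to pair with the poor $L^q(\Omega)$, $q<2$, integrability. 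Carrying this out rigorously requires (i) a careful stopping-time argument to justify the It\^o identity for $\|v_t\|_{H^1}^2$ up to a.s.-finite but not $L^1$ stopping times, then pass to the limit by Fatou, and (ii) tracking the polynomial prefactors through the nonlinear estimates to verify the final exponent is exactly $\tfrac43$ — both of which are technical but not conceptually hard. The genuinely PE-specific input is the anisotropic treatment of $w(v)\partial_z v$; everything else is bookkeeping around the weak energy law.
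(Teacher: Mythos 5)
Your setup is correct and matches the paper's: reduce to a stationary solution, use the basic $L^2$ energy balance to get $\int\|\nabla v\|_{L^2}^2\,\dd\mu = \Energy_0/2$, and observe that the obstruction is the second horizontal derivative of $v$ hiding in $\nabla_{x,y}w(v)$. You also correctly identify that the main enemy is the weak probabilistic integrability of the $H^2$ norm. But your Step~2 — interpolating $\|v\|_{H^{3/2-\varepsilon}}^2 \le \|v\|_{H^1}^{2\theta}\|v\|_{H^2}^{2(1-\theta)}$ and paying only ``a small power'' of $\|v\|_{H^2}$ — does not close, and this is the genuine gap. As the paper emphasises (and as you yourself note), the only available $H^2$-control from the literature on invariant measures of the stochastic PEs is \emph{logarithmic}: $\int \log(1+\|v\|_{H^2})\,\dd\mu < \infty$ (cf.\ \cite[Theorem 1.9]{GHKVZ14}). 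Logarithmic integrability does \emph{not} give any polynomial moment $\int\|v\|_{H^2}^{2(1-\theta)}\,\dd\mu < \infty$, no matter how close $\theta$ is to $1$. Your plan also tacitly assumes ``$L^q(\Omega)$, $q<2$, integrability'' of the $H^2$ norm; this is simply not available. So the interpolation budget cannot be matched, and no amount of stopping-time / Fatou bookkeeping repairs it.

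The paper's actual proof goes around this wall rather than through it, and never needs any moment of $\|v\|_{H^2}$. The key inputs are: (i) the observation that one should estimate $\nabla u$ directly in anisotropic spaces rather than $\nabla^2 v$ (cf.\ Remark~\ref{r:L1_integrability_gradient_second_derivatives}); (ii) the elementary splitting $\int_0^{\cdot}v\,\dd z = \big(\int_0^{\cdot}v\,\dd z - \underline{v}\big) + \underline{v}$ and Lemma~\ref{l:estimate_H_negative_mean_free}, which converts the troublesome $\nabla_{x,y}^2$ of the $z$-antiderivative into an $H^{-1}_z$-valued quantity plus a two-dimensional ``barotropic'' remainder; (iii) maximal $L^q$-regularity for the heat equation on anisotropic spaces $L^{p}_{x,y}(H^{-1}_z)$ (Lemma~\ref{l:max_reg_anisotropic_Laplacian}), applied to the mild formulation of the PEs and combined with the invariance of $\mu$ to convert a time-integrated bound into a $\mu$-bound; and (iv) the nonlinear estimate $\|w(v)\partial_z v\|_{L^1_{x,y}(L^2_z)} \lesssim \|\nabla v\|_{L^2}^2$, exploiting the $L^\infty_z$ gain of $w(v)=-\int_0^z\nabla_{x,y}\cdot v$, which feeds back into a quantity whose first $\mu$-moment is controlled by Proposition~\ref{prop:exponential_integrability_mu}. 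The threshold $p<\tfrac43$ arises from the mixed integrabilities in Proposition~\ref{prop:regularity_partial_z_inv_measure} and Lemma~\ref{l:mixed_integrability_invariant_mesures} (essentially from interpolating $L^1_{x,y}(H^\alpha_z)$ against $L^2_{x,y}(H^1_z)$ and from the $L^4_{x,y}(L^2_z)$ moment bound), not from any interpolation against $H^2$. If you want to repair your plan, replace the $H^2$-interpolation step with the mild-formulation/anisotropic-maximal-regularity machinery of Lemmas~\ref{l:estimate_H_negative_mean_free}--\ref{l:max_reg_anisotropic_Laplacian}: it is precisely the tool that converts the good $L^1$-in-$\mu$ estimate on the nonlinearity into $L^p$ control of $\nabla_{x,y}^2 v$ in the weak vertical topology $H^{-1}_z$ without touching $H^2$ moments.
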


We expect that the estimate \eqref{eq:integrability_nabla_u_p} to fail in the usual case $p=2$ (the proof also suggests the optimality of the threeshold $p<\frac{4}{3}$). 
This contrasts with the case of the 2D Navier-Stokes equations, where \eqref{eq:integrability_nabla_u_p} with $p=2$ is a consequence of the energy inequality (see Proposition \ref{prop:exponential_integrability_mu} below).
Due to \eqref{eq:def_w}, another (at first glance) more direct approach to prove 
Proposition \ref{prop:integrability_MET_L1}
is to estimate the following quantity:
\begin{equation}
\label{eq:integrability_second_order_derivatives_v}
\int_{\Hs_0^1(\T^3)}\int_{\Dom} |\nabla^2 v(\x)|\,\dd \x\,\dd \mu(v)
\end{equation}
where $\mu$ is an invariant measure for the stochastic PEs \eqref{eq:primitive_full}. 
We do not know whether the above integral is finite. Indeed, there are serious problems when dealing with \eqref{eq:integrability_second_order_derivatives_v} related to the failure of maximal $L^1$-regularity estimates. The reader is referred to Remark \ref{r:L1_integrability_gradient_second_derivatives} for further comments. 
In our case, working directly with $u=(v,w(v))$, we can exploit the smoothing effects of $v\mapsto w(v)$, making Theorem \ref{t:integrability_MET} easier than understanding the boundedness of \eqref{eq:integrability_second_order_derivatives_v}.

\smallskip

The proof of Theorem \ref{t:integrability_MET} is given in Subsection \ref{ss:proof_integrability_MET} and requires several preliminary results. Here, we state some of them as they are of independent interest. As usual, the starting point is the following estimates, which are a consequence of the standard energy balances (see e.g.\ \cite[Appendix A]{HM06_annals} for a similar situation).

\begin{proposition}
\label{prop:exponential_integrability_mu}
Suppose that Assumption \ref{ass:assumption_noise_integrability} holds.
Let $\mu$ be an invariant measure on $\Hs_0^1(\T^3)$ for the stochastic \emph{PEs} \eqref{eq:primitive_full} and denote by $\Energy_0\stackrel{{\rm def}}{=}\|Q\|_{\calL_2(\Ls_0^2(\T^3),\Ls_0^2(\T^3))}$ the injected energy by the noise per unit of mass. Then,
\begin{align}
\label{eq:exponential_integrability_mu_1}
\int_{\Hs_0^1(\T^3)} \int_{\T^3} |\nabla v(\x)|^2\,\dd \x \,\dd \mu(v)&=\frac{\Energy_0}{2} , & &
\text{\emph{(Gradient integrability)}}\\
\label{eq:exponential_integrability_mu_2}
\int_{\Hs_0^1(\T^3)} \exp\Big(\eta \int_{\T^3} | v(\x)|^2\,\dd \x \Big)\,\dd \mu(v)&\leq 2,&& \text{\emph{(Exponential integrability)}}
\end{align}
where $\eta=1/(2\Energy_0)$.
\end{proposition}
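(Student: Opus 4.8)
The plan is to derive both estimates from the standard energy balance for \eqref{eq:primitive_full} in $\Ls^2_0(\T^3)$, combined with a stationarity argument. First I would apply the Itô formula to $t\mapsto \|v_t\|_{\Ls^2_0(\T^3)}^2$ along the solution $(v_t)_t$. The nonlinear terms drop out upon testing against $v_t$: indeed $\langle \p[(v_t\cdot\nabla_{x,y})v_t + w(v_t)\partial_z v_t],v_t\rangle = \langle (v_t\cdot\nabla_{x,y})v_t + w(v_t)\partial_z v_t,v_t\rangle$ since $v_t$ is already in the range of $\p$, and the latter vanishes by an integration by parts together with the incompressibility relations $\nabla_{x,y}\cdot v_t + \partial_z w(v_t)=0$ and the boundary conditions \eqref{eq:primitive_full1} — this is the usual cancellation for the convective term in the PEs. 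The Laplacian contributes $-2\|\nabla v_t\|_{L^2}^2$, and the noise contributes a martingale part plus the Itô correction $\|Q\|_{\calL_2(\Ls^2_0,\Ls^2_0)}^2 = \Energy_0$. Thus
\begin{equation*}
\dd \|v_t\|_{\Ls^2_0}^2 = \big(-2\|\nabla v_t\|_{L^2}^2 + \Energy_0\big)\,\dd t + 2\langle v_t, Q\,\dd W_t\rangle.
\end{equation*}
Taking expectations under the invariant measure $\mu$ (so that the law of $v_t$ is $\mu$ for every $t$), the left-hand side has zero time-derivative and the martingale term has zero expectation, yielding \eqref{eq:exponential_integrability_mu_1} after dividing by $2$. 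A mild technical point is to justify that the stochastic integral is a genuine martingale (not merely a local one) under $\mu$ and that $\E_\mu\|v\|_{\Ls^2_0}^2 < \infty$; this follows from the construction of $\mu$ via Krylov–Bogoliubov together with the a priori bounds from \cite{GHKVZ14,primitive2}, or by a standard stopping-time/Fatou approximation.

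For the exponential bound \eqref{eq:exponential_integrability_mu_2}, I would run the same computation but now track $\Phi_t := \exp(\eta \|v_t\|_{\Ls^2_0}^2)$ with $\eta = 1/(2\Energy_0)$. By Itô's formula,
\begin{equation*}
\dd \Phi_t = \eta\,\Phi_t\Big[\big(-2\|\nabla v_t\|_{L^2}^2 + \Energy_0\big)\,\dd t + 2\langle v_t, Q\,\dd W_t\rangle + 2\eta\, \|Q^* v_t\|_{\Ls^2_0}^2\,\dd t\Big].
\end{equation*}
Using $\|Q^* v_t\|_{\Ls^2_0}^2 \le \Energy_0 \|v_t\|_{\Ls^2_0}^2 \le C\,\Energy_0\|\nabla v_t\|_{L^2}^2$ by Poincaré (the mean-zero condition is in force), one absorbs the Itô-correction term into the dissipation $-2\eta\Phi_t\|\nabla v_t\|_{L^2}^2$ provided $\eta$ is small enough; with the stated $\eta=1/(2\Energy_0)$ and Poincaré constant normalised on $\T^3$ one gets a clean drift bound of the shape $\dd \Phi_t \le \eta\Phi_t(\Energy_0 - \|\nabla v_t\|_{L^2}^2)\,\dd t + (\text{mart.})$, hence after using Poincaré once more, $\dd \Phi_t \le \eta\Phi_t(\Energy_0 - c\|v_t\|_{\Ls^2_0}^2)\,\dd t + (\text{mart.})$. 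Taking $\mu$-expectations and stationarity kills the left-hand side, giving $\E_\mu[\Phi\cdot(\Energy_0 - c\|v\|_{\Ls^2_0}^2)] \ge 0$, i.e. $\Energy_0\,\E_\mu[\Phi] \ge c\,\E_\mu[\Phi\|v\|_{\Ls^2_0}^2]$. Combining this with the elementary inequality $\Phi\|v\|_{\Ls^2_0}^2 \ge \tfrac{1}{\eta}(\Phi - 1)$ (since $x e^{\eta x}\ge \eta^{-1}(e^{\eta x}-1)$ for $x\ge 0$) and substituting $\eta=1/(2\Energy_0)$, $c$ the Poincaré constant, yields $\E_\mu[\Phi] \le 2$, which is exactly \eqref{eq:exponential_integrability_mu_2}.

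The main obstacle is not the formal computation but its rigorous justification on the invariant measure: one must control the a priori integrability of $\|v\|_{\Ls^2_0}^2$ (and of the exponential) under $\mu$ before the stationarity cancellation can be invoked. The clean way around this is to work first with a stationary solution started from $\mu$, derive the bounds for the stopped process $\Phi_{t\wedge\tau_n}$ with $\tau_n$ the exit time from a ball, pass to the limit using monotone convergence / Fatou on the dissipation term, and only then use stationarity to remove the time dependence — this is the standard device (compare \cite[Appendix A]{HM06_annals}), and the PEs-specific convective cancellation is the only structural input needed beyond it. A secondary point is ensuring the Poincaré constant and the factor of $2$ line up exactly to give the stated $\eta = 1/(2\Energy_0)$ and the bound $2$; this is a matter of bookkeeping once the normalisation of the torus and the exact form of the dissipation are fixed.
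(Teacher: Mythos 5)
Your proof of the gradient--integrability identity \eqref{eq:exponential_integrability_mu_1} follows the same energy-balance argument as the paper's: It\^o's formula on $\|v_t\|_{L^2}^2$, cancellation of the advective term thanks to $\nabla\cdot u_t=0$ and the impermeability condition $w_t|_{z=0,1}=0$, the It\^o correction $\Energy_0$, and stationarity. The only presentational difference is that the paper works from the outset with bounded smooth truncations $\psi(\|v\|_{L^2}^2)$ so that the stationarity step involves only finite quantities, whereas you apply It\^o to $\|v_t\|^2$ directly and flag the integrability/martingale issue as a technical point to be handled by approximation.

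For the exponential bound \eqref{eq:exponential_integrability_mu_2} you take a genuinely different route. The paper establishes the moment recursion $\mathcal{I}_n:=\int\|v\|_{L^2}^{2n}\,\dd\mu\le n\,\Energy_0\,\mathcal{I}_{n-1}\le n!\,\Energy_0^n$ via \eqref{eq:energy_estimate_k} (the $\psi\to x^n$ limit) and Poincar\'e, and then sums the power series of $\exp(\eta\|v\|_{L^2}^2)$. You instead apply It\^o directly to $\Phi_t=\exp(\eta\|v_t\|_{L^2}^2)$, absorb the second-order It\^o correction $2\eta^2\Phi_t\|Q^*v_t\|_{\Ls^2_0}^2$ into the dissipation $-2\eta\Phi_t\|\nabla v_t\|_{L^2}^2$ using Poincar\'e, and close the inequality for $\E_\mu[\Phi]$ via the elementary bound $x\,e^{\eta x}\ge\eta^{-1}(e^{\eta x}-1)$. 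I checked the bookkeeping: on $\T^3=(\R/\Z)^3$ the Poincar\'e constant is $(4\pi^2)^{-1}<1$; the absorption condition $2\eta\Energy_0(4\pi^2)^{-1}\le 1$ and the requirement that the reciprocal Poincar\'e constant be at least $1$ are one and the same, and your final algebra gives $\E_\mu[\Phi]\le 8\pi^2/(8\pi^2-1)\le 2$. So the argument is sound and, once made precise, even gives a slightly sharper constant than $2$ --- though $2$ is all that the statement requires.

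The main trade-off is where the a priori integrability difficulty sits, and here your route is strictly harder to make rigorous than the paper's. The moment method handles it automatically: each $\mathcal{I}_n$ is approached through bounded truncations, so there is never an object of unknown finiteness inside the stationarity argument, and the exponential series converges at the end. Your direct approach places $\exp(\eta\|v\|^2)$ inside the stationarity cancellation before its $\mu$-integrability is known, and this cannot be patched simply by stopping at an exit time $\tau_n$: stationarity does not apply to $\Phi_{t\wedge\tau_n}$, since the law of the stopped process is no longer $\mu$. The clean fix is essentially to fall back on bounded truncations (apply It\^o to a smooth bounded $\psi_M(\|v_t\|^2)$ approximating $\exp(\eta\cdot)\wedge e^{\eta M}$, use stationarity on this bounded observable, and pass $M\to\infty$ by monotone convergence), which brings your argument structurally closer to the paper's. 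You flag this caveat yourself, and the route is valid, but this is a genuine, not merely cosmetic, point; it is probably why the paper chose the moment method.
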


The above might be known to experts. However, for the sake of completeness, we provide a sketch of the proof in Subsection \ref{ss:control_partialz_mu} below. 

\smallskip

Next, we state another result which lies at the core of the proof of Theorem \ref{t:integrability_MET}. 
It shows that any invariant measure for the stochastic PEs \eqref{eq:primitive_full} has more regularity in the vertical direction compared to the horizontal ones. 
This is in accordance with the central role of the vertical derivative $(\partial_z v_t)_t$ in the global well-posedness of 3D PEs \cite{A23_primitive3,CT07}. The control of $(\partial_z v_t)_t$ is specific to PEs and can be thought of as a consequence of the $z$-independence of the surface pressure $(p_t)_t$.

\begin{proposition}[Mixed integrability and smoothness of the vertical derivative]
\label{prop:regularity_partial_z_inv_measure}
Under the assumptions of Theorem \ref{t:integrability_MET}, for all $p\in (1,2)$ and $\gamma <\frac{2}{p}$,
\begin{equation}
\label{eq:vertical_derivative_mu_H}
\int_{\Hs_0^1(\T^3)}\int_{\T^2_{x,y}}\|v(x,y,\cdot)\|_{H^{\gamma}(\T_z)}^p\,\dd x \,\dd y\,\dd \mu(v)\leq C(p,\gamma,\Energy).
\end{equation}
In particular, for all $p<\frac{4}{3}$,
\begin{equation}
\label{eq:vertical_derivative_mu_sup}
\int_{\Hs_0^1(\T^3)} \int_{\T^2_{x,y}} \sup_{\T_z} |\partial_z v(x,y,\cdot)|^p\,\dd x \,\dd y\,\dd \mu(v)\leq  C(p,\Energy).
\end{equation}
\end{proposition}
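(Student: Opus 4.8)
## Proof Strategy for Proposition \ref{prop:regularity_partial_z_inv_measure}

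The plan is to obtain both \eqref{eq:vertical_derivative_mu_H} and \eqref{eq:vertical_derivative_mu_sup} from a single a priori estimate on the vertical derivative $\psi_t:=\partial_z v_t$ along the stationary solution $(v_t)_t$ with $v_0\sim\mu$, $u_t=(v_t,w(v_t))$. First I would dispose of \eqref{eq:vertical_derivative_mu_sup} as a soft consequence of \eqref{eq:vertical_derivative_mu_H}: since $p<\tfrac43$ forces $\tfrac2p>\tfrac32$, one may pick $\gamma\in(\tfrac32,\tfrac2p)$, and the one-dimensional embedding $H^{\gamma}(\T_z)\embed W^{1,\infty}(\T_z)$ gives, for a.e.\ $(x,y)$, $\sup_{\T_z}|\partial_z v(x,y,\cdot)|\lesssim\|v(x,y,\cdot)\|_{H^{\gamma}(\T_z)}$; raising to the power $p$ and integrating in $(x,y)$ and against $\mu$ reduces the claim to \eqref{eq:vertical_derivative_mu_H}.

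For \eqref{eq:vertical_derivative_mu_H} I would differentiate \eqref{eq:primitive} in $z$. The decisive structural point of the PEs is that the surface pressure does not depend on $z$, so it disappears; using also $\partial_z w(v)=-\nabla_{x,y}\cdot v$, the field $\psi$ solves the pressure-free equation $\partial_t\psi_t=\Delta\psi_t-(u_t\cdot\nabla)\psi_t-(\psi_t\cdot\nabla_{x,y})v_t+(\nabla_{x,y}\cdot v_t)\psi_t+\partial_z Q\dot W_t$, in which the transport term $(u_t\cdot\nabla)\psi_t$ is skew in every $L^q$-pairing by $\nabla\cdot u_t=0$. Note $\partial_z Q\in\calL_2(\Ls_0^2,L^2(\T^3))$ with norm $\le\Energy$, so the forcing noise is only $L^2(\T^3)$-regular in space; together with the weak moments available for $\mu$ this is what will eventually account for the two restrictions $\gamma<2/p$ and $p<2$.

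Then I would run a higher-order $L^{2m}$-in-space estimate for $\psi$ in the spirit of Cao--Titi \cite{CT07} (testing the $\psi$-equation against $|\psi_t|^{2m-2}\psi_t$ for suitably large $m$, with the noise terms handled via a finite-mode approximation since only $Q\in\calL_2(\Ls_0^2,\Hs_0^1)$ is assumed), bounding the surviving nonlinear contributions $\lesssim\int|\psi_t|^{2m}\,|\nabla_{x,y}v_t|$ by \emph{anisotropic} Gagliardo--Nirenberg/Ladyzhenskaya inequalities: estimating $w(v_t)$ pointwise by $\|\nabla_{x,y}v_t(x,y,\cdot)\|_{L^1(\T_z)}$ to trade horizontal for vertical integrability, which is the anisotropic smoothing the introduction refers to. Applying It\^o's formula to a carefully chosen \emph{concave} increasing function $\phi$ of $\|\psi_t\|_{L^{2m}(\T^3)}^{2m}$ should produce an inequality of the form $\dd\phi(\cdot)+c\,\phi'(\cdot)\,\mathcal D(\psi_t)\,\dd t\le\phi'(\cdot)\,G(v_t)\,\dd t+\dd N_t$, with $\mathcal D(\psi_t)\gtrsim\||\psi_t|^{m-1}\nabla\psi_t\|_{L^2}^2$ the parabolic dissipation, $N_t$ a local martingale, and $G(v_t)$ depending only on $\|v_t\|_{L^2}$ and $\|\nabla v_t\|_{L^2}$; concavity is used both to render the It\^o correction favourable and to keep the right-hand side integrable against the \emph{weak} bounds for $\mu$. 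Taking expectations, using stationarity so the drift averages to zero over $[0,1]$, and feeding in Proposition \ref{prop:exponential_integrability_mu} (in particular $\int\|\nabla v\|_{L^2}^2\,\dd\mu<\infty$ and $\int e^{\eta\|v\|_{L^2}^2}\,\dd\mu\le2$), I would obtain a bound for a fractional vertical Sobolev norm of $\psi_0$ in $L^{p}(\O;L^p(\T^2_{x,y};L^2(\T_z)))$ for every $p<2$. An alternative I would keep in reserve is to read the $\psi$-equation as a stochastic parabolic equation with the quadratic terms as a forcing in a negative Sobolev scale and $\partial_z Q\dot W$ an $L^2(\T^3)$-valued noise, and to invoke stochastic maximal $L^p$-regularity together with stationarity; the $L^2$-valued, rather than $H^1$-valued, noise caps the spatial regularity of $\psi$ at just below one derivative (equivalently, $v$ at just below two vertical derivatives), while the failure of maximal $L^1$-regularity, cf.\ Remark \ref{r:L1_integrability_gradient_second_derivatives}, precludes the endpoint.

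Finally I would upgrade the previous bound to the stated form: interpolating $H^{\gamma}(\T_z)=[L^2(\T_z),H^{2}(\T_z)]_{\gamma/2}$, using Minkowski's inequality in $(x,y)$ together with the Sobolev embedding $\|v\|_{L^{q}(\T^2_{x,y};L^2(\T_z))}\lesssim\|v\|_{H^1(\T^3)}$, and a last H\"older inequality on $\O\times\Hs_0^1(\T^3)$ with the exponents arranged so that the high-regularity factor carries the small power supplied above while the low-regularity factor is paired with $\int\|\nabla v\|_{L^2}^2\,\dd\mu<\infty$, I expect to reach \eqref{eq:vertical_derivative_mu_H} for all $\gamma<2/p$ and $p\in(1,2)$, and then \eqref{eq:vertical_derivative_mu_sup} via the first step. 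The hard part will be the a priori estimate: controlling $|\partial_z v|^2|\nabla_{x,y}v|$ and the $w(v)\partial_z v$ contribution with only the weak $\mu$-moments of Proposition \ref{prop:exponential_integrability_mu} is what forces both the exploitation of the PE-specific anisotropic structure and the precise tuning of the concave functional $\phi$, and is exactly what limits the range to $p<2$ (and to $p<\tfrac43$ for the pointwise-in-$z$ statement).
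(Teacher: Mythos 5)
Your reduction of \eqref{eq:vertical_derivative_mu_sup} to \eqref{eq:vertical_derivative_mu_H} is exactly the one used in the paper. For \eqref{eq:vertical_derivative_mu_H}, however, your primary strategy has a genuine gap. You propose differentiating the SPDE in $z$ and running $L^{2m}$-in-space estimates on $\psi=\partial_z v$ via It\^o's formula applied to a concave function $\phi$ of $\|\psi_t\|_{L^{2m}}^{2m}$. The problem is the It\^o correction term: it contains $\phi'(\|\psi_t\|_{L^{2m}}^{2m})\sum_k\int_{\T^3}|\psi_t|^{2m-2}\,|\partial_z q_k|^2\,\dd\x$, and by H\"older this forces $\sum_k\|\partial_z q_k\|_{L^{2m}}^2<\infty$. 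Under Assumption~\ref{ass:assumption_noise_integrability} one only has $\partial_z Q\in\calL_2(\Ls^2_0,L^2(\T^3))$, so this fails for every $m>1$; concavity of $\phi$ makes the $\phi''$-contribution favourable but does nothing for this $\phi'$-term. For $m=1$ the correction closes, but then you are merely redoing the $L^2$-estimate already embedded in Proposition~\ref{prop:exponential_integrability_mu}, which yields no fractional vertical gain beyond $H^1$; and the $L^{2m}$-estimate, even if it closed, would produce isotropic $L^{2m}$ control on $\psi$ rather than the anisotropic $L^p_{x,y}(H^{\gamma-1}_z)$ bound stated in \eqref{eq:vertical_derivative_mu_H}. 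The finite-mode approximation you invoke postpones but does not resolve this integrability obstruction.

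The paper's route is essentially your ``alternative kept in reserve,'' but applied to the $v$-equation itself rather than to $\psi$, and using deterministic rather than stochastic maximal regularity. The equation is never differentiated in $z$. Instead, one writes the mild formulation \eqref{eq:PE_mild_formulation} for $v$, observes that by the pointwise bound $|w(v)|\lesssim\int_{\T_z}|\nabla_{x,y}v|\,\dd z$ the nonlinearity $b(v,v)$ sits in $L^1_{x,y}(L^2_z)$ with a bound by $\|\nabla v\|_{L^2}^2$ (cf.\ \eqref{eq:L1_estimate_mixed_explaination}), and then invokes the anisotropic almost-maximal $L^1$-in-time regularity of Lemma~\ref{l:max_reg_anisotropic_Laplacian} (built on joint $H^\infty$-calculus and mixed-derivative embeddings) with ground space $\X=H^{s_0,p_0}_{x,y}(L^2_z)$. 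Combined with the invariance of $\mu$ and Proposition~\ref{prop:exponential_integrability_mu}, this produces $\int\|v\|_{L^1_{x,y}(H^\alpha_z)}\,\dd\mu\le C$ for every $\alpha<2$. Interpolating against $\int\|v\|_{L^2_{x,y}(H^1_z)}^2\,\dd\mu<\infty$ then yields \eqref{eq:vertical_derivative_mu_H} with the exact exponent relation $\gamma<2/p$. The anisotropy is thus extracted from the structure of the nonlinearity inside the Duhamel integral, not from an $L^{2m}$-estimate on the vertical derivative, and the stochastic forcing only enters via the $\Hs^\sigma$-valued stochastic convolution, circumventing the $\partial_z Q$-integrability obstruction that blocks your primary approach.
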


The estimate \eqref{eq:vertical_derivative_mu_sup} follows from \eqref{eq:vertical_derivative_mu_H} by noticing that  $p<\frac{4}{3}$ ensures the existence of $\g<\frac{2}{p}$ satisfying $\g-1>\frac{1}{2}$ and thus $H^{\gamma}(\T_z)\embed C_{{\rm b}}^{1}(\T_z)$ by Sobolev embeddings.

Proposition \ref{prop:regularity_partial_z_inv_measure} is proven in Subsection \ref{ss:control_partialz_mu}. Its proof is based on $L^q$-maximal regularity results for the heat operator on anisotropic spaces of the form $L^{p}(\T^2_{x,y};L^{2}(\T_z))$; see Lemma \ref{l:max_reg_anisotropic_Laplacian}. A similar situation already appeared in \cite{A23_primitive3}, although with another aim.
The use of the anisotropic spaces with integrability $2$ in $z$ comes from the smoothing effect of the mapping $v\mapsto w(v)$ in the vertical direction. Indeed, for sufficiently smooth $v$, by the H\"older inequality it holds
\begin{align}
\label{eq:L1_estimate_mixed_explaination}
\|w(v)\partial_z v\|_{L^{1}(\T^2_{x,y};L^2(\T_z))}
&\lesssim \|w(v)\|_{L^2(\T_{x,y};L^\infty(\T_z))}
 \|\partial_zv\|_{L^2(\T^3)}\\
 &\nonumber
 \lesssim \|\nabla v\|_{L^2(\T^3)}^2.
\end{align}
The last term can be estimated by means of the energy inequality, i.e., Proposition \ref{prop:exponential_integrability_mu}. 
From the estimate \eqref{eq:L1_estimate_mixed_explaination}, one sees that we have at our disposal more integrability in the vertical direction than in the horizontal ones. The above inequality, along with Proposition \ref{prop:exponential_integrability_mu}, suggests the validity of \eqref{eq:vertical_derivative_mu_H} with $p$ close to $ 1$. 

\smallskip

Throughout this section, to shorthand the notation, we write $H^{s_1,p_1}_{x,y}(H^{s_2,p_2}_z)$ instead of $H^{s_0,p_0}(\T^2_{x,y};H^{s_1,p_1}(\T_z))$ and similar if $H^{s_i,p_i}$ is replaced by $L^{p_i}$.

\subsection{Auxiliary results}
\label{ss:aux_result_Lp}
In this subsection, we collect some useful auxiliary results.
We begin with an elementary lemma concerning the integral operator on the one-dimensional torus $\T=[0,1]$. For $f\in L^1(\T)$, let
\begin{equation}
\label{eq:underlined_operator}
\underline{f}\stackrel{{\rm def}}{=} \int_{\T}\Big(\int_{0}^{z} f(\zeta)\,\dd \zeta\Big)\,\dd z
= \int_{0}^1 f(z)\, (1-z) \,\dd z.
\end{equation}

\begin{lemma}
\label{l:estimate_H_negative_mean_free}
For all mean-zero functions $f\in L^2(\T)$, it holds that
\begin{equation}
\label{eq:estimate_H_negative_mean_free}
\Big\|\int_{0}^{\cdot} f(z)\, \dd z - \underline{f}\Big\|_{L^2(\T)}\lesssim \|f\|_{H^{-1}(\T)}.
\end{equation}
\end{lemma}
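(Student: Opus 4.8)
The statement is a purely one-dimensional estimate: on $\T = [0,1]$, for mean-zero $f \in L^2(\T)$, the primitive $F(z) := \int_0^z f(\zeta)\,\dd\zeta$ minus its average $\underline{f}$ is controlled in $L^2$ by $\|f\|_{H^{-1}(\T)}$. The natural approach is via Fourier series. Write $f = \sum_{n \in \Z \setminus \{0\}} \widehat{f}(n) e^{2\pi i n z}$ (the $n=0$ term vanishes since $f$ is mean-zero). Then $F(z) = \sum_{n \neq 0} \frac{\widehat{f}(n)}{2\pi i n}(e^{2\pi i n z} - 1)$, and subtracting the average kills the constant: one computes $F(z) - \underline{f} = \sum_{n \neq 0} c_n e^{2\pi i n z}$ with $c_n = \frac{\widehat{f}(n)}{2\pi i n}$ for $n \neq 0$ and $c_0 = 0$ (the constant $-1$ contributions and $\underline{f}$ cancel precisely because $\underline f$ is, by construction, the mean of $F$). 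Hence by Parseval,
\begin{equation*}
\Big\| F - \underline{f} \Big\|_{L^2(\T)}^2 = \sum_{n \neq 0} \frac{|\widehat{f}(n)|^2}{4\pi^2 n^2} \lesssim \sum_{n \neq 0} \frac{|\widehat{f}(n)|^2}{1 + n^2} \eqsim \|f\|_{H^{-1}(\T)}^2,
\end{equation*}
which is exactly \eqref{eq:estimate_H_negative_mean_free}.

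First I would record the two elementary facts: that $\underline{f}$ as defined in \eqref{eq:underlined_operator} equals $\int_\T F(z)\,\dd z$, the mean of the primitive $F$ (this is the content of the second equality in \eqref{eq:underlined_operator} after an integration by parts), and that $F - \underline f$ is therefore mean-zero. Then I would pass to Fourier coefficients, noting that the $n$-th coefficient of $F - \underline{f}$ for $n \neq 0$ is $\widehat{F}(n) = \widehat{f}(n)/(2\pi i n)$ — this is immediate since $F' = f$ in the distributional sense on $\T$ (the boundary term at $z=0,1$ does not obstruct this because, although $F$ need not be periodic, $f$ being mean-zero forces $F(0) = F(1)$, so $F$ \emph{is} periodic). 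Finally Parseval and the comparison $n^{-2} \lesssim (1+n^2)^{-1}$ for $n \neq 0$ conclude.

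The only subtlety — and it is mild — is the periodicity check: for the Fourier-series manipulation to be legitimate one needs $F$ to be a genuine periodic function, equivalently $F(0) = F(1)$, equivalently $\int_0^1 f = 0$, which is precisely the mean-zero hypothesis. If one prefers to avoid $H^{-1}$-by-Fourier, an equivalent route is duality: for $g \in L^2(\T)$ mean-zero, $\langle F - \underline f, g\rangle = \langle F, g - \overline g\rangle = -\langle f, G\rangle$ where $G$ is a primitive of $g - \overline g$, and $\|G\|_{H^1} \lesssim \|g\|_{L^2}$; taking the supremum over such $g$ gives the bound with the $H^{-1}$ norm defined as the dual of mean-zero $H^1$. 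I expect no real obstacle here; the main point to state carefully is just the mean-zero $\Rightarrow$ periodicity of the primitive so that the Fourier argument applies verbatim.
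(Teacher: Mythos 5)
Your proposal is correct and follows essentially the same route as the paper: pass to Fourier series, use that subtracting $\underline{f}$ removes the mean (and hence the zero mode), relate the nonzero Fourier coefficients of the primitive to those of $f$ via division by $2\pi i n$ (legitimate since the mean-zero hypothesis makes the primitive periodic), and conclude by Parseval. Your extra remarks on periodicity and the duality alternative are correct but not needed beyond what the paper records.
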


Before proving the above result, let us stress that the presence of $\underline{f}$ on the LHS\eqref{eq:estimate_H_negative_mean_free} is essential. Indeed, without $\underline{f}$ on the LHS\eqref{eq:estimate_H_negative_mean_free} the corresponding estimate and Lemma \ref{l:estimate_H_negative_mean_free} imply $|\underline{f}|\lesssim \|f\|_{H^{-1}(\T)}$. However, the latter is false as by duality it implies $z\mapsto z\not\in H^1(\T)\subseteq C(\T)$.

\begin{proof}
Let $\mathcal{F}$ be the Fourier transform on $\Z$. 
Since $\int_0^1 \big(\int_{0}^{\cdot} f(z)\, \dd z-\underline{f}\big)\,\dd z=0$, 
\begin{align*}
\Big\|\int_{0}^{\cdot} f(z)\, \dd z - \underline{f}\Big\|_{L^2(\T)}^2
&= \sum_{k\in \Z\setminus\{0\}} \Big|\mathcal{F}\Big(\int_{0}^{\cdot} f(z)\, \dd z - \underline{f}\Big)(k)\Big|^2\\
&= \sum_{k\in \Z\setminus\{0\}} \Big|\mathcal{F}\Big(\int_{0}^{\cdot} f(z)\, \dd z\Big)(k)\Big|^2\\
&\stackrel{(i)}{\eqsim} \sum_{k\in \Z\setminus\{0\}} \frac{1}{k^2}|\mathcal{F}(f)(k)|^2\eqsim\|f\|_{H^{-1}(\T)}^2,
\end{align*}
where in $(i)$ we used the mean-zero assumption $\int_{0}^1 f(z)\,\dd z=0$.
\end{proof}

Next, we establish maximal $L^q$-regularity estimates for the heat equation in spaces with anisotropic smoothness and integrability. They lie at the core of the proof of Proposition \ref{prop:regularity_partial_z_inv_measure}.

\begin{lemma}[Maximal $L^q$-regularity for the heat equation in anisotropic spaces]
\label{l:max_reg_anisotropic_Laplacian}
Let $1<q,p_0,p_1<\infty$ and $s_0,s_1\in \R$. Set 
$\X\stackrel{{\rm def}}{=}H^{s_0,p_0}_{x,y}(H^{s_1,p_1}_z)$ and let $\Delta_{\X}$ be the realization of the Laplace operator on $\X$ with domain
$$
\Do(\Delta_{\X})\stackrel{{\rm def}}{=} H^{s_0+2,p_0}_{x,y}(H^{s_1,p_1}_z)\cap H^{s_0,p_0}_{x,y}(H^{s_1+2,p_1}_z).
$$
Moreover, let $(e^{t\Delta})_{t\geq 0}$ be the heat semigroup on $\T^3$. 
Then 
\begin{align}
\label{eq:max_reg_anisotropic}
\Big\|t\mapsto \int_{0}^t e^{(t-s)\Delta} f(s)\,\dd s \Big\|_{L^q(0,1;\Do(\Delta_{\X}))}
&\lesssim \|f\|_{L^q(0,1;\X)},\\
\label{eq:max_reg_anisotropic_almost}
\Big\|t\mapsto \int_{0}^t e^{(t-s)\Delta} f(s)\,\dd s \Big\|_{L^1(0,1;\Do((-\Delta_{\X})^{\delta}))}
&\lesssim \|f\|_{L^1(0,1;\X)},  \ \ \delta\in (0,1).
\end{align}
Finally, for all $\delta\in (0,1)$ and $\theta_0,\theta_1>0$ such that $\theta_0+\theta_1=2\delta$,  
$$
\Do((-\Delta_{\X})^{\delta})\embed H^{s_0+\theta_0,q_0}_{x,y}(H^{s_1+\theta_1,q_1}_{z}).
$$
\end{lemma}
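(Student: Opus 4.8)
The plan is to reduce everything to the one-dimensional heat semigroup acting on the periodic torus and then to use a Fourier-multiplier / $R$-boundedness argument in each variable separately. First I would observe that, since $\T^3=\T^2_{x,y}\times \T_z$ with the flat metric, the Laplacian splits as $\Delta=\Delta_{x,y}+\partial_z^2$ and the two pieces commute; correspondingly $e^{t\Delta}=e^{t\Delta_{x,y}}\otimes e^{t\partial_z^2}$ as a tensor product of analytic semigroups. Each of $-\Delta_{x,y}$ on $H^{s_0,p_0}(\T^2_{x,y})$ and $-\partial_z^2$ on $H^{s_1,p_1}(\T_z)$ is a sectorial (indeed $R$-sectorial, by classical results of Weis and the Fourier-multiplier theory on $\Leb^p$ of the torus, see e.g.\ the standard references on maximal regularity) operator with angle $<\pi/2$, and hence so is their sum on the intersection domain $\Do(\Delta_\X)$; the $R$-sectoriality is inherited by the tensor-product construction because $H^{s_0,p_0}_{x,y}$ and $H^{s_1,p_1}_z$ are UMD spaces and $\X=H^{s_0,p_0}_{x,y}(H^{s_1,p_1}_z)$ is therefore UMD as well. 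By the Weis theorem characterizing maximal $L^q$-regularity via $R$-sectoriality on UMD spaces, $-\Delta_\X$ has maximal $L^q$-regularity on $(0,1)$, which is exactly \eqref{eq:max_reg_anisotropic}.

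For \eqref{eq:max_reg_anisotropic_almost} I would not use the full $L^q$-theory (which fails at $q=1$) but instead argue by the smoothing of the analytic semigroup together with a scalar convolution inequality. Writing $\delta\in(0,1)$, the analyticity of $(e^{t\Delta})$ on $\X$ gives $\|(-\Delta_\X)^{\delta}e^{t\Delta}\|_{\mathcal L(\X)}\lesssim t^{-\delta}$ for $t\in(0,1]$; hence
\begin{align*}
\Big\| \int_0^t e^{(t-s)\Delta} f(s)\,\dd s\Big\|_{\Do((-\Delta_\X)^{\delta})}
&\lesssim \int_0^t (t-s)^{-\delta}\|f(s)\|_\X\,\dd s ,
\end{align*}
and integrating in $t\in(0,1)$ and applying Young's convolution inequality with the kernel $t^{-\delta}\in L^1(0,1)$ (since $\delta<1$) yields the bound by $\|f\|_{L^1(0,1;\X)}$. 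This step is soft and does not require UMD or $R$-boundedness.

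The last assertion, the embedding $\Do((-\Delta_\X)^{\delta})\embed H^{s_0+\theta_0,q_0}_{x,y}(H^{s_1+\theta_1,q_1}_z)$ for $\theta_0+\theta_1=2\delta$, I would obtain by identifying fractional domains with anisotropic Bessel-potential spaces. Using again the commuting splitting $-\Delta_\X=(-\Delta_{x,y})+(-\partial_z^2)$ and the fact that for commuting sectorial operators $A,B$ with bounded imaginary powers one has $\Do((A+B)^{\delta})\supseteq \Do(A^{\theta_0/2})\cap \Do(B^{\theta_1/2})$ whenever $\theta_0+\theta_1=2\delta$ (a consequence of the mixed-derivative / Grisvard-type interpolation identity, available here because $H^{s_i,p_i}$ are UMD so the Laplacians have bounded imaginary powers), we get $\Do((-\Delta_\X)^{\delta})\supseteq \Do((-\Delta_{x,y})^{\theta_0/2})\cap\Do((-\partial_z^2)^{\theta_1/2})$ as subspaces of $\X$. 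Finally $\Do((-\Delta_{x,y})^{\theta_0/2})=H^{s_0+\theta_0,p_0}(\T^2_{x,y})$ and $\Do((-\partial_z^2)^{\theta_1/2})=H^{s_1+\theta_1,p_1}(\T_z)$ by the standard identification of Bessel-potential scales on the torus, and a Sobolev embedding in each variable upgrades the integrability exponents from $p_i$ to $q_i$ (this is where the relation between the $q_i$ and $p_i$, $s_i$, $\theta_i$ enters; in the statement the indices $q_0,q_1$ should be understood as admissible Sobolev targets). I expect the main obstacle to be bookkeeping: making the interpolation identity for fractional powers of the sum of commuting operators precise on anisotropic UMD spaces, and tracking exactly which Sobolev embedding $H^{s_i+\theta_i,p_i}\embed H^{s_i+\theta_i',q_i}$ is being invoked so that the final embedding holds with the claimed exponents; the semigroup and convolution estimates themselves are routine.
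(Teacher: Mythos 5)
Your treatment of \eqref{eq:max_reg_anisotropic} and \eqref{eq:max_reg_anisotropic_almost} is correct and essentially identical to the paper's: split $\Delta=\Delta_{x,y}+\partial_z^2$, obtain $R$-sectoriality (via $H^\infty$-calculus on the UMD space $\X$) for the sum, apply the Weis characterization for \eqref{eq:max_reg_anisotropic}, and use analyticity of the semigroup plus Young's convolution inequality for \eqref{eq:max_reg_anisotropic_almost}.

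The last assertion, however, has a genuine gap. You invoke the inclusion $\Do((A+B)^{\delta})\supseteq \Do(A^{\theta_0/2})\cap \Do(B^{\theta_1/2})$ for $\theta_0+\theta_1=2\delta$, where $A=-\Delta_{x,y}$ and $B=-\partial_z^2$. This is the wrong direction: the lemma needs $\Do((-\Delta_\X)^{\delta})\embed H^{s_0+\theta_0,\cdot}_{x,y}(H^{s_1+\theta_1,\cdot}_z)$, i.e.\ an \emph{inclusion of} the fractional domain \emph{into} an anisotropic Sobolev space, not a reverse inclusion. Worse, the inclusion as you state it is false: in the Fourier picture, with $a\eqsim 1+|k_{xy}|^2$, $b\eqsim 1+|k_z|^2$ and $\theta_0=\theta_1=\delta$, it would require $(a+b)^\delta\lesssim a^{\delta/2}+b^{\delta/2}$, which fails as $a=b\to\infty$. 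The standard Grisvard/Dore--Venni fact with $\theta_0+\theta_1=2\delta$ is the opposite mixed-derivative estimate, namely that $A^{\theta_0/2}B^{\theta_1/2}(A+B)^{-\delta}$ is bounded, equivalently $\Do((A+B)^{\delta})\embed\Do(A^{\theta_0/2}B^{\theta_1/2})$. The route the paper takes (and the one you should take) is first to use the cruder bound $\Do((-\Delta_\X)^{\delta})\embed \Do(A^{\delta})\cap\Do(B^{\delta})$, which does hold (boundedness of $(A+B)^{-\delta}:\X\to\Do(A^\delta)$, cf.\ \cite[Corollary 4.5.11]{pruss2016moving}), and then apply complex interpolation of the two anisotropic spaces $\Do(A^\delta)=H^{s_0+2\delta,p_0}_{x,y}(H^{s_1,p_1}_z)$ and $\Do(B^\delta)=H^{s_0,p_0}_{x,y}(H^{s_1+2\delta,p_1}_z)$ to reach $H^{s_0+2\delta\theta,p_0}_{x,y}(H^{s_1+2\delta(1-\theta),p_1}_z)$ for $\theta\in(0,1)$, which gives exactly $\theta_0+\theta_1=2\delta$ with both $\theta_i>0$. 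Also note the final integrability exponents are the original $p_0,p_1$; no additional Sobolev embedding is needed.
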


The above is known to experts on maximal regularity. We include the proof for the reader's convenience.

\begin{proof}
In the proof below, we use the notion of the $H^{\infty}$-calculus and $\mathcal{R}$-sectoriality for various versions of the Laplace operator. We refer to either \cite[Chapter 10]{Analysis2} or \cite[Chapters 3 and 4]{pruss2016moving} for details and notation.

We begin by proving \eqref{eq:max_reg_anisotropic}. 
Let 
$
A\stackrel{{\rm def}}{=} -\Delta_{x,y}
$
and $B\stackrel{{\rm def}}{=} -\partial^2_{z}$
be the realisation of the (minus) Laplace operator on $H^{s_0,p_0}_{x,y}(H^{s_1,p_1}_{z})$ with domain 
$$
\Do(A)=H^{s_0+2,p_0}_{x,y}(H^{s_1,p_1}_{z}) \quad \text{ and }\quad 
\Do(B)=H^{s_0,p_0}_{x,y}(H^{s_1+2,p_1}_{z}),
$$
respectively. Note that $A$ and $B$ are commuting (in the resolvent sense) and
$$
\Do(A)\cap \Do(B)= \Do(\Delta_{\X}).
$$
Moreover, $A$ and $B$ have $H^{\infty}$-calculus of angle 0 by the periodic version of \cite[Theorem 10.2.25]{Analysis2}. In particular the operator $A+B$ on $\Do(A)\cap \Do(B)$ is $\mathcal{R}$-sectorial of angle $0$ by \cite[Corollary 4.5.9]{pruss2016moving}. The maximal $L^q$-regularity estimate \eqref{eq:max_reg_anisotropic} now follows from the Weis' characterization, see e.g.\ \cite[Theorem 4.4.4]{pruss2016moving} or \cite{We}.

Due to \eqref{eq:max_reg_anisotropic}, the semigroup $(e^{t\Delta})_{t\geq 0}$ is analytic on $\X$ by \cite[Proposition 3.5.2(i)]{pruss2016moving}. Hence \eqref{eq:max_reg_anisotropic_almost} follows from Young's convolution inequality. 

It remains to prove the last assertion of Lemma \ref{l:max_reg_anisotropic_Laplacian}. The embedding essentially uses mixed-derivative estimates. Indeed, by \cite[Corollary 4.5.11]{pruss2016moving} for all $\theta\in (0,1)$,
\begin{align*}
\Do((-\Delta_{\X})^{\delta})
&\embed \Do(A^{\delta})\cap \Do(B^{\delta}) \\
&= H^{s_0+2\delta,q_0}_{x,y}(H^{s_1,q_1}_z)\cap H^{s_0,q_0}_{x,y}(H^{s_1+2\delta,q_1}_z)\\
&\embed  [H^{s_0+2\delta,q_0}_{x,y}(H^{s_1,q_1}_z), H^{s_0,q_0}_{x,y}(H^{s_1+2\delta,q_1}_z)]_{\theta}\\
&= H^{s_0+2\delta\theta}_{x,y} (H^{s_1+2\delta(1-\theta),q_1}_z),
\end{align*}
where $[\cdot,\cdot]_{\theta}$ denotes the complex interpolation functor, see e.g.\ \cite{BeLo}. The last claim follows by setting $\theta_0=2\theta\delta$ and $\theta_1=2\delta(1-\theta)$.
\end{proof}

\subsection{Proof of Propositions \ref{prop:exponential_integrability_mu} and \ref{prop:regularity_partial_z_inv_measure}}
\label{ss:control_partialz_mu}
We begin by providing a sketch of the proof of Proposition \ref{prop:exponential_integrability_mu}, which will then be used to prove Proposition \ref{prop:regularity_partial_z_inv_measure}.

\begin{proof}[Proof of Proposition \ref{prop:exponential_integrability_mu} -- Sketch]
Let $v\in \Hs^1_0(\T^3)$ and let $(v_t)_t$ be the global unique $H^1$-solution to \eqref{eq:primitive_full}, cf., Definition \ref{def:solution}. 
Let $\psi:[0,\infty)\to [0,\infty)$ be a bounded and smooth map. By It\^o formula applied to $v\mapsto \Psi_v\stackrel{{\rm def}}{=}\psi(\|v\|_{L^2}^{2})$ we have, a.s.\ for all $t>0$ and $n\geq 1$,
\begin{align*}
\psi(\|v_t\|_{L^2}^{2}) &+2\int_0^t \psi'(\|v_s\|_{L^2}^2)  \|\nabla v_s\|_{L^2}^{2}\,\dd s =\psi( \|v\|_{L^2}^{2}) \\
&+ \frac{1}{2}
\sum_{(k,\ell)\in \z} \int_0^t  D^2\Psi_{v_s} (Q\g_{(\kk,\ell)}e_{\kk}, Q\g_{(\kk,\ell)}e_{\kk})\,\dd s + \mathrm{M}_t,
\end{align*}
where $\mathrm{M}_t$ is a integrable martingale starting at zero and $(\g_{(\kk,\ell)}e_{\kk})_{(k,\ell)\in\z}$ is the orthonormal basis of $\Ls^2_0(\T^3)$ as described in Subsection \ref{sss:probabilistic_set_up}. 
In obtaining the above, we also used the standard cancellation $\int_{\T^3} (u_t\cdot\nabla) v_t\cdot v_t\,\dd x=0$ a.e.\ on $\R_+\times \O$ since $u_t=(v_t,w(v_t))$ satisfies $\nabla\cdot u_t=0$ and $w(v_t)=0$ on $\T^2\times \{0,1\}$ by \eqref{eq:def_w}. 

Taking $\int_{\Hs^1_0(\T^3)} \E [\cdot] \, \dd \mu(v)$ and using the invariance of $\mu$, we obtain for all $t>0$,
\begin{align*}
&2t\int_{\Hs^1_0(\T^3)}  \psi'(\|v\|_{L^2}^2)  \|\nabla v\|_{L^2}^{2}\, \dd \mu(v) \\
&\qquad \qquad = \frac{1}{2}
\sum_{(k,\ell)\in \z} \int_{\Hs^1_0(\T^3)}  \int_0^t  D^2\Psi(v_s) (Q\g_{(\kk,\ell)}e_{\kk}, Q\g_{(\kk,\ell)}e_{\kk})\,\dd s\,\dd \mu(v).
\end{align*}
Now, let us fix $n\geq 1$ and take a sequence of maps $(\psi_k)_{k\geq 1}$ such that $|\psi_k (x)|\lesssim_k x^2$ and $\lim_{k\to \infty}\psi_k(x)\to x^{n}$ for all $x\in [0,\infty)$. Then, by applying the above identity to such a sequence, one can readily check that, for all $n\geq 1$,
\begin{align}
\label{eq:energy_estimate_k}
\int_{\Hs^1_0(\T^3)} \| v\|_{L^2}^{2(n-1)} \|\nabla v\|_{L^2}^{2}\,  \dd \mu(v)
&= 
(2n-1)\frac{\Energy_0}{2}\int_{\Hs^1_0(\T^3)} \| v\|_{L^2}^{2(n-1)} \,  \dd \mu(v).
\end{align}
In particular, \eqref{eq:exponential_integrability_mu_1} is equivalent to \eqref{eq:energy_estimate_k} with $n=1$. 

Next, letting $\mathcal{I}_n\stackrel{{\rm def}}{=}\int_{\Hs^1_0(\T^3)} \| v\|_{L^2}^{2n}\,  \dd \mu(v)$, the identity \eqref{eq:energy_estimate_k} and the Poincar\'e inequality yield 
$$
\mathcal{I}_n \leq n\,\Energy_0\,\mathcal{I}_{n-1}\leq \dots \leq n! \,\Energy_0^n. 
$$
Hence, if $\eta=1/(2\Energy_0)$, then 
$$
\int_{\Hs_0^1(\T^3)} \exp\Big(\eta \int_{\T^3} | v(\x)|^2\,\dd \x \Big)\,\dd \mu(v)
= \sum_{n\geq 1} \frac{\eta^n}{n!}
\int_{\Hs^1_0(\T^3)} \| v\|_{L^2}^{2n} \,  \dd \mu(v)
\leq 2,
$$
as desired.
\end{proof}

Before going into the proof of Proposition \ref{prop:regularity_partial_z_inv_measure}, let us state the following result that is a straightforward consequence of the definitions in \eqref{eq:def_p_q} and will be used below without further mention.

\begin{lemma}
Let $1<p_0,p_1<\infty$ and $s_0,s_1\in \R$. Then $\q,\p\in \calL(H^{s_0,p_0}_{x,y}(H^{s_1,p_1}_z))$.
\end{lemma}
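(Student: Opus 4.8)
The plan is to reduce the claim for $\p=\Id-\q$ to the one for $\q$, and to prove the latter by factoring $\q$ through the two-dimensional torus. By \eqref{eq:def_p_q}, and since $\q f$ does not depend on $z$ (its value $\q_{x,y}[\int_{\T_z}f(\cdot,z)\,\dd z]$ is a function of $(x,y)$ alone, to be re-embedded into $\T^3$ as a $z$-independent field), one can write $\q=\iota\circ\q_{x,y}\circ\Pi$, where $\Pi f(x,y):=\int_{\T_z}f(x,y,z)\,\dd z$ is vertical averaging, $\q_{x,y}$ is the two-dimensional operator of \eqref{eq:def_q_xy}, and $\iota$ re-embeds a function on $\T^2_{x,y}$ as a $z$-independent function on $\T^3$. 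It then suffices to establish three boundedness statements: $\Pi\colon H^{s_0,p_0}_{x,y}(H^{s_1,p_1}_z)\to H^{s_0,p_0}(\T^2_{x,y})$, $\q_{x,y}\colon H^{s_0,p_0}(\T^2_{x,y};\R^2)\to H^{s_0,p_0}(\T^2_{x,y};\R^2)$, and $\iota\colon H^{s_0,p_0}(\T^2_{x,y};\R^2)\to H^{s_0,p_0}_{x,y}(H^{s_1,p_1}_z)$.

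For $\iota$ I would simply note $\|\iota g(x,y,\cdot)\|_{H^{s_1,p_1}(\T_z)}=\|1\|_{H^{s_1,p_1}(\T_z)}\,|g(x,y)|$ and take the $H^{s_0,p_0}_{x,y}$-norm. For $\Pi$ I would use that $\Pi$ commutes with $(1-\Delta_{x,y})^{s_0/2}$ (the two operators act on disjoint groups of variables), so it is enough to bound the fibrewise map $g\mapsto\int_{\T_z}g$ from $H^{s_1,p_1}(\T_z)$ to $\C$; this is elementary since the constant is a fixed point of $(1-\partial_z^2)^{-s_1/2}$, whence $|\int_{\T_z}g|=|\langle(1-\partial_z^2)^{s_1/2}g,1\rangle|\lesssim\|g\|_{H^{s_1,p_1}(\T_z)}$ by H\"older, uniformly in $s_1\in\R$. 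For $\q_{x,y}$ I would recall that it is the Fourier multiplier on $\T^2$ with matrix symbol $\kk\mapsto \kk\otimes\kk/|\kk|^2$ for $\kk\neq0$ and $0$ at the origin, whose entries extend to smooth $0$-homogeneous functions on $\R^2\setminus\{0\}$; by the Mikhlin–H\"ormander multiplier theorem transferred to the torus (see, e.g., \cite{Analysis2}) it is bounded on $L^{p_0}(\T^2;\R^2)$ for $1<p_0<\infty$, and since it commutes with $(1-\Delta_{x,y})^{s_0/2}$ it is bounded on $H^{s_0,p_0}(\T^2;\R^2)$ for every $s_0\in\R$ — this is just the classical $L^p$-theory of the two-dimensional Leray/Helmholtz projection.

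Composing the three maps yields $\q\in\calL(H^{s_0,p_0}_{x,y}(H^{s_1,p_1}_z))$ and hence $\p=\Id-\q\in\calL(H^{s_0,p_0}_{x,y}(H^{s_1,p_1}_z))$. The only point requiring some care is the bookkeeping of the commuting Bessel-potential operators $(1-\Delta_{x,y})^{s_0/2}$ and $(1-\partial_z^2)^{s_1/2}$ in the reductions for $\Pi$ and $\q_{x,y}$; there is no genuine obstacle here, since the relevant symbols factor over the two groups of variables. One could alternatively phrase the whole argument as a single anisotropic Fourier-multiplier statement on $\T^3$ with symbol $\one_{\{k_z=0\}}\,\kk_{x,y}\otimes\kk_{x,y}/|\kk_{x,y}|^2$, but the factored form is cleaner and sidesteps having to invoke an anisotropic multiplier theorem for a symbol that is discontinuous across $\{k_z=0\}$.
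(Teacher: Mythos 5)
Your proof is correct, and since the paper explicitly declines to give one (it calls the lemma ``a straightforward consequence of the definitions''), the factorisation $\q=\iota\circ\q_{x,y}\circ\Pi$ together with the three boundedness statements is precisely the argument the authors had in mind, spelled out in full. The only places meriting a second look — that $(1-\Delta_{x,y})^{s_0/2}$ commutes with the fibrewise maps $\Pi$ and $\iota$ and hence transfers boundedness from the fibre to the Bochner space, that the constant function is a fixed point of $(1-\partial_z^2)^{s_1/2}$ (so the pairing $\langle g,1\rangle$ can be shifted to $\langle(1-\partial_z^2)^{s_1/2}g,1\rangle$), and that the symbol $\kk\otimes\kk/|\kk|^2$ is a $0$-homogeneous Mikhlin symbol so $\q_{x,y}$ is bounded on $L^{p_0}(\T^2;\R^2)$ by transference — are all handled accurately, so there is nothing to correct.
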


\begin{proof}[Proof of Proposition \ref{prop:regularity_partial_z_inv_measure}]
As we already noticed that \eqref{eq:vertical_derivative_mu_sup} follows \eqref{eq:vertical_derivative_mu_H}, it suffices to prove the latter. 
The proof is split into three steps.

\smallskip

\emph{Step 1: Let $X$ be a reflexive Banach space and let $d\geq 1$ be an integer. Then for all $r<0$ and $q\in (1,\infty)$ satisfying  $- d > r-\frac{d}{q}$ we have }
\begin{equation}
\label{eq:L1_embeds_negative}
L^1(\T^d;X)\embed H^{r,q}(\T^d;X). 
\end{equation}
The above is known to experts. However, for the reader's convenience, we include a short proof based on a duality argument. Note that the condition $- d > r-\frac{d}{q}$ is equivalent to $-r-\frac{d}{q'}>0$ where $q'= \frac{q}{q-1}$. By the vector-valued Sobolev embeddings (see e.g.\ \cite[Proposition 7.4]{MV12}) we have 
\begin{equation}
\label{eq:embedding_to_dualize}
H^{-r,q'}(\T^d;X^*)\embed C(\T^d;X^*).
\end{equation}
Since $L^1(\T^d;X)\embed (C(\T^d;X^*))^*$,  the claimed embedding follows by taking the dual of the embedding \eqref{eq:embedding_to_dualize} and using $(H^{-r,q'}(\T^d;X^*))^*=H^{r,q}(\T^d;X)$ by reflexivity of $X$, see e.g.\ \cite[Theorem 1.3.21 and Proposition 5.6.7]{Analysis1}.

\smallskip

\emph{Step 2: For $\alpha<2$, it holds that}
\begin{equation}
\label{eq:claim_step2_integrability_vertical_derivative}
\int_{\Hs_0^1(\T^3)}\|v\|_{L^{1}_{x,y}(H^{\alpha}_z)}\,\dd \mu(v)\leq C(\alpha,\Energy).
\end{equation}
Let $(v_t)_t$ be the strong solution to the stochastic PEs \eqref{eq:primitive_full}. One can readily check that it satisfies the mild formulation:
\begin{align}
\label{eq:PE_mild_formulation}
v_t=e^{t\Delta} v_0 + \int_{0}^t e^{(t-s)\Delta} b(v_s,v_s)\,\dd s + \int_{0}^t e^{(t-s)\Delta}Q\, \dd W_s
\end{align}
a.s.\ for all $t\geq 0$, where $b$ is the nonlinearity given by 
\begin{equation*}
b(v_1,v_2)=\p[(v_1\cdot\nabla_{x,y})v_2+w(v_1)\partial_z v_2 ].
\end{equation*}
Next, we would like to apply the estimate \eqref{eq:max_reg_anisotropic_almost} in Lemma \ref{l:max_reg_anisotropic_Laplacian} with an appropriate choice of $(s_0,p_0)$, while we choose $p_1=2$ and $s_1=0$. Fix $s_0\in (0,2-\mu)$ and let $p_0\in (1,2)$ be such that $2<-s_0+\frac{2}{p}$.

Set $\delta=(\alpha+s_0)/2<1$ and let $\X=H^{s_0,p_0}_{x,y}(L^2_z)$ be as in Lemma \ref{l:max_reg_anisotropic_Laplacian}.
By taking $L^1(0,1;\Do((-\Delta_{\X})^{\delta}))$-norms in the mild-formulation \eqref{eq:PE_mild_formulation}, using the estimate \eqref{eq:max_reg_anisotropic_almost} and taking $\int_{\Hs^1(\T^3)}\cdot\,\dd \mu(v)$,
\begin{align*}
\int_{\Hs_0^1(\T^3)} \int_0^1 \|v_t\|_{\Do((-\Delta_{\X})^{\delta})}\,\dd t\,\dd \mu(v) 
& \lesssim_\alpha 
\int_{\Hs_0^1(\T^3)}\|v\|_{H^1}\,\dd \mu(s) \\
&+\int_{\Hs_0^1(\T^3)}\int_0^1 \|b(v_t,v_t)\|_{\X} \,\dd t\,\dd \mu(v)+ C(\Energy).
\end{align*}
Next, we analyse the terms on the RHS of the above equation separately. Firstly, note that $
\int_{\Hs^1(\T^3)}\|v\|_{H^1}\,\dd \mu(s) \leq C(\Energy)$ by Proposition \ref{prop:exponential_integrability_mu}. Secondly, by the invariance of $\mu$,
$$
\int_{\Hs_0^1(\T^3)} \int_0^1 \|v_t\|_{\Do((-\Delta_{\X})^{\delta})}\,\dd t\,\dd \mu(v) 
= \int_{\Hs_0^1(\T^3)} \|v\|_{\Do((-\Delta_{\X})^{\delta})}\,\dd \mu(v) .
$$
Thirdly, by Step 1 and the choice of $(s_0,p_0)$, we obtain (cf., \eqref{eq:L1_estimate_mixed_explaination})
\begin{align*}
&\int_{\Hs_0^1(\T^3)}\int_0^1 \|b(v_t,v_t)\|_{\X} \,\dd t\,\dd \mu(v)\\
&\lesssim
\int_{\Hs_0^1(\T^3)}\int_0^1 \big(\|(v_t\cdot \nabla )v_t \|_{L^1_{x,y}(L^2_z)}+ \|w(v_t)\partial_z v_t\|_{L^1_{x,y}(L^2_z)}\big) \,\dd t\,\dd \mu(v)\\
&\stackrel{(i)}{\lesssim}
\int_{\Hs_0^1(\T^3)}\int_0^1 \big(\|v_t\|_{L^2_{x,y}(L^\infty_z)}\|v_t\|_{H^1}+ 
\|w(v)\|_{L^2_{x,y}(L^\infty_z)} \|v\|_{H^1}\big)\, \dd t \,\dd \mu(v)\\
&\stackrel{(ii)}{\lesssim}
\int_{\Hs_0^1(\T^3)}\int_0^1 \|v_t\|_{H^1}^2\, \dd t \,\dd \mu(v)\\
& \stackrel{(ii)}{=}
\int_{\Hs_0^1(\T^3)} \|v\|_{H^1}^2\, \dd\mu(v)\\
&\stackrel{(iv)}{\leq } C(\Energy)
\end{align*}
where in $(i)$ used that $|w(v)|\lesssim \int_{\T_z} |\nabla_{x,y}\cdot v |\,\dd z$ on $\T^3$ by \eqref{eq:def_w}, in $(ii)$ that $
H^1\embed 
L^2_{x,y}(H^1_z)\embed L^r(L^\infty_{z})
$, in $(iii)$ the invariance of $\mu$ and in $(iv)$ Proposition \ref{prop:exponential_integrability_mu}.

Collecting the above estimates, we have obtained
\begin{equation*}
 \int_{\Hs_0^1(\T^3)} \|v\|_{\Do((-\Delta_{\X})^{\delta})}\,\dd \mu(v)\leq C(\alpha,\Energy).
\end{equation*}
Now, \eqref{eq:claim_step2_integrability_vertical_derivative} follows from the above and the last assertion in Lemma \ref{l:max_reg_anisotropic_Laplacian}.

\smallskip

\emph{Step 3: Proof of \eqref{eq:vertical_derivative_mu_H}}. Again, by Proposition \ref{prop:exponential_integrability_mu} and $H^1\embed L^2_{x,y}(H^1_z)$, we have 
\begin{equation*}
\int_{\Hs_0^1(\T^3)} \|v\|_{L^2_{x,y}(H^1_z)}^2\,\dd \mu(v)\leq C(\Energy).
\end{equation*}
By interpolating the above and \eqref{eq:claim_step2_integrability_vertical_derivative} (see e.g., \cite[Theorem 2.2.6]{Analysis1}), it holds that 
$$
\int_{\Hs_0^1(\T^3)} \|v\|_{L^{p_\theta}_{x,y}(H^{s_\theta}_z)}^{p_\theta}\,\dd \mu(v)\leq C(\theta,\alpha,\Energy).
$$ 
where $\theta\in (0,1)$ and $p_\theta=\frac{2-\theta}{2}$, $s_\theta=\alpha +\theta(1-\alpha)$ with $\alpha<2$.
In particular, $\theta=\frac{2}{p_\theta'}$ and $s_\theta=\alpha +\frac{2}{p_\theta'}(1-\alpha)$ where $p_\theta'$ is the conjugate exponent of $p$. The estimate \eqref{eq:vertical_derivative_mu_H} now follows by relabelling the parameters and choosing $\alpha\sim 2$ depending on $\g$.
\end{proof}

\begin{remark}[On the $\mu\otimes \Leb$-integrability of $\nabla^2 v$]
\label{r:L1_integrability_gradient_second_derivatives}
The above proof of Proposition \ref{prop:regularity_partial_z_inv_measure} would have given us the finiteness of the integral \eqref{eq:integrability_second_order_derivatives_v} in case we could choose $s_0=0$, $p_0=1$ and $\delta=1$ in the proof (still keeping $s_1=0$ and $p_1=2$). By inspecting the proof of Lemma \ref{l:max_reg_anisotropic_Laplacian} and in light of known results on maximal $L^1$-regularity \cite[Subsections 17.3.b and 17.4.b]{Analysis3}, Lemma \ref{l:max_reg_anisotropic_Laplacian} with $s_0=0$, $p_0=1$ and $\delta=1$ does not hold. Thus, it is open to determine whether \eqref{eq:integrability_second_order_derivatives_v} is finite or not. 
\end{remark}

\subsection{Proof of Theorem \ref{t:integrability_MET}}
\label{ss:proof_integrability_MET}
To prove Theorem \ref{t:integrability_MET}, we need the following consequence of Proposition \ref{prop:exponential_integrability_mu}. 

\begin{lemma}
\label{l:mixed_integrability_invariant_mesures}
Under the assumptions of Theorem \ref{t:integrability_MET}, for all $p<4$, 
\begin{equation}
\label{eq:lemma_estimate_L4xy_L2z}
\int_{\Hs_0^1(\T^3)}
\|v\|_{L^p_{x,y}(L^2_z)}^p \,\dd \mu(v)\leq C(p,\Energy).
\end{equation}
\end{lemma}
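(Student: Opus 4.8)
\textbf{Proof strategy for Lemma \ref{l:mixed_integrability_invariant_mesures}.}

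The plan is to combine the exponential integrability of $\|v\|_{L^2(\T^3)}$ from Proposition \ref{prop:exponential_integrability_mu} with the gradient integrability $\int \|\nabla v\|_{L^2}^2\,\dd\mu = \Energy_0/2$ by means of an anisotropic Gagliardo--Nirenberg--type inequality in the horizontal variables. More precisely, for fixed $z$, and more usefully after integrating in $z$, I would like an estimate of the schematic form
\begin{equation}
\label{eq:GN_anisotropic_plan}
\|v\|_{L^p_{x,y}(L^2_z)}\lesssim \|v\|_{L^2_{x,y}(L^2_z)}^{1-\beta}\,\|v\|_{H^1_{x,y}(L^2_z)}^{\beta},\qquad \beta=1-\tfrac{2}{p},
\end{equation}
valid in two horizontal dimensions for any $p\in[2,\infty)$ (this is the usual 2D interpolation $L^p$ between $L^2$ and $H^1$, applied with values in the Hilbert space $L^2_z$, which causes no difficulty). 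Raising \eqref{eq:GN_anisotropic_plan} to the power $p$ gives $\|v\|_{L^p_{x,y}(L^2_z)}^p\lesssim \|v\|_{L^2}^{p-2}\,\|v\|_{H^1_{x,y}(L^2_z)}^{2}$, and since $H^1(\T^3)\embed H^1_{x,y}(L^2_z)$ we may replace the last factor by $\|v\|_{H^1(\T^3)}^2$, i.e.\ essentially $\|\nabla v\|_{L^2}^2$ plus lower order.

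Next I would integrate against $\mu$ and apply Young's inequality (or Hölder) to split the product: for any $\eps>0$,
\begin{equation*}
\|v\|_{L^2}^{p-2}\|\nabla v\|_{L^2}^2 \lesssim_\eps e^{\eps\|v\|_{L^2}^2} + \|\nabla v\|_{L^2}^{2}\cdot(\text{a factor absorbed into }e^{\eps\|v\|_{L^2}^2}),
\end{equation*}
the point being to convert the polynomial factor $\|v\|_{L^2}^{p-2}$ into an exponential weight that is $\mu$-integrable by \eqref{eq:exponential_integrability_mu_2}, while keeping the $\|\nabla v\|_{L^2}^2$ factor which is $\mu$-integrable by \eqref{eq:exponential_integrability_mu_1}. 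Concretely one writes $\|v\|_{L^2}^{p-2}\|\nabla v\|_{L^2}^2 \le C_\eps e^{\eps\|v\|_{L^2}^2}\|\nabla v\|_{L^2}^2$, and then one needs $\mu$-integrability of $e^{\eps\|v\|_{L^2}^2}\|\nabla v\|_{L^2}^2$. To get the latter one returns to the Itô/energy computation in the proof of Proposition \ref{prop:exponential_integrability_mu}: applying the stationary balance to $\Psi_v = \psi(\|v\|_{L^2}^2)$ with $\psi(x)=e^{\eps x}$ (suitably truncated) produces, after using $D^2\Psi$ involves $\psi''=\eps^2 e^{\eps x}$ and $\psi'=\eps e^{\eps x}$, an identity of the form $\eps\int e^{\eps\|v\|_{L^2}^2}\|\nabla v\|_{L^2}^2\,\dd\mu \lesssim (\eps^2+\eps)\Energy_0\int e^{\eps\|v\|_{L^2}^2}\,\dd\mu$, whose right-hand side is finite by \eqref{eq:exponential_integrability_mu_2} once $\eps<1/(2\Energy_0)$. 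Since $p<4$ forces $p-2<2$, the polynomial power is strictly subquadratic and this argument closes for every such $p$; the threshold $p<4$ is exactly where $\beta=1-2/p<1/2$ keeps the exponent $2$ on the gradient term, which is the highest power of $\|\nabla v\|_{L^2}$ controlled by the stationary energy balance.

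The main obstacle I anticipate is the bookkeeping of the truncation/approximation in the Itô step needed to justify plugging $\psi(x)=e^{\eps x}$ (an unbounded function) into the energy identity — this is the same technicality already handled in the sketch of Proposition \ref{prop:exponential_integrability_mu} via the approximating sequence $(\psi_k)$, so I would invoke that device verbatim, choosing $\psi_k$ bounded, smooth, with $|\psi_k(x)|\lesssim_k x^2$ and $\psi_k(x)\uparrow e^{\eps x}$, and pass to the limit by monotone convergence on the (positive) gradient term and dominated convergence on the exponential term. Everything else — the anisotropic interpolation inequality \eqref{eq:GN_anisotropic_plan}, the embedding $H^1(\T^3)\embed H^1_{x,y}(L^2_z)$, and Young's inequality — is routine. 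A cosmetic point: one should track that all constants depend only on $p$ and $\Energy$ (equivalently $\Energy_0\le\Energy$), which is clear since $\Energy_0=\|Q\|_{\calL_2(\Ls_0^2,\Ls_0^2)}\le\|Q\|_{\calL_2(\Ls_0^2,\Hs_0^1)}=\Energy$.
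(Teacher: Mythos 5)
Your route differs from the paper's (which interpolates the two moment bounds $\int\|v\|_{L^2}^r\,\dd\mu$ and $\int\|v\|_{L^r_{x,y}(L^2_z)}^2\,\dd\mu$ directly, using $H^1\embed L^r_{x,y}(L^2_z)$), but the two are close in substance: both inputs are the same (gradient integrability plus exponential integrability) and both close via a H\"older inequality in $\mu$. However, there is an arithmetic slip in your proposal that leads you into unnecessary extra work and garbles the explanation of the threshold $p<4$.

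The slip: \eqref{eq:GN_anisotropic_plan}, with $\beta=1-\tfrac{2}{p}$, raised to the power $p$ gives
$$\|v\|_{L^p_{x,y}(L^2_z)}^p\lesssim \|v\|_{L^2}^{\,p(1-\beta)}\|v\|_{H^1_{x,y}(L^2_z)}^{\,p\beta}=\|v\|_{L^2}^{\,2}\,\|v\|_{H^1_{x,y}(L^2_z)}^{\,p-2},$$
i.e., the exponent $2$ sits on the $L^2$ norm and $p-2$ on the $H^1$ norm — not the other way around, as you wrote. With the correct exponents the closing step is immediate: since $p<4$ gives $p-2<2$, a single H\"older in $\mu$ with exponents $\tfrac{2}{4-p}$ and $\tfrac{2}{p-2}$ yields
$$\int\|v\|_{L^2}^2\|v\|_{H^1}^{p-2}\,\dd\mu\leq \Big(\int\|v\|_{L^2}^{\frac{4}{4-p}}\,\dd\mu\Big)^{\frac{4-p}{2}}\Big(\int\|v\|_{H^1}^{2}\,\dd\mu\Big)^{\frac{p-2}{2}}<\infty,$$
using \eqref{eq:exponential_integrability_mu_1}--\eqref{eq:exponential_integrability_mu_2} directly. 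In particular the exponentially-weighted gradient estimate $\int e^{\eps\|v\|_{L^2}^2}\|\nabla v\|_{L^2}^2\,\dd\mu<\infty$ is not needed at all. The threshold $p<4$ is the requirement $p-2<2$ so that the H\"older exponent $\tfrac{2}{p-2}$ exceeds $1$; it is not tied to ``keeping the exponent $2$ on the gradient term.''

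Two remarks on the argument as you actually wrote it. First, the inequality $\|v\|_{L^p_{x,y}(L^2_z)}^p\lesssim\|v\|_{L^2}^{p-2}\|v\|_{H^1}^2$ happens to be true on the compact domain for $2\le p<4$, but not via Gagliardo--Nirenberg: it follows from $L^p$-interpolation between $L^2_{x,y}$ and $L^{4/(4-p)}_{x,y}$ together with $H^1(\T^2_{x,y})\embed L^{4/(4-p)}(\T^2_{x,y})$ (and it fails for $p>4$). So the step you flagged as routine is, for the inequality you wrote, neither routine nor what GN gives. Second, the exponentially-weighted gradient bound you invoke is derivable — summing the identity \eqref{eq:energy_estimate_k} against $\eps^{n-1}/(n-1)!$ and using $\int\|v\|_{L^2}^{2(n-1)}\,\dd\mu\le (n-1)!\,\Energy_0^{n-1}$ gives convergence for $\eps\Energy_0<1$ — so your proof can be made to close; but you should have supplied this summation argument rather than asserting it, and with the corrected exponents the whole device is superfluous.
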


\begin{proof}
Fix $r<\infty$. 
Note that, by \eqref{eq:exponential_integrability_mu_2}, it follows that 
\begin{equation}
\label{eq:lemma_estimate_L4xy_L2z1}
\int_{\Hs_0^1(\T^3)}  \|v\|_{L^2}^r \,\dd \mu(v)
\leq C(r,\Energy).
\end{equation}
Moreover, by Sobolev embeddings, it follows that $H^1\embed H^1_{x,y}(L^2_z)\embed L^r_{x,y}(L^2_z)$. Hence, \eqref{eq:exponential_integrability_mu_1} implies
\begin{equation}
\label{eq:lemma_estimate_L4xy_L2z2}
\int_{\Hs_0^1(\T^3)}  \|v\|_{L^r_{x,y}(L^2_z)}^2 \,\dd \mu(v)
\leq C(r,\Energy).
\end{equation}
Interpolating \eqref{eq:lemma_estimate_L4xy_L2z1}-\eqref{eq:lemma_estimate_L4xy_L2z2} with $r=r(p)<\infty$ appropriately, we obtain \eqref{eq:lemma_estimate_L4xy_L2z}. 
\end{proof}

We are now ready to prove Theorem \ref{t:integrability_MET}.

\begin{proof}[Proof of Theorem \ref{t:integrability_MET}]
We begin by collecting some useful facts. 
Throghout the proof we fix $p<\frac{4}{3}$.
Recall that $u=(v,w(v))$ where $w(v)$ is as in \eqref{eq:def_w}.
Due to Proposition \ref{prop:exponential_integrability_mu}, to prove Theorem \ref{t:integrability_MET} it suffices to show that
\begin{equation}
\label{eq:claim_integrability_inv_meas}
\int_{\Hs_0^1(\T^3)} \int_{\T^3} \Big|\nabla_{x,y}\big( \nabla_{x,y}\cdot \int_{0}^{\cdot} v (\cdot,z)\,\dd z \big)\Big|^p \,\dd x \leq C(p,\Energy).
\end{equation}
The idea is to use the following splitting:
\begin{align*}
\int_{0}^{\cdot} v (\cdot,z)\,\dd z
&=
\Big(\int_{0}^{\cdot} v (\cdot,z)\,\dd z- \underline{v}\Big)+ \underline{v}\ \ \text{ where } \ \  \underline{v}\stackrel{{\rm def}}{=}\int_{\T_z} v(\cdot,z) z \,\dd z. 
\end{align*}
Hence, by Lemma \ref{l:estimate_H_negative_mean_free} and the divergence-free type-condition $\int_{\T_z}( \nabla_{x,y}\cdot v)(\cdot,z)\,\dd z=0$ on $\T_{x,y}^2$, to prove \eqref{eq:claim_integrability_inv_meas} it is enough to show that
\begin{align}
\label{eq:inv_measure_int_1}
\int_{\Hs_0^1(\T^3)}\int_{\T^2_{x,y}}\|\nabla_{x,y}^2 v(x,y,\cdot)\|_{H^{-1}_{z}}^p\, \dd x \, \dd y\,\dd \mu(v)& \leq C(p,\Energy),\\
\label{eq:inv_measure_int_2}
\int_{\Hs_0^1(\T^3)}\int_{\T^2_{x,y}} |\nabla_{x,y}^2 \vu |^p\,\dd x \, \dd y \,\dd \mu(v)& \leq C(p,\Energy).
\end{align}
We now split the proof into two steps.

\smallskip

\emph{Step 1: Proof of \eqref{eq:inv_measure_int_1}.}
Since $\nabla\cdot u=0$ for $v\in \Hs^1$, we have
$$
(v\cdot\nabla_{x,y}) v+ w(v)\partial_z v =\nabla_{x,y}\cdot ( v\otimes v) + \partial_z (w(v)v ). 
$$
Arguing as in the proof of Proposition \ref{prop:regularity_partial_z_inv_measure}, the mild formulation of the primitive equations and the maximal $L^q$-regularity estimate \eqref{eq:max_reg_anisotropic} of Lemma \ref{l:max_reg_anisotropic_Laplacian} with $s_0=0$, $p_0=q=p<4/3$, $s_1=-1$, $p_1=2$ and the stationary of $\mu$,
\begin{align*}
&\int_{\Hs_0^1(\T^3)} \int_{\T^2_{x,y}}\|\nabla_{x,y}^2 v(x,y,\cdot)\|_{H^{-1}_{z}}^{p}\, \dd x \, \dd y\,\dd \mu(v)
\lesssim_p C(\Energy)+
\int_{\Hs_0^1(\T^3)}\|v\|_{H^1}^2\,\dd\mu(v)\\
&\qquad + 
\int_{\Hs_0^1(\T^3)} \|\nabla_{x,y}\cdot (v\otimes v)\|_{L^{p}_{x,y}(H^{-1}_z)}^{p}\,\dd\mu(v)+
\int_{\Hs_0^1(\T^3)} \|\partial_z (w(v)v)\|_{L^{p}_{x,y}(H^{-1}_z)}^{p}\,\dd\mu(v).
\end{align*}
Next, we estimate the last two terms on the RHS of the above estimate. Firstly, choose $q<4$ such that $\frac{1}{q}+\frac{1}{2}=\frac{1}{p}$. Due \eqref{eq:L1_embeds_negative}, we have
\begin{align*} 
 \|\nabla_{x,y}\cdot (v\otimes v)\|_{L^{p}_{x,y}(H^{-1}_z)}
&\lesssim
\|\nabla_{x,y}\cdot (v\otimes v)\|_{L^{p}_{x,y}(L^1_z)}\\
&\lesssim  \|v\|_{L^q_{x,y}(L^2_z)}\|\nabla v\|_{L^2}. 
\end{align*}
In particular, by \eqref{eq:exponential_integrability_mu_1} and \eqref{eq:lemma_estimate_L4xy_L2z},
$$
\int_{\Hs_0^1(\T^3)} \|\nabla_{x,y}\cdot (v\otimes v)\|_{L^{p}_{x,y}(H^{-1}_z)}^{p}\,\dd\mu(v)
\leq C(p,\Energy).
$$
Similarly, arguing as in \eqref{eq:L1_estimate_mixed_explaination}, for $q$ as above,
\begin{align*}
\|w(v)v\|_{L_{x,y}^{p}(H^{-1}_z)}
&\lesssim
\|w(v)v\|_{L_{x,y}^{p}(L^2_z)}\\
&\lesssim \|w(v)\|_{L^2_{x,y}(L^{\infty}_z)}\|v\|_{L^r_{x,y}(L^2_z)}\\
&\lesssim \|\nabla v\|_{L^2}\|v\|_{L^r_{x,y}(L^2_z)}.
\end{align*}
Hence, $\int_{\Hs_0^1(\T^3)} \|\partial_z (w(v)v)\|_{L^{p}_{x,y}(H^{-1}_z)}^{p}\,\dd\mu(v)
\leq C(p,\Energy)$ by H\"older inequality. 
Thus, \eqref{eq:inv_measure_int_1} follows by collecting the previous estimates.

\smallskip

\emph{Step 2: Proof of \eqref{eq:inv_measure_int_2}.}
Let us begin by deriving an SPDE for $\vu_t\stackrel{{\rm def}}{=} \vut=\int_{\T^3} v_t\, (1-z)\,\dd z$, cf., \eqref{eq:underlined_operator}. From  \eqref{eq:primitive_full}, we obtain 
\begin{align*}
\partial_t \vu_t
&+ \underline{(v_t\cdot \nabla_{x,y})v_t }
+\underline{w(v_t)\partial_z v_t} \\
&= \frac{1}{2}
\q [(v\cdot\nabla_{x,y})v+w(v)\partial_z v] + \Delta_{x,y}\underline{v} + \underline{\partial_z^2 v}   + \underline{Q}\dot{W}, \quad   \text{on }\T^3.
\end{align*}
Here, we used that the pressure $-\q[f]$ does not depend on $z$ by \eqref{eq:def_p_q}.

Let $p<\frac{4}{3}$ be fixed.
For $v\in \Hs^2(\T^3)$, we have $w(v)=0$ on $\T_{x,y}^2\times \{0,1\}$ and therefore
\begin{align*}
\underline{w(v)\partial_z v}
&= \int_{\T_z} \big(w(v)\partial_z v \big) (1-z)\,\dd z\\
&= \int_{\T_z} w(v) v \,\dd z+ \int_{\T_z}(\nabla_{x,y} \cdot v) v\, (1-z) \,\dd z.
\end{align*}
Similarly, for $v\in \Hs^2(\T^3)\subseteq C(\T^3;\R^2)$,
\begin{align*}
\underline{\partial_z^2 v}
= -\partial_z v(\cdot,0) +\int_{\T_z} \partial_z v \,\dd z=  -\partial_z v(\cdot,0).
\end{align*}
It remains to estimate the pressure term $\q [(v_t\cdot\nabla_{x,y})v_t+w(v_t)\partial_z v_t]$.
Finally, let $\q_{x,y}$ be as below \eqref{eq:def_q_xy}. Note that, for $v\in \Hs^2(\T^3)$, 
\begin{align*}
\q [(v\cdot\nabla_{x,y})v+w(v)\partial_z v]
&= \q_{x,y} \Big[\int_{\T_z}\Big((v\cdot\nabla_{x,y})v+w(v)\partial_z v\Big)\, \dd z \Big]\\
&= \q_{x,y} \Big[ \int_{\T_z}(v\cdot\nabla_{x,y})v\,\dd z + 
\int_{\T_z}  (\nabla_{x,y}\cdot v ) v\,\dd z \Big],
\end{align*}
where the last equality follows from the identity
$$
\int_{\T_z} w(v)\partial_z v\,\dd z= 
\int_{\T_z}  (\nabla_{x,y}\cdot v ) v\,\dd z .
$$ 
Reasoning as the proof of Proposition \ref{prop:regularity_partial_z_inv_measure} and Step 1, one can check that the maximal $L^p$-regularity estimates (see Lemma \ref{l:max_reg_anisotropic_Laplacian} for a more general situation) for the Laplace operator $\Delta_{x,y}$ on $\T_{x,y}^2$, and using the invariance of $\mu$, it follows that
\begin{align*}
\int_{\Hs_0^1} \|\nabla_{x,y}^2 \vu\|_{L^{p}_{x,y}}^{p}\,\dd \mu(v)
&\lesssim_p 1+
\int_{\Hs_0^1}\Big( \|v\|_{H^1}^2
 +\|w(v)v\|_{L^{p}_{x,y}(L^1_z)}^{p}\Big) \,\dd \mu(v)\\
&+ \int_{\Hs_0^1}\Big(\||\nabla v| |v|\|_{L^{p}_{x,y}(L^1_z)}^{p} +
 \sup_{z\in \T}\|\partial_z v(\cdot,z)\|_{L^{p}_{x,y}}^{p}\Big)\,\dd \mu(v)+ C(\Energy)\\
&\leq C(p,\Energy)
\end{align*}
where in the last inequality follows from Lemma \ref{l:mixed_integrability_invariant_mesures} and \eqref{eq:vertical_derivative_mu_sup} in Proposition \ref{prop:regularity_partial_z_inv_measure}.
\end{proof}

\section{Global well-posedness and unique ergodicity of PEs}
\label{s:well_posedness_Hsigma}
In this section, we prove the results stated in Subsection \ref{ss:global_well_statement}.

\subsection{Global well-posedness of stochastic PEs -- Proof of Propositions \ref{prop:global_well_posedness} and \ref{prop:regularity_invariant_measure}}
Here, we prove Propositions \ref{prop:global_well_posedness} and \ref{prop:regularity_invariant_measure}.
The latter is a consequence of the following result, which includes the key \emph{new} a priori estimates for stochastic PEs \eqref{eq:primitive_full} in high-order Sobolev spaces.

\begin{theorem}[Global well-posedness in high-order regularity spaces]
\label{t:high_order_estimates}
Let $\sigma\geq 2$, $Q\in \calL_2(\Ls^2_0,\Hs^\sigma_0)$ and $v\in  L^2_{\F_0}(\O;\Hs^{\sigma}_0)$. 
There exists a global unique $H^{\sigma}$-solution $(v_t)_{t}$ such that
$
(v_t)_t\in C([0,\infty);H^{\sigma})
$ a.s.\
 for which the following hold.
\begin{enumerate}[{\rm(1)}]
\item\label{it:high_order_estimates2}
For all $T\in (0,\infty)$ and $\g\gg 1$,
$$
\P\Big(\sup_{t\in [0,T]} \|v_t\|_{H^\sigma}^2 + \int_{0}^T \|v_t\|_{H^{1+\sigma}}^2 \,\dd t\geq \g\Big)
\lesssim_T 
\frac{1+\|v\|_{H^{\sigma}}^4}{\log\log\log(\g)} .
$$
\item\label{it:high_order_estimates3} For all sequences $(v^n)_{n\geq 1}\subseteq L^2_{\F_0}(\O;\Hs^\sigma)$ satisfying $v^n\to v$ in probability in $\Hs^\sigma$ it holds that 
$$
(v^n_t)_t\to (v_t)_t \text{ in probability in }C([0,T];H^\sigma)\cap L^2(0,T;H^{1+\sigma})
$$
for all $T<\infty$, where $(v^n_t)_t$ is the global solution to \eqref{eq:primitive_full} with initial data $v^n$.
\end{enumerate}
\end{theorem}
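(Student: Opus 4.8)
The plan is to upgrade the standard local $H^\sigma$-theory for \eqref{eq:primitive_full} to a global one by means of a \emph{pathwise} a priori estimate built from the deterministic Cao--Titi regularity scheme (cf.\ \cite{CT07,A23_primitive3}), and then to convert the weak energy moments available for \eqref{eq:primitive_full} into the iterated-logarithm tail bound \eqref{it:high_order_estimates2}. For $\sigma\geq2$ and $Q\in\calL_2(\Ls^2_0,\Hs^\sigma_0)$, local well-posedness of \eqref{eq:primitive_full} in $\Hs^\sigma_0$ together with the blow-up criterion ``$\tau<\infty\Rightarrow\limsup_{t\uparrow\tau}\|v_t\|_{H^\sigma}=\infty$'' follows from the stochastic maximal $L^p$-regularity framework (\cite{AV19_QSEE_1}), already applied to \eqref{eq:primitive_full} at regularity $\sigma\geq1$ in \cite{primitive1,primitive2,BS21,Debussche_2012}; so it suffices to control $\sup_{t\in[0,T]}\|v_t\|_{H^\sigma}$ a.s., quantitatively. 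To reduce to a pathwise problem I would subtract the stochastic convolution $Z_t\stackrel{{\rm def}}{=}\int_0^t e^{(t-s)\Delta}Q\,\dd W_s$: since $Q\in\calL_2(\Ls^2_0,\Hs^\sigma_0)$ and $\Delta$ has a spectral gap on mean-zero fields, the factorization method gives $Z\in C([0,\infty);\Hs^\sigma_0)\cap L^2_{\loc}([0,\infty);\Hs^{1+\sigma}_0)$ a.s.\ with \emph{all} polynomial moments of $\sup_{[0,T]}\|Z_t\|_{H^\sigma}$ and of $\int_0^T\|Z_t\|_{H^{1+\sigma}}^2\,\dd t$ finite (Gaussianity). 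Then $y_t\stackrel{{\rm def}}{=}v_t-Z_t$ solves, $\om$-by-$\om$, the forced system $\partial_t y_t=\Delta y_t-\p[((y_t+Z_t)\cdot\nabla_{x,y})(y_t+Z_t)+w(y_t+Z_t)\partial_z(y_t+Z_t)]$ with $y_0=v$, in which $Z$ plays the role of a given forcing of class $C([0,T];H^\sigma)\cap L^2(0,T;H^{1+\sigma})$.

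Next I would run the deterministic global regularity scheme for PEs on $y$, absorbing the $Z$-terms. It proceeds through finitely many levels, each closed by a logarithmic Gronwall inequality: \emph{(a)} the $L^2$-energy (controls $\sup_{[0,T]}\|v_t\|_{L^2}^2+\int_0^T\|\nabla v_t\|_{L^2}^2\,\dd t$ linearly); \emph{(b)} the vertical derivative $\partial_z v$ in $L^2$ --- the crucial step, where the $z$-independence of the surface pressure removes $\p[\,\cdot\,]$ from the $\partial_z$-equation and yields $\sup_{[0,T]}\|\partial_z v_t\|_{L^2}^2\leq\exp(\mathrm{poly}(\text{level (a)},\|Z\|))$; \emph{(c)} the full gradient $\nabla v$ (i.e.\ $H^1$); and \emph{(d)} $H^2$ and, for $\sigma>2$, the remaining $H^\sigma$-regularity, each obtained from the previous level by a further logarithmic-Gronwall bootstrap. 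Composing the $\log$'s over these (finitely many) levels gives a bound of the form
\begin{equation*}
\log\log\log\Big(c_\sigma+\sup_{t\in[0,T]}\|v_t\|_{H^\sigma}^2+\int_0^T\|v_t\|_{H^{1+\sigma}}^2\,\dd t\Big)\lesssim_T\mathrm{poly}\Big(\sup_{t\in[0,T]}\|v_t\|_{L^2}^2,\ \int_0^T\|\nabla v_t\|_{L^2}^2\,\dd t,\ \sup_{t\in[0,T]}\|Z_t\|_{H^\sigma},\ \int_0^T\|Z_t\|_{H^{1+\sigma}}^2\,\dd t\Big),
\end{equation*}
three compositions of $\log$ sufficing for all $\sigma\geq2$; in particular $\sup_{[0,T]}\|v_t\|_{H^\sigma}<\infty$ a.s., hence $\tau=\infty$ and $(v_t)_t\in C([0,\infty);H^\sigma)$ a.s.

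For \eqref{it:high_order_estimates2}, Itô's formula applied to $\|v_t\|_{L^2}^2$ --- where the nonlinearity drops out by the cancellation $\int_{\T^3}(u_t\cdot\nabla)v_t\cdot v_t\,\dd\x=0$ used in the proof of Proposition \ref{prop:exponential_integrability_mu} --- together with Burkholder--Davis--Gundy gives $\E[\sup_{[0,T]}\|v_t\|_{L^2}^4+(\int_0^T\|\nabla v_t\|_{L^2}^2\,\dd t)^2]\lesssim_T 1+\|v\|_{L^2}^4\leq 1+\|v\|_{H^\sigma}^4$; the $Z$-moments being finite by Gaussianity, the structure of the hierarchy is arranged so that, after taking expectations, the right-hand side of the composed estimate above is $\lesssim_T 1+\|v\|_{H^\sigma}^4$, so that $\E[\log\log\log(c_\sigma+\sup_{[0,T]}\|v_t\|_{H^\sigma}^2+\int_0^T\|v_t\|_{H^{1+\sigma}}^2\,\dd t)]\lesssim_T 1+\|v\|_{H^\sigma}^4$, and Chebyshev's inequality along the increasing map $\g\mapsto\log\log\log\g$ yields \eqref{it:high_order_estimates2}. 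For the stability statement \eqref{it:high_order_estimates3} I would fix $T$, take $v^n\to v$ in probability in $\Hs^\sigma_0$, write the (linear-in-the-difference) equation for $v^n_t-v_t$, and run a Gronwall estimate on $\{\sup_{[0,T]}(\|v^n_t\|_{H^\sigma}+\|v_t\|_{H^\sigma})\leq R\}$ to obtain $\sup_{[0,T]}\|v^n_t-v_t\|_{H^\sigma}^2+\int_0^T\|v^n_t-v_t\|_{H^{1+\sigma}}^2\,\dd t\leq C(R,T)\|v^n-v\|_{H^\sigma}^2$; since \eqref{it:high_order_estimates2} is uniform in $n$ (it depends on the data only through $\|v^n\|_{H^\sigma}$), the complementary event is uniformly small as $R\to\infty$, whence convergence in probability in $C([0,T];H^\sigma)\cap L^2(0,T;H^{1+\sigma})$.

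The genuine obstacle is the criticality of $H^1$ for \eqref{eq:primitive_full}: since $w(v)\partial_z v$ loses a horizontal derivative, no fixed-regularity energy estimate closes and the full Cao--Titi hierarchy is unavoidable; its adaptation to the $Z$-perturbed random system and --- above all --- the careful tracking of how the logarithmic Gronwall bounds compose, so as to land at exactly a triple logarithm controlled by a mere fourth moment of the $L^2$-energy, is the heart of the matter. Step (b), the control of $\partial_z v$ that underpins the whole scheme, is precisely where the special structure of the PEs (the $z$-independent surface pressure) is indispensable.
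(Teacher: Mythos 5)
Your overall architecture --- treat $Z_t=\int_0^t e^{(t-s)\Delta}Q\,\dd W_s$ as a regular forcing, run a pathwise energy hierarchy on $y=v-Z$, and then integrate out the noise moments --- is a legitimate alternative route, but it leaves the crux unjustified, and that crux is precisely where the paper does something different. The paper does \emph{not} re-derive the Cao--Titi hierarchy from scratch for the $Z$-perturbed system. It takes as input the $H^1$-level tail estimate $\P(\sup_{[0,T]}\|v_t\|_{H^1}^2+\int_0^T\|v_t\|_{H^2}^2\,\dd t\geq\g)\lesssim(1+\|v\|_{H^1}^4)(\log\log\g)^{-1}$ already proved in \cite[Remark 3.10]{primitive2}, and then bootstraps \emph{once}: by Lemma \ref{l:estimate_nonlinearity}\eqref{it:estimate_nonlinearity_H10} and interpolation, $\|b(v,v)\|_{H^1}^2\lesssim\|v\|_{H^3}\|v\|_{H^2}^2\|v\|_{H^1}$ is absorbed up to a Gronwall coefficient $M_t=\|v_t\|_{H^2}^2\|v_t\|_{H^1}^2$, and the stochastic Gronwall inequality of \cite[Lemma A.1]{AV_variational} converts the double-log tail on $\int_0^T M_t\,\dd t$ into a triple-log tail at the $H^2$ level while carrying along the fourth-power constant. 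The further step to $\sigma>2$ uses the Lenglart-type Lemma \ref{l:lenglart}, whose purpose is exactly to \emph{preserve} the log count and the constant in the bootstrap.

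Your scheme replaces this by a full pathwise re-run of the hierarchy, and the two things you assert but do not establish are precisely the content of the proof. First, composing logarithmic Gronwall bounds across levels (a)--(d) does not automatically terminate at three nested logarithms independently of $\sigma$: each level requiring an exponential Gronwall argument adds one $\exp$, and going past $H^2$ costs more logs unless one invokes a Lenglart-type domination argument as the paper does; without it the count grows with $\sigma$, as one sees in the paper's own Proposition \ref{prop:regularity_invariant_measure}, where the bound is $\log^{[N]}$ with $N$ depending on $\sigma$. Second, the moment bookkeeping: you claim the expectation of $\mathrm{poly}(\sup\|v\|_{L^2}^2,\int_0^T\|\nabla v\|_{L^2}^2\,\dd t,\|Z\|)$ is $\lesssim_T 1+\|v\|_{H^\sigma}^4$, but a degree-$k$ polynomial costs the $k$-th moment of the $L^2$-energy, which scales like $1+\|v\|_{L^2}^{2k}$, not like $1+\|v\|_{L^2}^4$; landing on a fourth power requires the polynomial to have degree at most two, which the Cao--Titi hierarchy does not deliver for free. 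The paper sidesteps both issues by inheriting the double-log, fourth-power tail at the $H^1$ level from \cite{primitive2} rather than re-deriving it. Your treatment of item \eqref{it:high_order_estimates3} (Gronwall on the difference plus uniformity of the tail bound) matches the paper's reference to \cite[Theorem 3.7]{primitive2}, and your remark that the $z$-independent surface pressure survives the shift by $Z$ is correct and uses $\p Q=Q$; those parts are fine.
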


The case $\sigma=1$ of Theorem \ref{t:high_order_estimates} is also true, see e.g.\ \cite[Theorem 3.6]{primitive1} and \cite[Remark 3.10]{primitive2}. Item \eqref{it:high_order_estimates2} follows from bootstrapping the latter estimate to high-order Sobolev regularity. The latter fact is non-trivial due to the above-mentioned criticality of $H^1$ for the PEs.
As for Proposition \ref{prop:global_well_posedness}, the above result is not present in the existing literature on stochastic PEs. 
The proof of Theorem \ref{t:high_order_estimates} is rather general and can be adapted to the case of multiplicative noise as in \cite{primitive1,primitive2}. Moreover, by localization in $\Omega$ (see e.g., \cite[Proposition 4.13]{AV19_QSEE_2}), Theorem \ref{t:high_order_estimates} also holds with $L^0_{\F_0}(\O;\Hs_0^\sigma)$-intial data and non-zero mean initial data and noise. As it will not be needed here, we do not include the details.

The arguments in the proof of Theorem \ref{t:high_order_estimates} are inspired by the corresponding results in the deterministic setting presented in \cite[Section 3]{LT19}. However, in this work, we take a more direct approach by relying on the nonlinear estimate in Lemma \ref{l:estimate_nonlinearity}. Specifically, our approach reveals that the special structure of the PEs nonlinearity plays only a minor role in improving the estimate from $H^1$ to $H^2$.

The proof of Theorem \ref{t:high_order_estimates} is postponed to Subsection \ref{ss:high_order_estimates}. In the next subsection, we analyse the mapping properties of the nonlinearities in the PEs \eqref{eq:primitive_full}. 

\subsubsection{Sobolev estimates for the PEs nonlinearity}
Set
\begin{equation}
\label{eq:def_nonlinearity_b}
b(v_1,v_2)\stackrel{{\rm def}}{=} \p\big[(v_1\cdot\nabla_{x,y})v_2+ w(v_1)\partial_z v_2\big],
\end{equation}
where $w(v)$ is as in \eqref{eq:def_w}. In this subsection, we prove the following result.

\begin{lemma}
\label{l:estimate_nonlinearity}
The following estimates hold whenever the right-hand side is finite and $v_1,v_2\in \Ls^2$:
\begin{enumerate}[{\rm(1)}]
\item\label{it:estimate_nonlinearity_L2}
$\displaystyle{
\|b(v_1,v_2)\|_{L^2}
\lesssim \|v_1\|_{H^{3/2}}\|v_2\|_{H^{3/2}};
}$
\vspace{0.1cm}
\item\label{it:estimate_nonlinearity_H10}
$\displaystyle{\|b(v_1,v_2)\|_{H^1}
\lesssim \|v_1\|_{H^{5/2}}\|v_2\|_{H^{3/2}}}
+\|v_1\|_{H^{3/2}}\|v_2\|_{H^{5/2}};
$
\vspace{0.1cm}
\item\label{it:estimate_nonlinearity_H1}
$\displaystyle{\|b(v_1,v_2)\|_{H^\rho}
\lesssim_{\rho,\delta} \|v_1\|_{H^{1+\rho+\delta}}\|v_2\|_{H^{1+\rho+\delta}}}
$
for all $\delta>0$ and $\rho\geq 1$;
\vspace{0.1cm}
\item\label{it:estimate_nonlinearity_High}
$\displaystyle{\|b(v_1,v_2)\|_{H^\rho}
\lesssim_{\rho} \|v_1\|_{H^{1+\rho}}\|v_2\|_{H^{1+\rho}}}
$
for all $\rho\geq 2$.
\end{enumerate}
\end{lemma}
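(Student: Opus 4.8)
The plan is to reduce every estimate to product estimates in (possibly anisotropic) Sobolev spaces on $\T^3=\T^2_{x,y}\times\T_z$. Since the hydrostatic Helmholtz projection $\p$ extends to a bounded operator on $H^\rho(\T^3;\R^2)$ for every $\rho\in\R$ (Subsection \ref{ss:reformulation}), it suffices to bound the two bilinear terms $N_1(v_1,v_2):=(v_1\cdot\nabla_{x,y})v_2$ and $N_2(v_1,v_2):=w(v_1)\partial_z v_2$ separately. I will use two elementary properties of the vertical velocity $w(\cdot)$: first, $\|w(v_1)\|_{H^s(\T^3)}\lesssim\|v_1\|_{H^{s+1}(\T^3)}$ for all $s\geq0$ (one horizontal derivative is lost, while the vertical primitive $\int_0^z(\cdot)\,\dd z$ costs nothing because the vanishing vertical mean of $\nabla_{x,y}\cdot v_1$ is built into the definition of $\Ls^2$); second, the PEs-specific bound $\|w(v_1)\|_{L^p_{x,y}(L^\infty_z)}\lesssim\|\nabla_{x,y}\cdot v_1\|_{L^p_{x,y}(L^1_z)}\lesssim\|\nabla_{x,y} v_1\|_{L^p_{x,y}(L^2_z)}$. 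Throughout I also use the anisotropic embedding $H^s(\T^3)\embed H^a(\T^2_{x,y};H^b(\T_z))$ valid for all $a,b\geq0$ with $a+b\leq s$, combined with the $2$D, resp.\ $1$D, Sobolev embeddings in the $(x,y)$, resp.\ $z$, variables, and the Sobolev product rule $\|fg\|_{H^s}\lesssim\|f\|_{H^{s_1}}\|g\|_{H^{s_2}}$ valid whenever $s_1,s_2\geq s\geq0$ and $s_1+s_2-s>\tfrac32$.

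Parts \eqref{it:estimate_nonlinearity_High} and \eqref{it:estimate_nonlinearity_H1} (for $\rho\geq\tfrac32$) are the soft cases. For $\rho\geq2$, $H^\rho(\T^3)$ is a Banach algebra, so the product rule gives $\|N_1(v_1,v_2)\|_{H^\rho}\lesssim\|v_1\|_{H^\rho}\|\nabla_{x,y}v_2\|_{H^\rho}$ and $\|N_2(v_1,v_2)\|_{H^\rho}\lesssim\|w(v_1)\|_{H^\rho}\|\partial_z v_2\|_{H^\rho}$, both $\lesssim\|v_1\|_{H^{1+\rho}}\|v_2\|_{H^{1+\rho}}$ using $\|w(v_1)\|_{H^\rho}\lesssim\|v_1\|_{H^{1+\rho}}$; this is \eqref{it:estimate_nonlinearity_High}. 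For \eqref{it:estimate_nonlinearity_H1} with $\rho\geq\tfrac32$ the same rule applies via $H^{\rho+\delta}\cdot H^{\rho+\delta}\embed H^\rho$ (note $\rho+2\delta>\tfrac32$), estimating $v_1,w(v_1)\in H^{\rho+\delta}$ by $\|v_1\|_{H^{1+\rho+\delta}}$ and $\nabla_{x,y}v_2,\partial_z v_2\in H^{\rho+\delta}$ by $\|v_2\|_{H^{1+\rho+\delta}}$. The only restrictive range is $1\leq\rho<\tfrac32$, where $H^\rho$ fails to be multiplicative at the natural scaling; there I would invoke the fractional Leibniz (Kato--Ponce) inequality $\|fg\|_{H^\rho}\lesssim\|f\|_{W^{\rho,p_1}}\|g\|_{L^{q_1}}+\|f\|_{L^{q_2}}\|g\|_{W^{\rho,p_2}}$, with $\tfrac1{p_i}+\tfrac1{q_i}=\tfrac12$, applied to $(f,g)=(v_1,\nabla_{x,y}v_2)$ and $(f,g)=(w(v_1),\partial_z v_2)$. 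A short check of Sobolev indices shows that, since $1+\rho+\delta>\tfrac32$ (hence $H^{1+\rho+\delta}\embed L^\infty$) and $\rho+2\delta>\tfrac12$, one may pick $p_i,q_i$ so that $H^{1+\rho+\delta}(\T^3)$ embeds into each of $W^{\rho,p_1}$, $L^{q_2}$, $W^{1,q_1}$, $W^{1+\rho,p_2}$ — for the $N_2$-term using additionally $\|w(v_1)\|_{W^{\rho,p_1}}\lesssim\|v_1\|_{W^{1+\rho,p_1}}$.

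The delicate cases are \eqref{it:estimate_nonlinearity_L2} and \eqref{it:estimate_nonlinearity_H10}, where isotropic product estimates just miss the critical Sobolev exponent and one must separate the $z$-integrability from the $(x,y)$-integrability. For \eqref{it:estimate_nonlinearity_L2} I would bound $\|N_1(v_1,v_2)\|_{L^2}\leq\|v_1\|_{L^6}\|\nabla_{x,y}v_2\|_{L^3}\lesssim\|v_1\|_{H^1}\|v_2\|_{H^{3/2}}$ (from $H^1(\T^3)\embed L^6$ and $H^{3/2}(\T^3)\embed W^{1,3}$), and, by the anisotropic Hölder inequality $\|fg\|_{L^2_{x,y}(L^2_z)}\leq\|f\|_{L^4_{x,y}(L^\infty_z)}\|g\|_{L^4_{x,y}(L^2_z)}$ together with the second property of $w$, $\|N_2(v_1,v_2)\|_{L^2}\lesssim\|\nabla_{x,y}v_1\|_{L^4_{x,y}(L^2_z)}\|\partial_z v_2\|_{L^4_{x,y}(L^2_z)}\lesssim\|v_1\|_{H^{3/2}}\|v_2\|_{H^{3/2}}$, the last step using $H^{3/2}(\T^3)\embed W^{1,4}(\T^2_{x,y};L^2(\T_z))$ and $H^{3/2}(\T^3)\embed L^4(\T^2_{x,y};H^1(\T_z))$. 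For \eqref{it:estimate_nonlinearity_H10} I write $\|b(v_1,v_2)\|_{H^1}\eqsim\|b(v_1,v_2)\|_{L^2}+\|\nabla b(v_1,v_2)\|_{L^2}$, bound the first summand by \eqref{it:estimate_nonlinearity_L2}, distribute $\nabla$ over $N_1$ and $N_2$ (using $\nabla_{x,y}w(v_1)=w(\nabla_{x,y}v_1)$ and $\partial_z w(v_1)=-\nabla_{x,y}\cdot v_1$), and re-run the case \eqref{it:estimate_nonlinearity_L2} estimates on each of the resulting bilinear terms, whose three derivatives always split as $\tfrac52+\tfrac32$ or $\tfrac32+\tfrac52$, matching the claimed right-hand side.

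I expect the main obstacle to be precisely this endpoint bookkeeping in \eqref{it:estimate_nonlinearity_L2}--\eqref{it:estimate_nonlinearity_H10}: for each anisotropic Hölder splitting one must verify that the required embeddings $H^s(\T^3)\embed L^p(\T^2_{x,y};H^b(\T_z))$ and $H^s(\T^3)\embed W^{k,p}(\T^2_{x,y};L^2(\T_z))$ hold at the stated value of $s$, and these hold with zero margin — it is exactly the PEs structure, namely the vertical-integration gain $w(v_1)\in L^p_{x,y}(L^\infty_z)$, that brings the exponents down to the claimed Sobolev indices. A secondary, purely technical point is to make the fractional-Leibniz exponent selection for $1\le\rho<\tfrac32$ in \eqref{it:estimate_nonlinearity_H1} fully explicit.
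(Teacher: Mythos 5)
Your proofs of parts \eqref{it:estimate_nonlinearity_L2}, \eqref{it:estimate_nonlinearity_H10}, and \eqref{it:estimate_nonlinearity_High} are correct and follow essentially the same anisotropic H\"older strategy as the paper (there are cosmetic differences—you put $w(v_1)$ in $L^4_{x,y}(L^\infty_z)$ while the paper uses $L^\infty_z(L^4_{x,y})$—but both exploit the same vertical-integration gain in $w$), and your treatment of part \eqref{it:estimate_nonlinearity_H1} for $\rho\geq\tfrac32$ via the product rule $H^{\rho+\delta}\cdot H^{\rho+\delta}\embed H^\rho$ is also fine.

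There is, however, a genuine gap in your Kato--Ponce patch for part \eqref{it:estimate_nonlinearity_H1} in the range $1\le\rho<\tfrac32$: the isotropic index bookkeeping does \emph{not} close for small $\delta>0$. Concretely, for the term $N_2=w(v_1)\partial_z v_2$, your first Kato--Ponce summand $\|w(v_1)\|_{W^{\rho,p_1}}\|\partial_z v_2\|_{L^{q_1}}$ requires, via the isotropic bound $\|w(v_1)\|_{W^{\rho,p_1}}\lesssim\|v_1\|_{W^{1+\rho,p_1}}$, the embedding $H^{1+\rho+\delta}(\T^3)\embed W^{1+\rho,p_1}(\T^3)$, which by Sobolev forces $p_1\le 6/(3-2\delta)$, i.e.\ $p_1$ essentially equal to $2$. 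But then $1/q_1=1/2-1/p_1$ is near $0$, so $q_1$ is near $\infty$, while the control $\|\partial_z v_2\|_{L^{q_1}}\lesssim\|v_2\|_{H^{1+\rho+\delta}}$ requires $H^{\rho+\delta}(\T^3)\embed L^{q_1}(\T^3)$, i.e.\ $q_1\lesssim 3/(\tfrac32-\rho-\delta)$, which is a finite bound (about $6$ at $\rho=1$). These two constraints are incompatible: you end up needing $\delta\gtrsim \tfrac34-\tfrac\rho2$, not all $\delta>0$. The root cause is that the step $\|w(v_1)\|_{W^{\rho,p_1}}\lesssim\|v_1\|_{W^{1+\rho,p_1}}$ discards precisely the anisotropic gain ($w(v_1)$ is free in $L^\infty_z$ from its one integration in $z$) that you used to close parts \eqref{it:estimate_nonlinearity_L2}–\eqref{it:estimate_nonlinearity_H10}. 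To make a direct argument work here you would need to redo the bookkeeping with the anisotropic form $\|w(\cdot)\|_{L^\infty_z(\cdot)}$. The paper instead sidesteps the issue entirely: it proves \eqref{it:estimate_nonlinearity_H1} at $\rho=1$ directly by an anisotropic case analysis (terms $I_0$–$I_4$ using $\partial_z w(v)=-\nabla_{x,y}\cdot v$ and $\nabla_{x,y}w(v)=w(\nabla_{x,y}v)$), and then handles $\rho\in(1,2)$ by bilinear complex interpolation between the $\rho=1$ case and the $\rho=2$ case of \eqref{it:estimate_nonlinearity_High}—a route your sketch does not mention and which avoids the endpoint difficulty altogether.
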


Employing the terminology of \cite{AV19_QSEE_1,AV19_QSEE_2}, the estimate \eqref{it:estimate_nonlinearity_L2} shows that the nonlinearity in \eqref{eq:primitive} is critical with ground space is $L_t^2(L^2_x)$ (corresponding to initial data in $\Hs^1$), while \eqref{it:estimate_nonlinearity_L2} and \eqref{it:estimate_nonlinearity_H1}-\eqref{it:estimate_nonlinearity_High} ensure that this is not the case when the group space is $L_t^2(H^\rho_x)$ with $\rho> 0$. Finally, \eqref{it:estimate_nonlinearity_H10} will be the key to bootstrap the estimate to bootstrap the energy estimate in \cite{primitive2} from $H^1$-data to $H^{\sigma}$ with $\sigma\geq 2$ leading to Theorem \ref{t:high_order_estimates}\eqref{it:high_order_estimates2}.

\begin{proof}
Below, we deal with the most difficult part of the nonlinearity $b$, i.e.\ 
$$
b_0(v_1,v_2)\stackrel{{\rm def}}{=} \p \big[w(v_1)\partial_z v_2\big].
$$
The estimates for $(v_1\cdot\nabla_{x,y})v_2$ are similar and simpler. For simplicity, we sometimes write $H_ {x,y}^{s,p}(X)$ for the Sobolev spaces $H^{s,p}(\T^2_{x,y};X)$ (here $X$ is a Banach space).

\smallskip

\eqref{it:estimate_nonlinearity_L2}: 
The H\"{o}lder inequality and 
the Sobolev embedding $H^{1/2}_{x,y}\embed L^4_{x,y}$ yield 
\begin{align*}
\|b_0(v_1,v_2)\|_{L^2}
&\lesssim \|w(v_1)\|_{L^{\infty} (\T_z;L^4(\T^2_{x,y}))}\|\partial_z v_2\|_{L^2(\T_z;L^4(\T^2_{x,y}))}\\
&\lesssim \|\nabla_{x,y} v_1\|_{L^{2} (\T_{z};L^4(\T^2_{x,y}))}\|\partial_z v_2\|_{L^2(\T_z;L^4(\T_{x,y}^2))}\\
&\lesssim \|\nabla_{x,y} v_1\|_{L^{2} (\T_{z};H^{1/2}(\T^2_{x,y}))}\|\partial_z v_2\|_{L^2(\T_z;H^{1/2}(\T_{x,y}^2))}\\
&\lesssim \|v_1\|_{H^{3/2}(\T^3)}\|v_2\|_{H^{3/2}(\T^3)}.
\end{align*}

\smallskip

\eqref{it:estimate_nonlinearity_H10}: By density, we assume $v_1,v_2\in \Hs^3(\T^3)$. Note that, for all $v\in \Hs^3(\T^3)$,
$$
\partial_z w(v)=-\nabla_{x,y}\cdot v, \qquad \text{ as }\qquad 
\textstyle{\int}_{\T_z}\nabla_{x,y}\cdot v (\cdot,z)\,\dd z=0.
$$
Hence, the estimate follows by \eqref{it:estimate_nonlinearity_L2} and 
\begin{align}
\label{eq:gradient_indenity_b0_1}
\nabla_{x,y} \p[w(v_1)\partial_z v_2]
&= \p [w(\nabla_{x,y} v_1) \partial_z v_2] + \p[w( v_1) \partial_z \nabla_{x,y}v_2],\\  
\label{eq:gradient_indenity_b0_2}
\partial_z \p [w(v_1)\partial_z v_2]
&=\partial_z [w(v_1)\partial_z v_2]\\
\nonumber
&= -(\nabla_{x,y}\cdot v_1 )\partial_z v_2+ w(v_1)\partial_z (\partial_z v)
\end{align}
as well as 
$
\big\| (\nabla_{x,y}\cdot v_1 )\partial_z v_2\big\|_{L^2}\leq \|\nabla_{x,y}\cdot v_1\|_{L^{3}}\|\partial_z v\|_{L^6}\lesssim 
\|v_1\|_{H^{3/2}}\|v\|_{H^2}.
$
\smallskip

\eqref{it:estimate_nonlinearity_H1}: We prove the estimate only in the case $\rho=1$. The non-integer case $\rho\in (1,2)$ then follows from bilinear complex interpolation \cite[Theorem 4.4.1]{BeLo} and 
\eqref{it:estimate_nonlinearity_High} for $\rho=2$, whose proof is given below.

Using the identities \eqref{eq:gradient_indenity_b0_1}-\eqref{eq:gradient_indenity_b0_2}, for all $v_1,v_2\in \Hs^3$,
\begin{align*}
\|b_0(v_1,v_2)\|_{H^1}
&\lesssim \|w(v_1)\partial_z v_2\|_{L^2} 
+\|\nabla_{x,y} (w(v_1)\partial_z v_2)\|_{L^2}
+ \|\partial_z (w(v_1)\partial_z v_2)\|_{L^2}\\
&\lesssim \|w(v_1)\partial_z v_2\|_{L^2} \\
&+\| w(\nabla_{x,y} v_1)\partial_z v_2\|_{L^2}
+\|w(v_1)\partial_z \nabla_{x,y} v_2\|_{L^2}\\
&+ \|(\nabla_{x,y}\cdot v_1)\,  \partial_z v_2\|_{L^2}
+ \|w(v_1)\partial_z^2 v_2\|_{L^2}\stackrel{{\rm def}}{=}\textstyle{\sum}_{0\leq j\leq 4} I_j.
\end{align*}

Note that $I_0$ has been estimated in \eqref{it:estimate_nonlinearity_L2}. 
Fix $\varepsilon>0$. To estimate the remaining terms, recall that Sobolev embeddings yield
$
H^{1+\varepsilon}(\T^2_{x,y})\embed L^{\infty}(\T^2_{x,y}).
$
In particular
\begin{align*}
I_1 
&\leq \| w(\nabla_{x,y}v_1)\|_{L^{\infty}(\T_z;L^{2}(\T_{x,y}^2))}
\| \partial_z v_2\|_{L^{2}(\T_z;L^{\infty}(\T_{x,y}^2))}\\
&\leq \| \nabla_{x,y}^2 v_1\|_{L^{2}}
\| \partial_z v_2\|_{L^{2}(\T_z;L^{\infty}(\T_{x,y}^2))}\\
&\lesssim \|  v_1\|_{H^2} \| \partial_z v_2\|_{L^2(\T_z;H^{1+\varepsilon}(\T_{x,y}^2))}\\
&\lesssim \|v_1\|_{H^{2+\varepsilon}}\|v_2\|_{H^{2+\varepsilon}}.
\end{align*}

A similar strategy can be applied to $I_2$:
\begin{align*}
I_2 
&\lesssim \|w(v_1)\|_{L^{\infty}}\|v_2\|_{H^2}\\
&\lesssim \|\nabla_{x,y}\cdot v_1\|_{L^{2}(\T_z;L^{\infty}(\T^2_{x,y}))}\|v_2\|_{H^2}\\
&\lesssim \|\nabla_{x,y}\cdot v_1\|_{L^{2}(\T_z;H^{1+\varepsilon}(\T^2_{x,y}))}\|v_2\|_{H^2}\\
&\lesssim \|v_1\|_{H^{2+\varepsilon}}\|v_2\|_{H^{2+\varepsilon}}.
\end{align*}
As $I_4$ can be estimates as $I_2$, it remains to estimate $I_3$:
$$
I_3\lesssim \|\nabla_{x,y}v_1\|_{L^4} \|\partial_z v_2\|_{L^4}
\lesssim  \|v_1\|_{H^{2+\varepsilon}}\|v_2\|_{H^{2+\varepsilon}},
$$
where we used that $H^1(\T^3)\embed L^p(\T^3)$ for all $p\in [2,6]$.

\eqref{it:estimate_nonlinearity_High}: As in the previous step, by bilinear complex interpolation \cite[Theorem 4.4.1]{BeLo},  it suffices to consider the case $\rho\in \N_{\geq 2}$. In the latter case, the proof is analogous to \eqref{it:estimate_nonlinearity_H1} where one uses once more the Leibniz rule and the embedding $
H^{2}(\T^2_{x,y})\embed L^{\infty}(\T^2_{x,y}).
$
\end{proof}

\subsubsection{Proof of Theorem \ref{t:high_order_estimates}}
\label{ss:high_order_estimates}
In the proof of Theorem \ref{t:high_order_estimates}, we need the following consequence of the Lenglart domination principle \cite{L77_domination}.
 
\begin{lemma}
\label{l:lenglart}
Let $\tau$ be a stopping time. Let $f,g:[0,\tau)\to \R$ be progressively measurable and increasing stochastic process such that $f_{\tau}$ and $g_{\tau}$ are finite a.s.\ and 
$$
\E f_{\mu}\leq\E g_{\mu}
$$
for all stopping times $\mu$ such that $g_{\mu}\leq R_0$ a.s.\ for some deterministic constant $R_0\geq 1$.
Then, for all $\g,R>0$,
$$
\P(f_{\tau}\geq \g)\leq R\g^{-1} + \P(g_{\tau}\geq R).
$$
In particular, if $\P(g_{\tau}\geq \g)\leq K_g (\log^n \g)^{-1}$ for $\g\gg 1$ and some constants $n\geq 1$ and $K_g$ (here $\log^n $ is the $n$-times composition of $\log$), then there exists a universal constant $C_0>0$  such that $\P(f_{\tau}\geq \g)\leq C_0 K_g (\log^n \g)^{-1}$ for $\g\gg 1$.
\end{lemma}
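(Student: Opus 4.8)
The plan is to derive the pointwise bound $\P(f_\tau\ge\g)\le R\g^{-1}+\P(g_\tau\ge R)$ by a Lenglart-type stopping-time argument, and then to obtain the ``in particular'' statement by optimizing over the free parameter $R=R(\g)$ together with an elementary iterated-logarithm estimate. Throughout one may assume $f,g\ge 0$, as is the case in every application of the lemma.

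\emph{Pointwise bound.} Fix $\g,R>0$ and set
\[
\mu_R:=\inf\{t\in[0,\tau):g_t\ge R\},\qquad \nu_\g:=\inf\{t\in[0,\tau):f_t\ge\g\},\qquad \rho:=\mu_R\wedge\nu_\g,
\]
each hitting time being taken equal to $\tau$ if the corresponding set is empty; these are stopping times by progressive measurability of $f$ and $g$. Since $g$ is increasing, $g_\rho\le g_{\mu_R}\le R$ (in the continuous case; in general $g_{\rho-}\le R$), so the hypothesis of the lemma applies to $\rho$ and gives $\E f_\rho\le\E g_\rho\le R$. On the event $\{f_\tau\ge\g\}\cap\{g_\tau<R\}$ one has $\mu_R=\tau$, hence $\rho=\nu_\g$ and $f_\rho\ge\g$; combined with $f_\rho\ge 0$ this yields $\g\,\one_{\{f_\tau\ge\g,\,g_\tau<R\}}\le f_\rho$, so that $\g\,\P(f_\tau\ge\g,\,g_\tau<R)\le R$ after taking expectations. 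Therefore
\[
\P(f_\tau\ge\g)\le\P(f_\tau\ge\g,\,g_\tau<R)+\P(g_\tau\ge R)\le R\g^{-1}+\P(g_\tau\ge R).
\]
(For general increasing, possibly discontinuous $f,g$ one works with $g_{\rho-}$ and a sequence of stopping times announcing $\rho$, exactly as in the proof of Lenglart's domination theorem \cite{L77_domination}; since $f$ and $g$ are continuous in all the situations where Lemma~\ref{l:lenglart} is invoked in this paper, this refinement is not needed.)

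\emph{The ``in particular'' statement.} Assume $\P(g_\tau\ge\g)\le K_g(\log^n\g)^{-1}$ for $\g\ge\g_0$, and, replacing $K_g$ by $\max\{1,K_g\}$ (which preserves the hypothesis), assume $K_g\ge 1$. In the bound above I would take
\[
R=R(\g):=\frac{\g}{\log^n\g},
\]
so that $R\g^{-1}=(\log^n\g)^{-1}$, while for $\g\gg 1$ one has $\log^n\g\le\log\g\le\sqrt{\g}$, hence $R\ge\sqrt{\g}\ge\g_0$ and $\P(g_\tau\ge R)\le K_g(\log^n R)^{-1}$. It remains to compare $\log^n R$ with $\log^n\g$: from $\log R=\log\g-\log(\log^n\g)\ge\log\g-\log\log\g\ge\tfrac12\log\g$ for $\g\gg1$ (using $\log^n\g\le\log\g$), and then iterating $\log^{k+1}R=\log(\log^k R)$ with the elementary inequality $\log(\tfrac12 x)=\log x-\log2\ge\tfrac12\log x$ valid for $x\gg1$, one obtains $\log^n R\ge\tfrac12\log^n\g$ for all $\g\gg1$. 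Consequently
\[
\P(f_\tau\ge\g)\le\frac{1}{\log^n\g}+\frac{2K_g}{\log^n\g}\le\frac{3K_g}{\log^n\g},
\]
which is the assertion with universal constant $C_0=3$.

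I expect the only genuinely delicate point to be the iterated-logarithm comparison in the last step, and in particular the verification that $C_0$ may be chosen \emph{independently of $n$}: this works precisely because the factor $\tfrac12$ lost at each of the $n$ iterations is absorbed into the qualifier ``$\g\gg1$'' rather than compounding into the multiplicative constant, so that only a single factor $2$ survives. The reduction of the general increasing case to the continuous one is the usual technical point in Lenglart's inequality and is irrelevant for the continuous processes to which the lemma is applied here.
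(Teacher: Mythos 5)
The paper gives no proof of this lemma, referring only to Lenglart's domination principle \cite{L77_domination}, so there is no in-paper argument to compare against; your proof correctly supplies the missing details and is, in substance, the expected Lenglart--Chebyshev argument followed by optimization over the free level $R$. The pointwise bound via $\rho=\mu_R\wedge\nu_\g$, the domination hypothesis applied to $\rho$, and Chebyshev on $\{f_\tau\ge\g,\,g_\tau<R\}$ is exactly right, and the key observation for the ``in particular'' clause -- that the factor $\tfrac12$ lost at each of the $n$ iterations of $\log$ is absorbed into the threshold implicit in ``$\g\gg1$'' rather than compounding into the multiplicative constant, so that $C_0$ is independent of $n$ -- is precisely what makes the lemma usable in Step~2 of the proof of Theorem~\ref{t:high_order_estimates}, where $n$ grows with $\sigma$.

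Two small caveats. By replacing $K_g$ with $\max\{1,K_g\}$ before optimizing, what you in fact establish is $\P(f_\tau\ge\g)\le 3\max\{1,K_g\}(\log^n\g)^{-1}$, which is weaker than the asserted $C_0 K_g(\log^n\g)^{-1}$ whenever $K_g<1$. Taking $R=K_g\,\g/\log^n\g$ instead makes the Chebyshev term $R\g^{-1}$ proportional to $K_g$ directly and costs nothing in the iterated-logarithm comparison (the threshold for ``$\g\gg1$'' then also depends on $K_g$, which is permitted), removing the restriction. You also correctly flag that the argument as written assumes $f,g$ continuous; the general increasing case requires the usual announcing-sequence refinement from Lenglart's proof, but in the paper's sole application $f_t$ and $g_t$ are built from $\sup_{s\le t}\|v_s\|_{H^\sigma}^2$ and time integrals, which are continuous, so the simplification is harmless here.
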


\begin{proof}[Proof of Theorem \ref{t:high_order_estimates}]
We split the proof into several steps. 
\smallskip

\emph{Step 1: Existence of a maximal unique $H^\sigma$-solution $((v_t)_t,\tau)$ such that}
\begin{equation}
\label{eq:blow_up_criterion}
\P\Big(\tau<T,\, \sup_{t\in [0,\tau)}\|v(t)\|_{H^\sigma}^2+\int_0^T \|v(t)\|_{H^{1+\sigma}}^2\,\dd t <\infty\Big)=0 \ \text{ for all }T<\infty.
\end{equation} 
The existence of a maximal unique $H^\sigma$-solution $((v_t)_t,\tau)$ satisfying \eqref{eq:blow_up_criterion} immediately follows from \cite[Theorem 3.3]{AV_variational} and Lemma \ref{l:estimate_nonlinearity}.

\smallskip

\emph{Step 2: Let $((v_t)_t,\tau)$ be as in Step 1. Then, for all $T>0$ and $\g\gg 1$,}
\begin{equation}
\label{eq:estimate_Hsigma_higher}
\P\Big(\sup_{t\in [0,T\wedge\tau)} \|v_t\|_{H^\sigma}^2 + \int_{0}^{T\wedge\tau} \|v_t\|_{H^{1+\sigma}}^2 \,\dd t\geq \g\Big)\lesssim_T \frac{1+\|v\|_{H^{\sigma}}^4}{\log\log\log(\g)} .
\end{equation} 

Here, we focus on the case $\sigma=2$ as it is more complicated. We give a few comments on the case $\sigma>2$ at the end of this step.
Thus, let us first consider $\sigma=2$. 
For all $j\geq 1$, we define the following stopping time 
$$
\tau_j \stackrel{{\rm def}}{=} \inf\big\{t\in [0,T]\,:\, \|v_t\|_{H^2}+ {\textstyle{\int}}_0^t\| v_r\|^2_{H^3}\,\dd s\geq j \big\}, 
 \  \text{ where } \ \inf\emptyset\stackrel{{\rm def}}{=}\tau\wedge T.
$$
Let $\eta,\xi$ be stopping times such that $0\leq \eta\leq \xi\leq \tau_j$ a.s.\ for some $j\geq 1$. By well-known $L^2$-estimates for the stochastic heat equation, there exists $C_0>0$ independent of $(j,\eta,\xi)$ such that (see e.g., \cite[Lemma 4.1]{AV_variational} or \cite[Chapter 4]{LR15})
\begin{align*}
\E \sup_{t\in[\eta,\xi]}\|v_t\|_{H^2}^2
&+ \E\int_{\eta}^{\xi}\|v_t\|_{H^3}^2\,\dd t \leq C_0\E\|v_{\eta}\|_{H^2}^2\\
&+C_0 \E\int_{\eta}^{\xi}( \|b(v_t,v_t)\|_{H^1}^2+ \|Q\|_{\HS(\Ls^2_0,\Hs_0^{2})}^2)\,\dd t .
\end{align*}
where $b$ is as \eqref{eq:def_nonlinearity_b}.
By Lemma \ref{l:estimate_nonlinearity}\eqref{it:estimate_nonlinearity_H10} and standard interpolation inequalities,
\begin{align*}
\|b(v_t,v_t)\|_{H^1}^2
&\lesssim \|v_t\|_{H^{5/2}}^2\|v_t\|_{H^{3/2}}^2\\
&\lesssim \|v_t\|_{H^{3}} \|v_t\|_{H^2}^2\|v\|_{H^{1}}
\leq \frac{1}{2C_0} \|v_t\|_{H^3}^2 + C_{1} \|v_t\|_{H^2}^4 \|v_t\|_{H^1}^2
\end{align*}
a.s.\ for all $t\geq 0$. Letting $M_t\stackrel{{\rm def}}{=}\one_{[0,\tau)\times \O} \|v_t\|_{H^2}^2 \|v_t\|_{H^1}^2$, by \cite[Remark 3.10]{primitive2}, 
$$
\P\big({\textstyle{\int}}_0^T M_t\,\dd t \leq \g\big)\lesssim (1+\|v\|_{H^1}^4) (\log(\log(\g)))^{-1}, \ \ \g\gg 1.
$$
Since $\|v\|_{L^2(\eta,\xi;H^2)}\leq j$, the above estimates imply
\begin{align*}
\E \sup_{t\in[\eta,\xi]}\|v_t\|_{H^2}^2
+ \E\int_{\eta}^{\xi}\|v_t\|_{H^3}^2\,\dd t
 \lesssim \E\|v_\eta\|_{H^2}^2+ \E\int_{\eta}^{\xi} (1+ M_t)(1+\|v_t\|_{H^1}^2)\,\dd t ,
\end{align*}
where the implicit constant depends only on $C_0$ and $\|Q\|_{\calL_2(\Ls^2,\Hs^2)}$.
Thus, the case $\sigma=2$ of \eqref{eq:estimate_Hsigma_higher} follows from the stochastic Grownall's inequality (see e.g., \cite[Lemma A.1]{AV_variational}) and the above estimate.

The case $\sigma>2$ follows from the case $\sigma=2$ and Lemmas \ref{l:estimate_nonlinearity}\eqref{it:estimate_nonlinearity_H1}-\eqref{it:estimate_nonlinearity_High}, \ref{l:lenglart}.

\smallskip

\emph{Step 3: Conclusion}. Note that by combining the blow-up criterion \eqref{eq:blow_up_criterion} and the energy estimate \eqref{eq:estimate_Hsigma_higher}, we obtain $\tau=\infty$ a.s. Thus, the maximal unique $H^\sigma$-solution in Step 1 is \emph{global} in time. Moreover, using $\tau=\infty$ a.s.\ in \eqref{eq:estimate_Hsigma_higher}, we obtain \eqref{it:high_order_estimates2}.
Finally, \eqref{it:high_order_estimates3} follows almost verbatim the proof of \cite[Theorem 3.7]{primitive2} by employing the just-proved energy estimate in \eqref{it:high_order_estimates2}.
\end{proof}

\subsection{Random dynamical system from PEs -- Proof of Proposition \ref{prop:RDS}}
\label{ss:RDS}
The aim of this subsection is to prove the following result. For the definition of RDS, the reader is referred to the text above Proposition \ref{prop:RDS}.

\begin{proposition}[RDS from stochastic PEs II]
\label{prop:RDSII}
Let $\sigma\geq 2$ and assume that $Q\in \calL_2(\Ls^2,\Hs^{\sigma'})$ for some $\sigma'>\sigma$.
Then the solution operator $v\mapsto v_t$ associated to the stochastic {\normalfont{PEs}} induces a {\normalfont{RDS}} over $\Hs^\sigma$. Moreover, denoting by $V$ the corresponding semiflow map, we have
$ V(t,\theta^s\om,\cdot)$ is independent of $\F_t$ for all $t\geq s\geq 0$.
\end{proposition}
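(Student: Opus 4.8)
The plan is to reduce the stochastic PEs to a \emph{pathwise} equation via the Ornstein--Uhlenbeck transformation, and then to read off the cocycle structure from the ``helix'' identity of the stochastic convolution. Set $z_t(\om)\stackrel{{\rm def}}{=}\int_0^t e^{(t-r)\Delta}Q\,\dd W_r(\om)$. Since $Q\in\calL_2(\Ls^2,\Hs^{\sigma'})$ with $\sigma'>\sigma$, the factorization method produces a version of $z$ for which $(t,\om)\mapsto z_t(\om)$ is jointly measurable and $t\mapsto z_t(\om)\in\Hs^\sigma$ is locally Hölder continuous for \emph{every} $\om$ (one redefines $z\equiv 0$ on the exceptional null set, which may be taken $\theta$-invariant). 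A direct computation with the Wiener shift gives the \emph{helix identity}
\begin{equation}
\label{eq:helix-plan}
z_{s+t}(\om)=e^{t\Delta}z_s(\om)+z_t(\theta^s\om),\qquad s,t\ge 0,
\end{equation}
valid for all $\om$ off a $\theta$-invariant null set, where $z_t(\theta^s\om)=\int_s^{s+t}e^{(s+t-\rho)\Delta}Q\,\dd W_\rho$; note that stationarity of $z$ is \emph{not} required here (without the mean-zero constraint the constant mode of $z$ is even a Brownian motion), only \eqref{eq:helix-plan}.

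First I would solve, for each fixed path $z\in C([0,\infty);\Hs^\sigma)$, the deterministic PDE $\partial_t\psi=\Delta\psi+b(\psi+z,\psi+z)$ with $\psi(0)=v_0$ and $b$ as in \eqref{eq:def_nonlinearity_b}. The nonlinear bounds of Lemma~\ref{l:estimate_nonlinearity} and the pathwise (noise-frozen) analogue of Theorem~\ref{t:high_order_estimates} --- whose proof transfers essentially verbatim, the stochastic Grönwall/Lenglart steps becoming classical Grönwall estimates and the $H^1$ bound being the classical one \cite{CT07,LT19} --- yield a unique global $\psi(\,\cdot\,;v_0,z)\in C([0,\infty);\Hs^\sigma)\cap L^2_{\loc}([0,\infty);\Hs^{1+\sigma})$, depending continuously on $(v_0,z)$ uniformly on compact time intervals and sending bounded sets to bounded sets. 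One then \emph{defines} $V(t,\om,v_0)\stackrel{{\rm def}}{=}\psi(t\,;v_0,z_\cdot(\om))+z_t(\om)$. This is defined for all $\om$ in a $\theta$-invariant conull set; it is jointly measurable (from measurability of $z$ and continuity of the solution map), it sends bounded sets to bounded sets, and, for each fixed $v_0$, by uniqueness of mild solutions it coincides a.s.\ with the $H^\sigma$-solution of Theorem~\ref{t:high_order_estimates} --- hence it is a genuine version of the stochastic PEs.

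The cocycle identity $V(t+s,\om,\cdot)=V(t,\theta^s\om,\cdot)\circ V(s,\om,\cdot)$ then follows from uniqueness: writing $v_r:=V(r,\om,v_0)$ in mild form, splitting $\int_0^{s+r}$ at $s$, and inserting \eqref{eq:helix-plan}, one checks that $r\mapsto v_{s+r}$ solves the mild equation with initial datum $v_s=V(s,\om,v_0)$ and driving path $z_\cdot(\theta^s\om)$, so pathwise uniqueness forces $v_{s+r}=V(r,\theta^s\om,V(s,\om,v_0))$; since this holds for all $\om$ off one $\theta$-invariant null set, $V$ is an RDS on $\Hs^\sigma$ (the measurability and the continuity/boundedness axioms having already been checked). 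The independence assertion is immediate from $z_t(\theta^s\om)=\int_s^{s+t}e^{(s+t-\rho)\Delta}Q\,\dd W_\rho$: the map $V(t,\theta^s\om,\cdot)$ is a measurable function of $v_0$ and of the Brownian increments on $[s,\infty)$, hence independent of $\F_s$.

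The hard part will be obtaining a \emph{single} measurable solution map --- defined off one $\theta$-invariant null set and valid simultaneously for \emph{all} initial data --- so that the cocycle identity holds \emph{pointwise} and not merely a.s.\ for each fixed datum; the Ornstein--Uhlenbeck reduction is precisely the device that delivers this, at the cost of having to re-run the high-order energy estimates of Theorem~\ref{t:high_order_estimates} in the pathwise setting, uniformly over the noise path on compact time intervals. That pathwise bootstrap --- balancing the derivative loss in Lemma~\ref{l:estimate_nonlinearity} against the parabolic smoothing of $e^{t\Delta}$, exactly as in Theorem~\ref{t:high_order_estimates} --- is routine but is where the genuine effort lies.
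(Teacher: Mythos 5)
Your proposal is correct and follows essentially the same route as the paper: the Ornstein--Uhlenbeck transformation $v_t=\psi_t+\Gamma_t$ reducing to the modified pathwise PDE \eqref{eq:modified_PE}, with the cocycle identity extracted from the helix property of the stochastic convolution together with pathwise uniqueness, along the lines of \cite[Subsection~2.4.4]{KS12_2d}. The ``hard part'' you flag at the end --- re-running the high-order energy estimates of Theorem~\ref{t:high_order_estimates} in the pathwise, noise-frozen setting --- is in fact already supplied by Lemma~\ref{l:pathwise_energy_estimate}, stated immediately before the proposition, so no new estimate is needed.
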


The above implies Proposition \ref{prop:RDS} as if Assumption \ref{ass:Q} holds, then the stochastic PEs \eqref{eq:primitive_full} preserves the mean of the initial data and one can always choose $\sigma'>\sigma$ for which \eqref{eq:coloring_assumption_noise_2} also holds for $\sigma$ replaced by $\sigma'$.

To prove Proposition \ref{prop:RDSII}, we employ the following a-priori estimate for a modified version of the PEs.
In the following, we denote by $\exp^{[n]}$ the $n$-time composition of the function $\exp$. 
Recall that $b$ is defined \eqref{eq:def_nonlinearity_b}.

\begin{lemma}
\label{l:pathwise_energy_estimate}
Fix $\sigma\geq 2$, $v_0\in \Hs^\sigma$ and
$v'\in L^\infty_{\loc}([0,\infty);\Hs^{\sigma'})$ for some $\sigma'>\sigma$. Consider the following modified {\normalfont{PEs}}:
\begin{equation}
\label{eq:modified_PE}
\partial_t v + b(v+v',v+v')=\Delta v \ \text{ for } \ t>0, \qquad v(0)=v_0.
\end{equation}
Then, for all $T<\infty$ and $\eta\in [2,\sigma]$, the unique global $H^\sigma$-solution \eqref{eq:modified_PE} satisfies
\begin{equation}
\label{eq:sigma_estimate_pathwise}
\sup_{t\in [0,T]}\|v(t)\|_{H^\eta}^2+
\int_0^T \|v(t)\|_{H^{1+\eta}}^2\,\dd s \leq \exp^{[\g_0]} (R(1+\|v\|_{H^\eta}^{\g_1}+\|v'\|_{L^\infty(0,T;H^{\sigma'})}^{\g_1})).
\end{equation}
where $\g_0,\g_1\in \N_{\geq 1}$ and $R>0$ depends only on $\sigma$ and $(\sigma,T)$, respectively.
\end{lemma}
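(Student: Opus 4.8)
The plan is to prove Lemma~\ref{l:pathwise_energy_estimate} by a bootstrapping argument in the Sobolev index $\eta$, starting from the basic $H^2$-estimate and then climbing up to $H^\sigma$ one (fractional) derivative at a time, using the nonlinear estimates of Lemma~\ref{l:estimate_nonlinearity}. Since \eqref{eq:modified_PE} is a deterministic parabolic problem (no stochastic integral), all the estimates are pathwise and follow from standard energy methods; the exponential-tower growth in \eqref{eq:sigma_estimate_pathwise} is simply what one gets from iterating Gr\"onwall's inequality several times.

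\textbf{Step 1: Well-posedness and the $H^2$-estimate.} First I would recall that \eqref{eq:modified_PE} has a unique global $H^\sigma$-solution; this follows from the deterministic analogue of Theorem~\ref{t:high_order_estimates} (or directly from \cite{LT19,CT07,A23_primitive3}), using that $v'\in L^\infty_{\rm loc}([0,\infty);\Hs^{\sigma'})$ is a sufficiently regular, fixed forcing-type perturbation and that $b$ is bilinear. Then I would establish \eqref{eq:sigma_estimate_pathwise} for $\eta=2$. Testing \eqref{eq:modified_PE} against $\Delta^2 v$ (or working in $H^2$ directly) and using the cancellation structure together with Lemma~\ref{l:estimate_nonlinearity}\eqref{it:estimate_nonlinearity_H10}, one bounds $\|b(v+v',v+v')\|_{H^1}$ by terms of the form $\|v+v'\|_{H^{5/2}}\|v+v'\|_{H^{3/2}}$; interpolating $\|w\|_{H^{5/2}}\|w\|_{H^{3/2}}\lesssim \|w\|_{H^3}\|w\|_{H^2}\|w\|_{H^1}$ and absorbing the top-order factor $\|v\|_{H^3}$ into the dissipation, one arrives at a differential inequality of the form $\tfrac{\dd}{\dd t}\|v\|_{H^2}^2 + \|v\|_{H^3}^2 \lesssim (1+\|v\|_{H^2}^2)(1+\|v\|_{H^2}^2+\|v'\|_{\Hs^{\sigma'}}^4)$, which upon Gr\"onwall gives a bound with one or two exponentials, hence \eqref{eq:sigma_estimate_pathwise} with $\eta=2$ and some $\g_0,\g_1$. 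The only subtlety is keeping track of the coupling terms involving $v'$, but since $\sigma'>\sigma\ge 2$ these are harmless and contribute only lower-order (in the unknown $v$) quantities, controlled by $\|v'\|_{L^\infty(0,T;\Hs^{\sigma'})}$.

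\textbf{Step 2: Bootstrap to $\eta\in(2,\sigma]$.} Here I would test \eqref{eq:modified_PE} against $(-\Delta)^\eta v$ and use Lemma~\ref{l:estimate_nonlinearity}\eqref{it:estimate_nonlinearity_H1}--\eqref{it:estimate_nonlinearity_High} to estimate $\|b(v+v',v+v')\|_{H^\eta}\lesssim \|v+v'\|_{H^{1+\eta}}^{2}$ (or $\|v+v'\|_{H^{1+\eta+\delta}}^2$ with $\delta$ chosen so that $1+\eta+\delta\le\sigma'$ on the $v'$-terms; the $v$-terms for $\eta\ge2$ use the clean estimate \eqref{it:estimate_nonlinearity_High}). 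The top-order term $\|v\|_{H^{1+\eta}}^2$ cannot simply be absorbed into the dissipation because it appears quadratically, so instead I would split $\|v+v'\|_{H^{1+\eta}}^2\lesssim \|v\|_{H^{1+\eta}}\|v\|_{H^\eta}$ via interpolation between $H^\eta$ and $H^{2+\eta}$ — but $H^{2+\eta}$ exceeds the a priori regularity. The correct move is to interpolate $\|v\|_{H^{1+\eta}}$ between $H^{\eta}$ and $H^{1+\eta}$ is trivial, so instead one uses that the previous step already controls $\int_0^T\|v\|_{H^{1+(\eta-1)}}^2\,\dd t = \int_0^T\|v\|_{H^{\eta}}^2\,\dd t$, and runs a Gr\"onwall argument in which the ``already known'' quantity $\int_0^T\|v\|_{H^{\eta}}^2\,\dd t$ (bounded by the induction hypothesis, i.e.\ by an exponential tower of height $\g_0(\eta-1)$ or $\g_0(\eta-1)$) plays the role of the time-dependent coefficient. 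Concretely: $\tfrac{\dd}{\dd t}\|v\|_{H^\eta}^2 + \|v\|_{H^{1+\eta}}^2 \lesssim \|v\|_{H^\eta}^2\big(\|v\|_{H^{\eta}}^2 + \|v'\|_{\Hs^{\sigma'}}^{2}\big) + (\text{lower order})$, and applying Gr\"onwall with the integrating factor $\exp(C\int_0^T(\|v\|_{H^\eta}^2+\|v'\|^2)\,\dd t)$ bounds $\sup_t\|v\|_{H^\eta}^2$ by $\exp$ of (a constant times) the $\eta$-level quantity from Step~1 plus the induction hypothesis; each step adds a bounded number of exponentials, so after finitely many steps (at most $\lceil\sigma\rceil$, climbing by integers, with fractional $\eta$ handled by the interpolation inequality $\|v\|_{H^\eta}\le \|v\|_{H^{\lfloor\eta\rfloor}}^{1-\theta}\|v\|_{H^{\lceil\eta\rceil}}^{\theta}$) one reaches $\eta=\sigma$ with $\g_0,\g_1$ depending only on $\sigma$, and the constant $R$ on $(\sigma,T)$.

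\textbf{Main obstacle.} I expect the genuinely delicate point to be the $\eta=2$ base case: this is exactly the passage from the critical space $H^1$ to $H^2$ for the PEs, where the special structure of the nonlinearity (the vertical-derivative term $w(v)\partial_z v$ and the $z$-independence of the pressure) must be exploited, and it is precisely the content of Lemma~\ref{l:estimate_nonlinearity}\eqref{it:estimate_nonlinearity_H10} that makes the absorption of the top-order term possible — the naive estimate $\|b(v,v)\|_{H^1}\lesssim\|v\|_{H^2}^2$ would be supercritical and insufficient. Once one is at $H^2$ or above, the bootstrap in Step~2 is routine parabolic smoothing and the only bookkeeping is tracking how many exponentials accumulate and ensuring the $v'$-contributions never exceed $\sigma'$ derivatives, which is guaranteed by $\sigma'>\sigma\ge\eta$ together with the freedom to pick $\delta>0$ small in Lemma~\ref{l:estimate_nonlinearity}\eqref{it:estimate_nonlinearity_H1}.
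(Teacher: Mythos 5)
Your bootstrap strategy via Lemma~\ref{l:estimate_nonlinearity} is the same as the paper's, but there is a genuine gap in Step~1 (the $\eta=2$ ``base case''). The differential inequality you derive,
\begin{equation*}
\frac{\dd}{\dd t}\|v\|_{H^2}^2 + \|v\|_{H^3}^2 \lesssim (1+\|v\|_{H^2}^2)\big(1+\|v\|_{H^2}^2+\|v'\|_{\Hs^{\sigma'}}^4\big),
\end{equation*}
is quadratic in $\|v\|_{H^2}^2$, i.e.\ of Riccati type, and does \emph{not} close by Gr\"onwall alone: solutions to $\dot y \lesssim y^2$ blow up in finite time. The absorption of $\|v\|_{H^3}$ into the dissipation (using Lemma~\ref{l:estimate_nonlinearity}\eqref{it:estimate_nonlinearity_H10} and the interpolation $\|b(v,v)\|_{H^1}^2\lesssim\|v\|_{H^3}\|v\|_{H^2}^2\|v\|_{H^1}$) leaves a Gr\"onwall \emph{coefficient} of the form $\|v\|_{H^2}^2\|v\|_{H^1}^2$, and the argument only closes once you know a priori that this coefficient lies in $L^1(0,T)$. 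That integrability does not follow from Lemma~\ref{l:estimate_nonlinearity}; it is supplied by the $H^1$-level estimate $\sup_{t\le T}\|v_t\|_{H^1}^2 + \int_0^T\|v_t\|_{H^2}^2\,\dd t<\infty$ for the modified PEs, which rests on the Cao--Titi structure (the $z$-independence of the pressure and the barotropic/baroclinic decomposition of the velocity), not merely on the bilinear bound. You correctly flag $H^1\to H^2$ as the delicate step, but you attribute its resolution to the wrong mechanism: Lemma~\ref{l:estimate_nonlinearity}\eqref{it:estimate_nonlinearity_H10} handles the absorption of the top-order derivative, while the a priori $H^1$-bound is what renders the remaining Gr\"onwall coefficient integrable.

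The paper's sketch starts the chain at $\eta=1$ precisely for this reason, invoking the deterministic PE theory (\cite{CT07}, \cite[Subsection 1.4]{HH20_fluids_pressure}) for the $H^1$-estimate -- which is where the embedding $v'\in L^\infty_{\loc}(W^{1,\infty}_{x,y}(L^2_z))\cap L^\infty_{\loc}(L^2_{x,y}(W^{1,\infty}_z))$ from $\sigma'>\sigma\geq 2$ is used -- and only then bootstraps to $\eta\in[2,\sigma]$ by mimicking Step~2 of the proof of Theorem~\ref{t:high_order_estimates}, where the stochastic analogue of the needed integrability was exactly the bound on $\int_0^T M_t\,\dd t$ with $M_t=\|v_t\|_{H^2}^2\|v_t\|_{H^1}^2$. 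Your Step~2 bootstrap from $\eta=2$ upward is in order once the base case is fixed, but you should begin at $\eta=1$, not $\eta=2$.
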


The proof of \eqref{eq:sigma_estimate_pathwise} follows the one of Theorem \ref{t:high_order_estimates}. For brevity, we only give a sketch. It is possible to improve the dependence on the initial data 
$v$ and 
$v'$, but the current form is sufficient for our purposes. Furthermore, by adapting the proof of the 
$H^1$-estimate from \cite{HH20_fluids_pressure}, one can verify that \eqref{eq:sigma_estimate_pathwise} holds with 
$\g_0=3$ and $\g_1=4$.

\begin{proof}[Proof of Lemma \ref{l:pathwise_energy_estimate} -- Sketch]
As $\sigma'>\sigma\geq 2$, by Sobolev embedding we have 
$$
v'\in L^\infty_{\loc}([0,\infty);W^{1,\infty}(\T^2_{x,y};L^2(0,1)))
\cap L^\infty_{\loc}([0,\infty);L^2(\T^2_{x,y};W^{1,\infty}(0,1))).
$$
Hence, the case where in \eqref{eq:sigma_estimate_pathwise} we have $\sigma=1$ follows from the (by now) well-established proof of global well-posedness in $H^1$ of the deterministic PEs, see e.g., \cite{CT07} or \cite[Subsection 1.4]{HH20_fluids_pressure}. To bootstrap from the latter situation, the estimate \eqref{eq:sigma_estimate_pathwise} for $\sigma\geq 2$, one can repeat the argument almost verbatim the argument used in the proof of Theorem \ref{t:high_order_estimates} by using the nonlinear estimate of Lemma \ref{l:estimate_nonlinearity}. 
\end{proof}

\begin{proof}[Proof of Proposition \ref{prop:RDSII}]
To prove the claim, it is convenient to work with the stochastic PEs \eqref{eq:primitive_full} in a pathwise sense. 
Let $\Gamma_t=
\int_0^t e^{(t-s)\Delta} Q\,\dd W_s $ and note that the process $(z_t)_t$ defined as $z_t=v_t-\Gamma_t$ solves \eqref{eq:modified_PE} with $v_t'=\Gamma_t$. Arguing as in \cite[Subsection 2.4.4]{KS12_2d}, by Lemma \ref{l:pathwise_energy_estimate}, the claim follows by noticing that $(\Gamma_t)_t\in L^\infty_{\loc}([0,\infty);\Hs^{\sigma'})$ with $\sigma'>\sigma$ a.s.
\end{proof}

\subsection{Regularity of invariant measures -- Proof of Proposition \ref{prop:regularity_invariant_measure}}
Lemma \ref{l:pathwise_energy_estimate} also yields the regularity of invariant measures claimed in Proposition \ref{prop:regularity_invariant_measure}.

\begin{proof}[Proof of Proposition \ref{prop:regularity_invariant_measure}]
Let $\mu$ be an invariant measure of \eqref{eq:primitive_full} on $\Hs^\rho$. Without loss of generality, we only consider the case $\rho=1$. 
By \cite[Theorem 1.7]{GHKVZ14}, it follows that $\mu(\Hs^2)=1$ and 
\begin{equation}
\label{eq:H2_regularity_invariant_measure}
\textstyle
\int_{\Hs^1_0(\T^3)} \log(R(1+\|v\|_{H^2}))\,\dd \mu(v)\lesssim_{\Energy_\sigma} 1,
\end{equation}
where $R>1$ is a universal constant.
To prove \eqref{eq:integrability_invariant_measure_Hrho}, we employ a standard bootstrapping argument by combining \eqref{eq:H2_regularity_invariant_measure}, and \eqref{eq:H2_regularity_invariant_measure} as a starting point. 

Recall that $\Energy_\sigma=\|Q\|_{\calL_2(\Ls^2_0,\Hs^\sigma_0)}$ and fix $\sigma'<1+\sigma$. We claim that, if there exists $\eta\in [ 2,\sigma]$ and $\g\in \N_{\geq 1}$ for which
\begin{align}
\label{eq:improved_regularity_invariant_measures_1}
\textstyle
\int_{\Hs^1_0(\T^3)} \log^{[\g] }(R_\g(1+\|v\|_{H^\eta}))\,\dd \mu(v)\lesssim_{\Energy_\sigma} 1, 
\end{align}
then there exists $\g'$ depending only on $\g,\eta,\sigma$ and $\sigma'$ such that 
\begin{equation}
\label{eq:improved_regularity_invariant_measures_2}
\textstyle\int_{\Hs^1_0(\T^3)} \log^{[\g'] }(R_\g'(1+\|v\|_{H^{\eta'}}))\,\dd \mu(v)\lesssim_{\Energy_\sigma} 1\ \ \ 
 \text{ where }\ \ \
  \eta' \stackrel{{\rm def}}{=}\eta+\sigma'-\sigma, 
\end{equation}
and $R'_\g$ depends only on $\g,\eta,\sigma$ and $\sigma'$.

To prove \eqref{eq:improved_regularity_invariant_measures_1}$\Rightarrow$\eqref{eq:improved_regularity_invariant_measures_2}, we employ weighted maximal $L^p$-regularity estimates, see e.g., \cite[Section 3.5.2]{pruss2016moving} or \cite[Subsection 17.2.e]{Analysis3}. 
Recall that, for 
$$
\eta_0\in (\eta',\eta+1),
$$ 
the operator on $-\Delta: H^{\eta_0}\subseteq H^{\eta_0-2}\to H^{\eta_0-2}$ has weighted maximal $L^p$-regularity estimates (this follows from the periodic version of \cite[Example 10.1.5]{Analysis2}). As we will see below, we do not take $\eta_0$ equal to $\eta+1$ due to the $\delta$-loss in Lemma \ref{l:estimate_nonlinearity}\eqref{it:estimate_nonlinearity_H1}.
 
Next, let us fix $p\in (2,\infty)$ such that $\eta_0-2/p>\eta'$, and afterwards $\kappa\in [0,p-1)$ such that $\eta_0-2\frac{1+\kappa}{p}<\eta$. Note that the latter choice is always possible by using $\eta'<\eta+\frac{1}{2}$ and choosing $\kappa$ close to $ p-1$.
Hence, by either \cite[Corollary 17.2.37]{Analysis3} or \cite[Theorem 3.4.8]{pruss2016moving}, the maximal $L^p$-regularity of $-\Delta$ on $H^{\eta_0-2}$ with time weight $t^\kappa$ applied to $v'_t =v_t-\Gamma_t$ (here $\Gamma_t=\int_0^t e^{\Delta(t-s)} Q \,\dd W_s$), a.s., it holds that 
\begin{align*}
\|v_1'\|_{H^{\eta'}}^p
&
\textstyle
\stackrel{(i)}{\lesssim} \|v\|_{H^{\eta}}^p+ \int_0^1 t^{\kappa} \|b(v_t-\Gamma_t,v_t-\Gamma_t)\|^p_{H^{\eta_0-2}}\,\dd s\\
&\textstyle
\stackrel{(ii)}{\lesssim} \|v\|_{H^{\eta}}^p+  \sup_{t\in [0,1]}\big(\|v_t\|_{H^{\eta}}^{2p}+\|\Gamma_t\|_{H^{\eta}}^{2p}\big),
\end{align*}
where in $(i)$ we used $H^{\eta}\embed B^{\eta_0-2(1+\kappa)/{p}}_{2,p}$ and $B^{\eta_0-2/p}_{2,p}\embed H^{\eta'}$ 
by the above choice of $\kappa$ and $p$, and in $(ii)$ Lemma \ref{l:estimate_nonlinearity}\eqref{it:estimate_nonlinearity_H1} and $\eta_0<\eta+1$.
As usual, $B$ stands for Besov spaces.
From the above and Lemma \ref{l:pathwise_energy_estimate}, we obtain, a.s., 
\begin{align*}
\|v_1'\|_{H^{\eta'}}^p
\leq \exp^{[\g_0]} (R(1+\|v\|_{H^\eta}^{\g_1}+\|(\Gamma_t)_t\|_{L^\infty(0,1;H^{\eta})}^{\g_1})),
\end{align*}
for some $\g_0,\g_1$ and $R$ depending only on $\sigma$ and $\sigma'$. 

Now, let $\g$ be as in \eqref{eq:improved_regularity_invariant_measures_1}. From the previously displayed formula, we have, a.s.,  
$$
\log^{[\g+\g_0]} (
\|v_1\|_{H^{\eta'}})\lesssim_{\sigma,\sigma'} 1+\|\Gamma_1\|_{H^{\sigma'}}+ \|(\Gamma_t)_t\|_{L^\infty(0,1;H^{\eta})}^{\g_1}
+ \log^{ [\g]} (\|v\|_{H^\eta}).
$$
By stochastic maximal $L^p$-regularity for the Laplacian on $H^{\sigma-1}(\T^3)$ (with no time weight) and $p(\sigma,\sigma')<\infty$ such that $\sigma'<\sigma+1-\frac{2}{p}$ (see e.g., \cite[Theorem 7.16]{AV19} and \cite[Example 10.1.5]{Analysis2}), it follows that $$
\E\|\Gamma_1\|_{H^{\sigma'}}^p\lesssim_{p,\Energy_\sigma} 1.
$$ 
Hence, the implication \eqref{eq:improved_regularity_invariant_measures_1}$\Rightarrow$\eqref{eq:improved_regularity_invariant_measures_2} now follows from taking the expected values, using the standard estimate $\E \|(\Gamma_t)_t\|_{L^\infty(0,T;H^{\eta})}^{\g_1}\lesssim_{\Energy_\sigma}1$ (here we used $\eta\leq \sigma$), and integrating over $\Hs^1_0$ with respect to the invariant measure $\mu$.
\end{proof}

\subsection{Unique ergodicity of stochastic PEs -- Proof of Theorem \ref{t:uniqueness_regularity_invariant}}
The aim of this subsection is to prove Theorem \ref{t:uniqueness_regularity_invariant}. 
As usual, it is based on smoothing and irreducibility properties of the Markov semigroup induced by the stochastic PEs \eqref{eq:primitive_full}, cf., \cite{DPZ_ergodicity,FlaMas1995}.
More precisely, we employ the following results.

\begin{proposition}[Strong Feller]
\label{prop:strong_feller_v}
Under the assumptions of Theorem \ref{t:uniqueness_regularity_invariant}, the Markov process $(v_t)_t$ induced by the stochastic {\normalfont{PEs}} \eqref{eq:primitive_full} is strong Feller in $\Hs^\sigma$.
\end{proposition}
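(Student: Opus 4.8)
The plan is to obtain the strong Feller property on $\Hs^\sigma_0$ from a Bismut--Elworthy--Li (BEL) type gradient estimate for the transition semigroup $P_t$, combined with the Feller property already furnished by Proposition~\ref{prop:global_well_posedness}, following the scheme of \cite{FlaMas1995} (see also \cite{DPZ_ergodicity}). Since the high--order energy of the stochastic PEs is only weakly integrable (Theorem~\ref{t:high_order_estimates}), I would first reduce to a \emph{local} statement: using the continuity and smallness (for large $R$) of $v\mapsto\P(\sup_{[0,t]}\|v^v_s\|_{\Hs^\sigma}\ge R)$ from Theorem~\ref{t:high_order_estimates}, together with a truncation of the nonlinearity $b$ above level $R$ in $\Hs^\sigma$, it suffices to prove, for the truncated dynamics $v^{(R)}$, a gradient bound
\[
|D_vP^{(R)}_t\varphi(v)[h]|\le C(t,R)\,\|\varphi\|_\infty\,\|h\|_{\Hs^\sigma},\qquad v,h\in\Hs^\sigma_0 ,
\]
for every bounded Borel $\varphi$; letting $R\to\infty$ along the truncation then yields continuity of $v\mapsto P_t\varphi(v)$.

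Next I would differentiate the (truncated) solution map. With the nonlinear estimates of Lemma~\ref{l:estimate_nonlinearity} and the a priori bounds of Lemma~\ref{l:pathwise_energy_estimate}/Theorem~\ref{t:high_order_estimates}, one checks that $v\mapsto v^v_t$ is Gâteaux differentiable from $\Hs^\sigma_0$ into $L^2(\Omega;\Hs^\sigma_0)$ with derivative $\eta^h_t=D_vv^v_t[h]$ solving the linearised PEs
\[
\partial_s\eta^h_s=\Delta\eta^h_s-\bigl(b(\eta^h_s,v_s)+b(v_s,\eta^h_s)\bigr),\qquad\eta^h_0=h .
\]
The crucial point is a parabolic smoothing estimate on the good event $\{t<\tau_R\}$, $\tau_R=\inf\{s:\|v_s\|_{\Hs^\sigma}\ge R\}$: for every $\theta\in[\sigma,\sigma+2)$ and $0<s\le t$,
\[
\E\bigl[\|\eta^h_s\|_{\Hs^{\theta}}^2\,\one_{\{s<\tau_R\}}\bigr]\le C(t,R,\theta)\,s^{-(\theta-\sigma)}\,\|h\|_{\Hs^\sigma}^2 .
\]
This would be proved by a bootstrap gaining regularity in small increments: in the worst term $b(\eta^h_r,v_r)$, estimating in $\Hs^{\theta-2}$ via Lemma~\ref{l:estimate_nonlinearity} forces $v_r$ only into $\Hs^{\theta-1+\eps}$ with $\theta-1+\eps<\sigma+1$ (possible since $\theta<\sigma+2$), and $v_r$ indeed lies in $\Hs^{\sigma'}$ for every $\sigma'<\sigma+1$ a.s., the stochastic convolution $\int_0^s e^{(s-r)\Delta}Q\,\dd W_r$ having all moments finite in $C([0,t];\Hs^{\sigma'})$; the Grönwall constants become deterministic on $\{s<\tau_R\}$.

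With this in hand, I would carry out the Malliavin integration by parts. Fix $\chi\in C^\infty([0,t])$ with $\chi\equiv1$ near $0$, $\chi(t)=0$, $\supp\chi'\subseteq[t/2,t]$, and choose the adapted control $g^h\in L^2(0,t;\Ls^2_0)$ with $Qg^h_s=\chi'(s)\eta^h_s$ (legitimate after approximating $\eta^h_s$, smooth for $s\ge t/2$, by Fourier truncations, using that $Q$ has dense range); by the nondegeneracy hypothesis \eqref{eq:generalization2} and the smoothing estimate above,
\[
\|g^h_s\|_{\Ls^2_0}\lesssim|\chi'(s)|\,\|\eta^h_s\|_{\Hs^{\sone}},\qquad
\E\int_0^t\|g^h_s\|^2_{\Ls^2_0}\,\dd s\le C(t,R)\,\|h\|^2_{\Hs^\sigma},
\]
which is exactly where $\sone<\sigma+2$ enters. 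The process $\zeta_s=\chi(s)\eta^h_s$ solves the linearised equation with forcing $Qg^h_s$ and satisfies $\zeta_0=h$, $\zeta_t=0$, i.e.\ $\int_0^t J^v_{s,t}Qg^h_s\,\dd s=-J^v_{0,t}h$, where $J^v$ is the linearised flow; since $\D_{g^h}v^v_t=\int_0^t J^v_{s,t}Qg^h_s\,\dd s$ and $D_vv^v_t[h]=J^v_{0,t}h$, the Malliavin chain rule together with the duality $\E[\D_{g^h}\Phi]=\E[\Phi\,\delta(g^h)]$ ($\delta(g^h)=\int_0^t\langle g^h_s,\dd W_s\rangle_{\Ls^2_0}$ since $g^h$ is adapted) gives, first for $\varphi\in C^1_{\rm b}$ and then for bounded Borel $\varphi$ by approximation,
\[
D_vP_t\varphi(v)[h]=-\,\E\bigl[\varphi(v^v_t)\,\delta(g^h)\bigr],\qquad
|D_vP_t\varphi(v)[h]|\le\|\varphi\|_\infty\Bigl(\E\!\int_0^t\!\|g^h_s\|^2\,\dd s\Bigr)^{1/2}.
\]
Integrating along a segment from $v$ to $v'$ and sending $R\to\infty$ as in the first step completes the proof.

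I expect the main obstacle to be the parabolic smoothing estimate for the linearised PEs: because $H^1$ is critical and the velocity is only $\Hs^{<\sigma+1}$--regular, the terms $b(\eta,v)$ and $b(v,\eta)$ lose slightly more than one derivative, so the gain from $\Hs^\sigma$ up to $\Hs^{\sone}$ must be organised into many small steps, each balancing the derivative loss (Lemma~\ref{l:estimate_nonlinearity}) against the available regularity of $v$ and against the singular time weight, and this is precisely what forces the gap $\sone<\sigma+2$. Tracking the $R$--dependent constants and removing the truncation is routine given the blow--up bounds of Theorem~\ref{t:high_order_estimates}, while the measure--theoretic points around $Q^{-1}$ on the range of $Q$ and the adaptedness of $g^h$ are handled within the Malliavin framework of Subsection~\ref{ss:Malliavin_calculus} without difficulty.
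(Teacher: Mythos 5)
Your proposal is correct and follows essentially the same route as the paper's proof: truncate the nonlinearity in $\Hs^\sigma$ to reduce to a globally nice problem (this is Lemma \ref{l:strong_feller_reduction}), choose a BEL-type control built from the linearised flow so that $\D_g v_T$ reproduces $Dv_T h$ exactly (your $g^h_s=Q^{-1}[\chi'(s)\eta^h_s]$ with $\chi'$ supported in $[t/2,t]$ is, up to sign and smoothness of the time cutoff, exactly the paper's $g_t = 2T^{-1}\one_{[T/2,T]}(t)Q^{-1}\Jv_{0,t}h$), establish a parabolic smoothing bound for the linearised semigroup up to $\Hs^{\sigma_0}$ with $\sigma_0<\sigma+2$ (Lemma \ref{l:instantaneous_regularization_J_appendix}, which as you anticipate is a two-step bootstrap because $v$ lies only in $\Hs^{<\sigma+1}$), and conclude via Malliavin integration by parts and the $L^2(\O\times(0,T);\Ls^2_0)$ bound on the control. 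The only cosmetic differences are the smoothness of the time cutoff and the sign convention in the BEL identity.
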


\begin{proposition}[Irreducibility]
\label{prop:irreducibility_Appendix}
Let the assumptions of Theorem \ref{t:uniqueness_regularity_invariant} be satisfied. Then 
$
\supp\,\mu=\Hs^{\sigma}
$ for all invariant measures $\mu$ of the stochastic {\normalfont{PEs}} \eqref{eq:primitive_full} on $\Hs^{\sigma}$.
\end{proposition}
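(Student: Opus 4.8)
The strategy follows the by-now classical scheme for irreducibility of semilinear SPDEs with additive noise (see \cite{FlaMas1995} and \cite{DPZ_ergodicity}): reduce the support statement to positivity of transition probabilities on balls, then prove the latter by combining an approximate‑controllability statement for the associated controlled PE with a Girsanov change of measure. Concretely, if $\mu$ is invariant on $\Hs^\sigma_0(\T^3)$, then by invariance $\mu(B_{\Hs^\sigma_0(\T^3)}(v_\ast,\delta))=\int_{\Hs^\sigma_0(\T^3)}P_T(v,B_{\Hs^\sigma_0(\T^3)}(v_\ast,\delta))\,\dd\mu(v)$ for every $T>0$, and since $v\mapsto P_T(v,A)$ is measurable and any nonempty open set contains such a ball, $\supp\mu=\Hs^\sigma_0(\T^3)$ follows once one knows
\[
P_T(v_0,B_{\Hs^\sigma_0(\T^3)}(v_\ast,\delta))>0\qquad\text{for all }v_0,v_\ast\in\Hs^\sigma_0(\T^3),\ \delta>0,
\]
for some fixed $T>0$.

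\emph{Construction of a control.} Fix $T>0$. For $h\in L^2(0,T;\Ls^2_0(\T^3))$ let $\psi^h\in C([0,T];\Hs^\sigma_0(\T^3))$ solve $\partial_t\psi^h=\Delta\psi^h-b(\psi^h,\psi^h)+Qh$ with $\psi^h(0)=v_0$, globally well posed by the estimates behind Theorem \ref{t:high_order_estimates} (and Lemma \ref{l:pathwise_energy_estimate}, with the extra drift $Qh$ in place of $v'$). For every $\eps>0$ I would produce $h$ with $\|\psi^h(T)-v_\ast\|_{\Hs^\sigma_0}<\eps$ in two phases. On $[0,T/2]$ take $h\equiv0$: then $\psi^h$ is the unforced deterministic PE and, by parabolic bootstrapping of the a priori bounds (as in the proof of Proposition \ref{prop:regularity_invariant_measure}), $\bar v:=\psi^h(T/2)\in\Hs^{\sigma+3}_0(\T^3)$. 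On $[T/2,T]$, pick $v_\ast'\in\Hs^{\sigma+3}_0(\T^3)$ with $\|v_\ast'-v_\ast\|_{\Hs^\sigma_0}<\eps/3$ (density) and a smooth curve $\psi:[T/2,T]\to\Hs^{\sigma+3}_0(\T^3)$ with $\psi(T/2)=\bar v$, $\psi(T)=v_\ast'$; then $g(t):=\partial_t\psi(t)-\Delta\psi(t)+b(\psi(t),\psi(t))$ lies in $C([T/2,T];\Hs^\sigma_0(\T^3))$ by Lemma \ref{l:estimate_nonlinearity}. Using that $Q$ is injective with dense range in $\Hs^\sigma_0(\T^3)$, choose a piecewise‑constant $\widetilde g:[T/2,T]\to\r(Q)$ with $\sup_t\|\widetilde g(t)-g(t)\|_{\Hs^\sigma_0}\le\eps'$ and set $h(t):=Q^{-1}\widetilde g(t)$ on $[T/2,T]$; then $h\in L^\infty(T/2,T;\Ls^2_0(\T^3))\subseteq L^2(0,T;\Ls^2_0(\T^3))$, the nondegeneracy \eqref{eq:generalization2} together with $\sone<\sigma+2$ being what makes this cost finite in a quantitative way. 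Finally $\psi^h-\psi$ solves on $[T/2,T]$ a heat equation with zero datum and forcing $-b(\psi^h,\psi^h)+b(\psi,\psi)+(\widetilde g-g)$, so the stability estimates underlying Lemma \ref{l:pathwise_energy_estimate} with Lemma \ref{l:estimate_nonlinearity} and Gr\"onwall give $\|\psi^h(T)-v_\ast'\|_{\Hs^\sigma_0}\le C(\bar v,v_\ast',T)\,\eps'$; taking $\eps'$ small yields the claim.

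\emph{Girsanov and conclusion.} Given $v_0,v_\ast,\delta$, apply the previous step with $\eps=\delta/2$ to get $h$ and set $\tilde W_t:=W_t-\int_0^t h(s)\,\dd s$. Since $h$ is deterministic and lies in the Cameron--Martin space $L^2(0,T;\Ls^2_0(\T^3))$, $\tilde W$ is an $\Ls^2_0(\T^3)$-cylindrical Brownian motion under a probability $\widetilde\P\sim\P$, and under $\widetilde\P$ the solution $v$ of \eqref{eq:primitive_full} satisfies $\dd v=(\Delta v-b(v,v)+Qh)\,\dd t+Q\,\dd\tilde W$. Hence $\rho:=v-\psi^h$ solves $\dd\rho=(\Delta\rho-b(v,v)+b(\psi^h,\psi^h))\,\dd t+Q\,\dd\tilde W$ with $\rho(0)=0$, and on the event $E_\kappa:=\{\|\int_0^{\cdot}e^{(\cdot-s)\Delta}Q\,\dd\tilde W_s\|_{C([0,T];\Hs^\sigma_0)}<\kappa\}$ the continuous‑dependence estimate for the PE (same ingredients as Theorem \ref{t:high_order_estimates}\eqref{it:high_order_estimates3}) forces $\|\rho\|_{C([0,T];\Hs^\sigma_0)}<\delta/2$ for $\kappa=\kappa(\psi^h,T,\delta)$ small. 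Since the stochastic convolution is a centered Gaussian element of $C([0,T];\Hs^\sigma_0(\T^3))$ (using $Q\in\calL_2(\Ls^2_0(\T^3),\Hs^\sigma_0(\T^3))$ and parabolic regularity), $\widetilde\P(E_\kappa)>0$; therefore $\widetilde\P(v_T\in B_{\Hs^\sigma_0(\T^3)}(v_\ast,\delta))>0$, and by $\P\sim\widetilde\P$ also $P_T(v_0,B_{\Hs^\sigma_0(\T^3)}(v_\ast,\delta))>0$. With the reduction above this gives $\supp\mu=\Hs^\sigma_0(\T^3)$.

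\emph{Main obstacle.} The heart of the argument is the approximate controllability of the \emph{nonlinear} controlled PE inside $\Hs^\sigma_0(\T^3)$ with controls in $L^2(0,T;\Ls^2_0(\T^3))$: one must simultaneously (i) exploit the instantaneous parabolic smoothing of the PE to reduce to a smooth target trajectory, so that the required force $g(t)$ is regular enough to be approximated inside $\r(Q)$ (where the dense range of $Q$ enters), (ii) keep the resulting control admissible, which is where \eqref{eq:generalization2} with $\sone<\sigma+2$ is genuinely used, and (iii) propagate smallness of the forcing error to smallness of the trajectory, which rests on the pathwise estimates of Lemma \ref{l:pathwise_energy_estimate} and the nonlinear bounds of Lemma \ref{l:estimate_nonlinearity}, respecting the anisotropic structure of the PE nonlinearity $w(v)\partial_z v$. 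The reduction to transition probabilities and the Girsanov step are comparatively soft.
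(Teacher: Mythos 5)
Your argument is correct and follows essentially the paper's route: reduce to positivity of one‑step transition probabilities on balls, control the deterministic PE to a neighbourhood of a smooth target (using instantaneous parabolic smoothing of the unforced PE together with the dense range of $Q$ and the nondegeneracy \eqref{eq:generalization2}), and then transfer the control to the noise while comparing trajectories via a Gr\"onwall/stability estimate based on Lemma \ref{l:estimate_nonlinearity} and Lemma \ref{l:pathwise_energy_estimate}. The only real difference is the last step: you invoke a Girsanov change of measure to absorb the control, whereas the paper (Lemma \ref{l:support_stoch_convolution_primitive}) uses the full support of the Gaussian law of the stochastic convolution in $L^2(0,T;\Hs^{1+\sigma}_0)\cap C([0,T];\Hs^\sigma_0)$ to exhibit a positive‑probability event on which the noise realization tracks the control, and then compares $v_t$ with the controlled trajectory pathwise — the two mechanisms are standard and interchangeable here.
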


For the definition of the strong Feller property, the reader is referred to the text before Proposition \ref{prop:strong_feller_projective}.
The proof of Proposition \ref{prop:strong_feller_v} is postponed in Subsection \ref{ss:proof_irreducibility}. The irreducibility result of Proposition \ref{prop:irreducibility_Appendix} is given in Subsection \ref{ss:proof_irreducibility}.
Now, we first prove that the above implies Theorem \ref{t:uniqueness_regularity_invariant}.

\begin{proof}[Proof of Theorem \ref{t:uniqueness_regularity_invariant}]
The existence of an invariant measure on $\Hs^2(\T^3)$ was established in \cite[Theorems 16 and 1.7]{GHKVZ14}. By Proposition \ref{prop:regularity_invariant_measure}, these measures are also concentrated on $\Hs^\sigma$. The uniqueness and support conditions for invariant measures in Theorem \ref{t:uniqueness_regularity_invariant} then directly follow from Propositions \ref{prop:strong_feller_v}-\ref{prop:irreducibility_Appendix} and Khasminskii's theorem (see, e.g., \cite[Proposition 4.1.1]{DPZ_ergodicity}).
\end{proof}

The results in Propositions \ref{prop:strong_feller_v} and \ref{prop:irreducibility_Appendix} lead to a stronger result. For later use, we formulate it in the following

\begin{remark}[Strongly mixing of transition semigroup for PEs]
\label{r:strongly_mixing_etc}
Doob's theorem \cite[Theorem 4.2.1]{DPZ_ergodicity} and Propositions \ref{prop:strong_feller_v}-\ref{prop:irreducibility_Appendix} imply that the Markov transition function $P_t(v,A)=\P(v_t\in A\,|\,v_0=v)$ where $A\in \Borel(\Hs^\sigma)$ is strongly mixing and the measures $P_t(v,\cdot)$, $P_t(v',\cdot)$ are equivalent for all $v,v'\in \Hs^\sigma$. 
\end{remark}

\subsubsection{Proof of Proposition \ref{prop:irreducibility_Appendix}}
\label{ss:proof_irreducibility}
In this subsection, we prove the irreducibility result of Proposition \ref{prop:irreducibility_Appendix}. 
We begin with a controllability result.

\begin{lemma}[Controllability]
\label{l:controllability_primitive}
Let the assumptions of Theorem \ref{t:uniqueness_regularity_invariant} be satisfied.
Fix $T\in (0,\infty)$, an initial data $\vin\in \Hs_0^\sigma$ and $\vf_0\in \cap_{\sigma'\geq \sigma}\,\Hs^{\sigma'}$. Then there exists a smooth control $g\in \cap_{\sigma'\geq \sigma}L^\infty(0,T;\Hs_0^{\sigma'})$ such that the unique $\Hs^\sigma$-solution to (deterministic) controlled {\normalfont{PEs}} (here $b$ is as in \eqref{eq:def_nonlinearity_b})
\begin{equation}
\label{eq:control_primitive_lemma_statement}
\partial_t v_t+ b(v_t,v_t)
= \Delta v_t+Q g_t \ \ \text{ for }t>0, \qquad v_0=\vin,
\end{equation}
satisfies $v_T=\vf$.
\end{lemma}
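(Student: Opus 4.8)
\textbf{Proof plan for Lemma \ref{l:controllability_primitive}.} The plan is to construct the control $g$ explicitly by prescribing a target trajectory $v_t$ that interpolates between $\vin$ and $\vf$, and then \emph{reading off} the control from the equation. First I would fix a smooth path $t\mapsto \widetilde v_t$ in $\cap_{\sigma'\geq\sigma}\Hs^{\sigma'}_0$ with $\widetilde v_0=\vin$ and $\widetilde v_{T}=\vf$; for instance, one may run the (deterministic, uncontrolled) PEs starting from $\vin$ for a short time to gain instantaneous smoothing, then glue via a smooth cutoff to a constant-in-time path equal to $\vf$ near $t=T$. Since $\vf\in\cap_{\sigma'\geq\sigma}\Hs^{\sigma'}$ is already smooth and $\vin\in\Hs^\sigma_0$, and parabolic smoothing of \eqref{eq:primitive} instantly regularizes $\vin$, such an interpolating path exists with $(\widetilde v_t)_t\in L^\infty(0,T;\Hs^{\sigma'}_0)$ for every $\sigma'$ (away from $t=0$ one has arbitrary smoothness, and the $L^\infty$ bound near $t=0$ only needs $\Hs^\sigma$-regularity, which is enough since $b$ maps into $\Hs^{\sigma-1}$ by Lemma \ref{l:estimate_nonlinearity}). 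Then I define
\begin{equation}
\label{eq:control_readoff}
Q g_t \stackrel{{\rm def}}{=} \partial_t \widetilde v_t + b(\widetilde v_t,\widetilde v_t) - \Delta \widetilde v_t .
\end{equation}

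The remaining point is solvability of \eqref{eq:control_readoff} for $g$, i.e.\ that the right-hand side of \eqref{eq:control_readoff} lies in the range of $Q$. Here I would use the nondegeneracy hypothesis: $Q$ has dense range and, by \eqref{eq:generalization2}, satisfies $\|f\|_{\Ls^2_0}\lesssim \|Qf\|_{\Hs^\sone_0}$ with $\sone<\sigma+2$; together with the diagonalizability of $Q$ in the basis $(\g_{(\kk,\ell)}e_\kk)$ from Subsection \ref{sss:probabilistic_set_up}, this shows $Q$ is injective with bounded inverse $Q^{-1}:\mathrm{ran}(Q)\to\Ls^2_0$, and moreover $Q^{-1}$ maps $\Hs^{\sone+\beta}_0$ boundedly into $\Hs^{\beta}_0$ for $\beta\geq 0$ (invert the singular values, which behave like $|\kk|^{-\alpha}$-type weights). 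Since the right-hand side of \eqref{eq:control_readoff} belongs to $L^\infty(0,T;\Hs^{\sigma-1}_0)$ away from $t=0$ and in fact to $L^\infty(0,T;\Hs^{\sigma'-2}_0)$ wherever $\widetilde v\in\Hs^{\sigma'}$, choosing the interpolating path smooth enough (in particular so that $\partial_t\widetilde v_t$, $\Delta\widetilde v_t$, $b(\widetilde v_t,\widetilde v_t)\in\Hs^{\sone}_0$ for all $t$, which costs only finitely many derivatives) gives $g_t=Q^{-1}(\partial_t\widetilde v_t+b(\widetilde v_t,\widetilde v_t)-\Delta\widetilde v_t)\in\Hs^{\sigma'}_0$ for every $\sigma'$, with an $L^\infty(0,T)$ bound. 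One must also check $g_t\in\Ls^2_0$, i.e.\ mean-zero and divergence-free barotropic mode — both are automatic since $\partial_t$, $\Delta$, and $b=\p[\cdots]$ all preserve $\Ls^2_0$. By construction $v_t\stackrel{{\rm def}}{=}\widetilde v_t$ then solves \eqref{eq:control_primitive_lemma_statement} with this $g$ and $v_T=\vf$; uniqueness of the $\Hs^\sigma$-solution to \eqref{eq:control_primitive_lemma_statement} is the deterministic analogue of Theorem \ref{t:high_order_estimates} (or follows from \cite{AV_variational} plus Lemma \ref{l:estimate_nonlinearity}), so $v_t$ is \emph{the} solution.

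The main obstacle I anticipate is the behaviour near $t=0$: the interpolating path cannot be smoother than $\Hs^\sigma$ at $t=0$ (it must match $\vin\in\Hs^\sigma_0$), so $\partial_t\widetilde v_t+b(\widetilde v_t,\widetilde v_t)-\Delta\widetilde v_t$ is only in $\Hs^{\sigma-2}_0$ at $t=0$ — and to apply $Q^{-1}$ and land in $\Ls^2_0$ one needs this in $\Hs^\sone_0$ with $\sone<\sigma+2$, i.e.\ one needs about two \emph{more} derivatives than $\vin$ has. The fix is that $\sone<\sigma+2$ is a \emph{strict} inequality, so it suffices to have the right-hand side in $\Hs^{\sigma-2+\epsilon}_0$ near $t=0$, which is \emph{still} not available from $\vin\in\Hs^\sigma_0$ alone; the genuine resolution is to not interpolate linearly but to first flow by the actual (uncontrolled) PE semigroup for time $t\in[0,T/2]$ — which lands in $\cap_{\sigma'}\Hs^{\sigma'}_0$ for every $t>0$ by parabolic smoothing (Theorem \ref{t:high_order_estimates} and bootstrapping) — so that $g_t=0$ on $[0,T/2]$ and the construction \eqref{eq:control_readoff} is only used on $[T/2,T]$ where $\widetilde v_t$ is already $C^\infty$ in space and time. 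This sidesteps the endpoint regularity issue entirely, and the only residual care is to choose the temporal cutoff in $C^\infty_c((0,T))$ so no $t=0$ or $t=T$ boundary terms appear. I would also remark that $g$ can be taken supported in $(T/2,T)$, which is slightly stronger than what is claimed and is convenient for the irreducibility argument in Proposition \ref{prop:irreducibility_Appendix}.
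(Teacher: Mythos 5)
Your proposal is correct and takes essentially the same approach as the paper: flow by the uncontrolled deterministic PEs on $[0,T/2]$ to gain instantaneous smoothness, then interpolate smoothly to the target and read off the control via $Qg_t=\partial_t f_t+b(f_t,f_t)-\Delta f_t$, inverting $Q$ on the (now smooth) right-hand side. The only cosmetic difference is the justification of the instantaneous regularization: the paper cites the analyticity result of \cite{GGHHK20_analytic}, whereas you invoke parabolic smoothing and bootstrapping of Theorem \ref{t:high_order_estimates}; the conclusion you need ($\wt v_{T/2}\in\cap_{\sigma'}\Hs^{\sigma'}_0$) is the same, and your correct identification of the $t=0$ endpoint obstruction and its resolution is precisely what the paper does.
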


\begin{proof}
Let $\wt{v}$ be the unique $H^\sigma$-solution to
$$
\partial_t \wt{v}_t - \Delta \wt{v}_t+  \p[(\wt{v}_t\cdot\nabla_{x,y})\wt{v}_t + w(\wt{v}_t)\partial_z \wt{v}_t]
=0,  \qquad \wt{v}_0=z_0.
$$
By \cite{GGHHK20_analytic}, we have $z_t\in \cap_{\sigma'>\sigma} \Hs^{\sigma'}_0$ for all $t>0$. In particular 
$\wt{z}_0\stackrel{{\rm def}}{=} \wt{v}_{1/2}\in \cap_{\sigma'>\sigma} \Hs^{\sigma'}_0$. Let
$$
f_t= (\varphi_t \wt{z}_0+ (1-\varphi_t)z_1)
$$
where $\varphi$ is a smooth cut-off function such that $\varphi(1/2)=1$ and $\varphi(1)=0$. From the above construction and uniqueness of solutions to the controlled PEs, one can readily check that the following control 
$$
g_t= \one_{[1/2,1]} Q^{-1}(\partial_t f_t - \Delta f_t - \p[(f\cdot\nabla_{x,y})f_t + w(f_t)\partial_z f_t]
 ),
$$
where $Q^{-1}$ is well-defined due to
satisfies the requirement of Step 1 and $
v^{(g)}_t=\wt{v}_t  $ if $ t\in [0,1/2]$ or $
v^{(g)}_t=f_t$ if $t\in (1/2,1]$.
\end{proof}

We conclude this subsection by studying the regularity and support of the law of stochastic convolutions:
\begin{equation}
\label{eq:def_stochastic_convolution_primitive}
\textstyle
\Gamma_t\stackrel{{\rm def}}{=} \int_0^t e^{(t-s)\Delta} Q\,\dd W_s.
\end{equation}

\begin{lemma}[Regularity and support of stochastic convolutions]
\label{l:support_stoch_convolution_primitive}
Let the assumptions of Theorem \ref{t:uniqueness_regularity_invariant} be satisfied.
Set
$
\XX_T\stackrel{{\rm def}}{=}L^2(0,T;\Hs^{1+\sigma}_0)\cap C([0,T];\Hs^{\sigma}_0)
$
where $T>0$.
Then the following hold:
\begin{itemize}
\item $(\Gamma_t)_t \in \XX_T$ a.s.;
\item $\P((\Gamma_t)_t \in \Op)>0$ for all open sets $\Op\subseteq \XX_T$.
\end{itemize}
\end{lemma}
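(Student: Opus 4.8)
The plan is to prove the two assertions by different routes: the pathwise regularity $(\Gamma_t)_t\in\XX_T$ is a standard consequence of maximal $L^2$-regularity and the factorization method, while the support statement I would reduce, through a Cameron--Martin shift, to an approximate controllability (density) statement for a deterministic linear parabolic equation.

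\textbf{Step 1 (regularity).} Since $e^{t\Delta}$ commutes with $\p$, the convolution $\Gamma_t=\int_0^t e^{(t-s)\Delta}Q\,\dd W_s$ stays in the subspace $\Hs^\sigma_0$. For the membership in $L^2(0,T;\Hs^{1+\sigma}_0)$ I would invoke the It\^o isometry (equivalently, stochastic maximal $L^2$-regularity, which in the Hilbert space setting is elementary): $\E\int_0^T\|\Gamma_t\|_{\Hs^{1+\sigma}_0}^2\,\dd t\lesssim_T\int_0^T\|(-\Delta)^{1/2}e^{r\Delta}Q\|_{\calL_2(\Ls^2_0,\Hs^\sigma_0)}^2\,\dd r\lesssim\|Q\|_{\calL_2(\Ls^2_0,\Hs^\sigma_0)}^2$, the last step being a modewise computation using $\int_0^\infty|\kk|^2 e^{-cr|\kk|^2}\,\dd r=c^{-1}$. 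For continuity in $\Hs^\sigma_0$ I would use the factorization method: write $\Gamma_t=c_\alpha\int_0^t(t-s)^{\alpha-1}e^{(t-s)\Delta}Y_s\,\dd s$ with $Y_s=\int_0^s(s-r)^{-\alpha}e^{(s-r)\Delta}Q\,\dd W_r$, note that $\alpha\in(0,\tfrac12)$ gives $Y\in L^p(0,T;\Hs^\sigma_0)$ a.s.\ for every $p<\infty$ (again by the It\^o isometry and Gaussianity), and conclude since $f\mapsto\int_0^\cdot(t-s)^{\alpha-1}e^{(t-s)\Delta}f(s)\,\dd s$ maps $L^p(0,T;\Hs^\sigma_0)$ into $C([0,T];\Hs^\sigma_0)$ whenever $p>1/\alpha$.

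\textbf{Step 2 (reduction of the support statement).} By Step 1, $(\Gamma_t)_t$ is a centered Gaussian random variable with values in the separable Banach space $\XX_T$, so its law $\nu$ is a centered Gaussian Radon measure; in particular $0\in\supp\nu$, whence $\nu(B_{\XX_T}(0,\delta))>0$ for all $\delta>0$. For any \emph{deterministic} $g\in L^2(0,T;\Ls^2_0)$, the Cameron--Martin theorem for the cylindrical Brownian motion $W$ (shifting $W$ by $\int_0^\cdot g(s)\,\dd s$) shows that the laws of $(\Gamma_t)_t$ and of $(\Gamma_t+(Lg)_t)_t$ are mutually equivalent on $\XX_T$, where $(Lg)_t\stackrel{{\rm def}}{=}\int_0^t e^{(t-s)\Delta}Qg(s)\,\dd s$. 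Hence for $\psi\in\XX_T$ and $\varepsilon>0$, choosing $g$ with $\|\psi-Lg\|_{\XX_T}<\varepsilon$ yields $\P((\Gamma_t)_t\in B_{\XX_T}(\psi,2\varepsilon))>0$, as this is equivalent to $\P((\Gamma_t)_t\in B_{\XX_T}(\psi-Lg,2\varepsilon))\geq\nu(B_{\XX_T}(0,\varepsilon))>0$. Thus the second assertion follows once $\{Lg:g\in L^2(0,T;\Ls^2_0)\}$ is dense in $\XX_T$ (since every such $Lg$, and every element of $\supp\nu$, vanishes at $t=0$, the statement is to be understood on the closed subspace of $\XX_T$ of functions vanishing at $t=0$).

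\textbf{Step 3 (the key point: density of the reachable set).} I would show that the closure of $\{Lg:g\in L^2(0,T;\Ls^2_0)\}$ in $\XX_T$ contains every $\psi\in\XX_T$ that is smooth in $(t,\x)$, $C^1$ in time with values in $\bigcap_{\sigma'\geq\sigma}\Hs^{\sigma'}_0$, and satisfies $\psi(0)=0$ --- such $\psi$ being dense in the relevant subspace. For such $\psi$, set $f\stackrel{{\rm def}}{=}\partial_t\psi-\Delta\psi\in C([0,T];\bigcap_{\sigma'\geq\sigma}\Hs^{\sigma'}_0)$. Since $\{f(t):t\in[0,T]\}$ is compact in $\Hs^\sigma_0$ and $Q$ has dense range in $\Hs^\sigma_0$, a finite partition of unity over this compact set produces $g\in C([0,T];\Ls^2_0)\subseteq L^2(0,T;\Ls^2_0)$ with $\|f-Qg\|_{C([0,T];\Hs^\sigma_0)}$ arbitrarily small. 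As $\psi-Lg$ solves $\partial_t(\psi-Lg)-\Delta(\psi-Lg)=f-Qg$ with zero initial value, deterministic maximal $L^2$-regularity for $-\Delta$ together with the embedding $L^2(0,T;\Hs^{\sigma+2}_0)\cap H^1(0,T;\Hs^\sigma_0)\embed C([0,T];\Hs^{\sigma+1}_0)$ gives $\|\psi-Lg\|_{\XX_T}\lesssim_T\|f-Qg\|_{L^2(0,T;\Hs^\sigma_0)}$, which can be made as small as we wish; this proves the density. I expect this density/approximate-controllability step in the refined path space $\XX_T$ --- which must match simultaneously the $L^2_t$-in-$\Hs^{1+\sigma}$ and $C_t$-in-$\Hs^\sigma$ topologies --- to be where the main work lies, whereas Steps 1 and 2 are routine.
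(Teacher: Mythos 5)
Your argument is correct, and it is considerably more explicit than the paper's two-sentence proof. The paper establishes that $(\Gamma_t)_t$ is a Gaussian $\XX_T$-valued random variable by passing to the $L^2(\O;\XX_T)$-limit of the finite-rank convolutions $\Gamma^N_t=\int_0^t e^{(t-s)\Delta}Q_N\,\dd W_s$ with $Q_N=\Pi_{\leq N}Q\Pi_{\leq N}$, and then simply asserts that ``Gaussian random variables have full support.'' Taken literally, that assertion cannot hold on all of $\XX_T$: since $\Gamma_0=0$ a.s., the law is concentrated on the closed hyperplane $\{f\in\XX_T : f(0)=0\}$, and you are right to read the lemma on that subspace (which is all that is used in the proof of Proposition~\ref{prop:irreducibility_Appendix}). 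More substantively, full support there is equivalent to density of the Cameron--Martin space, i.e.\ of $\{Lg : g\in L^2(0,T;\Ls_0^2)\}$, in that subspace of $\XX_T$; this is not spelled out in the paper, and your Step~3 --- dense range of $Q$, a partition-of-unity approximation of the compact curve $\{f(t)\}_{t\in[0,T]}$ by elements of the range, and deterministic maximal $L^2$-regularity together with the trace embedding to control both the $L^2_t\Hs^{1+\sigma}$ and $C_t\Hs^\sigma$ components simultaneously --- supplies precisely this missing piece, and is in fact where the nondegeneracy hypothesis on $Q$ from Theorem~\ref{t:uniqueness_regularity_invariant} enters. One small caveat: in Step~2 you invoke that $(\Gamma_t)_t$ is a Gaussian random variable with values in $\XX_T$, whereas your Step~1 only proves pathwise membership in $\XX_T$; to obtain Gaussianity of the $\XX_T$-valued law one still needs something like the paper's finite-dimensional approximation, which fits naturally into your factorization argument but should be stated.
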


Although the above might be standard, we included some details for the reader's convenience.

\begin{proof}
For all $N\geq 1$, let $Q_N\stackrel{{\rm def}}{=}\Pi_{\leq N}Q\Pi_{\leq N}$ where $\Pi_{\leq N}$ is the projection onto the Fourier modes $\leq N$, and $\Gamma_t^N\stackrel{{\rm def}}{=} \int_0^t e^{(t-s)\Delta} Q_N\,\dd W_s$. 
It is clear that $(\Gamma_t^N)_t:\O\to \XX_T$ is a Gaussian random variable for all $N\geq 1$, and $(\Gamma_t^N)_t\to (\Gamma_t)_t$  in $L^2(\O;\XX_T)$ for all $T<\infty$ as $N\to \infty$. 
In particular, $(\Gamma_t)_t:\O\to \XX_T$ is Gaussian as well. 
The final claim follows from the fact that Gaussian random variables have full support.
\end{proof}

\begin{proof}[Proof of Proposition \ref{prop:irreducibility_Appendix}]
The strategy is well-known to experts, cf., the one of \cite[Lemma 7.3]{BBPS2022}. However, some modifications are needed because of the mapping property of the nonlinearity in the PEs, cf., Lemma \ref{l:estimate_nonlinearity}.
For the reader's convenience, we include the details.
It suffices to show that for all $z_0\in \Hs^\sigma_0$, $z_1\in \cap_{\sigma'>\sigma}\Hs^{\sigma'}_0$ and $\g>0$,
\begin{equation}
\label{eq:approx_controllability_time_1}
\P(v_1\in B_{\g}(z_1) \,|\, v_0=z_0)>0.
\end{equation}
Indeed, if \eqref{eq:approx_controllability_time_1} holds, then the invariance of $\mu$ yields
$$
\mu(B_\g(z_1))=
\int_{\Hs^\sigma} 
\P(v_1\in B_{\g}(z_1) \,|\, v_0=z_0)\,\dd \mu(z_0)>0
$$
for all $z_1\in\cap_{\sigma'>\sigma} \Hs^{\sigma'}_0$ and $\g>0$. Hence, 
$\supp\,\mu \supseteq\cap_{\sigma'>\sigma} \Hs^{\sigma'}_0$, and by density, 
$$
\supp\,\mu \supseteq \overline{\cap_{\sigma'>\sigma}\Hs^{\sigma'}_0}=\Hs^{\sigma}_0,
$$
where the closure is taken in the $\Hs^{\sigma}_0$-topology.

\smallskip

Now, fix $z_0\in \Hs^\sigma$ and $z_1\in \cap_{\sigma'>\sigma} \Hs^{\sigma'}$. Let $(g_t)_t$ be the control given in Lemma \ref{l:controllability_primitive} with $T=1$, $\eta=z_0$ and $\xi=z_1$. 
Let  $(v_t^{(g)})_t$ the solution to the corresponding control problem \eqref{eq:control_primitive_lemma_statement}.
By Lemma \ref{l:support_stoch_convolution_primitive}, it follows that 
$$
\P ( \O_\varepsilon)>0 \quad \text{ where }\quad
\O_\varepsilon\stackrel{{\rm def}}{=}
\Big\| \Big(\Gamma_t - \int_0^t e^{(t-s)\Delta} Qg_s\,\dd s\Big)_t\Big\|_{\XX_T}\leq \varepsilon,
$$ 
where $\Gamma_t$ and $\XX_T$ are as in \eqref{eq:def_stochastic_convolution_primitive} and Lemma \ref{l:support_stoch_convolution_primitive}, respectively.

By the definition of $\O_\varepsilon$ and $\varepsilon<1$, we have $\sup_{\O_\varepsilon}\|(\Gamma_t)_t\|_{\XX_T}\leq R$ where $R$ is independent of $\varepsilon$. Hence, from Lemma \ref{l:pathwise_energy_estimate} (and its extension to the case of a control $g$), it follows the existence of a constant $R_0>0$ independent of $\varepsilon$ such that 
\begin{equation}
\label{eq:apriori_estimate_appendix_R0}
\|(v_t)_t\|_{\XX_T}+ \|(v_t^{(g)})_t\|_{\XX_T}\leq R_0.
\end{equation}

It is clear that that difference $v_t-v_t^{(g)}$ satisfies, a.s.,
\begin{align*}
v_t-v^{(g)}_t = \int_0^t e^{(t-s)\Delta}\big[ b(v_t,v_t)-b(v_t^{(g)},v_t^{(g)})\big] \,\dd s + \Gamma_t - \int_0^t e^{(t-s)\Delta} Q g_s\,\dd W_s
\end{align*}
where $b$ denotes the bilinear nonlinearity in the PEs \eqref{eq:def_nonlinearity_b}.
Thus, by parabolic regularity and Lemma \ref{l:estimate_nonlinearity}, we have, for all $t\in (0,1)$, $\delta\in (0,1)$ and a.e.\ on $\O_{\varepsilon}$, 
\begin{align*}
\sup_{s\in (0,t)}\|v_s-v^{(g)}_s\|_{H^\sigma}^2
&+
\int_0^T\|v_s-v^{(g)}_s\|_{H^{\sigma+1}}^2\,\dd t \\
&\leq C_0 \int_0^t \| b(v_s,v_s)-b(v_s^{(g)},v_s^{(g)})\|_{H^{\sigma-1}}^2 \,\dd s +C_0 \varepsilon\\
&\leq C_{0}C_{\delta} \int_0^t 2R\|v_s-v_s^{(g)}\|_{H^{2+\delta}}^2\,\dd s+C_0\varepsilon\\
&\leq\frac{1}{2} \int_0^t\|v_s-v^{(g)}_s\|_{H^{\sigma+1}}^2\,\dd s + C_{1,\delta}  \int_0^t \|v_s-v_s^{(g)}\|_{H^{\sigma}}^2\,\dd s+C_0\varepsilon.
\end{align*}
Now, by the Gr\"onwall inequality,
$$
\sup_{t\in (0,1)}\|v_t-v_t^{(g)}\|_{\Hs^\sigma}\leq C\varepsilon  \quad \text{ on }\ \O_\varepsilon.
$$
Thus, \eqref{eq:approx_controllability_time_1} follows from the above with $\varepsilon=C/\g$ and $\P(\O_\varepsilon)>0$.
\end{proof}

\section{Strong Feller for the Lagrangian and projective processes}
\label{s:strong_Feller}
The purpose of this section is to establish Propositions \ref{prop:strong_feller_projective} and \ref{prop:strong_feller_v}, which guarantee the strong Feller property for the projective and velocity processes, respectively. For the projective processes, we focus primarily on the process $(v_t, \x_t, \xi_t)_t$, as the argument for the process $(v_t, \x_t, \wc{\xi}_t)_t$ follows similarly. To show the strong Feller property, we adopt a standard approach that reduces the problem of proving a gradient estimate for a truncated SPDE. Central to this reduction is the energy estimate provided in Theorem \ref{t:high_order_estimates}. The gradient estimate itself is established using Malliavin calculus.

This section is organised as follows. In Subsection \ref{ss:strong_feller_via_cut_off}, we reduce the proof of the strong Feller property for the original system to a truncated one. The latter is then further reduced to a suitable gradient estimate for the truncated problem. Subsection \ref{ss:Malliavin_calculus} introduces the necessary notation for Malliavin calculus. Subsection \ref{ss:unique_ergodicity_PEs} provides an elementary proof of Proposition \ref{prop:strong_feller_v}, which also sets the stage for the more intricate proof of Proposition \ref{prop:strong_feller_projective}. Finally, in Subsections \ref{ss:gradient_estimates} and \ref{ss:nondegeneracy}, we prove the required gradient estimate for the projective process $(v_t, \x_t, \xi_t)_t$ and the estimate for the inverse of the reduced Malliavin matrix, respectively.

\subsection{Strong Feller of the projective process via its cutoff}
\label{ss:strong_feller_via_cut_off}
As in \cite[Subsection 6.1]{BBPS2022}, we begin by considering the augmented system:
\begin{equation}
\label{eq:augumented_system}
\left\{
\begin{aligned}
\partial_t v_t &= \Delta v_t-\p[(v_t\cdot\nabla_{x,y})v_t+w(v_t)\partial_z v_t]+ Q \dot{W}^{(v)}_t,\\
\partial_t \x_t&= u_t(\x_t),\\
\partial_t \xi_t &= \Pi_{\xi_t} [\nabla u_t (\x_t)\xi_t],\\
\partial_t \eta_t &=\dot{W}_t^{(\eta)},
\end{aligned}
\right.
\end{equation}
where $W^{(v)}_t=W_t$ is the cylindrical Brownian motion on $\Ls^2_0$, described in Subsection \ref{sss:probabilistic_set_up}, $W_t^{(\eta)}$ is a $\R^6$-Brownian motion in $\R^6$ independent of $W^{(v)}_t$ and $Q$ is as Subsection \ref{sss:probabilistic_set_up}. Moreover, as above, $u=(v,w)$ where $w$ is as in \eqref{eq:def_w}.
The system \eqref{eq:augumented_system} is complemented with the initial conditions
$v_0=v\in \H$,  $\x_0=\x\in \Dom$, $ \xi_0 =\xi\in \S^2$ and $ \eta_0=\eta\in \R^6$.

In contrast to Subsection \ref{ss:proofs_further_results}, for convenience, here we treat $(\xi_t)_t$ as an $\S^2$-valued process. However, to establish the strong Feller property as in Proposition \ref{prop:strong_feller_projective}, this approach is sufficient. Indeed, any observable $\psi:\Hs^\sigma_0\times \Dom\times \Pr\to \R$ can be lifted to an obsevable $\wt{\psi}$ on $\Hs_0^\sigma\times \Dom\times \S^2$ via the formula $\wt{\psi}(v,\x,\xi)=\psi(v,\x,\pi(\xi))$, where $\pi:\S^2\to \Pr$ is the standard quotient map.

\smallskip

To proceed further, we introduce some more notation. We denote by $\v_t$ the augmented process:
$$
\v_t\stackrel{{\rm def}}{=}
\begin{pmatrix}
v_t,
\x_t,
\xi_t,
\eta_t
\end{pmatrix}^\top
\in \HL
\qquad \text{ where }\qquad 
\HL\stackrel{{\rm def}}{=}\H\times \Dom\times \S^2\times \R^6.
$$
Note that $\v_t$ satisfies the abstract SPDE on $\HL$:
\begin{equation}
\label{eq:abstract_SPDE}
\left\{
\begin{aligned}
&\partial_t \v_t 
+A \v_t= F(\v_t)+ \wh{Q} \dot{W}_{t},\\
&\v_0=\v,
\end{aligned}
\right.
\end{equation}
where $\v\in \HL$ and 
\begin{equation*}
A \v =
\begin{pmatrix}
-\Delta v\\
0\\
0\\
0
\end{pmatrix}, 
\quad
F(\v) = 
\begin{pmatrix}
- \p[(v\cdot\nabla_{x,y})v+w(v)\partial_z v]\\
u(\x)\\
\Pi_\xi [\nabla u(\x)\xi]\\
0
\end{pmatrix}, \quad
\wh{Q}\dot{W}_t=
\begin{pmatrix}
Q \dot{W}_t^{(u)}\\
0\\
0\\
\dot{W}_t^{(\eta)}
\end{pmatrix}.
\end{equation*}

Let $\chi$ be a smooth and nonnegative increasing function satisfying 
\begin{equation}
\label{eq:choice_cutoff}
\chi(t)=0 \ \text{ for }t\leq 1, \qquad \text{ and }\qquad \chi(t)=1\ \text{ for }t\geq 2.
\end{equation}
For $r\geq 0$, we let $\chi_r\stackrel{{\rm def}}{=}\chi(\cdot/r)$. Consider the following truncation of the nonlinearity:
\begin{equation}
\label{eq:Ftruncation_nonlinearity}
F^{(r)}(\v)\stackrel{{\rm def}}{=} \big[1-\chi_{3r}(\|v\|_{H^\sigma})\big] F(\v)+ \chi_{r}(\|v\|_{H^\sigma}) H(\x,\xi,\eta), 
\end{equation}
where, for $\v=(v,\x,\xi,\eta)\in \HL$, 
\begin{equation*}
H(\x,\xi,\eta)= 
(\dist(\x))^4\,
\begin{pmatrix}
0\\\sum_{j=1}^3 a_j \frac{
\eta_j}{\sqrt{1+|\eta_j|^2}}\\
\Pi_{\xi}\sum_{j=1}^3 a_{j} \frac{\eta_{j+3}}{\sqrt{1+|\eta_{j+3}|^2}}\\
0
\end{pmatrix},
\end{equation*}
$(a_j)_{j=1}^3$ denotes the canonical basis of $\R^3$ and 
$\dist(\x)$ the distance from $\partial\Dom$. The exponent used to quantify the distance to the boundary may not be optimal; our choice is guided by the exponent that appears in the main algebraic lemmas below—see Lemma \ref{l:spanning_condition}. 

The above truncation (slightly) differs from the one used in \cite[Section 6]{BBPS2022}. Indeed, the additional term $\dist(\x)$ ensures that the $\x_t$-dynamics remains within $\Dom$. As in \cite{BBPS2022}, the process $\eta$ is used in combination with the truncation to transfer some `randomness' from the $\eta$-equation to $\x_t$ and $\xi_t$ via the nonlinearity $\chi_{r}(\|v\|_{H^\sigma})H(\x,\xi,\eta)$ when $\|v\|_{\Hs^\sigma}$ is large. However, for small values of $\|v\|_{\Hs^\sigma}$, the coupling is not efficient enough, and a hypoellipticity argument has to be exploited; see Subsection \ref{sss:hypoelliptic}. 

Consider the solution to the following version of \eqref{eq:abstract_SPDE} with cutoff:
\begin{equation}
\label{eq:abstract_SPDE_r}
\left\{
\begin{aligned}
&\partial_t \v_t^{(r)}+ A \v_t^{(r)} = F^{(r)}(\v_t^{(r)})+ \wh{Q} \dot{W}_t,\\
&\v_0^{(r)}=\v,
\end{aligned}
\right.
\end{equation}
where $\v\in \HL$.
The global well-posedness of \eqref{eq:abstract_SPDE_r} follows from standard arguments due to the presence of the truncation, Lemma \ref{l:estimate_nonlinearity} and the smoothness condition $\sigma\geq 2$.
For all $r,t\geq 0$ and bounded Borel maps $\phi:\HL\to \R$, we let
$$
[ P_t \phi](\v)\stackrel{{\rm def}}{=}\E[\phi(\v_t)] \qquad \text{ and }\qquad
[ P_t^{(r)} \phi](\v)\stackrel{{\rm def}}{=}\E[\phi(\v_t^{(r)})] ,
$$
where $\v_t$ and $\v_t^{(r)}$ is the solution to \eqref{eq:abstract_SPDE} and \eqref{eq:abstract_SPDE_r} with initial data $V$, respectively. As in Subsection \ref{ss:proofs_further_results}, the process $(V_t)_t$ (resp.\ $(V_t^{(r)})_t$) is strong Feller in $\HL$ if $P_t \phi\in C(\HL;\R)$ (resp.\ $P_t^{(r)} \phi\in C(\HL;\R)$) for all bounded Borel maps $\phi:\HL\to \R$.

The following result enables us to deduce the strong Feller property of the process $(\v_t)_{t\geq 0}$  from that of the truncated process $(\v^{(r)}_t)_{t\geq 0}$ provided $r$ is sufficiently large.

\begin{lemma}[Strong Feller of the projective process -- Reduction to cutoff]
\label{l:strong_feller_reduction}
Suppose that there exists $r_0>0$ such that $(\v^{(r)}_t)_{t\geq 0}$ is strong Feller in $\HL $ for all $r\geq r_0$. Then $(\v_t)_{t\geq 0}$ is strong Feller in $\HL$.
\end{lemma}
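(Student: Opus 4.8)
The plan is to argue that the strong Feller property of the truncated systems for \emph{some} large cutoff level $r$ forces the strong Feller property of the original augmented process, exploiting the fact that the velocity component $v_t$ of $\v_t$ does not feel the cutoff or the extra nonlinearity $H(\xi,\eta)$ at all, so that trajectories of $\v_t$ and $\v_t^{(r)}$ agree up to the first time $v_t$ has $\Hs^\sigma$-norm exceeding $3r$. The key input is the energy estimate of Theorem \ref{t:high_order_estimates}\eqref{it:high_order_estimates2}, which gives a quantitative bound on $\P\big(\sup_{t\in[0,T]}\|v_t\|_{H^\sigma}\ge \gamma\big)$ that tends to $0$ as $\gamma\to\infty$, locally uniformly in the initial data $v$ (indeed with an explicit rate and dependence on $\|v\|_{H^\sigma}^4$).

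First I would fix $r\ge r_0$ and introduce the stopping time $\tau_r \stackrel{{\rm def}}{=} \inf\{t\ge 0 : \|v_t\|_{H^\sigma}\ge 3r\}$, where $(v_t)_t$ solves the (untruncated) stochastic PEs \eqref{eq:primitive_full}; since $F^{(r)}$ coincides with $F$ on $\{\|v\|_{H^\sigma}\le 3r\}$, pathwise uniqueness for both \eqref{eq:abstract_SPDE} and \eqref{eq:abstract_SPDE_r} yields $\v_t = \v_t^{(r)}$ for $t<\tau_r$, hence in particular on $\{\tau_r>t\}$. Next, for a bounded Borel $\phi:\HL\to\R$ with $\|\phi\|_\infty\le 1$, and $t>0$ fixed, I would split
\begin{align*}
|P_t\phi(\v) - P_t\phi(\v')|
&\le |P_t^{(r)}\phi(\v)-P_t^{(r)}\phi(\v')|
+ |P_t\phi(\v)-P_t^{(r)}\phi(\v)| \\
&\quad + |P_t\phi(\v')-P_t^{(r)}\phi(\v')|,
\end{align*}
and bound the last two differences using $\v_t=\v_t^{(r)}$ on $\{\tau_r>t\}$, giving
$$
|P_t\phi(\v)-P_t^{(r)}\phi(\v)| = \big|\E[(\phi(\v_t)-\phi(\v_t^{(r)}))\one_{\{\tau_r\le t\}}]\big| \le 2\,\P(\tau_r\le t)\le 2\,\P\Big(\sup_{s\in[0,t]}\|v_s\|_{H^\sigma}\ge 3r\Big),
$$
which by Theorem \ref{t:high_order_estimates}\eqref{it:high_order_estimates2} is $\lesssim_t (1+\|v\|_{H^\sigma}^4)/\log\log\log(3r)$.

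Then I would conclude as follows. Given $\v^*=(v^*,\x^*,\xi^*,\eta^*)\in\HL$ and $\eps>0$, choose a bounded $\Hs^\sigma$-neighbourhood $\mathcal{U}$ of $v^*$, so $\sup_{v\in\mathcal{U}}(1+\|v\|_{H^\sigma}^4)\le K<\infty$; pick $r\ge r_0$ large enough that $2C_t K/\log\log\log(3r) < \eps/3$, where $C_t$ is the constant from the energy estimate; then the two ``comparison'' terms above are each $<\eps/3$ uniformly over initial velocities in $\mathcal{U}$, and since $(\v_t^{(r)})_t$ is strong Feller, the map $\v\mapsto P_t^{(r)}\phi(\v)$ is continuous, so we can shrink the neighbourhood (in all of $\HL$, intersected with $\mathcal{U}\times\T^3\times\S^2\times\R^6$) to make $|P_t^{(r)}\phi(\v)-P_t^{(r)}\phi(\v^*)|<\eps/3$; combining, $|P_t\phi(\v)-P_t\phi(\v^*)|<\eps$ for $\v$ near $\v^*$. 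Hence $P_t\phi\in C(\HL;\R)$, i.e.\ $(\v_t)_t$ is strong Feller. The only genuinely delicate point is making the comparison bound \emph{locally uniform in the initial data} so that it survives passing to the limit along a sequence converging to $\v^*$ --- but this is exactly what the polynomial-in-$\|v\|_{H^\sigma}$ dependence in Theorem \ref{t:high_order_estimates}\eqref{it:high_order_estimates2} provides, so no further work is needed there; everything else is a routine triangle-inequality argument.
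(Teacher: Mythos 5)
Your proof takes essentially the same route as the paper: agreement of $\v_t$ and $\v_t^{(r)}$ up to the exit time of $v_t$ from an $\Hs^\sigma$-ball, the tail bound from Theorem \ref{t:high_order_estimates}\eqref{it:high_order_estimates2} to make that exit unlikely uniformly over initial data in a bounded neighbourhood, and the same three-term triangle inequality finished by the continuity of $P_t^{(r)}\phi$. One small slip worth flagging: you assert that $F^{(r)}$ agrees with $F$ on $\{\|v\|_{H^\sigma}\le 3r\}$, but the truncation \eqref{eq:Ftruncation_nonlinearity} also adds the term $\chi_r(\|v\|_{H^\sigma})H(\xi,\eta)$, which is nonzero once $\|v\|_{H^\sigma}>r$; so $F^{(r)}=F$ only on $\{\|v\|_{H^\sigma}\le r\}$, and your stopping time should be taken with threshold $r$ (as in the paper) rather than $3r$. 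This only tightens your estimate and does not affect the structure of the argument.
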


\begin{proof}
Fix a bounded Borel observable $\phi:\HL\to \R$, a time $t\geq 0$ and an initial data $V\in \HL$. To prove the claim, we need to show that, for each $\varepsilon>0$, there exists $\delta=\delta(\varepsilon)>0$ such that
\begin{equation}
\label{eq:continuity_Pt_reduction}
|P_t\phi(V')-P_t\phi(V)|\leq \varepsilon\ \ \text{
for all } \ \|V-V'\|_{\H}\leq \delta.
\end{equation}
Let $\v=(v,\x,\xi,\eta)\in \HL$. By \eqref{eq:augumented_system}, it is clear that $\v_t=(v_t,\x_t,\xi_t,\eta+ W^{(\eta)}_t)$ for $t\geq 0$, where $v_t$ is the unique solution to \eqref{eq:primitive_full} and $(\x_t,\xi_t)$ the corresponding processes uniquely determined by $v_t$ as in \eqref{eq:augumented_system}. 
By Theorem \ref{t:high_order_estimates}\eqref{it:high_order_estimates2}, we can choose $r_1\geq r_0$ (depending on $\|v\|_{\Hs^\sigma}$) such that, for $\wh{v}\in \H$ satisfying $\|v-\wh{v}\|_{\H}\leq 1$, the solution $\wh{v}_t$ of the stochastic PEs \eqref{eq:primitive_full} with initial data $\wh{v}\in \H$ satisfies
$$
\P\Big(\sup_{s\in [0,t]}\|\wh{v}_s\|_{\Hs^\sigma}\geq  r_1\Big)\leq \frac{\varepsilon}{8\|\phi\|_{L^\infty}}.
$$
Hence, defining the following stopping time 
$$
\wh{\tau}\stackrel{{\rm def}}{=}\inf\{s\in [0,t]\,:\, \|\wh{v}_s\|_{\H}\geq r_1\} , \quad \text{ and }\quad \inf\emptyset\stackrel{{\rm def}}{=}t,
$$
we have $\P(\wh{\tau}<t)\leq  \varepsilon/(8\|\phi\|_{L^\infty})$.
By uniqueness of solutions to the stochastic PEs (cf., Definition \ref{def:solution}), it follows that $v^{(r)}=v$ on $[0,\tau)\times \O$. 
Hence, for all $r\geq r_1$ and $\wh{\v}=(\wh{v},\x,\xi,\eta)\in \HL$ such that $\|v-\wh{v}\|_{\Hs^\sigma}\leq 1$,
\begin{equation}
\label{eq:estimate_Pt_Ptr}
| P_t \phi(\wh{V})- P_t^{(r)} \phi(\wh{V})|\leq \E\big[\one_{\{\wh{\tau}<t\}}(|\phi (\wh{V}_t)| +|\phi(\wh{V}^{(r)}_t)|) \big]
\leq \varepsilon /4 .
\end{equation}
Thus, for all $r\geq  r_1$ and $\v'\in \HL$ satisfying $\|\v-\v'\|_{\HL}\leq 1$,
\begin{align*}
| P_t \phi(\v)- P_t \phi(\v')| 
&\leq 
| P_t \phi(\v)- P_t^{(r)} \phi(\v)|\\
&+| P_t^{(r)} \phi(\v)- P_t^{(r)} \phi(\v')|  +
| P_t \phi(\v')- P_t^{(r)} \phi(\v')| \\
&\stackrel{\eqref{eq:estimate_Pt_Ptr}}{\leq} \varepsilon/2+| P_t^{(r)} \phi(\v)- P_t^{(r)} \phi(\v')| .
\end{align*}
The conclusion follows by the continuity of $P_t^{(r)} \phi(\cdot)$ for $t>0$ provided $r\geq r_1$.
\end{proof}

In light of Lemma \ref{l:strong_feller_reduction}, it is enough to prove the Markov process $(V^{(r)}_t)_t$ is strong Feller in $\HL$ for $r$ sufficiently large.
The latter fact is a consequence of the following result. 
In its formulation, we use the following notation:
$$
T_\v \HL\stackrel{{\rm def}}{=}
\H(\T^3)\times \R^3 \times T_{\xi} \S^2\times \R^6 \ \ \text{ for }\ \ \v=(v,\x,\xi,\eta)\in \HL.
$$

\begin{proposition}[Gradient estimate for the cutoff projective process]
\label{prop:gradient_estimate}
There exist positive constants $a_0,b_0,r_0,T_0>0$ depending only on the parameters in Subsection \ref{sss:probabilistic_set_up} such that for all $r\geq r_0$, $t\leq T_0$ and $\phi\in C^2(\HL;\R)$ the mapping 
$$
\HL\ni \v\mapsto P_t^{(r)}\phi(\v)
$$ 
is differentiable and 
$$
| D P^{(r)}_t \phi (\v) h |\lesssim t^{-a_0}(\dist (\x))^{-b_0}(1+|\eta|^{b_0})\|\phi\|_{L^{\infty}(\HL)}\|h\|_{T_V\HL}  
$$
for all $\v\in \HL$ and $h\in T_V \HL$, with implicit constant independent of $h$ and $V$.
\end{proposition}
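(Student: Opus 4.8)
The strategy is the classical Bismut–Elworthy–Li approach combined with Malliavin calculus, as in \cite[Section 6]{BBPS2022}. I would work on the time interval $[0,T]$ with $T\leq T_0$ small, fix $r\geq r_0$, and write $V_t^{(r)}$ for the solution of the truncated system \eqref{eq:abstract_SPDE_r}. The key object is the (first-variation) Jacobian flow $J_{0,t}h=D_\v V_t^{(r)}\cdot h$ for $h\in T_\v\HL$, which solves the linearized equation obtained by differentiating \eqref{eq:abstract_SPDE_r}; because of the cutoff and $\sigma\geq 2$ together with the nonlinear estimates in Lemma \ref{l:estimate_nonlinearity}, one gets polynomial-in-$\|\v\|_{\HL}$ moment bounds on $\sup_{t\leq T}\|J_{0,t}\|$ and its inverse, and similarly on the Malliavin derivative $\mathscr D_s V_t^{(r)}$. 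The Bismut formula then reads
\[
D P_t^{(r)}\phi(\v)\,h=\frac1t\,\E\Big[\phi(V_t^{(r)})\int_0^t\langle \wh Q^{-1}\mathscr A_s^{-1}J_{0,t}h,\dd W_s\rangle\Big],
\]
where $\mathscr A_s$ is an appropriate \emph{reduced} (partial) Malliavin matrix adapted to the degenerate structure — this is forced because $\wh Q$ does not act on the $\x$- and $\xi$-components. Concretely, one introduces the partial Malliavin matrix $\Ma$ of Subsection \ref{ss:gradient_estimates}, inverts it on the $\HL$-directions, uses the Skorohod integral $\delta$ to integrate by parts, and thereby expresses $DP_t^{(r)}\phi(\v)h$ as $\E[\phi(V_t^{(r)})\,\delta(\text{something})]$ with the ``something'' built from $\Ma^{-1}$, the Malliavin derivative, and $J_{0,t}h$. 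Applying Cauchy–Schwarz, $\|\phi\|_{L^\infty}$ bounds $\E[\phi(V_t^{(r)})^2]^{1/2}$, the factor $t^{-a_0}$ comes from the time singularity of $\Ma^{-1}$ near $t=0$ (integrated against the spectral-gap-type estimates), and the polynomial factor $(1+\|\v\|_{\HL}^{b_0})$ comes from the moment bounds on $J_{0,t}$, $\mathscr D V^{(r)}$, and the negative moments of $\Ma$.

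The argument splits naturally into four steps. \emph{Step 1:} differentiability of $\v\mapsto V_t^{(r)}$ in $L^p(\Omega;\HL)$ and the first-variation/second-variation estimates, plus Malliavin differentiability of $V_t^{(r)}$ with the chain rule $\mathscr D_s^{(\kk,\ell)}V_t^{(r)}=J_{s,t}\wh Q e_{(\kk,\ell)}$ for $s<t$; this is routine given the cutoff and Lemma \ref{l:estimate_nonlinearity}. \emph{Step 2:} the invertibility of the reduced Malliavin matrix $\Ma$ with quantitative negative-moment bounds $\E\|\Ma^{-1}\|^p\lesssim t^{-p a_0}(1+\|\v\|_{\HL}^{pb_0})$; this is deferred to Subsection \ref{ss:nondegeneracy}. \emph{Step 3:} the integration-by-parts/Bismut formula itself, assembled from Steps 1–2 via the duality between $\mathscr D$ and $\delta$. \emph{Step 4:} the bound: Cauchy–Schwarz plus the moment estimates collected in Steps 1–2, keeping track of the powers of $t$ and $\|\v\|_{\HL}$.

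The main obstacle is \emph{Step 2}, the lower bound on the reduced Malliavin matrix — i.e.\ establishing that the noise, which enters only the $v$-equation (and the auxiliary $\eta$-equation), propagates through the nonlinear coupling $u\mapsto(\x,\xi)$-dynamics and, crucially through the truncation term $\chi_r(\|v\|_{H^\sigma})H(\xi,\eta)$, so as to make $\Ma$ uniformly nondegenerate on all of $\HL$. This is genuinely delicate here because the vertical velocity component $w(v)$ is not directly forced (recall \eqref{eq:def_w}), so one cannot simply read off nondegeneracy from $\wh Q$; instead one must run a Hörmander-type bracket computation for the system \eqref{eq:primitive}–\eqref{eq:lagrangian_flow}, exploiting that when $\|v\|_{H^\sigma}$ is small the $\eta$-forcing feeds $(\x,\xi)$ through iterated Lie brackets, and when $\|v\|_{H^\sigma}$ is large the explicit term $H(\xi,\eta)$ provides the needed directions. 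The quantitative (polynomial-in-$\|\v\|$, polynomial-in-$t^{-1}$) control of $\Ma^{-1}$ that this yields is precisely what is proven in Subsection \ref{ss:nondegeneracy}, and here I would simply cite it; the remaining steps are then a standard, if bookkeeping-heavy, assembly.
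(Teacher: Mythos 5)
Your overall plan — Malliavin calculus plus a partial Malliavin matrix plus an integration-by-parts formula \`a la Bismut — is indeed the paper's strategy, and your Steps~1 and~2 correctly locate the two hard ingredients. However, the plan as written has several genuine gaps. First, the Bismut formula you display involves $\wh Q^{-1}$, which does not exist as an operator on $\HL$: the noise is degenerate on the $\x$-, $\xi$-, and vertical-velocity directions, so $\wh Q$ is not invertible. More importantly, the paper does \emph{not} prove an exact integration-by-parts identity of that form. It constructs a control $g=(g^\l,g^\h)$ (equations \eqref{eq:control_glow_part} and \eqref{eq:control_glow_part1}) for which $\D_g V_T$ is only an \emph{approximation} of $DV_T h$, and a large part of the proof consists of estimating the remainder $r_T=\D_g V_T-DV_T h$; see \eqref{eq:bounds_control_2} in Lemma~\ref{l:bounds_control}. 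Nothing in your outline accounts for this remainder, and no exact Bismut--Elworthy--Li identity is available in this degenerate setting.

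Second, you omit the high/low frequency splitting, which is structurally essential. The partial Malliavin matrix $\Ma$ of \eqref{eq:partial_malliavin_matrix_def} acts only on the \emph{finite-dimensional} low-mode block $T_{V_\l}\HL_\l$ (the modes indexed by $\z_\l$, together with $\x,\xi,\eta$); the infinitely many high Fourier modes are handled by an entirely different mechanism — direct noise injection through $Q$, exactly as in the proof of Proposition~\ref{prop:strong_feller_v}. Without this split there is no finite-dimensional matrix to invert. Moreover, because $\Mat^{-1}$ blows up like $T^{-a}$ too fast for parabolic smoothing alone to absorb, the paper must project the high-mode control through a frequency cutoff $\Pi_{\leq N}$ with $N\sim T^{-2a}(1+|\eta|)^{2b}$ in \eqref{eq:control_glow_part1}, leaving the high-frequency tail in the remainder; this is precisely why the remainder term is unavoidable. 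Finally, your description of the hypoellipticity is reversed: when $\|v\|_{H^\sigma}\leq r$ we have $\chi_r=0$, so $H(\xi,\eta)$ is switched off and the $\eta$-process is fully decoupled from $(\x,\xi)$. In that regime the directions $(\x,\xi)$ are reached via the Lie bracket of the \emph{$v$}-noise with the projective drift $\fpr_\l$ (this is what Lemma~\ref{l:spanning_condition1} quantifies), not via the $\eta$-forcing as you claim. The $\eta$-noise is used only in the large-$\|v\|_{H^\sigma}$ regime, where $\chi_r=1$ activates the auxiliary coupling $H(\xi,\eta)$.
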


As in \cite[Subsection 6.1]{BBPS2022_AOP}, the above readily implies the strong Feller property of $(V^{(r)}_t)_t$ with $r\geq r_0$.
Indeed, fix $\phi\in C^2(\HL;\R)$ and let $\v=(v,\x,\xi,\eta),\v'=(v',\x',\xi',\eta')\in \HL$ be such that $\|V-V'\|_{\HL}\leq \varepsilon$ with $\varepsilon\in (0,\dist(\x)/2)$. In particular, $\dist(\x')\leq \dist(\x)/2$. 
Let $[0,1]\ni \lambda \to \xi^\lambda$ be a geodesic on $\S^2$ such that $\xi^0=\xi$ and $\xi^1=\xi'$, and $|\frac{\dd}{\dd t}\xi^\lambda|\lesssim |\xi-\xi'|$. Letting 
$V^\lambda= ((1-\lambda)v+\lambda v', (1-\lambda)\x+\lambda \x', \xi^\lambda,(1-\lambda)\eta+\lambda \eta)$, from Proposition \ref{prop:gradient_estimate}, for all $t\leq T_0$, we have
\begin{align*}
|P^{(r)}_t\phi(\v)-P^{(r)}_t\phi(\v')|
&\textstyle\leq \int_0^1 \big| D P^{(r)}_t \phi (\v_\lambda) \frac{\dd}{\dd \lambda  }V^\lambda\big| \,\dd \lambda\\
&
\textstyle\leq t^{-a_0}(\dist (\x))^{-b_0}(1+|\eta|^{b_0})\|\phi\|_{L^{\infty}(\HL)} \|V-V'\|_{\HL}.
\end{align*}
Since the right-hand side of the above depends only on $\|\phi\|_{L^\infty}$, the previous extends from $\phi\in C^2(\HL;\R)$ to $\phi$ being only bounded and measurable (see \cite[Lemma 7.1.5]{DPZ_ergodicity} for details). The case $t>T_0$ follows from the semigroup property and the Feller property (see Proposition \ref{prop:global_well_posedness}).  
Hence, $(V^{(r)}_t)_t$ is strong Feller for all $r\geq r_0$.

In the rest of this section, we focus on proving Proposition \ref{prop:gradient_estimate}. 
Our proof of Proposition \ref{prop:gradient_estimate} follows the line of \cite[Proposition 6.1]{BBPS2022} done in Section 6 of the same work. However, adjustments are needed for the stochastic PEs \eqref{eq:primitive_full}.
In particular, because of the noise degeneracy of the third component $w_t$ of the vector field $u_t$ (see \eqref{eq:def_w}), which creates additional difficulties in the study of the invertibility of the partial Malliavin matrix, see Subsection \ref{sss:hypoelliptic}.

\smallskip

\emph{For notational convenience, in the rest of this section, we drop superscripts in the cutoff \emph{SPDE} \eqref{eq:abstract_SPDE_r} and related objects, e.g., in the following, $P_t$ stands for $P_t^{(r)}$. In particular, all the processes appearing below are solutions to \eqref{eq:abstract_SPDE_r} for some $r>0$, which will be clear from the context.}

\subsection{Setup for the gradient estimate and Malliavin calculus}
\label{ss:Malliavin_calculus}

\subsubsection{Basics on Malliavin calculus}
In this subsection, we introduce the key concepts in the context of Malliavin calculus, primarily to fix the notation for the subsequent subsections. For a more detailed discussion, the reader is referred to the monograph \cite{Nualart}, and for extensions to the Banach-valued case, see \cite{PV14_Malliavin}. Malliavin calculus is typically introduced in the context of isonormal Gaussian processes \cite[Definition 1.1.1]{Nualart}. For our purposes, we focus here on $\uu$-cylindrical Brownian motion $\ww$ on a separable Hilbert space; see e.g., \cite[Definition 2.11]{AV19_QSEE_1} for the definition. Recall that such a cylindrical process is equivalent to an isonormal Gaussian process on $L^2(0,T;\uu)$. To see the correspondence between the latter situation and the one analysed here, let us mention that the sequence of standard independent Brownian motions $(W^{(\kk,\ell)})_{(\kk,\ell)\in \z}$ in \eqref{eq:L20cylindrical_noise} uniquely induced a $\Ls^2_0$-cylindrical Brownian motion via the formula: For $ f\in \Ls^2_0$ and $ 0\leq s<t\leq T$,
\begin{equation}
\label{eq:noise_isonormal}
\textstyle
\ww_{\Ls^2_0} (\one_{[s,t]} \otimes f)\stackrel{{\rm def}}{=}\sum_{(\kk,\ell)\in\z} \g_{(\kk,\ell)}e_{\kk} (f,  \g_{(\kk,\ell)}e_{\kk})_{L^2}\,(W^{(\kk,\ell)}_t-W^{(\kk,\ell)}_s).
\end{equation} 
Indeed, it is routine to show that $\ww_{\Ls^2_0}$, initially defined on simple functions by linearly extending the above formula, can be extended uniquely as a bounded linear operator $\ww_{\Ls^2_0} :L^2(0,T;\Ls^2_0)\to L^2(\O)$ satisfying the properties in \cite[Definition 2.11]{AV19_QSEE_1}. 

In the context of the study of the augmented process \eqref{eq:abstract_SPDE_r}, the space $\uu=\Ls^2_0\times \R^6$, where one also takes into account the noise in the $\eta$-component. Moreover, for the sake of proving Proposition \ref{prop:strong_feller_v}, we use $\uu=\Ls^2$. For the latter situation, the association between noise and isonormal process as in \eqref{eq:noise_isonormal} extends naturally. 

\smallskip

Using the previous identification, the Malliavin derivative $\D$ can be defined as in \cite[Subsection 1.2]{Nualart} or \cite[Subsection 2.2]{PV14_Malliavin} on an Hilbert space $\hh$ as an unbounded linear operator
\begin{equation}
\label{eq:malliavin_derivative}
\D: \Do(\D)\subseteq L^2(\O;\hh)\to L^2(\O\times (0,T);\calL_2(\uu,\hh)),
\end{equation}
where $\Do(\D)$ denotes the domain of the operator.
In the above we used the identification $L^2(0,T;\calL_2(\uu,\hh))=\calL_2(L^2(0,T;\uu),\hh)$ (see e.g., \cite[Theorem 9.4.8]{Analysis2}) and the vector valued extension of the Fubini identity $L^2(\O;L^2(0,T))=L^2(\O\times (0,T))$. Here, with a slight abuse of notation, we did not display the dependence on the ground space $\hh$ and on $T>0$. 

For a random variable $\Phi\in \Do(\D)$, we use the more suggestive notation $\D\Phi=(\D_s \Phi)_{s}$ to denote its Malliavin derivative, where $\D_s\Phi\in L^2(\O;\calL_2(\Ls_0^2,\Hs^\sigma))$ for a.a.\ $s\in (0,T)$. 

In general, computing the Malliavin derivative is not straightforward. Below, we will need to compute the Malliavin derivative of $V_T$, where $V_T$ is the solution to the truncated SPDE \eqref{eq:abstract_SPDE} for a fixed $r > 0$. However, as shown in \cite[Subsection 1.2.1]{Nualart}, when $\O$ is the standard space of paths, the Malliavin derivative of the random variable $V_T$ be computed using the Fr\'echet derivative of the \emph{It\^o-map} (if available) associated to \eqref{eq:abstract_SPDE}, i.e., the map which associated to the noise realisation $W$ the solution $V_T(W)$, see \cite[Subsection 1.2]{Nualart}.
To make this concrete, we need to introduce some more notation. 
To this end, as in Subsection \ref{ss:proofs_further_results} and without loss of generality, we may assume that 
\begin{equation}
\label{eq:path_space_assumption_omega}
\O = C_0([0,T]; \WW \times \R^6),
\end{equation}
where $\WW$ is a Hilbert space such that the embedding $\Ls^2_0 \hookrightarrow \WW$ is Hilbert-Schmidt (for example, $\WW = \Hs^{-3/2 - \varepsilon}$ for $\varepsilon > 0$), and $\R^6$ accounts for the auxiliary process $\eta$. In the following, we let $\WW\stackrel{{\rm def}}{=} \Hs^{-3/2-\varepsilon}$ where $\varepsilon>0$ is chosen such that $\varepsilon<\alpha-\sigma-\frac{3}{2}$. The latter choice is always possible by \eqref{eq:coloring_assumption_noise_2}. Such a choice, in particular, implies
\begin{equation}
\label{eq:WW_Q_relation}
Q: \WW \to \Hs^{\sigma}.
\end{equation}

The It\^o-map associated to \eqref{eq:abstract_SPDE} is the mapping 
\begin{equation}
\label{eq:Ito_map}
\Ito_T:C_0([0,T];\WW\times \R^6)\to \HL
\end{equation} 
which satisfies
$
\Ito_T(W_T)= V_T(W) \text{ for a.a.\ }W\in C_0([0,T];\WW\times \R^6),
$
where $V_T$ is the solution at time $T$ of \eqref{eq:abstract_SPDE}. In particular, $\Ito_0\equiv V\in \HL$.

The existence of such a map can be performed as in the proof of Proposition \ref{prop:RDSII}, by considering the process $Z_t= V_t-(\Lambda_t,0,0,W^{(\eta)}_t)^\top$, where 
\begin{equation}
\label{eq:gamma_pathwise}
\textstyle
\Lambda_t= Q W_t - \int_0^t \Delta e^{(t-s)\Delta} QW_t\,\dd s,
\end{equation} 
and noticing that $\Lambda_t= \int_0^t e^{(t-s)\Delta} Q\, \dd W_s$ a.s.\ for all $t\geq 0$.

\begin{lemma}[Malliavin derivative from the It\^o-map]
\label{l:from_Malliavin_Ito}
Fix $T>0$. Let Assumption \ref{ass:Q} be satisfied and suppose that  \eqref{eq:path_space_assumption_omega} and \eqref{eq:WW_Q_relation} hold. Then the It\^o-map $\Ito_T$ is Fr\'echet differentiable. Moreover, the random variable $V_T\in \Do(\D)$ where $\D$ is the Malliavin derivative on $\HL$, and for all $(g_t)_t\in L^2(0,T;\Ls^2_0\times \R^6)$ and a.a.\ $W\in C_0([0,T];\WW\times \R^6)$, it is given as:
\begin{equation}
\label{eq:derivative_Ito_Malliavin}
\D V_T(W) g =  D\Ito_T(W) G
\end{equation}
where $G(\cdot)=\int_0^{\cdot} g_t'\,\dd t'$. 
Finally, for each $(g_t)_{t}\in L^2(\O\times (0,T);\Ls^2_0\times \R^6)$, the process $(\D_g V_t)_t \stackrel{{\rm def}}{=}
(\D V_t(W) g)_t$ solves (in the strong sense):
\begin{equation}
\label{eq:differential_problem_DgV}
\left\{
\begin{aligned}
\partial_t \D_g V_t &=  A \D_g V_t +D F(V_t) \D_g V_t +
Qg_t,\\
\D_g V_0&=0.
\end{aligned}
\right.
\end{equation}
\end{lemma}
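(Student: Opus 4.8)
The plan is to prove the three assertions of Lemma~\ref{l:from_Malliavin_Ito} essentially by reducing everything to the pathwise (It\^o-map) picture, along the lines of \cite[Subsection 1.2.1]{Nualart} and the construction already used in the proof of Proposition~\ref{prop:RDSII}.

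\textbf{Step 1: Fr\'echet differentiability of the It\^o-map.} First I would set up the pathwise reformulation: writing $Z_t = V_t - (\Lambda_t, 0, 0, W^{(\eta)}_t)^\top$ with $\Lambda_t$ as in \eqref{eq:gamma_pathwise}, the process $Z$ solves a random PDE with no stochastic forcing whose coefficients depend on the noise realisation $W$ only through the (deterministic, for fixed $\omega$) drift path $(\Lambda_t)_t$. By \eqref{eq:WW_Q_relation} the map $W \mapsto (\Lambda_t)_t$ is bounded linear from $C_0([0,T];\WW \times \R^6)$ into $L^\infty(0,T;\Hs^\sigma) \times \{0\}\times\{0\}\times C_0([0,T];\R^6)$; combined with the truncation present in $F^{(r)}$ (which makes the nonlinearity globally Lipschitz together with all the estimates of Lemma~\ref{l:estimate_nonlinearity}) and with Lemma~\ref{l:pathwise_energy_estimate}, one gets that $(\Lambda) \mapsto (Z_t)_t$, hence $W \mapsto \Ito_T(W)$, is Fr\'echet differentiable by a standard contraction/implicit-function-theorem argument on the space $\XX_T = L^2(0,T;\Hs_0^{1+\sigma})\cap C([0,T];\Hs_0^\sigma)$ (truncation keeps the fixed-point map a contraction uniformly). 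The derivative $D\Ito_T(W)$ applied to a direction $\Psi \in C_0([0,T];\WW\times \R^6)$ is obtained by differentiating the fixed-point equation, producing a linear inhomogeneous equation driven by the increments of $\Psi$.

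\textbf{Step 2: Identification of the Malliavin derivative.} Next I would invoke the Cameron--Martin structure: on the path space \eqref{eq:path_space_assumption_omega} with the cylindrical Brownian motion identified with an isonormal Gaussian process on $L^2(0,T;\Ls_0^2\times\R^6)$ as in \eqref{eq:noise_isonormal}, a random variable given by a Fr\'echet-differentiable functional of the path is Malliavin differentiable, with $\D\Phi$ the restriction of the Fr\'echet derivative to the Cameron--Martin directions, i.e.\ $\D V_T(W) g = D\Ito_T(W) G$ with $G(\cdot) = \int_0^\cdot g_{t'}\,\dd t'$; this is exactly \cite[Proposition 1.2.4 and the remarks in Subsection 1.2.1]{Nualart}, extended to the Hilbert-valued case as in \cite[Subsection 2.2]{PV14_Malliavin}. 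The only points to check are that $V_T \in L^2(\O;\HL)$ (from the moment bounds implied by Lemma~\ref{l:pathwise_energy_estimate} together with Gaussian integrability of $\|\Lambda\|_{\XX_T}$, as in the proof of Proposition~\ref{prop:RDSII}) and that the Fr\'echet derivative has the right integrability in $\omega$, which again follows from the pathwise estimates with a polynomial dependence on $\|\Lambda\|$.

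\textbf{Step 3: The linear equation \eqref{eq:differential_problem_DgV}.} Finally, for $(g_t)_t \in L^2(\O\times(0,T);\Ls_0^2\times\R^6)$, I would identify $(\D_g V_t)_t$ by differentiating the mild/strong formulation of \eqref{eq:abstract_SPDE_r} in the direction $G$: the stochastic convolution $\int_0^t e^{(t-s)A}\wh Q \,\dd W_s$ has Malliavin derivative $s \mapsto e^{(t-s)A}\wh Q$, so differentiating $V_t = e^{-tA}V + \int_0^t e^{-(t-s)A} F^{(r)}(V_s)\,\dd s + \int_0^t e^{-(t-s)A}\wh Q\,\dd W_s$ and using the chain rule for $\D$ (valid since $F^{(r)}$ is smooth with bounded derivatives, by the cutoff and Lemma~\ref{l:estimate_nonlinearity}) gives $\D_g V_t = \int_0^t e^{-(t-s)A}\big(DF^{(r)}(V_s)\D_g V_s + \wh Q g_s\big)\,\dd s$, which is the mild form of \eqref{eq:differential_problem_DgV}; parabolic regularity then upgrades this to a strong solution. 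One should note that $DF^{(r)}(V_s)$ involves $\chi_r'(\|v_s\|_{H^\sigma})$ times a derivative of $\|\cdot\|_{H^\sigma}$, which is why $\sigma \geq 2$ and the higher-order estimates of Theorem~\ref{t:high_order_estimates} matter for well-posedness of \eqref{eq:differential_problem_DgV}.

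\textbf{Main obstacle.} The delicate point is Step~1: establishing Fr\'echet differentiability of the It\^o-map with enough control (in particular, the \emph{joint} measurability in $\omega$ and the moment bounds on the derivative) in a setting where $H^1$ is critical for the PEs and the nonlinearity loses more than one derivative off the $L^2$-scale, cf.\ Lemma~\ref{l:estimate_nonlinearity}. The truncation $\chi_{3r}$ in $F^{(r)}$ is precisely what rescues this — it makes the fixed-point map a uniform contraction in $\XX_T$ and turns the cutoff nonlinearity into a globally Lipschitz, smooth map — so the argument is genuinely about carefully combining the cutoff with the anisotropic nonlinear estimates and the pathwise energy bound of Lemma~\ref{l:pathwise_energy_estimate}, rather than about any new analytic input.
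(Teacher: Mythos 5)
Your proposal is correct and takes essentially the same route as the paper: the paper's own ``proof'' is the two sentences immediately following the lemma statement, which delegate Step~1 to \cite[Proposition 4.1]{HM11_theory} (using exactly the transformation $Z_t = V_t - (\Lambda_t,0,0,W^{(\eta)}_t)^\top$ with $\Lambda$ as in \eqref{eq:gamma_pathwise}) and Step~2 to \cite[Subsection 1.2.1]{Nualart}, and leaves Step~3 implicit. You have filled in the details behind those citations rather than found a different argument, and your identification of the truncation in $F^{(r)}$ as the mechanism that makes the fixed-point map a uniform contraction is precisely what makes the cited HM11 argument applicable here.
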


Using \eqref{eq:gamma_pathwise}, the Fr\'echet differentiability of the It\^o map can be demonstrated as in \cite[Proposition 4.1]{HM11_theory}. For the equality in \eqref{eq:derivative_Ito_Malliavin}, the proof can be obtained by arguing as in \cite[Subsection 1.2.1]{Nualart}.

The key significance of \eqref{eq:differential_problem_DgV} is that it enables the computation of Mallavin derivatives of 
$V_T$ without requiring explicit calculation.
Finally, let us mention that from \eqref{eq:derivative_Ito_Malliavin} it follows that for a.a.\ $W\in C_0([0,T];\WW\times \R^6)$,
$$
\D_g V_T(W)  = \frac{\dd}{\dd \varepsilon}\Big|_{\varepsilon=0} \Ito_T \Big(W+\int_0^{\cdot} g_{t'}\,\dd t'\Big).
$$
Thus, the Malliavin derivative can be interpreted as a derivative in the Cameron-Martin-type direction $\int_0^{\cdot} g_{t'}\,\dd t'\in \prescript{}{0}{H}^{1}(0,T;\Ls^2_0\times \R^6)$. 

\subsubsection{Gradient estimate via controllability and integration by parts}
As standard in the study of strong Feller property for Markov semigroups (see e.g., \cite{DPZ_ergodicity,F99,FlaMas1995}), we prove Proposition \ref{prop:gradient_estimate} by finding for each $T>0$ and $h\in T_V\HL$ a suitable \emph{control} $g=(g_t)_t$ (possibly depending on $h$, $V$ and $T$) for which 
\begin{equation}
\label{eq:approximation_gradient_phiVT}
\D_g V_T  \approx D V_T h,
\end{equation}  
where $\approx$ means that the remainder $\D_g V_T - D V_T h$ is small in an appropriate sense, cf., Lemma \ref{l:bounds_control} below.
As in Lemma \ref{l:from_Malliavin_Ito}, in the above, we set 
$
\D_g V_T = \D V_T g .
$

From \eqref{eq:approximation_gradient_phiVT}, it follows that 
\begin{align}
\label{eq:estimate_malliavian_integration_by_parts}
D P_T \phi(V) h 
& =\E [D\phi(V_T) DV_T h] \\
\nonumber
& \approx \E\big(
D\phi(V_T) \D_g V_T \big)\\
\nonumber
&\stackrel{(i)}{=} \E\big(
\D_g [\phi(V_T)]\big)\\
\nonumber
&\stackrel{(ii)}{=} \E[
\phi(V_T) \delta(g)]
\end{align}
where in $(i)$, we applied the chain rule for the Malliavin derivative (see \cite[Proposition 1.2.4]{Nualart}), and in $(ii)$, we denote by $\delta$ the adjoint operator of $\D$ on $L^2(\O;\Hs^\sigma)$, usually referred to as the Skorohod operator, with its domain denoted by $\Do(\delta)$. Since
\begin{equation}
\label{eq:estimate_malliavian_integration_by_parts2}
\big|\E[
\phi(V_T) \delta(g)]\big|
\leq\|\phi\|_{L^\infty}\, \E|\delta(g)|,
\end{equation}
it follows that in the case \eqref{eq:approximation_gradient_phiVT} holds, the proof of Proposition \ref{prop:gradient_estimate} reduces to prove $g\in \Do(\delta)$ and estimating $\E|\delta(g)|$. The latter will be performed using the following result, which can be proven as in \cite[Propositions 1.3.1 and 1.3.11]{Nualart}. Below, we denote the subspace of progressively measurable processes with the subscript `${{\rm prog}}$'.

\begin{lemma}
\label{l:delta_lemma}
Let $\ww$ be an isonormal process on $L^2(0,T;\uu)$, where $T>0$ and $\uu$ is a separable Hilbert space. 
Let $\hh$ be another separable Hilbert space and let $\delta$ be the adjoint operator of the Malliavin derivative \eqref{eq:malliavin_derivative} with $\hh=\R$. Then the following hold.
\begin{enumerate}[{\rm(1)}]
\item\label{it:delta_lemma_1} $L^2_{{\rm prog}}(\O\times (0,T);\uu)\subseteq \Do(\delta)$, and for all $g\in L_{{\rm prog}}^2(\O\times (0,T);\uu)$, the random variable $\delta (g)$ coincide with the It\^o-integral $\int_0^{T}(  g_t,\dd \ww_t)_{\uu}$ and 
$$
\E|\delta(g)|^2= \E\|g\|_{L^2(0,T;\uu)}^2.
$$
\item\label{it:delta_lemma_2}  For all $(g_t)_t\in L^2(\O\times (0,T);\uu)$ such that $g_t\in \Do(\D)$ for a.a.\ $t\in (0,T)$ and $(\D_s g_t)_{s,t}:\O\times (0,T)^2 \to \calL_2(\uu)$ has a measurable modification, we have $g\in \Do(\delta)$ and
$$
\E| \delta(g)|^2
\leq 
\E \int_0^T \|g_t\|_{\uu}^2\,\dd t 
+
\E \int_0^T\int_0^T \| \D_s g_t \|_{\calL_2(\uu)}^2\,\dd s \, \dd t .
$$
\end{enumerate}
\end{lemma}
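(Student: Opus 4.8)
\textbf{Proof proposal for Lemma \ref{l:delta_lemma}.}

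The plan is to identify $\delta$ with the divergence operator from classical Malliavin calculus and invoke the standard estimates for it, once the cylindrical-noise setup is translated into the language of isonormal Gaussian processes. First I would recall the identification already explained in Subsection \ref{ss:Malliavin_calculus}: a $\uu$-cylindrical Brownian motion $\ww$ on $[0,T]$ is equivalent to an isonormal Gaussian process over the Hilbert space $\mathcal{K}=L^2(0,T;\uu)$, and the Malliavin derivative in \eqref{eq:malliavin_derivative} with $\hh=\R$ is precisely the derivative operator $D$ of that isonormal process, taking values in $\mathcal{K}$. Consequently its adjoint $\delta$ is the Skorohod integral / divergence operator $\delta_{\mathcal{K}}$ studied in \cite[Section 1.3]{Nualart}, modulo the identification $L^2(\O\times(0,T);\uu)\cong L^2(\O;\mathcal{K})$ and the Fubini identity for vector-valued $L^2$ spaces (\cite[Theorem 9.4.8]{Analysis2} together with the scalar Fubini theorem). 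This reduction is the only genuinely new step; everything else is a citation.

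For part \eqref{it:delta_lemma_1}, once the identification is in place, the claim that $L^2_{{\rm prog}}(\O\times(0,T);\uu)\subseteq\Do(\delta)$ and that $\delta(g)$ agrees with the It\^o integral $\int_0^T(g_t,\dd\ww_t)_{\uu}$ for adapted integrands is exactly \cite[Proposition 1.3.11]{Nualart} (or \cite[Proposition 1.3.4]{Nualart} in the scalar case, extended coordinatewise to $\uu$-valued integrands by expanding along an orthonormal basis of $\uu$). The isometry $\E|\delta(g)|^2=\E\|g\|_{L^2(0,T;\uu)}^2$ is then just the It\^o isometry, which also follows from the general $L^2$-isometry for $\delta$ on adapted integrands (the Malliavin-derivative correction term vanishes by adaptedness, since $\D_s g_t=0$ for $s>t$). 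I would spell this out in one or two lines.

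For part \eqref{it:delta_lemma_2}, the bound is the standard a priori estimate for the divergence operator: for $g\in\Do(\delta)$ (more precisely for $g$ in the domain $\mathbb{D}^{1,2}(\mathcal{K})$ of the divergence, guaranteed here by the hypotheses $g_t\in\Do(\D)$ and measurability of $(\D_s g_t)_{s,t}$), one has
\[
\E|\delta(g)|^2 \;=\; \E\|g\|_{\mathcal{K}}^2 \;+\; \E\,\mathrm{tr}\big((\D g)\circ(\D g)^{*}\big),
\]
where the trace term is $\E\int_0^T\int_0^T\langle \D_s g_t,\D_t g_s\rangle_{\calL_2(\uu)}\,\dd s\,\dd t$; see \cite[Proposition 1.3.1]{Nualart}. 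Bounding the cross term by Cauchy--Schwarz (or simply $|\langle\D_s g_t,\D_t g_s\rangle|\le\frac12\|\D_s g_t\|^2+\frac12\|\D_t g_s\|^2$ and symmetrising) yields exactly the stated inequality. The main obstacle, such as it is, is not analytic depth but bookkeeping: one must check that the Banach-space-valued conventions used in the paper (the identifications $L^2(0,T;\calL_2(\uu,\hh))=\calL_2(L^2(0,T;\uu),\hh)$, and $L^2(\O;L^2(0,T;\uu))=L^2(\O\times(0,T);\uu)$) are compatible with the scalar theory in \cite{Nualart} so that the cited propositions apply verbatim; this is routine given \cite[Chapters 1 and 9]{Analysis1,Analysis2} and the fact that $\uu$ and $\hh$ are separable Hilbert spaces, so I would simply remark on it rather than belabour it.
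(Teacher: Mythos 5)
Your proposal is correct and follows exactly the same route as the paper, which simply cites \cite[Propositions 1.3.1 and 1.3.11]{Nualart} for the two parts after setting up the isonormal-process identification in Subsection \ref{ss:Malliavin_calculus}. Your spelling out of the cross-term bound via symmetrisation and of the vector-valued Fubini identifications is a harmless elaboration of what the paper leaves implicit.
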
  

Because of \eqref{it:delta_lemma_1}, the operator $\delta(g)$ can be considered as an enxtesion of the classical It\^o-integral and its action on $g$ is often denoted by $\int_0^T ( g_t ,\delta \,\ww_t )_{\uu}^2$.

\smallskip

With the above lemma, we now have all the necessary tools to prove Proposition \ref{prop:gradient_estimate} using the argument outlined in \eqref{eq:approximation_gradient_phiVT}-\eqref{eq:estimate_malliavian_integration_by_parts}. However, before proving the general case, we first apply this reasoning to the special case where the observable $\phi$ depends only on the velocity field $(v_t)_t$, thereby proving Proposition \ref{prop:strong_feller_v}. This will provide insight into the choice of the control $g$ in the general case, as discussed in Subsection \ref{ss:gradient_estimates} below.

\subsection{Intermezzo: Strong Feller for PEs -- Proof of Proposition \ref{prop:strong_feller_v}}
\label{ss:unique_ergodicity_PEs}
The aim of this subsection is to prove Proposition \ref{prop:strong_feller_v} via the argument described in \eqref{eq:estimate_malliavian_integration_by_parts} for the cutoff process. For the sake of completeness, let us recall the setting. Here $(v_t)_t$ is the solution to the \emph{truncated} stochastic PEs \eqref{eq:primitive_full}: 
\begin{equation}
\label{eq:v_primitive_cutoff}
\left\{
\begin{aligned}
\partial_t v_t &=\Delta v_t + \wt{F}(v_t)+ Q\dot{W}_t,\\ 
v_0&=v\in \Hs^\sigma,
\end{aligned}
\right.
\end{equation}
where 
$$
\wt{F}(v_t)=\chi_r(\|v_t\|_{H^\sigma})\,b(v_t,v_t)\quad\text{ for }\  r>0,
$$ 
and $b$ as in \eqref{eq:def_nonlinearity_b}. Here, as commented below Proposition \ref{prop:gradient_estimate}, we dropped the dependence on the parameter $r$.
The reduction of the strong Feller property from the original problem \eqref{eq:primitive_full} to the one with cutoff \eqref{eq:v_primitive_cutoff} follows from Lemma \ref{l:strong_feller_reduction}. 

We prove Proposition \ref{prop:strong_feller_v} via the gradient bound as in Proposition \ref{prop:gradient_estimate}, where the observable $\phi$ depends only on $v$. To prove the latter, we argue as in \eqref{eq:estimate_malliavian_integration_by_parts}. 
As mentioned in the previous subsection, the idea is to find for each $h\in \Hs^\sigma$ (recall that $\Hs^\sigma$ is the state space for \eqref{eq:v_primitive_cutoff}) a suitable control $(g_t)_t\in L^2(\O\times (0,T);\Ls^2)$ for which $\D_g v_T$ is a `good' approximation of $Dv_T h$, see \eqref{eq:approximation_gradient_phiVT} holds. 
To this end, let us $D v_T h$ and $\D_g v_T$ compute separately. 
Firstly, differentiating \eqref{eq:v_primitive_cutoff}, we obtain $D v_T h=\Jv_{T,0} h $ where $\Jv_{t,s}\in\calL(\Hs^\sigma)$ is the Jacobian operator for the velocity equation, which solves (in a strong operator sense) 
\begin{equation}
\label{eq:increments_v_J}
\left\{
\begin{aligned}
\partial_t \Jv_{s,t}
&=\Delta \Jv_{s,t} + D \wt{F}(v_t)\Jv_{s,t} ,\\
\Jv_{s,s}
&=\Id,
\end{aligned}
\right.
\end{equation}
for all $0\leq s<t<\infty$.
Secondly, by the analogue of Lemma \ref{l:from_Malliavin_Ito} for the system \eqref{eq:v_primitive_cutoff}, we have that ($\D_g v_t)_{t}$ exists for any direction $(g_t)_t\in L^2(0,T;\Ls^2)$ and it solves
\begin{equation}
\label{eq:increments_v_D}
\left\{
\begin{aligned}
\partial_t \D_g v_t
&=\Delta \D_g v_t + D F_b(v_t)\D_g v_t+Q g_t ,\\
\D_g v_0
&=0.
\end{aligned}
\right.
\end{equation}
By comparing \eqref{eq:increments_v_J} with \eqref{eq:increments_v_D}, the Duhamel principle imply
$$
\D_g v_T =\int_0^T \Jv_{t,T} Qg_t\,\dd t . 
$$
Now, if we choose
\begin{equation}
\label{eq:control_velocity}
g_t=2T^{-1}\one_{[T/2,T]}(t) Q^{-1}\Jv_{0,t}h,
\end{equation}
then the the semigroup-type property $\Jv_{s,t}=\Jv_{r,t}\Jv_{s,r}$ for $s<r<t$, which follows from the uniqueness of the linear problem \eqref{eq:increments_v_J}, yields 
\begin{equation}
\label{eq:derivative_g_equal_to_shift_malliavin_velocity}
\D_g v_T=\int_{T/2}^T \Jv_{t,T} \Jv_{0,t} h \,\dd t
=  
\Jv_{0,T} h= Dv_T h.
\end{equation}
Note that $Q^{-1}: \Ls^\sigma\to \Hs^{\sone}$ in \eqref{eq:control_velocity} is well-defined due to the assumption \eqref{eq:generalization2} and the density of the range of $Q$.
Given \eqref{eq:derivative_g_equal_to_shift_malliavin_velocity}, in the current situation and with choice of the control $(g_t)_t$ in \eqref{eq:control_velocity}, the formula \eqref{eq:approximation_gradient_phiVT} holds with equality.

To obtain the gradient estimate via \eqref{eq:estimate_malliavian_integration_by_parts}, we need to control $(g_t)_t$ in the domain of $\delta$. This will be accomplished via Lemma \ref{l:delta_lemma} and the following result.

\begin{lemma}[Regularity estimates for the Jacobian]
\label{l:instantaneous_regularization_J_appendix}
Suppose that the assumptions of Theorem \ref{t:uniqueness_regularity_invariant} hold.
Let $(\Jv_{s,t})_{0\leq s<t< \infty}\subseteq \calL(\Hs^\sigma)$ be the strong solution to \eqref{eq:increments_v_J} in $\Hs^\sigma$ such that, for all $h\in \Hs^\sigma$ and a.s., 
\begin{equation}
\label{eq:Jv_pathwise_regularity}
\Jv_{s,t} h \in C([s,\infty);\Hs^\sigma)\cap L^2([0,\infty);\Hs^{\sigma+1}) .
\end{equation}
Then, for all $\sigma_0<\sigma+2$ and $T\in (0,1]$, there exist deterministic constants $C_0,r_0,p_0\geq 0$ such that, for all $v,h\in \Hs^\sigma$ and a.s., 
$$
\E\int_{T/2}^T \|\Jv_{t,0} h\|_{H^{\sigma_0}}^2\,\dd t 
\leq C_0 T^{-r_0}(1+\|v\|_{H^\sigma}^{p_0}) \|h\|_{H^\sigma}^2.
$$
\end{lemma}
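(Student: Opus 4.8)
The goal is to show that the Jacobian $\Jv_{t,0}h$, which starts at $h\in\Hs^\sigma$, instantaneously gains two spatial derivatives in an averaged $L^2$-in-time sense over $[T/2,T]$, paying a polynomial price in $T^{-1}$ and in $\|v\|_{H^\sigma}$. The plan is to treat \eqref{eq:increments_v_J} as a linear parabolic equation with an explicit (random, cut-off) source term $D\wt F(v_t)\Jv_{0,t}$ and to bootstrap the pathwise regularity \eqref{eq:Jv_pathwise_regularity} up the Sobolev scale using the maximal $L^p$-regularity of the heat semigroup together with the nonlinear estimates of Lemma \ref{l:estimate_nonlinearity}. Since $\wt F(v)=\chi_r(\|v\|_{H^\sigma})b(v,v)$, its Fréchet derivative acts as $D\wt F(v)g = \chi_r(\|v\|_{H^\sigma})\big(b(g,v)+b(v,g)\big) + \chi_r'(\|v\|_{H^\sigma})\langle\cdot\rangle\, b(v,v)$, and the cut-off guarantees $\|v_t\|_{H^\sigma}\lesssim r$ on the support of $\chi_r$, so all factors of $v$ appearing in the source can be bounded by a constant depending only on $r$.

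First I would record the starting estimate: from \eqref{eq:Jv_pathwise_regularity} and a standard energy/Grönwall argument for \eqref{eq:increments_v_J} (using that $D\wt F(v_t)$ is bounded on the relevant spaces thanks to the cut-off and Lemma \ref{l:estimate_nonlinearity}), one gets
\[
\E\sup_{t\in[0,T]}\|\Jv_{0,t}h\|_{H^\sigma}^2 + \E\int_0^T \|\Jv_{0,t}h\|_{H^{\sigma+1}}^2\,\dd t \le C(r)\,\|h\|_{H^\sigma}^2 .
\]
Then I would run a finite iteration. Suppose at some stage we control $\E\int_0^T \|\Jv_{0,t}h\|_{H^{\tau}}^2\,\dd t$ and $\E\sup_{[0,T]}\|\Jv_{0,t}h\|_{H^{\tau-1}}^2$ for some $\tau\in[\sigma,\sigma_0)$. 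Using Duhamel, $\Jv_{s,t}h = e^{(t-s)\Delta}h + \int_s^t e^{(t-r)\Delta} D\wt F(v_r)\Jv_{0,r}h\,\dd r$; the analytic-smoothing bound $\|e^{t\Delta}\|_{\calL(H^{\tau-1},H^{\tau'})}\lesssim t^{-(\tau'-\tau+1)/2}$ for $\tau'<\tau+1$, combined with $\|D\wt F(v_r)\Jv_{0,r}h\|_{H^{\tau-1}}\lesssim_r \|\Jv_{0,r}h\|_{H^{\tau}}$ (from Lemma \ref{l:estimate_nonlinearity}\eqref{it:estimate_nonlinearity_H1}–\eqref{it:estimate_nonlinearity_High} with the spare derivative $\delta$ chosen small), upgrades the regularity on $[T/2,T]$ by almost one derivative at the cost of a factor $T^{-r_0}$ from the singular kernel at $s=0$ (I would split the Duhamel integral at $t/2$ and on $[0,t/2]$ use $\int_0^{t/2}(t-r)^{-\beta}\,\dd r$ which is finite, while on $[t/2,t]$ use $L^2$-in-time maximal regularity). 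After finitely many such steps — the number depending only on $\sigma_0-\sigma$ — one lands above $\sigma_0$, which yields the claimed bound on $\E\int_{T/2}^T\|\Jv_{t,0}h\|_{H^{\sigma_0}}^2\,\dd t$; accumulating the polynomial losses gives the exponent $r_0$ and the power $p_0$ of $\|v\|_{H^\sigma}$ (the latter entering only through $\E\sup_{[0,T]}\|v_t\|_{H^\sigma}^{\text{(power)}}$, which in turn is controlled by Theorem \ref{t:high_order_estimates}\eqref{it:high_order_estimates2}, or is simply bounded by a function of $r$ on the support of the cut-off).

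The main obstacle is bookkeeping the smoothing kernel singularity near $t=0$ while simultaneously tracking the loss of an arbitrarily small amount of regularity at each bootstrap step: since Lemma \ref{l:estimate_nonlinearity}\eqref{it:estimate_nonlinearity_H1} only gives $\|b(v_1,v_2)\|_{H^\rho}\lesssim \|v_1\|_{H^{1+\rho+\delta}}\|v_2\|_{H^{1+\rho+\delta}}$ for $\delta>0$ when $\rho<2$, one cannot gain a full derivative in one shot in the low-regularity range, so the iteration must be arranged so that the accumulated $\delta$-losses stay strictly below the target $\sigma_0<\sigma+2$; once $\rho\ge 2$ the sharp estimate \eqref{it:estimate_nonlinearity_High} removes the $\delta$. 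A secondary, more routine point is justifying that the stochastic terms cause no difficulty: equation \eqref{eq:increments_v_J} has no noise (the randomness enters only through the coefficient $v_t$), so all manipulations are pathwise and the expectation is taken at the very end, using Hölder in $\omega$ to separate the $v$-dependent constants from $\|h\|_{H^\sigma}^2$. I would also note that the constants $C_0,r_0,p_0$ produced this way are deterministic and depend only on $\sigma,\sigma_0,r$, as required.
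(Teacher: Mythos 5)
Your iteration cannot reach the required range $\sigma_0\in(\sigma+1,\sigma+2)$. The crucial gap is the assertion that ``all factors of $v$ appearing in the source can be bounded by a constant depending only on $r$'': the cut-off controls only $\|v_t\|_{H^\sigma}$, whereas Lemma~\ref{l:estimate_nonlinearity}\eqref{it:estimate_nonlinearity_H1} bounds $\|D\wt{F}(v_t)g\|_{H^\rho}$ by a quantity involving $\|v_t\|_{H^{1+\rho+\delta}}$, which exceeds the cut-off scale $H^\sigma$ as soon as $\rho>\sigma-1-\delta$. A Duhamel bootstrap for $\Jv$ driven purely by the truncation therefore cannot close beyond $\sigma_0<\sigma+1$; the range $(\sigma+1,\sigma+2)$ is exactly what the paper flags as unreachable by a direct argument, and it is what Example~\ref{ex:operator_Q} demands (one needs $\sigma_0>\sigma+\tfrac32$). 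The cut-off tames the size of the nonlinearity, not the regularity of the coefficient $v_t$.

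What is missing is an instantaneous-regularization estimate for the \emph{velocity itself}, which is Step~1 of the paper's proof. There, stochastic maximal $L^p$-regularity with a temporal weight $t^\kappa$ on the ground space $H^{\sigma_1-1}$ yields, for every $\sigma_1<\sigma+1$, an estimate of the form $\E\sup_{t\le T}t^{\kappa_1}\|v_t\|_{H^{\sigma_1}}^{p_1}\lesssim 1+\|v\|_{H^\sigma}^{p_1}$, together with an analogous weighted bound for $\Jv_{0,t}h$. This is sharp: since $Q\in\calL_2(\Ls^2_0,\Hs^\sigma_0)$, the velocity $v_t$ lives in $H^{1+\sigma}$ at best, which is also what produces the ceiling $\sigma_0<\sigma+2$. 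Only with these weighted bounds in hand does the closing step work: on $[T/2,T]$, deterministic maximal $L^2$-regularity on the ground space $H^{\sigma_0-1}$ controls $\int_{T/2}^T\|\Jv_{0,t}h\|_{H^{\sigma_0}}^2\,\dd t$ by the initial trace $\|\Jv_{0,T/2}h\|_{H^{\sigma_0-1}}$ plus the source in $L^2(T/2,T;H^{\sigma_0-2})$, and that source is estimable by Lemma~\ref{l:estimate_nonlinearity}\eqref{it:estimate_nonlinearity_H1} precisely because $\sigma_0-1+\delta<\sigma+1$, so Step~1 applies to both $v_t$ and $\Jv_{0,t}h$. The result is obtained by a single (almost) two-derivative jump after the coefficient has been regularized, not by iterated almost-one-derivative gains as you propose; indeed, with the ceiling $\|v_t\|_{H^{\sigma_1}}$, $\sigma_1<\sigma+1$, in place, an iteration of the kind you describe could never advance past the first nontrivial step.
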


As we will see in the proof of Proposition \ref{prop:strong_feller_v} below, the possibility of choosing $\sigma_0>\sigma+\frac{3}{2}$is crucial for accommodating diagonal operators, as demonstrated in Example \ref{ex:operator_Q}. While this may not be surprising, it is important to emphasise that reaching the regime $\sigma_0\in(\sigma+1,\sigma+2)$ 
cannot be done directly and requires a two-step argument. Moreover, the regularity threshold is optimal because $Q\in \calL_2(\Ls^2,\Hs^\sigma)$, and therefore the velocity $v$ belongs to $\Hs^{1+\sigma}$ at most in space.

\begin{proof}
The existence of a strong solution to \eqref{eq:increments_v_J} that satisfies \eqref{eq:Jv_pathwise_regularity} is a standard result, thanks to the cutoff, and thus is omitted for brevity. However, we note that this can also be established using the estimate below, alongside the method of continuity. The proof is divided into two steps.

\smallskip

\emph{Step 1: Regularization estimates for $v$. For all $\sigma_1<\sigma+1$ there exist constants $C_1,\kappa_1,\kappa_2>0$ and $p_1\in (2,\infty)$ such that, for all $T\in (0,1]$,}
\begin{align}
\label{eq:Step1_regularization_proof_strong}
\E \sup_{t\in [0,T]} t^{\kappa_1}\|v_t\|_{H^{\sigma_1}}^{p_1}
&\lesssim C_1(1+\|v\|_{H^\sigma}^{p_1}),\\
\label{eq:Step1_regularization_proof_strong2}
\sup_{t\in [0,T]}t^{\kappa_2} \|\Jv_{0,T}h\|_{H^{\sigma_1}}&\lesssim_r \|h\|_{H^{\sigma}}.
\end{align}
As commented above, \eqref{eq:Step1_regularization_proof_strong} cannot be further improved since $Q$ is only in $\calL_2(\Ls^2,\Hs^\sigma)$. Below, we only prove \eqref{eq:Step1_regularization_proof_strong} as \eqref{eq:Step1_regularization_proof_strong2} follows similarly.  

To prove the bounds, here we apply stochastic maximal $L^p$-regularity estimates. To this end, fix $\sigma_1<\sigma_0$,  $p_0\in (2,\infty)$ and $\kappa_0\in [0,\frac{p_0}{2}-1)$ such that $\sigma_0<\sigma_1+1-\frac{2}{p_0}$ and $\sigma\geq \sigma_1+1-2\frac{1+\kappa_0}{p_0}$. Now, by stochastic maximal $L^p$-regularity for the Laplacian on the ground space $H^{\sigma_1-1}(\T^3;\R^2)$ with time weight $\kappa_0$ (see e.g., \cite[Theorem 7.16]{AV19} and \cite[Example 10.1.5]{Analysis2}), we obtain
\begin{align*}
&\E \sup_{t\in [0,T]} t^{\kappa_0}\|v_t\|_{B^{\sigma_1+1-2/p_0}_{2,p_0}}^{p_0}
+
\E \int_0^T t^{\kappa_0}\|v_t\|_{H^{\sigma_1+1}}^{p_0}\,\dd t \\
&\leq 
N \E\|v\|_{B^{\sigma_1+1-2(1+\kappa_0)/{p_0}}_{2,p_0}}^{p_0}
+ N \E\int_0^T \big(\|\wt{F}(v_t)\|_{\Hs^{\sigma_1-1}}^{p_0}+\|Q\|_{\calL_2(\Ls^2,\Hs^{\sigma_1})}^{p_0}\big)\,\dd t,
\end{align*}
Now, as $\sigma_1<\sigma$, by Lemma \ref{l:estimate_nonlinearity}\eqref{it:estimate_nonlinearity_H1} applied with $\varepsilon=\sigma-\sigma_1$,
\begin{align*}
\|\wt{F}(v_t)\|_{\Hs^{\sigma_1-1}}^{p_0}
&\leq \chi_r(\|v_t\|_{\Hs^\sigma})\|v_t\|_{H^{\sigma}}^{2p_0}\lesssim_r 1.
\end{align*}
The conclusion of Step 1 now follows from the fact that $B^{\sigma_1+1-2/p_0}_{2,p_0}\embed H^{\sigma_0}$ and 
$H^{\sigma_0} \embed B^{\sigma_1+1-2(1+\kappa_0)/p_0}_{2,p_0} $ by construction.

\smallskip

\emph{Step 2: Conclusion}.
The idea here is to use again maximal $L^2$-regularity estimates working on $[T/2,T]$ instead of $[0,T]$, and exploiting the estimates of Step 1. More precisely, for all $\sigma_0\in (\sigma+1,\sigma+2)$, maximal $L^2$-regularity estimates for the Laplace operator on $H^{\sigma_0-1}(\T^3)$ (see e.g., \cite[Theorem 3.5.7]{pruss2016moving}) ensures the existence of a constant $M>0$ independent of $T,v$ and $h$ such that
\begin{align}
\label{eq:estimate_Jv_T2T}
\int_{T/2}^T \|\Jv_{0,t} h\|_{H^{\sigma_0}}^2\,\dd t
&\leq M \|J_{0,T/2}h\|_{H^{\sigma_0-1}}^2
+ \int_{T/2}^T \|D\wt{F}(v_t)J_{0,t}\|_{\sigma_0-2}^2\,\dd t\\
\nonumber
&\stackrel{\eqref{eq:Step1_regularization_proof_strong2}}{\leq} M_1\|h\|_{H^{\sigma}}^2
+M \int_{T/2}^T \|D\wt{F}(v_t)J_{0,t}\|_{\sigma_0-2}^2\,\dd t,
\end{align}
where $M_1$ depend only on $M$ and $r$.
From Lemma \ref{l:estimate_nonlinearity}\eqref{it:estimate_nonlinearity_H1}, it follows that 
\begin{align*}
\|D\wt{F}(v_t)J_{0,t}\|_{\sigma_0-2}
\lesssim \|v_t\|_{H^{\sigma_0-1}}\|J_{0,t}h\|_{H^{\sigma_0-1}}.
\end{align*}
Now, the claim follows from the above, \eqref{eq:estimate_Jv_T2T} and the estimates in Step 1.
\end{proof}

\begin{proof}[Proof of Proposition \ref{prop:strong_feller_v}]
Let us begin by recalling that $Q^{-1}:\Hs^\g\to \Ls^2$, see the comments below \eqref{eq:derivative_g_equal_to_shift_malliavin_velocity}.
As mentioned at the beginning of this subsection, it is enough to prove the gradient estimate of Proposition \ref{prop:gradient_estimate} with $\phi\in C^1_{{\rm b}}(\Hs^\sigma)$.

Let $(g_t)_t$ be as in \eqref{eq:control_velocity}. Note that $(g_t)_t$ is progressively measurable and 
\begin{align*}
\E \int_0^T \|g_t\|_{\Ls^2}^2 \,\dd t 
&\eqsim \E \int_{T/4}^T \|Q^{-1}\Jv_{0,t} h\|_{\Ls^2}^2 \,\dd t \\
&\lesssim \E \int_{T/4}^T \|\Jv_{0,t} h\|_{\Hs^{\g}}^2 \,\dd t 
\lesssim T^{-r}(1+\|v\|_{H^\sigma}^{p}) \|h\|_{\Hs^\sigma}^2,
\end{align*}
where we used \eqref{eq:generalization2} and Lemma \ref{l:instantaneous_regularization_J_appendix} with $\sigma_0=\g<\sigma+2$. In particular, the constants $r$ and $p$ are independent of $T$, $v$ and $h$.

As observed above, in the current case, we have $\D_g v_T= Dv_T h$. Therefore, \eqref{eq:approximation_gradient_phiVT} holds as an equality. By reasoning similarly to the arguments used in \eqref{eq:estimate_malliavian_integration_by_parts}-\eqref{eq:estimate_malliavian_integration_by_parts2}, we can conclude that
\begin{align*}
|D P_t \phi(v)h|
&\leq\|\phi\|_{L^\infty} (\E \|(g_t)_t\|_{L^2(0,T;\Ls^2)}^2 )^{1/2}\\
&\lesssim 
\|\phi\|_{L^\infty}
T^{-r}(1+\|v\|_{H^\sigma}^{p}) \|h\|_{\Hs^\sigma}.
\end{align*}
Thus, Proposition \ref{prop:gradient_estimate} with $\phi\in C^1_{{\rm b}}(\Hs^\sigma)$ follows, and as explained at the beginning of this subsection, this yields Proposition \ref{prop:strong_feller_v}.
\end{proof}

\subsection{Gradient bound for the cutoff projective process -- Proof of Proposition \ref{prop:gradient_estimate}}
\label{ss:gradient_estimates}
To prove the latter, we adapt the arguments in \cite[Subsection 6.6.2]{BBPS2022} to the current situation.
More precisely, we obtain the following result where we denote by $r_T$ the remainder in \eqref{eq:approximation_gradient_phiVT}:
\begin{equation}
\label{eq:remainder}
r_T\stackrel{{\rm def}}{=} 
\D_g V_T- D V_T h.
\end{equation}

\begin{lemma}[Bounds on the control and remainder]
\label{l:bounds_control}
There exists $r_0$ depending only on the parameters introduced in Subsection \ref{sss:probabilistic_set_up} for which the following holds. 
For all $r>r_0$, there exist constants $a_0,b_0,T_0>0$ such that for all $0<T\leq T_0$, $V\in \HL$ and $h\in T_V \HL$ there exists a control $(g_t)_t\in \Do(\D)$ for a.a.\ $t\in (0,T)$ and $(\D_s g_t)_{s,t}:\O\times (0,T)^2 \to \calL_2(\Ls^2_0)$ has a measurable modification, and
\begin{align}
\label{eq:bounds_control_1}
\E\int_0^T \Big(\|g_t\|_{\Ls^2}^2+ \int_0^T \|\D_s g_t\|_{\calL(\Ls^2,\Hs^\sigma)}^2\,\dd s\Big) \,\dd t
&\lesssim_r T^{-a_0} (1+\|V\|_{\HL})^{b_0}\|h\|_{T_V\HL},\\
\label{eq:bounds_control_2}
\E\|r_T\|_{T_V\HL}^2&\lesssim_r T \|h\|_{T_V\HL},
\end{align} 
where $r_T$ is the remainder as in \eqref{eq:remainder}.
\end{lemma}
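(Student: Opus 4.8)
The plan is to follow the strategy of \cite[Subsection 6.6.2]{BBPS2022} but to account for the noise degeneracy in the $\x$- and $\xi$-components of the augmented system \eqref{eq:abstract_SPDE_r}. First I would differentiate \eqref{eq:abstract_SPDE_r} with respect to the initial data to obtain the Jacobian process $J_{s,t}\in\calL(T_V\HL)$, solving $\partial_t J_{s,t}= -A J_{s,t}+ DF^{(r)}(V_t)J_{s,t}$ with $J_{s,s}=\Id$, so that $DV_T h= J_{0,T}h$; simultaneously, Lemma \ref{l:from_Malliavin_Ito} gives that $\D_g V_t$ solves \eqref{eq:differential_problem_DgV}, hence by Duhamel $\D_g V_T=\int_0^T J_{t,T}\wh Q g_t\,\dd t$. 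As in the velocity-only computation of Subsection \ref{ss:unique_ergodicity_PEs}, one splits the time interval: on $[0,T/2]$ one uses the cutoff nonlinearity $\chi_r(\|v\|_{H^\sigma})H(\xi,\eta)$ together with the $\eta$-noise to steer the $(\x,\xi)$-directions — this is where the degeneracy of $w$ forces a genuinely different argument from \cite{BBPS2022} — and on $[T/2,T]$ one drives the $v$-component directly via $Q$, using $Q^{-1}$ which is well defined by \eqref{eq:generalization2} and the density of the range of $Q$. The control $g$ is then constructed so that $J_{0,T}h-\D_g V_T=r_T$ is supported by the ``mismatch'' between the finite-dimensional steering of $(\x,\xi)$ and the exact increment; the choice should be of the schematic form $g_t=g_t^{(v)}+g_t^{(\eta)}$ with $g^{(v)}_t\propto \one_{[T/2,T]}T^{-1}Q^{-1}(\text{projection of } J_{0,t}h \text{ onto the }v\text{-direction})$ and $g^{(\eta)}$ chosen via an inverse of the (partial) Malliavin/controllability matrix for the $(\x,\xi)$-block on $[0,T/2]$.

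For the bound \eqref{eq:bounds_control_2} on the remainder, I would estimate $\E\|r_T\|_{T_V\HL}^2$ by Gr\"onwall-type arguments applied to the difference process, exploiting that the mismatch enters only through the interval $[0,T/2]$ and carries a factor of $T$ from the short time horizon; the cutoff makes $DF^{(r)}$ and all its relevant derivatives bounded, so the only $V$-dependence is polynomial and the $T$-smallness is genuine. For \eqref{eq:bounds_control_1}, I would invoke Lemma \ref{l:delta_lemma}\eqref{it:delta_lemma_2}: the term $\E\int_0^T\|g_t\|_{\Ls^2}^2\,\dd t$ is controlled by regularity estimates for the Jacobian (the analogue of Lemma \ref{l:instantaneous_regularization_J_appendix}, extended to the augmented system, giving $\E\int_{T/2}^T\|J_{0,t}h\|_{H^{\sigma_0}}^2\,\dd t\lesssim T^{-r_0}(1+\|V\|^{b_0})\|h\|^2$ with $\sigma_0<\sigma+2$, which is exactly what is needed for $Q^{-1}:\Hs^\sone\to\Ls^2$ since $\sone<\sigma+2$), plus bounds on the norm of the inverse of the partial Malliavin matrix for the $(\x,\xi)$-block — the latter coming from Subsection \ref{ss:nondegeneracy}. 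The Malliavin-derivative term $\E\int_0^T\int_0^T\|\D_s g_t\|_{\calL_2(\Ls^2,\Hs^\sigma)}^2\,\dd s\,\dd t$ is handled by differentiating the defining equations for $J$ and for the Malliavin matrix once more, using that $\D_s V_t$ itself satisfies a linear equation of the same type and obeys moment bounds thanks to Theorem \ref{t:high_order_estimates} and the cutoff; all constants stay polynomial in $\|V\|_{\HL}$ and the negative powers of $T$ accumulate into a single $T^{-a_0}$.

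The main obstacle I anticipate is controlling the inverse of the partial Malliavin matrix $\Ma$ associated with the degenerate $(\x,\xi)$-block: unlike in \cite{BBPS2022}, the vertical velocity $w$ is not directly forced, so invertibility of $\Ma$ cannot be read off from the noise acting on $v$ alone but must be established through the H\"ormander-type bracket condition of Subsection \ref{ss:nondegeneracy}, and then quantitative (polynomial-in-$\|V\|$, negative-power-in-$T$) lower bounds on its smallest eigenvalue must be propagated through the construction of $g^{(\eta)}$. Once that estimate is in hand — which is precisely the content deferred to Subsections \ref{ss:gradient_estimates} and \ref{ss:nondegeneracy} — combining it with the Jacobian regularity estimates and Lemma \ref{l:delta_lemma} yields \eqref{eq:bounds_control_1}–\eqref{eq:bounds_control_2}, and then the integration-by-parts chain \eqref{eq:estimate_malliavian_integration_by_parts}–\eqref{eq:estimate_malliavian_integration_by_parts2} together with Lemma \ref{l:strong_feller_reduction} delivers Proposition \ref{prop:gradient_estimate} and hence Proposition \ref{prop:strong_feller_projective}.
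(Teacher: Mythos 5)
Your broad strategy — build $g$ from $Q^{-1}$ on the directly forced modes (using \eqref{eq:generalization2}) plus an inverse of a partial Malliavin matrix on the degenerate directions, then bound $\delta(g)$ via Lemma~\ref{l:delta_lemma} and the Jacobian regularization — is correct, and you have correctly identified the invertibility of the partial Malliavin matrix as the central obstacle. However, the decomposition you propose would not close, and several structural ingredients in the paper's construction are missing.

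\textbf{The decomposition is wrong, and the gap is fatal in the regime $\|v\|_{H^\sigma}$ small.} You split the system into the $v$-block versus the $(\x,\xi,\eta)$-block and propose to steer $(\x,\xi)$ purely through the $\eta$-noise via the coupling term $\chi_r(\|v\|_{H^\sigma})H(\xi,\eta)$. But this coupling \emph{vanishes identically} when $\|v\|_{H^\sigma}\leq r$, by \eqref{eq:choice_cutoff} and \eqref{eq:Ftruncation_nonlinearity}. In that regime the $(\x,\xi)$-equations are driven only by the transport terms $u_t(\x_t)$ and $\Pi_{\xi_t}(\nabla u_t(\x_t)\xi_t)$, i.e.\ by the Fourier modes of $v$ itself — there is \emph{no} other source of randomness. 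This is precisely why the paper's low-mode state space $\HL_\l=\Pi_\l\Hs^\sigma\times\R^3\times\S^2\times\R^6$ includes the low Fourier modes of $v$ (with $\z_\l=\{(\kk,\ell)\in\z:|\kk|_\infty=1\}$ as in \eqref{eq:def_zl}), and why the partial Malliavin matrix \eqref{eq:partial_malliavin_matrix_def} runs over all of $\z_\l$, not just the $\eta$-block: Lemma~\ref{l:spanning_condition}, Step~2, shows that when $\|v\|_{H^\sigma}\leq 2r$ the missing directions in $T_\xi\S^2\times\R^3$ are recovered by the bracket $[A_\l-D_\l F_\l(v,\x,\xi)]\qq_j$ coming from $D_\l\fpr_\l$ applied to the low-mode velocity noise, not from $H(\xi,\eta)$. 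Your $v$-vs-$(\x,\xi,\eta)$ split gives a singular partial Malliavin matrix precisely where it is needed most.

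\textbf{Two further missing ingredients.} First, even granting the right decomposition, you cannot simply plug $Q^{-1}J_{0,t}h$ into the high-mode control as in Subsection~\ref{ss:unique_ergodicity_PEs}: the inverse $(\Mat)^{-1}$ introduces a singularity $T^{-a}$ with $a$ potentially large, and the resulting cross-coupling between high and low modes produces error terms that the parabolic smoothing cannot absorb. The paper resolves this with a frequency cutoff $\Pi_{\leq N}$, $N\sim T^{-2a}(1+|\eta|)^{2b}$, inside the high-mode control \eqref{eq:control_glow_part1}; without it your claim that ``all constants stay polynomial in $\|V\|_{\HL}$ and the negative powers of $T$ accumulate into a single $T^{-a_0}$'' does not hold, because an uncontrolled infinite sum over high modes appears in the remainder. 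Second, defining the high-mode control needs $\D_g V^\l_t$, which in turn depends on $g^\h$: the paper breaks this circularity by introducing the auxiliary pair $(\gamma_t,\delta_t)$ solving \eqref{eq:partial_malliavin_derivative_low_high_system} and \emph{then} proving a posteriori $(\gamma_t,\delta_t)=(\D_g V^\l_t,\D_g V^\h_t)$ (Lemma~\ref{l:solvability_system_gamma_delta}). Your proposal does not address this.

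In summary: the integration-by-parts scaffold and the appeals to Lemma~\ref{l:delta_lemma}, Lemma~\ref{l:instantaneous_regularization_J_appendix}, and the invertibility estimate of Lemma~\ref{l:invertibility_matrix} are sound, but the choice of decomposition, the absence of the frequency cutoff, and the circular dependence in the high-mode control are genuine gaps that would prevent the estimate \eqref{eq:bounds_control_1}--\eqref{eq:bounds_control_2} from closing.
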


Note that $\calL(\Ls^2,\Hs^\sigma)\subseteq \calL_2(\Ls^2)$ as $\Hs^\sigma\embed \Ls^\infty$ due the fact that we are working on $\T^3$ and our assumption on $\sigma$, see \eqref{eq:coloring_assumption_noise_2}. Therefore, \eqref{eq:bounds_control_1} and Lemma \ref{l:delta_lemma}\eqref{it:delta_lemma_2} gives the control on $\delta(g)$ which is needed to conclude the argument in \eqref{eq:estimate_malliavian_integration_by_parts}. 

The fact that Lemma \ref{l:bounds_control} implies the gradient estimate of Proposition \ref{prop:gradient_estimate} is standard, and it is a slightly elaborated version of the argument in \eqref{eq:estimate_malliavian_integration_by_parts}. For details, the reader is referred to \cite[Proposition 6.1]{BBPS2022}.

\smallskip

Next, we construct the control $(g_t)_t$ for which the bounds in Lemma \ref{l:bounds_control} hold.

\subsubsection{Construction of the control}
To prove Lemma \ref{l:bounds_control}, we follow the splitting in high and low frequencies as in \cite{BBPS2022,EckHai2001,RX11}. 
The idea is to split the system \eqref{eq:abstract_SPDE_r} into a high-frequency part where the noise is active in all components and a low-order part that is \emph{finite} dimensional and only a \emph{few} modes are active but they are still sufficient to obtain improvement on the smoothness via an H\"ormander type condition, see Lemma \ref{l:spanning_condition}. 

To begin, let $\z_\l\subseteq \z$ be the `low' mode given by 
\begin{equation}
\label{eq:def_zl}
\z_\l\stackrel{{\rm def}}{=}\{(\kk,\ell)\in\z\,:\, |\kk|_{\infty}=1\}.
\end{equation}
Let $\Pi_\L:\H(\T^3)\to \H(\T^3)$ be the orthogonal projection onto the low modes, i.e., onto the space 
$$\sp\big\{\g_{(\kk,\ell)}e_{\kk}\,:\, (\kk,\ell)\in\z_\L\big\},$$ 
see Subsection \ref{sss:probabilistic_set_up} for the notation. Moreover, we let $\Pi_\h\stackrel{{\rm def}}{=}\Id_{\H(\T^3)} -\Pi_\l$. 
For future convenience, we also extend such projections on the state space $\HL$ for the augmented system \eqref{eq:augumented_system}, i.e., for $V=(v,\x,\xi,\eta)\in \HL$,
$$
\Pi_\h V= (\Pi_\h v,0,0,0)^{\top} , \qquad \text{ and }\qquad 
\Pi_\l V=(\Pi_\l v,\x,\xi,\eta)^\top ;
$$
where, with a slight abuse of notation, we still denoted by $\Pi_\h$ and $\Pi_\l$ the above extension to the product space $\HL$ of the latter projections initially defined on $\Hs^\sigma$. 

By applying the projections $\Pi_\l$ and $\Pi_\h$ to \eqref{eq:abstract_SPDE_r}, it is clear that the processes 
$$
V^\h_t\stackrel{{\rm def}}{=}\Pi_\h V_t\qquad \text{ and }\qquad V^\l_t\stackrel{{\rm def}}{=}\Pi_\l V_t
$$ 
satisfy the following system of SPDEs (recall that we dropped superscripts in \eqref{eq:abstract_SPDE_r}):
\begin{equation}
\label{eq:high_low_splitting}
\left\{
\begin{aligned}
&\partial_t \v_t^\l + A_\l \v_t^\l  = F_\l(\v_t)+ \wh{Q}_\l \dot{W}_t,\\
&\partial_t \v_t^\l + A_\h \v_t^\h  = F_\h(\v_t)+ \wh{Q}_\h \dot{W}_t,
\end{aligned}
\right.
\end{equation}
with initial data $V_0^\l = \Pi_\l V$, $V_0^\h = \Pi_\h V$, where for $V\in \HL$,
\begin{align*}
A_\h V&= \Pi_\h A V, \qquad 
F_\l (V)=\Pi_\l F(V), \qquad  Q_\l =\Pi_\l \wh{Q},\\
A_\h V&= \Pi_\h A V, \qquad 
F_\l (V)=\Pi_\l F(V), \qquad  Q_\l =\Pi_\l \wh{Q}.
\end{align*}

Let us remark that the SPDE in \eqref{eq:high_low_splitting} are coupled via the nonlinearities $F_\l(V)$ and $F_\h (V)$.
Moreover,
\begin{equation}
\label{eq:def_QL}
Q_\l = \Pi_\l \wh{Q}=(\Pi_\l Q, 0,0,\Id_{\R^6})^\top,
\end{equation}
where $\Id_{\R^6}$ is the identity matrix in $\R^6$. 
The state space for the unknown $V^\l_t$ and $V^\h_t$ are
\begin{equation}
\HL_\l \stackrel{{\rm def}}{=}
(\Pi_\l \H(\T^3))\times \Dom\times  \S^2\times \R^6
\qquad \text{ and }\qquad 
\HL_\h \stackrel{{\rm def}}{=}
\Pi_\h\H(\T^3),
\end{equation}
respectively. 
Accordingly, we define low and high mode approximation of the Jacobian for \eqref{eq:abstract_SPDE_r}. More precisely, we let 
\begin{equation}
\label{eq:high_low_splitting_jacobian}
\left\{
\begin{aligned}
&\partial_t R_{s,t}^\l + A_\l R_{s,t}^\l  = D_\l F_\l(\v_t)R_{s,t}^\l, \qquad R_{s,s}^\l =\Id\\
&\partial_t R_{s,t}^\h + A_\h R_{s,t}^\h  =D_\h F_\h(\v_t)R_{s,t}^\h, \qquad R_{s,s}^\h =\Id.
\end{aligned}
\right.
\end{equation}
Note that $R_{0,t}^\l $ and $R_{0,t}^\h$ are \emph{not} the projection of the Jacobian $J_{0,t}=D V_t $ onto the spaces $\HL_\l$ and $\HL_\h$, respectively. Indeed, the projections $\Pi_\l$ and $\Pi_\h$ do not commute with the operator $J\mapsto D F(v) J$. 
 However, as we expect $J_{0,t}\approx \Id$ for small times, then $\Pi_\l R_{0,t}^\h \approx 0 $ and $\Pi_\h R_{0,t}^\l \approx 0 $ for small times and therefore $R^\h_{0,t}$ and $R_{0,t}^\l$ are good approximation of the full Jacobian $J_{0,t}$.

For notational convience, for $V^\l\in \HL_\l$, we let 
$$
T_{V^\L} \HL_\l =\Pi_{\L} (\H(\T^3))\times \R^3\times T_{\xi} \S^2\times \R^6.
$$
From the above definitions, for all $s,t\geq 0$, we have  
$
R_{t,s}
$ 
is a linear mapping from $T_{V_s^\L} \HL_\l$ to $
T_{V_t^\L} \HL_\l$, and $Q_\l:T_{V^\l_s}\HL\to T_{V^\l_t}\HL$.  
Finally, we also need the inverse of the operator $R^\l_{s,t}$. 
Let 
\begin{equation}
\label{eq:inverse_R}
S^\l_{s,t}=(R^{\l}_{s,t})^{-1}\in\calL(T_{V_t^\L} \HL_\l,T_{V_s^\L} \HL_\l).
\end{equation}
Note that, from the resolvent identity, $\partial_t S^\l_{s,t}= (R^{\l}_{s,t})^{-1}\partial_t R^\l_{s,t} (R^{\l}_{s,t})^{-1}$ and therefore the first in \eqref{eq:high_low_splitting_jacobian} yields
\begin{equation}
\label{eq:inverse_low_jacobian}
\partial_t S_{s,t}^\l = S_{s,t}^\l A_\l + S_{t,s}^\l D_\l F(V), \qquad S_{s,s}^\l =\Id.
\end{equation}

Next, let us comment on the well-definedness of the inverse in \eqref{eq:inverse_R}. 
Note that $R^{\l}_{s,t}$ is a-priori invertible only for short times as $R^\l_{s,t}\to \Id$ as $t\downarrow s$. However, the global well-posedness of the problem \eqref{eq:inverse_low_jacobian} shows the global in-time invertibility of the approximate Jacobian $R^\l_{s,t}$.

\smallskip

Finally, we define the last object needed to construct the control $(g_t)_t$, i.e., the \emph{partial Malliavin matrix} $\Ma$. Recall that $V=(V_\l,V_\h)\in \HL$ is fixed and $(V^\l_t)_t$ is the solution to the low-mode system for such initial data. 
Then, the partial Malliavin matrix 
$\Ma:T_{V_\l}\HL \to 
T_{V_\l}\HL$ is given by
\begin{align}
\label{eq:partial_malliavin_matrix_def}
\Ma 
\stackrel{{\rm def}}{=}
\int_0^t S^\L_{0,s} Q_\L(S_{0,s}^\L Q_\L)^\top \,\dd s, 
\end{align} 
where we used that $Q_\L:T_{V^\l}\HL\to T_{V^\l_s}\HL$ for all $s\geq 0$ and $V^\l \in \HL_\l$, see \eqref{eq:def_QL}.

The following result shows that $\Ma$ is nondegenerate. The nondegeneracy of the Malliavin matrix allows us to build the control $(g_t)_t$.  

\begin{lemma}[Bounds and nondegeneracy of the partial Malliavin matrix]
\label{l:invertibility_matrix}
There exists $T_1>0$ depending only on the parameters introduced in Subsection \ref{sss:probabilistic_set_up} and $r>0$ for which the following holds. For all $T\in (0,T_1]$, the partial Malliavin matrix $\Ma$ is a.s.\ invertible on $T_{V_\l}\HL$. Moreover, there exist constants $a,b>0$ such that
$$
\E |(\Mat)^{-1}|^p\lesssim_p T^{-ap}\dist(\x)^{-b}(1+|\eta|)^{bp} \  \text{ for all } p>1.
$$
\end{lemma}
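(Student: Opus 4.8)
\textbf{Proof plan for Lemma \ref{l:invertibility_matrix}.}

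The plan is to follow the now-standard Hörmander-type strategy used in \cite[Section 6]{BBPS2022} and its predecessors \cite{HM06_annals,EckHai2001,RX11}, but with the algebraic input coming from the particular structure of the primitive equations, including the noise degeneracy in the vertical component $w_t$ and in the projective direction $\xi_t$. The key object is the quadratic form $\langle \Mat\zeta,\zeta\rangle = \int_0^T |(S^\L_{0,s}Q_\L)^\top\zeta|^2\,\dd s$ for $\zeta\in T_{V_\l}\HL$ with $|\zeta|=1$, and the goal is a quantitative lower bound for its infimum over the unit sphere. First I would record the quantitative smoothing and boundedness estimates for the low-mode objects: bounds on $R^\L_{s,t}$, on its inverse $S^\L_{s,t}$ (using the global well-posedness of \eqref{eq:inverse_low_jacobian}, which holds thanks to the cutoff), and on their Malliavin derivatives, all with at most polynomial growth in $\|V\|_{\HL}$ and inverse-polynomial blow-up in $T$ — these are exactly the type of estimates behind Lemma \ref{l:instantaneous_regularization_J_appendix} and are proved by stochastic/deterministic maximal $L^p$-regularity plus the nonlinear estimates of Lemma \ref{l:estimate_nonlinearity}. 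The upshot of this step is the ``a priori'' side: $\|\Mat\|$ and $\|S^\L\|$, $\|R^\L\|$ have finite moments of all orders with the stated dependence.

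The heart of the argument is the reverse inequality: on the event where $\langle \Mat\zeta,\zeta\rangle$ is small for some unit $\zeta$, one derives a contradiction by a chain of Norris-type lemmas. Concretely, smallness of $\int_0^T |(S^\L_{0,s}Q_\L)^\top\zeta|^2\,\dd s$ forces $|(Q_\L)^\top (S^\L_{0,s})^\top\zeta|$ to be small for $s$ near $0$; since $Q_\L$ is explicitly $(\Pi_\L Q,0,0,\Id_{\R^6})^\top$ with $\Pi_\L Q$ invertible on the low modes by Assumption \ref{ass:Q} and the $\R^6$-block being the identity, this pins down the $\Pi_\L v$-component and the $\eta$-component of $(S^\L_{0,s})^\top\zeta$. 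Then one iterates: differentiating $s\mapsto (S^\L_{0,s})^\top\zeta$ via \eqref{eq:inverse_low_jacobian} and applying a Norris-type lemma (the quantitative version, as in \cite[Lemma 6.xx]{BBPS2022} or \cite[Lemma 8.2]{HM06_annals}) shows that the bracket process and quadratic-variation process of the remaining components must also be small, which successively activates the $\x_t$ and $\xi_t$ directions through the nonlinearities $F_\l$ — here the truncation term $\chi_r(\|v\|_{H^\sigma})H(\xi,\eta)$ is what injects $\eta$-randomness into the $\x$ and $\xi$ slots when $\|v\|_{H^\sigma}$ is large, while for small $\|v\|_{H^\sigma}$ one instead invokes the Hörmander spanning condition of Lemma \ref{l:spanning_condition} (Lie brackets of the relevant vector fields span $T_{V_\l}\HL$). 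Combining the two regimes and a compactness/covering argument over the unit sphere in the finite-dimensional space $T_{V_\l}\HL$ yields $\P(\inf_{|\zeta|=1}\langle\Mat\zeta,\zeta\rangle < \eps) \lesssim \eps^{q}$ with good control of the constant in $|\eta|$ and $T$, from which the moment bound on $(\Mat)^{-1}$ follows by the standard formula $\E|(\Mat)^{-1}|^p \lesssim \int_0^1 \eps^{-p-1}\P(\lambda_{\min}(\Mat)<\eps)\,\dd\eps$ together with the a priori bounds from the first step.

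I expect the main obstacle to be the Norris-lemma iteration in the low-$\|v\|_{H^\sigma}$ regime, where the ``efficient'' coupling $\chi_r(\|v\|_{H^\sigma})H(\xi,\eta)$ vanishes and one must extract nondegeneracy purely from the hypoelliptic structure of the Lagrangian and projective equations \eqref{eq:projective_1}. Unlike the velocity equation, there is no direct noise in the $\x_t$ or $\xi_t$ components, and the vertical velocity $w_t$ entering the flow is itself only indirectly forced through \eqref{eq:def_w}; so the spanning condition must be verified by computing iterated Lie brackets of the drift $F$ with the noise directions $\g_{(\kk,\ell)}e_\kk$, and one has to check carefully that the anisotropy of the index set $\z$ (the missing second polarization on the plane modes $\plane$) does not obstruct spanning of all of $\R^3\times T_\xi\S^2$ at a generic point. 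This algebraic verification — essentially an adaptation of \cite[Section 6.6]{BBPS2022} to the primitive-equations vector field — together with keeping the $T$- and $|\eta|$-dependence of every constant explicit through the Norris estimates, is where the real work lies; the maximal-regularity inputs and the final integration-in-$\eps$ are routine by comparison.
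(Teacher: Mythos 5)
Your proposal is correct and follows essentially the same route as the paper: reduce to a tail estimate $\P(\langle\Mat h,h\rangle<\varepsilon)\lesssim T^{-a_1p}(1+|\eta|)^{b_1p}\varepsilon^p$ plus a covering argument over the unit sphere as in \cite[Lemma 4.7]{Hai11}, establish the tail estimate via the Norris-type argument of \cite[Lemma 6.15]{BBPS2022} using the quadratic-form identity $\langle\Mat h,h\rangle=\sum_{j\in\zl}\int_0^T\langle S^\L_{0,s}\qq^j,h\rangle^2\,\dd s$, and carry the PE-specific algebraic work in the Hörmander spanning condition (Lemma \ref{l:spanning_condition}), treating the two regimes $\|v\|_{H^\sigma}\gtrless r$ separately and checking that the anisotropic index set $\z$ does not obstruct spanning. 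The paper packages the first two pieces as Lemma \ref{l:invertibility_matrix_suff} and delegates the Norris iteration to \cite{BBPS2022}, but the logical structure is the one you describe.
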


The proof of Lemma \ref{l:invertibility_matrix} is given at the beginning of Section \ref{ss:nondegeneracy} below.
Next, with the aid of the partial Malliavin matrix, we construct the control $(g_t)_t$, which will be used to proof of Lemma \ref{l:bounds_control}.

To define the control $(g_t)_t$, we start with the low-frequency part. In light of Lemma \ref{l:invertibility_matrix}, we can choose the low mode component of the control $(g_t)_t$ as
\begin{align}
\label{eq:control_glow_part}
g_t^\l &\stackrel{{\rm def}}{=} (S^\L_{0,t}Q_\l)^\top (\Mat)^{-1} S_{0,T}^\l D V_T^\l h_{\l} .
\end{align}

Before defining the high part of the control, let us first comment on the structure of the low-frequency control. Arguing as in Lemma \ref{l:from_Malliavin_Ito} (and assuming the high-frequency of the control to be given), it follows that $\D_g V^\l_t$ solves
\begin{align}
\label{eq:DgVlow_equation}
\partial_t \D_g V^\l_t 
+A_\l V^\l_t 
&= D F_\l (V_t) \D_g V_t + \wh{Q}^\l g_t\\
\nonumber
&= D_\l F_\l (V_t) \D_g V_t^\l +D_\h F_\l (V_t) \D_g V_t^\h + \wh{Q}^\l g_t^\l 
\end{align}
with zero initial condition, i.e., $\D_g V^\l_t=0$. 
Reasoning as in \eqref{eq:increments_v_J}-\eqref{eq:increments_v_D} and using $Q_\l g =Q_\l g^\l$, we can compute $
\D_{g} V_T^\l $ via \eqref{eq:high_low_splitting_jacobian} and the Duhamel principle:  
\begin{align}
\label{eq:malliavin_derivative_low1}
\D_{g} V_T^\l 
=\int_0^T R_{t,T}^\l D_\h F_\l (V_t)\D_g V^\h_t \,\dd t +
\int_0^T R_{t,T}^\l Q g^\l_t \,\dd t.
\end{align}
For the last term on the right-hand side of the previous, we have
\begin{align}
\label{eq:malliavin_derivative_low2}
\int_0^T R_{t,T}^\l Q g^\l_t \,\dd t
&= 
R_{0,T}^\l \int_0^T  S_{0,t}^\l  Q g^\l_t \,\dd t\\
\nonumber
&= 
R_{0,T}^\l \Big(\int_0^T  S_{0,t}^\l  Q (S_{0,t}^\l  Q)^\top \,\dd t\Big)(\Mat)^{-1} S_{0,T}^\l D V_T^\l h_{\l}\\
\nonumber
&\stackrel{{(i)}}{=}
R_{0,T}^\l  S_{0,T}^\l D V_T^\l h_{\l}
\stackrel{{(ii)}}{=}D V_T^\l h_{\l},
\end{align}
where $(i)$ and $(ii)$ follow from the definition of $\Mat$ and \eqref{eq:inverse_R}, respectively.
In particular, the second term in \eqref{eq:malliavin_derivative_low1} and the choice of $g^\l$ in \eqref{eq:control_glow_part} allows us to reproduce a variation in the initial condition $h$ in the low mode part of the dynamics of \eqref{eq:high_low_splitting}, while the second term $\int_0^T R_{t,T}^\l D_\h F_\l (V_t)\D_g V^\h_t \,\dd t$ will be treated as a remainder. 

As for the high-mode part, we would like to choose a control similar to the one used in \eqref{eq:control_velocity}, where, ideally, $(Q,\Jv)$ are replaced by $(Q^\h,R^\h)$. However, in the current situation, it turns out that the error of the low frequencies compared to the high ones is too large.
This issue arises because the parameter $a>0$ in Lemma \ref{l:invertibility_matrix} can be quite large, making the control of the partial Mallavin matrix, which governs the low frequencies, so singular as $T \downarrow 0$ that the smoothing effect of $A_\h$ cannot compensate its contribution. To solve this issue, as in \cite[Section 6]{BBPS2022}, we use a frequency cutoff in the high-frequencies control:
\begin{equation}
\label{eq:control_glow_part1}
g_t^\h\stackrel{{\rm def}}{=}-Q_\h^{-1} \Pi_{\leq N} D_\l F_\h (V_t) \gamma_t +\one_{[T/2,T]}(t) 2T^{-1} Q_{\h}^{-1} R_t^\h h_\h ,
\end{equation}
where $\Pi_{\leq N}$ is the projection onto frequencies $\leq N$, $N\stackrel{{\rm def}}{=} T^{-2a} (1+|\eta|)^{2b}$ (with $a$ and $b$ as in Lemma \ref{l:invertibility_matrix}) and $\g_t$ is a $T_{V_t^\l} \HL$-valued process solving the following system:
\begin{align}
\label{eq:partial_malliavin_derivative_low_high_system}
\left\{
\begin{aligned}
&\dot{\gamma}_t+A_\l \gamma_t
= D_\l F_\l (V_t) \gamma_t + Q_\l g_t^\l +D_\h F_\l (V_t)\delta_t,\\
&\dot{\delta}_t
+A_\l \gamma_t=   D_\h F_\h (V_t) \delta_t + \Pi_{>N} D_\l F_\h (V_t)\g_t +\one_{[T/2,T]}(t) 2T^{-1}  R^\h_t h_\h ,\\
&\gamma_0=0, \quad\quad \delta_0=0.
\end{aligned}
\right.
\end{align}
The system \eqref{eq:partial_malliavin_derivative_low_high_system} is constructed in such a way that the natural candidates for the solutions $\gamma_t$ and $\delta_t$ are $\D_g V^\l_t$ and $\D_g V^\h_t$, respectively; cf., \eqref{eq:DgVlow_equation} for the first equation. 
Note that we could not directly define \eqref{eq:control_glow_part1} with 
$\gamma_t$ replaced by 
$\D_g V^\l_t$, because defining the latter requires knowing the high part of the control 
$(g_t)_t$, which we aim to define. Therefore, by considering the system \eqref{eq:partial_malliavin_derivative_low_high_system}, we avoid the previous circular argument.
The usefulness of the first term in the high-frequency control \eqref{eq:control_glow_part1} will be clear below when constructing the remainder $r_T=DV_T h - \D_g V_T$, cf., Lemma \ref{l:bounds_control}. 

Next, we discuss the existence of solutions to \eqref{eq:partial_malliavin_derivative_low_high_system} and the consistency property with the Malliavin derivative. 

\begin{lemma}
\label{l:solvability_system_gamma_delta}
There exists $T_2>0$ depending only on the parameters introduced in Subsection \ref{sss:probabilistic_set_up} and $r>0$ for which the following holds. For all $T\in (0,T_2]$ and $p\geq 2$, there exists a unique solution $(\gamma_t,\delta_t)_t\in L^p(\O;C([0,T];T_{V_t}\HL))$ satisfying
\begin{align*}
\E \sup_{t\in [0,T]} \|(\g_t,\delta_t)\|_{\HL}^p + \E \sup_{s,t\in [0,T]} \|\D_s (\g_t,\delta_t)\|_{\calL(\Ls^2,T_{V_t}\HL)}^p 
\lesssim_p T^{-2ap} (1+|\eta|)^{2bp} \|h\|_{\HL}^p.
\end{align*}
Moreover, $\gamma_t=\D_g V^\l_t$ and $\delta_t=\D_g V^\h_t$ for a.a.\ $t\in [0,T]$.
\end{lemma}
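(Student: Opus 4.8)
The plan is to solve the coupled linear system \eqref{eq:partial_malliavin_derivative_low_high_system} by a fixed-point/continuation argument on small time intervals, using the smoothing of the high-frequency semigroup $e^{-tA_\h}$ and the finite-dimensionality of the low-frequency component, and then to verify the consistency $\gamma_t=\D_gV_t^\l$, $\delta_t=\D_gV_t^\h$ by uniqueness. First I would observe that the system is \emph{linear} in $(\gamma,\delta)$ once the control $g^\l_t$ in \eqref{eq:control_glow_part} and the drivers $R^\h_t h_\h$ are regarded as given data. The $\gamma$-equation lives on the finite-dimensional space $\Pi_\l\H(\T^3)\times\R^3\times T_{\xi(t)}\S^2\times\R^6$, so $A_\l$ is bounded there and all its terms are Lipschitz (the coefficients $D_\l F_\l(V_t)$, $D_\h F_\l(V_t)$ are bounded on $[0,T]$ thanks to the cutoff $\chi_{3r}(\|v\|_{H^\sigma})$ and $\sigma\geq 2$, which gives $H^\sigma\embed L^\infty$). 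The $\delta$-equation is the genuinely infinite-dimensional one, driven by $A_\h=-\Pi_\h\Delta$, but the forcing $\Pi_{>N}D_\l F_\h(V_t)\gamma_t+\one_{[T/2,T]}2T^{-1}R^\h_th_\h$ lies in $\Hs^\sigma$ (again by the cutoff and by Lemma \ref{l:estimate_nonlinearity}, plus the pathwise regularity of $R^\h_t$ from the high-frequency analogue of Lemma \ref{l:instantaneous_regularization_J_appendix}), so parabolic regularity for $A_\h$ closes the loop. Mild-form Duhamel estimates plus a contraction in $L^p(\Omega;C([0,T_\ast];\HL))$ for $T_\ast$ small, followed by iteration over $[0,T]$, yields existence and uniqueness.

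Next I would prove the quantitative bound. The right-hand side of both equations contains the control $g^\l_t$ from \eqref{eq:control_glow_part}, which through $(\Mat)^{-1}$ carries the factor that Lemma \ref{l:invertibility_matrix} controls: $\E|(\Mat)^{-1}|^p\lesssim_p T^{-ap}(1+|\eta|)^{bp}$. Combining this with the Jacobian bounds for $R^\l,S^\l$ (global in time by the discussion around \eqref{eq:inverse_low_jacobian}) and for $R^\h$, together with $\|DV_T^\l h_\l\|\lesssim\|h\|$, gives $\E\sup_{[0,T]}\|g^\l_t\|^p\lesssim_p T^{-\text{(something)}}(1+|\eta|)^{\text{(something)}}\|h\|^p$; a Gr\"onwall estimate in the Duhamel formulation for $(\gamma,\delta)$ then propagates this to $\E\sup_{[0,T]}\|(\gamma_t,\delta_t)\|_\HL^p\lesssim_p T^{-2ap}(1+|\eta|)^{2bp}\|h\|_\HL^p$, the exponent $2a$ arising because $N=T^{-2a}(1+|\eta|)^{2b}$ enters through $Q_\h^{-1}\Pi_{\le N}$ in $g^\h$ and through the frequency split $\Pi_{>N}$. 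The Malliavin-derivative estimate for $(\D_s\gamma_t,\D_s\delta_t)$ is obtained by differentiating the system \eqref{eq:partial_malliavin_derivative_low_high_system} in the Malliavin sense (legitimate since all data are Malliavin-differentiable with derivatives solving analogous linear equations, cf.\ Lemma \ref{l:from_Malliavin_Ito}), giving a system of the same structure with forcing coming from $\D_s$ of the coefficients and of $(\Mat)^{-1}$; the latter is the delicate term and again reduces to moment bounds on $(\Mat)^{-1}$ and $\D_s\Mat$, and one runs the same contraction/Gr\"onwall scheme.

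Finally, for the consistency claim $\gamma_t=\D_gV_t^\l$ and $\delta_t=\D_gV_t^\h$, I would note that with $g=(g^\l,g^\h)$ as constructed, the pair $(\D_gV_t^\l,\D_gV_t^\h)$ solves \eqref{eq:DgVlow_equation} together with the high-frequency analogue obtained by applying $\Pi_\h$ to \eqref{eq:differential_problem_DgV}; matching the $\Pi_{\le N}$/$\Pi_{>N}$ split of $D_\l F_\h(V_t)$ (the first term of $g^\h$ in \eqref{eq:control_glow_part1} is engineered precisely so that $Q_\h g^\h_t$ contributes $-\Pi_{\le N}D_\l F_\h(V_t)\gamma_t$, leaving $\Pi_{>N}D_\l F_\h(V_t)\gamma_t$ in the $\delta$-equation) shows $(\D_gV_t^\l,\D_gV_t^\h)$ satisfies exactly \eqref{eq:partial_malliavin_derivative_low_high_system} with the \emph{same} data; uniqueness from the first part then gives the identification. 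The main obstacle I anticipate is the quantitative bound, specifically tracking the $T\downarrow0$ blow-up: the control $g^\l$ is singular of order $T^{-a}$ from $(\Mat)^{-1}$, this feeds into $\delta$ through $D_\h F_\l$, and one must verify that the high-frequency cutoff at level $N=T^{-2a}(1+|\eta|)^{2b}$ is chosen large enough that the smoothing $\|e^{-tA_\h}\Pi_{>N}\|\lesssim e^{-cNt}$ absorbs the singular low-mode contribution, yet small enough that $\|Q_\h^{-1}\Pi_{\le N}\|\lesssim N^{\alpha}$ (using Assumption \ref{ass:Q}) stays polynomial in $T^{-1}$; balancing these is exactly where the exponents $a,b$ and the choice of $N$ must be consistent, and this bookkeeping — rather than any conceptual difficulty — is the heart of the proof.
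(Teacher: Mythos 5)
Your approach — contraction mapping for existence and uniqueness, Duhamel plus Gr\"onwall for the bound, Malliavin-differentiating the system for the $\D_s$ estimate, and uniqueness for the consistency $\gamma_t=\D_gV_t^\l$, $\delta_t=\D_gV_t^\h$ — is essentially the route the paper takes: the paper simply defers to \cite[Lemma 6.10]{BBPS2022} and observes that the argument transfers verbatim once the Navier--Stokes nonlinear estimates are replaced by Lemma \ref{l:estimate_nonlinearity} together with the $\Hs^\sigma$-cutoff in \eqref{eq:Ftruncation_nonlinearity}. Your reading of the algebraic matching (the $\Pi_{\leq N}$ term in $g^\h_t$ cancels the part of $D_\l F_\h(V_t)\gamma_t$ that is not written in the $\delta$-equation, so $(\D_gV_t^\l,\D_gV_t^\h)$ solves exactly \eqref{eq:partial_malliavin_derivative_low_high_system}) is also correct.

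There is one misattribution worth flagging. You write that the exponent $2a$ in the bound ``aris[es] because $N=T^{-2a}(1+|\eta|)^{2b}$ enters through $Q_\h^{-1}\Pi_{\le N}$ in $g^\h$ and through the frequency split $\Pi_{>N}$.'' This is not where the $T^{-2a}$ comes from in \emph{this} lemma. The control $g^\h$ does not enter the $(\gamma,\delta)$ system \eqref{eq:partial_malliavin_derivative_low_high_system} at all, and $\Pi_{>N}$ is an orthogonal projection, hence contributes no blow-up. The sup bound $\E\sup_t\|(\g_t,\delta_t)\|^p$ is in fact of order $T^{-ap}$ (the single factor $(\Mat)^{-1}$ coming from $g^\l_t$, estimated by Lemma \ref{l:invertibility_matrix}; the stated $T^{-2ap}$ is simply a non-sharp but correct upper bound for $T\leq 1$). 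The genuine source of the exponent $2a$ is the Malliavin-derivative term: differentiating $g^\l_t$ in the Malliavin sense produces $\D_s(\Mat)^{-1}=-(\Mat)^{-1}(\D_s\Mat)(\Mat)^{-1}$, i.e.\ two factors of $(\Mat)^{-1}$, each contributing $T^{-a}(1+|\eta|)^{b}$, hence $T^{-2a}(1+|\eta|)^{2b}$ in $\E\sup_{s,t}\|\D_s(\g_t,\delta_t)\|^p$; the value of $N$ is then \emph{chosen} to match this exponent, not the cause of it. The balance you describe at the end — the smoothing $\|e^{-tA_\h}\Pi_{>N}\|\lesssim e^{-cNt}$ against the polynomial growth of $\|Q_\h^{-1}\Pi_{\le N}\|$ — is indeed the heart of the argument, but it belongs to the proof of Lemma \ref{l:bounds_control} (estimating $g^\h_t$ and the remainders \eqref{eq:remainder_1}--\eqref{eq:remainder_2}), not to the present lemma.
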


In the above, $C([0,T];T_{V_t}\HL)$ stands for the set of maps continuous maps $f$ on $[0,T]$ such that $f_t\in T_{V_t}\HL$ for all $t\in [0,T]$. 
The above result is verbatim from the argument in \cite[Lemma 6.10]{BBPS2022}, where instead of the nonlinear estimates for the Navier-Stokes nonlinearity, one uses Lemma \ref{l:estimate_nonlinearity} and the truncation in $\Hs^\sigma$ of the nonlinearity in \eqref{eq:Ftruncation_nonlinearity}. 
Therefore, we omit the details.

\subsubsection{Proof of Lemma \ref{l:bounds_control}}
In this subsection, we finally give the proof of Lemma \ref{l:bounds_control}. 
We begin by giving the explicit formula for the remainder 
$$
r_T= D V_T h - \D_g V_T .
$$
Arguing as in \eqref{eq:malliavin_derivative_low1}-\eqref{eq:malliavin_derivative_low2} and in light of Lemma \ref{l:solvability_system_gamma_delta}, we have that 
$$
\D_g V^\h_t = \int_0^T R_{t,T}^\h D_\l F_\h (V_t) \Pi_{>N}\D_g V_t^\l \,\dd t  + R_{0,T}^\h h_\h, 
$$ 
where we used $\wh{Q}_\h g=\wh{Q}_\h g^\h $ and the last term arises from the identity $ R_{t,T}^\h  R_{0,t}^\h =R_{0,T}^\h$ for all $t\in [0,T]$ as in \eqref{eq:derivative_g_equal_to_shift_malliavin_velocity}.
Similarly, for $h_\h\in \HL_\h$, we have
\begin{equation}
\left\{
\begin{aligned}
\partial_t DV^\h_t h_\h +A_\h DV^\h_t h_\h
&= D F_\h (V_t)DV_t h_\h \\
&= D_\h F_\h (V_t)DV_t^\h h+D_\l F_\h (V_t)DV^\l_t h,\\
DV^\h_0 h_\h &=h_\h.
\end{aligned}
\right.
\end{equation}
Thus, if $h=(h_\l,h_\h)\in \HL$, then 
\begin{align*}
D V^\h_T h
&= D_\l V^\h_T h_\l + D_\h V^\h_T h_\h\\
&=D_\l V^\h_T h_\l + R^\h_{0,T} h_\h+ \int_0^T R^\h_{T,t} D_\l F_\h (V_t)DV^\l_t h_\h\, \dd t.
\end{align*}

By the above formulas and \eqref{eq:malliavin_derivative_low1}-\eqref{eq:malliavin_derivative_low2}, we obtain $r_T=(r_T^\h,r_T^\l)$ where
\begin{align}
\label{eq:remainder_1}
r_T^\l
&=
\int_0^T R^\l_{t,T} D_\h F_\l (V_t)\delta_t \,\dd t ,\\
\label{eq:remainder_2}
r_T^\h &= \int_0^T R_{t,T}^\h\Pi_{>N} D_\l F_\h (V_t) \g_t \,\dd t \\
\nonumber
& -D_\l V^\h_T h_\l-\int_0^T R^\h_{T,t} D_\l F_\h (V_t)DV^\l_t h_\h\,\dd t.
\end{align}

\begin{proof}[Proof of Lemma \ref{l:bounds_control}]
From the expression of the control $g_t=(g_t^\l,g_t^\h)$ defined in \eqref{eq:control_glow_part} and \eqref{eq:control_glow_part1}, as well as the remainder $r_T=(r_T^\l,r_T^\h)$ in \eqref{eq:remainder_1}-\eqref{eq:remainder_2}, the estimates in Lemma \ref{l:bounds_control} follow verbatim from the argument used in \cite[Lemma 6.5]{BBPS2022}, by employing analogous results from \cite[Lemmas 6.22-6.24]{BBPS2022} in the present context. As above, the extension of the latter results follows from the truncation in the nonlinearity \eqref{eq:Ftruncation_nonlinearity} and from Lemma \ref{l:estimate_nonlinearity}.

Let us emphasise that, as in the proof of Proposition \ref{prop:strong_feller_v} given at the end of Subsection \ref{ss:unique_ergodicity_PEs} (see also Example \ref{ex:operator_Q}), the lower bound $\sigma>\alpha-2$ in assumption in \eqref{eq:coloring_assumption_noise_2} is needed to prove the claimed estimate for $\E\int_0^T \|g_t\|_{\Ls^2}^2\,\dd t$.
\end{proof}

\subsection{Nondegeneracy of the partial Malliavin matrix -- Proof of Lemma \ref{l:invertibility_matrix}}
\label{ss:nondegeneracy}
The proof of Lemma \ref{l:invertibility_matrix} of the above result via the following

\begin{lemma}
\label{l:invertibility_matrix_suff}
There exists $T_3>0$ depending only on the parameters introduced in Subsection \ref{sss:probabilistic_set_up} and $r>0$ for which the following holds. Then, for all $p\geq 1$ and $T\in (0,T_3]$, 
$$
\E|\Mat|^p\lesssim_p 1.
$$
Moreover, there exist costants $a_1,b_1>1$ such that, for all $\varepsilon>0$ and $V\in \HL_\l$, 
\begin{equation}
\label{eq:inf_bound_malliavin_matrix}
\sup_{|h|_{\HL_\l}=1} \P \big( \langle\Mat h,h\rangle <\varepsilon\big) 
\lesssim_{p} t^{-a_1p}\dist(\x)^{-b_1} (1+|\eta|)^{b_1p} \varepsilon^p,
\end{equation}
where $\langle\cdot,\cdot\rangle$ denotes the inner product in $\HL_\l$.
\end{lemma}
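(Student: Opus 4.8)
The plan is to prove Lemma \ref{l:invertibility_matrix_suff} in two parts, upper bound and lower bound, and then observe that Lemma \ref{l:invertibility_matrix} follows by the standard deterministic linear-algebra fact that control of $\E|\Mat|^p$ together with a small-ball estimate of the form \eqref{eq:inf_bound_malliavin_matrix} on a finite-dimensional space $\HL_\l$ yields moment bounds on $(\Mat)^{-1}$ (see, e.g., the argument in \cite[Section 6]{BBPS2022}; one covers $\S^{d-1}$ by $\varepsilon$-nets and uses that $\Mat$ is Lipschitz in $h$). For the upper bound on $\E|\Mat|^p$: from \eqref{eq:partial_malliavin_matrix_def}, $\Mat=\int_0^T S^\L_{0,s}Q_\L(S^\L_{0,s}Q_\L)^\top\,\dd s$, so it suffices to bound $\E\sup_{s\le T}|S^\L_{0,s}|^{2p}$; this follows from \eqref{eq:inverse_low_jacobian}, Gr\"onwall, and the fact that $|D_\l F(V_t)|$ is controlled a.s.\ because of the cutoff $\chi_r(\|v_t\|_{H^\sigma})$ in \eqref{eq:Ftruncation_nonlinearity}, the regularity of the projective part $H(\xi,\eta)$, and Theorem \ref{t:high_order_estimates}\eqref{it:high_order_estimates2} combined with Lemma \ref{l:estimate_nonlinearity}. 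Since $\HL_\l$ is finite-dimensional this is routine.

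The core of the proof is the small-ball bound \eqref{eq:inf_bound_malliavin_matrix}. First I would reduce it, following the now-standard Malliavin-Stroock-type scheme used in \cite{BBPS2022,EckHai2001,RX11,HM06_annals}, to a \emph{weak H\"ormander / spanning condition}: if $\langle\Mat h,h\rangle<\varepsilon$ for small $\varepsilon$, then by the definition of $\Mat$ and continuity of $s\mapsto S^\L_{0,s}$ one has $|(S^\L_{0,s}Q_\L)^\top h|$ small on a time interval of positive length near $0$; differentiating this quantity in $s$ repeatedly (using \eqref{eq:inverse_low_jacobian}) and invoking the Norris-type lemma (an iterated quantitative time-regularity estimate, proved exactly as in \cite[Lemma 6.14]{BBPS2022}, which needs only $A_\l$ on a finite-dimensional space plus the a.s.\ bounds on the coefficients from the cutoff) forces $h$ to be almost orthogonal to the span of the iterated Lie brackets of the noise directions $Q_\L$ with the drift $A_\l+D_\l F_\l$. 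Hence \eqref{eq:inf_bound_malliavin_matrix} will follow once we show this span is all of $T_{V^\l}\HL_\l$, with quantitative (polynomially in $(1+|\eta|)$, $T^{-1}$) lower bounds on the relevant Gram determinant. I would isolate this as a separate algebraic spanning lemma (the analogue of Lemma \ref{l:spanning_condition} referenced in the text): the noise directly excites the $\eta$-block (the identity $\Id_{\R^6}$ in \eqref{eq:def_QL}) and, for $\|v\|_{H^\sigma}$ large, the truncation term $\chi_r H(\xi,\eta)$ in \eqref{eq:Ftruncation_nonlinearity} transfers it to the $\x$- and $\xi$-blocks; for small $\|v\|_{H^\sigma}$ one instead brackets the low-mode Stokes drift with the finitely many forced low modes of $v$ to recover the missing directions via the advection and projective nonlinearities $u(\x)$ and $\Pi_\xi\nabla u(\x)\xi$.

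The main obstacle I anticipate is precisely this spanning computation for the projective/position block in the \emph{small} $\|v\|_{H^\sigma}$ regime, because the vertical velocity component $w_t$ is not directly forced — it is slaved to $v_t$ by \eqref{eq:def_w} — so the H\"ormander bracket chain that produces motion in the $\x_z$ and the corresponding $\xi$-directions is genuinely longer and more delicate than in \cite{BBPS2022}. Concretely, one must check that iterated brackets involving $w(v)\partial_z$ and the finitely many low modes of $v$ still span $\R^3\times T_\xi\S^2$ at every $(\x,\xi)$; this is a (somewhat involved but finite) linear-algebra verification that I would place in Subsection \ref{ss:nondegeneracy} as a dedicated lemma, tracking constants to keep the dependence on $|\eta|$ and $T$ polynomial. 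Everything else — the Gr\"onwall estimates, the Norris lemma, the net argument converting \eqref{eq:inf_bound_malliavin_matrix} to $\E|(\Mat)^{-1}|^p\lesssim T^{-ap}(1+|\eta|)^{bp}$ — is routine adaptation of \cite{BBPS2022} using Lemma \ref{l:estimate_nonlinearity} and the $\Hs^\sigma$-cutoff in place of the Navier-Stokes-specific inputs.
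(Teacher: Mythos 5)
Your overall strategy matches the paper's: the bound $\E|\Mat|^p\lesssim_p 1$ is routine once the cutoff gives deterministic a.s.\ bounds on $|D_\l F(V_t)|$ and hence on $S^\L_{0,s}$ by Gr\"onwall; the small-ball estimate \eqref{eq:inf_bound_malliavin_matrix} is reduced to a H\"ormander-type spanning condition on a finite-dimensional space via the Norris machinery of \cite[Lemma 6.15]{BBPS2022}; and the spanning condition is checked in two regimes, using the auxiliary $\eta$-process for $\|v\|_{H^\sigma}$ large and the projective drift for $\|v\|_{H^\sigma}$ small. The paper indeed says explicitly that the implication from the spanning condition (its Lemma \ref{l:spanning_condition}) to Lemma \ref{l:invertibility_matrix_suff} follows verbatim from \cite[Lemma 6.15]{BBPS2022}, so this step is correctly identified as standard.

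The one point where your anticipation diverges from what actually happens is the claim that, because $w$ is slaved to $v$ rather than forced directly, ``the H\"ormander bracket chain that produces motion in the $\x_z$ and the corresponding $\xi$-directions is genuinely longer and more delicate than in \cite{BBPS2022}.'' This is not the case, and it is worth spelling out why, since it is the structural observation that makes the PEs case tractable. The spanning condition needed (Lemma \ref{l:spanning_condition}) is the parabolic, \emph{one-step} bracket condition: $\{\qq_j,\,[A_\l-D_\l F_\l(v,\x,\xi)]\qq_j\}_{j\in\zl}$ spans $\HL_\l$. Because $w=w(v)$ responds instantaneously to perturbations of $v$ via \eqref{eq:def_w}, the directional derivative $D_\l\fpr_\l(v,\x,\xi)\qq_j$ in the noise direction $\qq_j=q_{(\kk,\ell)}\g_{(\kk,\ell)}e_\kk$ already yields the \emph{full three-dimensional} vector field $q_{(\kk,\ell)}\,e_\kk(\x)\,\g^{(u)}_{(\kk,\ell)}$ in the $\x$-block and $q_{(\kk,\ell)}(\kk\cdot\xi)e_{-\kk}(\x)\Pi_\xi\g^{(u)}_{(\kk,\ell)}$ in the $\xi$-block, where $\g^{(u)}_{(\kk,\ell)}=(\g_{(\kk,\ell)},\g^{(w)}_{(\kk,\ell)})\in\R^3$ encodes the $w$-response. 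Since for $k_z\neq0$ one has $\svec\{\g^{(u)}_{(\kk,1)},\g^{(u)}_{(\kk,2)}\}=\{\kk'\in\R^3:\kk'\cdot\kk=0\}$, the span over $\z_\l$ is already all of $\R^3$ at every $\x$, and all of $T_\xi\S^2$ at every $(\x,\xi)$ (same pointwise argument as \cite[Lemma 5.3]{BBPS2022}). So no iterated bracket is needed; the chain has the \emph{same} length as for Navier-Stokes, and the constants in the Norris step do not deteriorate. This is exactly the content of the paper's auxiliary Lemma \ref{l:spanning_condition1} (estimate \eqref{eq:uniform_bound_below_algebraic_condition_2}). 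Aside from this, your proposal correctly identifies the structure and the two-case decomposition, and the ``(somewhat involved but finite) linear-algebra verification'' you defer is precisely that lemma.
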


Let us first show how Lemma \ref{l:invertibility_matrix_suff} implies Lemma \ref{l:invertibility_matrix}. 
The argument below largely follows the one in \cite[Lemma 4.7]{Hai11}, where we keep track of the constants which are needed because of the explosion as $T\to 0$.

\begin{proof}[Proof of Lemma \ref{l:invertibility_matrix}]
Let $N=\dim \HL_\L$, and recall that $T_{V} \HL_\l =\Pi_{\L} (\H(\T^3))\times \R^3\times T_{\xi} \S^2\times \R^6$ for $V\in \HL_\l$. 
Arguing as in \cite[Lemma 4.7]{Hai11}, by a covering argument, there exists an absolute constant $C_0>0$ such that, for all $\varepsilon>0$, 
$$
\textstyle
\P\big(\inf_{|h|=1} \langle \Mat h,h\rangle<\varepsilon\big) \leq C_0 \varepsilon^{2-2N} 
\P\big(\langle \Mat h,h\rangle<4\varepsilon\big)
+ \P\big(|\Mat|>\varepsilon^{-1}\big).
$$
Since $|\Mat|^{-1}=(\inf_{|h|=1} \langle \Mat h,h\rangle)^{-1}$ and $t\leq 1$, we obtain
\begin{align*}
\E|\Mat|^{-1}
&
\textstyle
\lesssim 1
+ \int_1^\infty  \lambda^{p+2N-1} \big[\P(\langle \Mat h,h\rangle<(4\lambda)^{-1})
+ \P(|\Mat|>\lambda)\big]\,\dd \lambda\\
&\lesssim t^{-a_1(p+2N+1)}\dist(\x)^{-b_1(p+2N+1)} (1+|\eta|)^{b_1(p+2N+1)},
\end{align*}
where in the last step we applied Lemma \ref{l:invertibility_matrix_suff} with $p$ replaced by $p+2N+1$.
\end{proof}

\subsubsection{Proof of Lemma \ref{l:invertibility_matrix_suff}}
\label{sss:hypoelliptic}
The proof of Lemma \ref{l:invertibility_matrix_suff} requires some preparation. 
To begin, let us introduce some notation. Recall that $\z_\l$ is defined in \eqref{eq:def_zl}. Set
\begin{equation}
\label{eq:augumented_indeces}
\zl= \z_\l \cup \{1,\dots,6\},
\end{equation}
and
\begin{equation}
\label{eq:qq_j_def}
\qq_{j} = 
\left\{
\begin{aligned}
&(q_{(\kk,\ell)}\g_{(\kk,\ell)} e_\kk, 0,0,0)^\top\quad &\text{ if }& j= (\kk,\ell)\in \z_\l,\\
&(0,0,0,a_j^{(\eta)})^\top\quad &\text{ if }&  j\in \{1,\dots,6\},
\end{aligned}
\right.
\end{equation}
where $(a_j^{(\eta)})_{j=1}^6$ is the standard basis of $\R^6$. The zeros in the above definition ensure that $\qq_j\in T_{V_\l}\HL$ for all $V_\l\in \Pi_\l (\H(\T^3))$.
Note that $\zl$ gather all the `directions' in the low-part of the system \eqref{eq:high_low_splitting} where the noise is present.

Next, we turn to the proof of \eqref{eq:inf_bound_malliavin_matrix}. Let us begin by noticing that, for $h\in \HL_\l$, 
\begin{equation}
\label{eq:identity_malliavin_matrix}
\langle \Ma h,h\rangle
=
\sum_{j\in \zl}\int_0^t \langle S^\l_s \qq_j h,h\rangle^2 \,\dd s
\end{equation}
where, with a slight abuse of notation, on the LHS and RHS of \eqref{eq:identity_malliavin_matrix} we indicate with $\langle \cdot,\cdot \rangle$ the scalar product in $T_{V_\l}\HL$ and in $T_{V_\l(s)}\HL$, respectively. A similar abuse of notation will also be used below.
Now, for $ j\in \zl$, we let 
$$
X_t^j\stackrel{{\rm def}}{=}
\langle S_{0,t}^\L \qq_j,h\rangle,
$$
and using that \eqref{eq:inverse_low_jacobian}, we obtain
\begin{equation*}
\left\{
\begin{aligned}
\partial_t X_t^j &=  -\langle A_\l \qq_j ,h\rangle+\langle S^\l_{0,t} D_\l F_\l (V_t)\qq_j,h\rangle ,\\
X_0^j &= \langle \qq_j,h\rangle.
\end{aligned}
\right.
\end{equation*}

Now, arguing as in \cite[Proposition 6.12]{BBPS2022}, the key point is to study H\"ormander type conditions for the vectors $(\qq_j,A_\l \qq_j -D_\l F_\l (V_t)\qq_j)_{j\in \zl}$ appearing in the above. 
The following is the key algebraic result to prove \eqref{eq:inf_bound_malliavin_matrix}, cf., \cite[Lemma 6.15]{BBPS2022}.

\begin{lemma}
\label{l:spanning_condition}
There exists a universal constant $r_0>0$ for which the following assertion holds provided $r\geq r_0$.  
For all $V=(v,\x,\xi,\eta)\in \HL$,
\begin{equation*}
\svec\big\{ \qq_{j}, [A_\l  -D_\l F_\l (v,\x,\xi)]\qq_j\,:\, j\in \zl\big\} = \HL_\l.
\end{equation*}
Moreover, there exists $C_0>0$ independent of $V=(v,\x,\xi,\eta)\in\HL$ such that
\begin{equation}
\label{eq:uniform_bound_below_algebraic_condition_full}
\max_{j\in \zl} \big\{ |\langle \qq_j ,h \rangle|, |\langle [A_\l  -D_\l F_\l (v,\x,\xi)]\qq_j ,h \rangle|\big\}
\geq  \frac{C_0(\dist(\x))^{4}}{(1+|\eta|)^{3}} |h|
\end{equation}
for all $h\in T_{V}\HL_\l$.
\end{lemma}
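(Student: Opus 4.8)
The plan is to verify the spanning identity blockwise with respect to the decomposition $T_V\HL_\l = \Pi_\l\Hs^\sigma_0(\T^3)\times\R^3\times T_\xi\S^2\times\R^6$, which is finite dimensional since $\z_\l=\{(\kk,\ell):|\kk|_\infty=1\}$ is finite. Two of the four blocks are immediate: because $q_{(\kk,\ell)}\eqsim|\kk|^{-\alpha}$ is bounded away from $0$ on $\z_\l$, the family $\{\qq_j:j\in\z_\l\}$ is, up to positive rescaling, a basis of the velocity block $\Pi_\l\Hs^\sigma_0(\T^3)$ and $\{\qq_j:j=1,\dots,6\}$ is the standard basis of the $\eta$-block $\R^6$, both with $V$-independent two-sided norm bounds. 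So it remains to produce, inside the span and modulo those two blocks, vectors dominating the $\x$-block $\R^3$ and the $\xi$-block $T_\xi\S^2$. For this I compute $[A_\l-D_\l F_\l(v,\x,\xi)]\qq_j$, using that $A_\l\qq_j$ is a scalar multiple of $\qq_j$ for $j\in\z_\l$ (eigenfunction property of $-\Delta$) and $A_\l\qq_j=0$ for $j\in\{1,\dots,6\}$. Differentiating \eqref{eq:Ftruncation_nonlinearity} and dropping the velocity-block component one reads off: for $j\in\{1,2,3\}$ the vector $-\chi_r(\|v\|_{\Hs^\sigma})(1+\eta_j^2)^{-3/2}a_j$ in the $\x$-block; for $j\in\{4,5,6\}$ the vector $-\chi_r(\|v\|_{\Hs^\sigma})(1+\eta_j^2)^{-3/2}\Pi_\xi a_{j-3}$ in the $\xi$-block; and for $j=(\kk,\ell)\in\z_\l$, writing $\psi_j=\g_{(\kk,\ell)}e_\kk$, the $\x$-component $-(1-\chi_{3r}(\|v\|_{\Hs^\sigma}))(\psi_j(\x),w(\psi_j)(\x))$ and the $\xi$-component $-(1-\chi_{3r}(\|v\|_{\Hs^\sigma}))\Pi_\xi[\nabla(\psi_j,w(\psi_j))(\x)\xi]$, plus two cutoff-derivative terms carried by the $j$-independent directions determined by $H(\xi,\eta)$ and by $F(\v)$, with scalar weights proportional to $\chi_r'(\|v\|_{\Hs^\sigma})$ and $\chi_{3r}'(\|v\|_{\Hs^\sigma})$ respectively.

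Then I split on the size of $\|v\|_{\Hs^\sigma}$, using $\{\|v\|_{\Hs^\sigma}\ge 2r\}\cup\{\|v\|_{\Hs^\sigma}<2r\}=[0,\infty)$. On $\{\|v\|_{\Hs^\sigma}\ge 2r\}$ one has $\chi_r(\|v\|_{\Hs^\sigma})=1$, so the six vectors with $j\in\{1,\dots,6\}$ are exactly $(1+\eta_j^2)^{-3/2}a_k$ and $(1+\eta_j^2)^{-3/2}\Pi_\xi a_k$, $k=1,2,3$; since $\{a_1,a_2,a_3\}$ and $\{\Pi_\xi a_1,\Pi_\xi a_2,\Pi_\xi a_3\}$ span $\R^3$ and $T_\xi\S^2$ with $\xi$-uniform constants and $(1+\eta_j^2)^{-3/2}\ge(1+|\eta|^2)^{-3/2}$, these two blocks are spanned with constant $\gtrsim(1+|\eta|)^{-3}$, and the awkward $\chi_{3r}'$ term (active only for $\|v\|_{\Hs^\sigma}\in[3r,6r]$, where $\chi_r=1$) need never be used. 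On $\{\|v\|_{\Hs^\sigma}<2r\}$ one has $\chi_{3r}(\|v\|_{\Hs^\sigma})=\chi_{3r}'(\|v\|_{\Hs^\sigma})=0$, so the velocity-mode brackets contribute $(\psi_j(\x),w(\psi_j)(\x))$ in the $\x$-block and $\Pi_\xi[\nabla(\psi_j,w(\psi_j))(\x)\xi]$ in the $\xi$-block, up to the single $j$-independent direction $H(\xi,\eta)$ weighted by $\chi_r'(\|v\|_{\Hs^\sigma})$. Here one exploits that $r$ may be taken large (the lemma only requires $r\ge r_0$ for a universal $r_0$): since $\chi_r'$ is supported where $\|v\|_{\Hs^\sigma}\in[r,2r]$, $|H(\xi,\eta)|\lesssim 1$, and $\|\psi_j\|_{\Hs^\sigma}\lesssim 1$ on $\z_\l$, that perturbation has norm $\lesssim r^{-1}$, hence is dominated by half the spanning constant of the unperturbed family once $r$ is large. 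Combining with the velocity and $\eta$ bases gives the spanning identity in both cases, with bound $\gtrsim(1+|\eta|)^{-3}\gtrsim(1+|\eta|)^{-5}$ — the power $5$ is taken with room to spare, also allowing for the loss in the estimate $\max_j|\langle\cdot,h\rangle|\ge c_0/\sqrt{|S|}$.

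What remains, and what I expect to be the main obstacle, is the finite-dimensional spanning claim underlying the case $\|v\|_{\Hs^\sigma}<2r$: there is a finite $S\subseteq\z_\l$ such that $\{(\psi_j(\x),w(\psi_j)(\x)):j\in S\}$ spans $\R^3$ for every $\x\in\T^3$, and $\{\Pi_\xi[\nabla(\psi_j,w(\psi_j))(\x)\xi]:j\in S\}$ spans $T_\xi\S^2$ for every $(\x,\xi)\in\T^3\times\S^2$, with constants uniform over the compact parameter space — which, by continuity, follows from the pointwise statements. The horizontal ($\R^2$) components, and the purely two-dimensional modes ($k_z=0$, for which $w(\psi_j)\equiv 0$), are handled as in the Navier--Stokes analysis of \cite[Lemma 6.15]{BBPS2022}. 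The genuinely new point, where the noise degeneracy of $w=w(v)$ bites, is reaching the vertical direction $a_3$: this is only possible through the finitely many three-dimensional low modes ($k_z\neq 0$), whose vertical velocities are $w(\psi_j)(x,y,z)=\pm(\cos 2\pi x-\cos 2\pi(x\pm z))$ and the analogous expressions in $y$; one must exhibit an explicit handful of such modes whose associated $\R^3$-vectors (and projective lifts) admit no common rank defect over $\T^3$, respectively $\T^3\times\S^2$. This explicit verification is the computation I would carry out in detail; everything else is bookkeeping modelled on \cite[Section 6]{BBPS2022}.
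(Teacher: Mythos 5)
Your proposal follows the same route as the paper: the blockwise decomposition of $T_V\HL_\l$, the observation that the velocity and $\eta$ blocks come for free from the nondegeneracy of $(q_{(\kk,\ell)})$ on $\z_\l$ and of $\Id_{\R^6}$, the same two cases $\|v\|_{\Hs^\sigma}\ge 2r$ and $<2r$, the $H(\xi,\eta)$-driven spanning in the first case, and the small-perturbation argument in $r$ to absorb the cutoff-derivative term in the second. Your estimate that the perturbation in the small-$\|v\|$ regime has size $O(r^{-1})$ is exactly the step that fixes $r_0$ in the paper as well.

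The step you explicitly defer -- ``exhibit an explicit handful of such modes whose associated $\R^3$-vectors (and projective lifts) admit no common rank defect over $\T^3$, respectively $\T^3\times\S^2$'' -- is precisely the content of Lemma~\ref{l:spanning_condition1} in the paper, and is the only genuinely new piece relative to \cite[Lemma 6.15]{BBPS2022}. It is worth noting that the paper avoids the brute-force case inspection you sketch (separating the $k_z=0$ modes and then hunting for $k_z\neq 0$ modes to reach $a_3$) by a cleaner structural observation: the incompressibility lift $v\mapsto u=(v,w(v))$ sends the two horizontal Fourier coefficients $\g_{(\kk,1)},\g_{(\kk,2)}$ to $\g^{(u)}_{(\kk,1)},\g^{(u)}_{(\kk,2)}$ with $\g^{(u)}_{(\kk,\ell)}\cdot\kk=0$, and for any $\kk$ with $k_z\neq 0$ these two vectors are linearly independent and hence span the \emph{whole} plane $\kk^\perp\subseteq\R^3$. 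From that point on the argument is verbatim the 2D Navier--Stokes computation of \cite[Lemma 5.3]{BBPS2022}, with $\kk^\perp$ playing the role it always plays there; the vertical direction $a_3$ is reached automatically, with no need to single out an explicit finite set of modes or check rank defects by hand. So your proposal is not wrong, but the deferred ``explicit verification'' is where the new mathematics lives, and it is softer than you anticipate: one identity ($\svec\{\g^{(u)}_{(\kk,1)},\g^{(u)}_{(\kk,2)}\}=\kk^\perp$ for $k_z\neq 0$) reduces the whole thing to the known 2D pattern plus continuity/compactness for uniformity. Without filling in that step, the proof is incomplete at the one point the lemma exists to settle.
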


The implication from Lemma \ref{l:spanning_condition} to Lemma \ref{l:invertibility_matrix_suff} is well-known and follows verbatim from the proof of \cite[Lemma 6.15]{BBPS2022}. The details are omitted for brevity. 
We note that the Lie brackets, which typically appear in algebraic lemmas of this kind, are absent in Lemma \ref{l:spanning_condition} because the functional $\langle \cdot,\qq_j\rangle$ used to obtain the system for $
X_t^j$ does not depend on the solution $(V_t)_t$ of the truncated SPDE \eqref{eq:abstract_SPDE_r}. As a result, the first-order commutators reduce to $[A_\l  -D_\l F_\l (v,\x,\xi)]\qq_j$.

\smallskip

In the remaining part of this subsection, we focus on the proof of Lemma \ref{l:spanning_condition}. In the latter result, the structure of the PEs plays a key role. 
To begin, let us decompose $F_\l$ as follows:
\begin{align*}
F_\l (V)
&=\big[ 1-\chi_{3r}(\|v\|_{\Hs^\sigma(\T^3)}) \big]\big (\fvel_\l(v)+\fpr_\l(v,\x,\xi) \big)\\
&+\chi_{r}(\|v\|_{\Hs^\sigma(\T^3)}) H(\x,\xi,\eta), 
\end{align*}
where $\fvel_\l$ and $\fpr_\l$ are the velocity and projective part of the nonlinearity low mode projection of the nonlinearity $F$ in \eqref{eq:abstract_SPDE}, i.e., 
\begin{align}
\label{eq:def_Fvv}
\fvel_\l(v)
&=
\big(- \Pi_\l\p[(v\cdot\nabla_{x,y})v+w(v)\partial_z v], 0,0,0\big)^\top,\\
\fpr_\l(v,\x,\xi)
&=
\big(0, u(\x), \Pi_\xi [\nabla u(\x)\xi], 0\big)^{\top}
\end{align}
for $V=(v,\x,\xi,\eta)$. Recall that $u=(v,w(v))$ where $w(v)$ is as in \eqref{eq:def_w}.

\smallskip

The following constitutes the main ingredient in the proof of Lemma \ref{l:spanning_condition}.

\begin{lemma}
\label{l:spanning_condition1}
There exist positive constants $r_0,c_0$ such that, for all $r\geq r_0$, $V=(v,\x,\xi,\eta)\in \HL$ and  
$h=(h_v ,h_\x,h_\xi,h_\eta)\in T_{V}\HL_\l$, 
\begin{align}
\label{eq:uniform_bound_below_algebraic_condition_1}
\max_{j\in \zl}|\langle \qq_j ,h \rangle|&\geq c_0\big( |h_v| + |h_\eta|\big),\\
\label{eq:uniform_bound_below_algebraic_condition_2}
\max_{j\in \z_\l}|\langle D_\l \fpr_\l (v,\x,\xi)\qq_j  ,h \rangle|&\geq c_0(\dist(\x))^{4}\big( |h_\x| + |h_\xi|\big).
\end{align}
\end{lemma}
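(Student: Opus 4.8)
The plan is to prove the two lower bounds \eqref{eq:uniform_bound_below_algebraic_condition_1} and \eqref{eq:uniform_bound_below_algebraic_condition_2} separately, exploiting the block structure of the augmented state space $\HL_\l = (\Pi_\l\H(\T^3))\times\R^3\times T_\xi\S^2\times\R^6$ and of the vectors $\qq_j$ in \eqref{eq:qq_j_def}. For \eqref{eq:uniform_bound_below_algebraic_condition_1}, observe that $\langle\qq_j,h\rangle$ reads off either a low Fourier coefficient of $v_\l^{(h)}$ (when $j=(\kk,\ell)\in\z_\l$, giving $q_{(\kk,\ell)}(\g_{(\kk,\ell)}e_\kk, v_\l^{(h)})_{L^2}$) or a coordinate of $\eta^{(h)}$ (when $j\in\{1,\dots,6\}$, giving $\eta^{(h)}_j$). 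Since $(q_{(\kk,\ell)})_{(\kk,\ell)\in\z_\l}$ is a finite family of strictly positive numbers with $q_{(\kk,\ell)}\eqsim|\kk|^{-\alpha}$ by Assumption \ref{ass:Q}, and since $(\g_{(\kk,\ell)}e_\kk)_{(\kk,\ell)\in\z_\l}$ is an orthonormal basis of $\Pi_\l\H(\T^3)$, the finitely many quantities $|\langle\qq_j,h\rangle|$ control $|v_\l^{(h)}|$ (a norm equivalence on a finite-dimensional space) and $|\eta^{(h)}|$ (directly, up to the constant $1$ from $a_j^{(\eta)}$); taking the max and $c_0$ equal to the minimal $q_{(\kk,\ell)}$ over $\z_\l$ divided by a dimensional constant gives \eqref{eq:uniform_bound_below_algebraic_condition_1}. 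This first bound is essentially bookkeeping.

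The substance is in \eqref{eq:uniform_bound_below_algebraic_condition_2}, which must recover the $\x^{(h)}$ and $\xi^{(h)}$ components — precisely those on which the noise does \emph{not} act directly. Here I would compute $D_\l\fpr_\l(v,\x,\xi)\qq_j$ explicitly. Since $\qq_j$ for $j=(\kk,\ell)\in\z_\l$ has the form $(q_{(\kk,\ell)}\g_{(\kk,\ell)}e_\kk,0,0,0)^\top$, differentiating $\fpr_\l(v,\x,\xi)=(0,u(\x),\Pi_\xi[\nabla u(\x)\xi],0)^\top$ in the $v$-direction $\g_{(\kk,\ell)}e_\kk$ (recall $u=(v,w(v))$ with $w$ linear in $v$ by \eqref{eq:def_w}) produces a vector whose $\x$-block is $q_{(\kk,\ell)}\,\big(\g_{(\kk,\ell)}e_\kk,\ w(\g_{(\kk,\ell)}e_\kk)\big)(\x)$ evaluated at the point $\x$, and whose $\xi$-block is $q_{(\kk,\ell)}\,\Pi_\xi[\nabla(\g_{(\kk,\ell)}e_\kk, w(\g_{(\kk,\ell)}e_\kk))(\x)\,\xi]$. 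Pairing with $h=(v^{(h)}_\l,\x^{(h)},\xi^{(h)},\eta^{(h)})$ gives a linear combination of $\x^{(h)}$ and $\xi^{(h)}$ with coefficients that are the evaluations at $\x$ of the explicit trigonometric vector fields $U_{(\kk,\ell)}:=(\g_{(\kk,\ell)}e_\kk,\ w(\g_{(\kk,\ell)}e_\kk))$ and their gradients, restricted to $|\kk|_\infty=1$. The key algebraic claim is then: the family $\{U_{(\kk,\ell)}(\x),\ \Pi_\xi[\nabla U_{(\kk,\ell)}(\x)\xi] : (\kk,\ell)\in\z_\l\}$ spans $\R^3\times T_\xi\S^2$ for every $\x\in\T^3$ and $\xi\in\S^2$, with a quantitative lower bound uniform in $(\x,\xi)$. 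This is the three-dimensional, hydrostatic analogue of \cite[Lemma 6.15]{BBPS2022}, and I expect the $k_z=0$ anisotropy of $\z_\l$ (the missing second polarization on $\plane$, reflecting the reduction to 2D Navier--Stokes for $z$-independent data) to be the delicate point: one must check that even with only the hydrostatically-admissible low modes — and with the vertical component $w(\g_{(\kk,\ell)}e_\kk)$ forced by incompressibility rather than free — the evaluations and their derivatives still fill out all of $\R^3\times T_\xi\S^2$. I would verify this by exhibiting explicitly, for a fixed $(\x,\xi)$, enough modes among $|\kk|_\infty=1$ whose Jacobian evaluations are linearly independent (e.g.\ using $\kk=(1,0,0),(0,1,0),(1,1,0),(0,0,1),\dots$ and both $\cos$/$\sin$ phases), then upgrading to a uniform bound by compactness of $\T^3\times\S^2$ and continuity of the finite-dimensional Gram determinant, which is real-analytic and not identically zero.

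The truncation factors $1-\chi_{3r}(\|v\|_{\Hs^\sigma})$ and $\chi_r(\|v\|_{\Hs^\sigma})$ contribute only scalar multipliers that do not affect directional spanning, but one must take $r\geq r_0$ large enough that on the relevant region these are identically $1$ respectively $0$ — or, more carefully, note that the $H$-term $\chi_r(\|v\|_{\Hs^\sigma})H(\xi,\eta)$ provides an \emph{additional} contribution to the span (it feeds the $\x$- and $\xi$-blocks via the $\R^6$ directions), so including it only helps; the stated $r_0$ simply ensures the argument is clean where one of the two mechanisms is degenerate. Finally, combining \eqref{eq:uniform_bound_below_algebraic_condition_1} and \eqref{eq:uniform_bound_below_algebraic_condition_2} and tracking the worst constant — which degrades like $(1+|\eta|)^{-5}$ because $\nabla U_{(\kk,\ell)}(\x)$ enters multiplied against $\xi$ and the $H$-contribution carries the $\eta/\sqrt{1+|\eta|^2}$ factors whose derivatives decay in $|\eta|$ — yields Lemma \ref{l:spanning_condition}, hence Lemma \ref{l:invertibility_matrix_suff} and Lemma \ref{l:invertibility_matrix} by the cited covering arguments. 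The main obstacle, as flagged, is the explicit verification of the uniform-in-$(\x,\xi)$ Hörmander spanning using only the anisotropic hydrostatic low-mode set $\z_\l$.
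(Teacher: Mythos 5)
Your proposal follows the paper's proof essentially verbatim: part \eqref{eq:uniform_bound_below_algebraic_condition_1} by inspection of \eqref{eq:qq_j_def} and $\inf_{\z_\l}q_{(\kk,\ell)}>0$, part \eqref{eq:uniform_bound_below_algebraic_condition_2} by computing $D_\l\fpr_\l\qq_j$ explicitly (obtaining the same structure with the vectors $\g^{(u)}_{(\kk,\ell)}=(\g_{(\kk,\ell)},\g^{(w)}_{(\kk,\ell)})$ and the key anisotropic fact that $\g^{(w)}_{(\kk,\ell)}=0$ when $k_z=0$), invoking the spanning argument of \cite[Lemma 5.3]{BBPS2022} for the two spanning identities, and upgrading to a uniform constant by compactness of $\T^3\times\S^2$. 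One minor correction: your discussion of the truncation factors $\chi_r, \chi_{3r}$ and the $H$-term is misplaced here — $\fpr_\l$ and $\qq_j$ are $r$-independent by construction, so the $r_0$ in the statement is vacuous for this lemma; the truncation, the $H$-contribution and the $(1+|\eta|)^{-5}$ degradation enter only in the proof of the enclosing Lemma \ref{l:spanning_condition}, where the cases $\|v\|_{\Hs^\sigma}\geq 2r$ and $\leq 2r$ are treated separately.
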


We emphasise that in \eqref{eq:uniform_bound_below_algebraic_condition_2}, the maximum is taken exclusively over $\z_\l$, deliberately excluding the $\eta$-directions in $\zl\setminus \z_\l$. This will be crucial later to prove \eqref{eq:uniform_bound_below_algebraic_condition_full} for low values of $\|v\|_{\H}$ at which the truncation in \eqref{eq:Ftruncation_nonlinearity} is not active.

Before going into the proof of Lemma \ref{l:spanning_condition1}, we collect some facts.
If $v\in \Pi_\l (\H(\T^3))$, then for some constants $(v_{(\kk,\ell)})_{(\kk,\ell)\in \z_\l}$ it holds that (see also the beginning of Subsection \ref{ss:solutions_def} for the notation)
\begin{equation}
\label{eq:v_decomposed_in_low_modes}
\textstyle
v(\x) =\sum_{(\kk,\ell)\in \z_\l} v_{(\kk,\ell)} \g_{(\kk,\ell)} e_{\kk}(\x) \ \ \text{ for } \ \x\in \T^3.
\end{equation} 
By \eqref{eq:def_w}, the third component of the corresponding velocity field is given by 
\begin{align*}
\textstyle
w(\x)
=- \int_0^{z} \nabla_{x,y} \cdot v (x,y,z')\,\dd z'
= 
\sum_{(\kk,\ell)\in \z_\l}  \g^{(w)}_{(\kk,\ell)} v_{(\kk,\ell)}\big( e_{\kk}(\x) 
 - e_{\kk}(\x_0)\big)
\end{align*}
where $\x=(x,y,z)\in \Dom$, $\kk=(k_x,k_y,k_z)\in \Z^3_0$, and we set
\begin{align*}
\x_0&\stackrel{{\rm def}}{=} (x,y,0), \\
\g^{(w)}_{(\kk,\ell)}
&\stackrel{{\rm def}}{=}
- \, k_z^{-1}(k_x,k_y)\cdot a_\ell \ \ \text{ if }  k_z\neq 0,\quad \text{ or }
\g^{(w)}_{(\kk,\ell)}
\stackrel{{\rm def}}{=}0  \  \text{ otherwise. } 
\end{align*}
Note that $\g^{(w)}_{(\kk,\ell)}=\g^{(w)}_{(-\kk,\ell)}$ by symmetry of $\g_{(\cdot,\ell)}$. Letting 
$$
\g^{(u)}_{(\kk,\ell)}\stackrel{{\rm def}}{=} (\g_{(\kk,\ell)}, \g^{(w)}_{(\kk,\ell)})\in \R^3
\quad  \text{ and }
\quad
\beta^{(u)}_{(\kk,\ell)}\stackrel{{\rm def}}{=} (0, 0,\g^{(w)}_{(\kk,\ell)})\in \R^3,
$$
we obtain that the velocity field $u$ corresponding to an horizontal velocity $v\in \Pi_\l(\H(\T^3))$ decomposed as \eqref{eq:v_decomposed_in_low_modes} has the form 
$$
\textstyle
u (\x)=\sum_{(\kk,\ell)\in \z_\l} v_{(\kk,\ell)} \big(\g^{(u)}_{(\kk,\ell)} e_{\kk}(\x)-\beta^{(u)}_{(\kk,\ell)} e_{\kk}(\x_0) \big)  .
$$ 
Therefore, for each $\xi\in \S^2$ and $\x\in\Dom$, 
\begin{align*}
\Pi_{\xi} \big[\nabla u(\x)\xi\big] 
\textstyle
= \sum_{(\kk,\ell)\in \z_\l}v_{(\kk,\ell)}  
\big((\kk\cdot \xi) e_{-\kk}(\x) \,\Pi_{\xi}\g_{(\kk,\ell)}^{(u)} - (\kk\cdot \xi_0) e_{-\kk}(\x_0)\,\Pi_{\xi}\beta_{(\kk,\ell)}^{(u)}\big).
\end{align*}
where $\xi_0\stackrel{{\rm def}}{=} (\xi_x,\xi_y,0)$. With the above preparation, we first establish Lemma \ref{l:spanning_condition1}, and then turn to Lemma \ref{l:spanning_condition}.

\begin{proof}[Proof of Lemma \ref{l:spanning_condition1}]
The estimate \eqref{eq:uniform_bound_below_algebraic_condition_1} follows clearly from $\inf_{(\kk,\ell)\in\z_\l}q_{(\kk,\ell)}>0$ and \eqref{eq:qq_j_def}.
Next, we focus on \eqref{eq:uniform_bound_below_algebraic_condition_2}. Since for all $j=(\kk,\ell)\in \zl$ and $\x\in \Dom$
$$
\textstyle
D_\l \fpr_\l (v,\x,\xi)\qq_j 
= 
\frac{\dd}{\dd \varepsilon}
D_\l \fpr_\l (v+\varepsilon q_{(\kk,\ell)}\g_{(\kk,\ell)} e_{\kk},\x,\xi)\big|_{\varepsilon=0},
$$ 
it follows that $D_\l \fpr_\l (v,\x,\xi)\qq_j $ is given by
\begin{align}
\label{eq:value_of_direction_derivative_FL_proof_algebraic_lemma}
q_{(\kk,\ell)}
\begin{pmatrix}
0\\
\g^{(u)}_{(\kk,\ell)}e_{\kk}(\x) -
\beta^{(u)}_{(\kk,\ell)}e_{\kk}(\x_0)  \\
\Pi_{\xi}\big[(\kk\cdot \xi) e_{-\kk}(\x) \,\g_{(\kk,\ell)}^{(u)} - (\kk\cdot \xi_0) e_{-\kk}(\x_0)\,\beta_{(\kk,\ell)}^{(u)}\big]\\
0
\end{pmatrix}.
\end{align}
For exposition convenience, we now split the proof into two steps.

\smallskip

\emph{Step 1: There exist positive constants $r_1,c_1$ such that, for all $r\geq r_1$, $V=(v_\l,\x,\xi,\eta)\in \HL_\l$ and $h=(h_v ,h_\x,h_\xi,h_\eta)\in T_{V}\HL_\l$, 
}
\begin{equation}
\label{eq:lower_bound_step_1_algebraic_lemma}
\max_{j\in \z_\l}|\langle D_\l \fpr_\l (v,\x,\xi)\qq_j  ,h \rangle|\geq c_1(\dist(\x))^{4}|h_\x|.
\end{equation}

For $j=(\kk,\ell)\in \z_\l$, we denote by $j'$ the reflected index $j'=(-\kk,\ell)\in \z_\l$. By standard computations, one can check that the vector $e_{\kk}(\x)
D_\l \fpr_\l (v,\x,\xi)\qq_j 
+
e_{-\kk}(\x)
D_\l \fpr_\l (v,\x,\xi)\qq_{j'}
$ 
is given by 
\begin{align*}
\Phi_{(\kk,\ell)}(z,\xi)\stackrel{{\rm def}}{=}
q_{(\kk,\ell)}
\begin{pmatrix}
0\\
\g^{(u)}_{(\kk,\ell)}-
\beta^{(u)}_{(\kk,\ell)} \cos(2\pi k_z z) \\
\sign(\kk)\,[
 (\kk\cdot \xi_0) \,\Pi_{\xi}\beta_{(\kk,\ell)}^{(u)}\, \sin (2\pi k_z z)]\\
0
\end{pmatrix},
\end{align*}
where, as in \eqref{eq:definition_gamma_kk_ell}, $\sign (\kk)=1$ if $\kk\in \Z^3_+$ or $\sign(\kk)=0$ otherwise. 
In particular, the vector
$
\tfrac{1}{2}(\Phi_{(\kk,\ell)}(z,\xi)+\Phi_{(-\kk,\ell)}(z,\xi)) 
$
has only one non-trivial component, namely the one in the $\x$-direction, which takes the form
$$
\phi_{(\kk,\ell)}(z)\stackrel{{\rm def}}{=}
\g^{(u)}_{(\kk,\ell)}-
\beta^{(u)}_{(\kk,\ell)} \cos(2\pi k_z z) .
$$
To prove \eqref{eq:lower_bound_step_1_algebraic_lemma}, it is enough to show that, for all vectors $h\in \R^3$, 
\begin{equation}
\label{eq:sufficient_condition_step_1_algebraic_lemma}
\textstyle
\max_{(\kk,\ell)\in \z_\l}|\langle 
\phi_{(\kk,\ell)}(z)  ,h \rangle|\gtrsim (|z|^2\wedge |1-z|^2)|h|.
\end{equation}
To prove the above, let $\kk_1=(1,0,1)$ and $\kk_2=(1,-1,0)$. It is clear that 
$$
\g_{(\kk_1,1)}
=
\begin{pmatrix}
1\\
0\\
\cos(2\pi z)-1
\end{pmatrix},
\qquad
\g_{(\kk_1,2)}
=
\begin{pmatrix}
0\\
1\\
0
\end{pmatrix},
\qquad 
\g_{(\kk_2,1)}
=
\frac{1}{\sqrt{2}}
\begin{pmatrix}
1\\
-1\\
0
\end{pmatrix}.
$$ 
Since $\svec\{\g_{(\kk_1,2)},\g_{(\kk_2,1)}\}=\R^2\times \{0\}$, the claim \eqref{eq:sufficient_condition_step_1_algebraic_lemma} follows by noticing that $(\kk_1,1), (\kk_1,2), (\kk_2,1)\in \z_\l$ and 
$1-\cos(2\pi z)\gtrsim |z|^2\wedge |1-z|^2$ by Taylor's expansion.

\smallskip

\emph{Step 2: There exist positive constants $r_2,c_2$ such that, for all $r\geq r_2$, $V=(v_\l,\x,\xi,\eta)\in \HL_\l$ and $h=(h_v ,h_\x,h_\xi,h_\eta)\in T_{V}\HL_\l$, 
}
\begin{equation}
\label{eq:lower_bound_step_2_algebraic_lemma}
\max_{j\in \z_\l}|\langle D_\l \fpr_\l (v,\x,\xi)\qq_j  ,h \rangle|\geq c_2(\dist(\x))^4 |h_\xi|.
\end{equation}

By \eqref{eq:value_of_direction_derivative_FL_proof_algebraic_lemma} and the argument in Step 2, it is enough to show that
\begin{equation}
\label{eq:span_step2_algebraic_lemma}
\svec \big\{\Psi_{(\kk,\ell)}(z,\xi)\,:\, (\kk,\ell)\in \z_\l\big\}= T_{\xi}\S^2,
\end{equation} 
with a corresponding lower bound, where 
$$
\Psi_{(\kk,\ell)}(\x,\xi)\stackrel{{\rm def}}{=}\Pi_{\xi}\big[
(\kk\cdot \xi) e_{-\kk}(\x) \,\g_{(\kk,\ell)}^{(u)} - (\kk\cdot \xi_0) e_{-\kk}(\x_0)\,\beta_{(\kk,\ell)}^{(u)}\big].
$$
Similar in Step 2, the vector $e_{-\kk}(\x) \Psi_{(\kk,\ell)}(\kk,\xi)- e_{\kk}(\x)\Psi_{(\kk,\ell)}(\kk,\xi)$ takes the form
$$
\psi_{(\kk,\ell)}(z,\xi)\stackrel{{\rm def}}{=}
\Pi_{\xi}\big[(\kk\cdot \xi) \g_{(\kk,\ell)}^{(u)} - (\kk\cdot \xi_0) \,\beta_{(\kk,\ell)}^{(u)} \, \cos(2\pi k_z z)\big].
$$
To prove \eqref{eq:span_step2_algebraic_lemma}, we distinguish two cases depending on a parameter $\delta>0$, which will be chosen depending on $\dist(\x)$. Below, we write $\xi=(\xi_x,\xi_y,\xi_z)\in \R^3$, and for $y\in \R$, we let $\sign (y)=1$ if $y\geq 0$, or $\sign(y)=-1$ if $y<0$. 

\smallskip

\emph{Claim 1: There exists $K>0$ such that \eqref{eq:lower_bound_step_2_algebraic_lemma} holds with RHS replaced by $c_2(\dist(\x))^2 |h_\xi|$ provided $|\xi_z|\leq K(\dist (\x))^2$.}
By a standard perturbation argument, in this case, it suffices to prove Claim 1 for $\xi_z=0$. Note that, as $\xi_z=0$ and $|\xi|=1$, either $\xi_x\geq \frac{1}{\sqrt{2}}$ or $\xi_y\geq \frac{1}{\sqrt{2}}$. 
In the following, we only discuss the case $\xi_x\geq \frac{1}{\sqrt{2}}$, as the other is similar. 
Let $\kk_1=(\sign(\xi_x),\sign(\xi_y),0)$ and $\kk_2=(\sign(\xi_x),0,1)$. Thus,
\begin{align*}
\psi_{(\kk_1,1)}(z,\xi)
=
\sqrt{2}(|\xi_x|+|\xi_y|)\Pi_\xi\begin{pmatrix}
\kk_1^\perp\\
0
\end{pmatrix}
, 		
\quad
\psi_{(\kk_2,1)}(z,\xi)
=
|\xi_x|\,\Pi_{\xi}
\begin{pmatrix}
1\\
0\\
-1+\cos(2\pi z)
\end{pmatrix}.
\end{align*}
Due to $\xi_z=0$, it holds that $T_{\xi}\R^2= \svec\{(\xi^\perp,0)^\top, (0,0,1)^\top\}$. Moreover, from $\kk_1\cdot\xi \neq 0$ it follows that $\Pi_\xi(\kk_1^\perp,0)^\top=\svec\{(\xi^\perp,0)\}$. 
Now, in the case $\xi_z=0$, the claim \eqref{eq:lower_bound_step_2_algebraic_lemma} holds with the RHS replaced by $c_2(\dist(\x))^2 |h_\xi|$. As noted earlier, by perturbation, this is sufficient to prove Claim 1.

\smallskip

\emph{Claim 2: Let $K>0$ be as in Claim 1. Then \eqref{eq:lower_bound_step_2_algebraic_lemma} holds for $|\xi_z|\geq  K(\dist (\x))^2$.}
Here we proceed as in Step 1 of the current proof. Consider 
$$
\wt{\psi}_{(\kk,\ell)}(z,\xi)\stackrel{{\rm def}}{=}
(\kk\cdot \xi) \g_{(\kk,\ell)}^{(u)} - (\kk\cdot \xi_0) \,\beta_{(\kk,\ell)}^{(u)} \, \cos(2\pi k_z z).
$$
Here, instead of \eqref{eq:span_step2_algebraic_lemma}, we prove $\svec \big\{\wt{\psi}_{(\kk,\ell)}(z,\xi)\,:\, (\kk,\ell)\in \z_\l\big\}= \R^3$ with a corresponding lower bound as in \eqref{eq:lower_bound_step_2_algebraic_lemma}. Indeed, for $\kk_1=(\sign(\xi_x), 0,\sign(\xi_z))$ and $\kk_2=(0, \sign(\xi_y),\sign(\xi_z))$, one has
\begin{align*}
\wt{\psi}_{(\kk_1,1)}(z,\xi)&=
(|\xi_x|+|\xi_z|)
\begin{pmatrix}
1\\
0\\
-\frac{\sign(\xi_x)}{\sign(\xi_z)}
\end{pmatrix}
+
|\xi_x|
\begin{pmatrix}
1\\
0\\
\frac{\sign(\xi_x)}{\sign(\xi_z)}
\end{pmatrix}
\cos(2\pi z), \\ 
\wt{\psi}_{(\kk_2,2)}(z,\xi)&=
(|\xi_y|+|\xi_z|)
\begin{pmatrix}
0\\
1\\
-\frac{\sign(\xi_y)}{\sign(\xi_z)}
\end{pmatrix}
+
|\xi_y|
\begin{pmatrix}
0\\
1\\
\frac{\sign(\xi_y)}{\sign(\xi_z)}
\end{pmatrix}
\cos(2\pi z), \\ 
\wt{\psi}_{(\kk_1,2)}(z,\xi)&=
(|\xi_x|+|\xi_z|)
\begin{pmatrix}
0\\
1\\
0
\end{pmatrix},
\quad \text{ and }\quad  
\wt{\psi}_{(\kk_2,1)}(z,\xi)=
(|\xi_x|+|\xi_z|)
\begin{pmatrix}
1\\
0\\
0
\end{pmatrix}.
\end{align*}
The above readily implies the estimate \eqref{eq:lower_bound_step_2_algebraic_lemma} as $|\xi_z|(1-\cos(2\pi z))\gtrsim (\dist(\x))^4$. 

The claim \eqref{eq:lower_bound_step_2_algebraic_lemma} of Step 2 hence follows from Claims 1 and 2. This finished the proof of Lemma \ref{l:spanning_condition1}.
\end{proof}

\begin{proof}[Proof of Lemma \ref{l:spanning_condition}]
Here, we essentially follow \cite[Lemma 6.15]{BBPS2022}. As in the latter proof, we distinguish two cases.
The parameter $r_0>0$ is chosen in Step 2.

\smallskip

\emph{Step 1: $\|v\|_{\Hs^\sigma(\T^3)}\geq 2r$}. Note that the non-decreasingness of $\chi$ and \eqref{eq:choice_cutoff} imply 
\begin{equation*}
\chi_{r}(\|v\|_{\Hs^\sigma})= 1.
\end{equation*}
In particular, as in Case 1 of \cite[Lemma 6.15]{BBPS2022}, we can use the regularising $\eta$ process to span the missing directions $(\x,\xi)$. 
More precisely, for $j\in \{1,\dots,6\}$,  
since $
D_\l F_\l (\v)\qq_j = \frac{\dd}{\dd \varepsilon} H(\x,(\xi,\eta)+\varepsilon a_j) |_{\varepsilon=0}
$, 
it holds that
$$
|\langle 
D_\l F_\l (\v)\qq_j , h \rangle| \geq \frac{(\dist(\x))^{4}}{(1+|\eta|^2)^{3/2}} |\langle a_j,h \rangle|
\geq \frac{(\dist(\x))^{4}}{(1+|\eta|)^{3}} |\langle a_j,h \rangle|.
$$
In the above $(a_j)_{j=1}^6$ is the standard basis of $\R^6$.
This concludes Step 1.

\smallskip

\emph{Step 2: $\|v\|_{\Hs^\sigma(\T^3)}\leq 2r$}. In this case, we cannot rely on the regularisation induced by $\eta$. Instead, we exploit Lemma \ref{l:spanning_condition1} to the presence of $\fpr_\l$ in the drift, while the remaining terms are treated as lower-order perturbations. 
Firstly, 
due to \eqref{eq:choice_cutoff},
$$
\chi_{3r}(\|v\|_{\Hs^\sigma(\T^3)})= 0.
$$  
Hence, in case $j=(\kk,\ell)\in \z_\l$, we have
\begin{align*}
D_\l F_\l (V)\qq_{j}
&= D_\l \fvel_\l(v) \qq_j+D_\l \fpr_\l(v,\x,\xi)\qq_j\\
 &
 -\frac{1}{r}\,\chi'\Big(\frac{\|v\|_{\Hs^\sigma(\T^3)}}{r}\Big) \frac{q_{(\kk,\ell)} ( v, \g_{(\kk,\ell)} e_\kk)_{\Hs^\sigma(\T^3)}}{\|v\|_{\Hs^\sigma(\T^3)}} H(\x,\xi,\eta).
\end{align*}
where $v_{(\kk,\ell)} =\int_{\T^3}v(\x)\cdot \g_{(\kk,\ell)}e_\kk(\x)\,\dd \x $.
Now, we apply the operator $\langle \cdot,h \rangle$, and we estimate each term separately. As above, here $h=(h_v,h_\x,h_\xi,h_\eta)\in \H\times \R^3\times T_{\xi} \S^2\times \R^6$. 
Firstly, by \eqref{eq:def_Fvv}, $\|v\|_{\Hs^\sigma}\leq 2r$ and Lemma \ref{l:estimate_nonlinearity},
$$
|\langle D_\l \fvel_\l(v) \qq_j,h\rangle |= 
|\langle D_\l \fvel_\l(v) \qq_j,h_v\rangle|\lesssim_r |h_v|\lesssim \max_{j\in \zl} |\langle \qq_j ,h\rangle|. 
$$
Secondly, 
let $c_0\geq 0$ be as in Lemma \ref{l:spanning_condition1}. As $\chi'\in L^\infty(\R_+)$, there exists $r_0>0$ depending only on $\|\chi'\|_{L^\infty(\R_+)}$, $\sup_{(\kk,\ell)}q_{(\kk,\ell)}<\infty$ and $c_0$ such that, for all $r\geq r_0$,
$$
\frac{1}{r}
\sup_{j\in \z_\l}
\Big|\chi'\Big(\frac{\|v\|_{\Hs^\sigma(\T^3)}}{r}\Big) \frac{q_{(\kk,\ell)}( v, \g_{(\kk,\ell)} e_\kk)_{\Hs^\sigma(\T^3)} }{\|v\|_{\Hs^\sigma(\T^3)}} H(\x,\xi,\eta)\Big|
\leq \frac{c_0}{2} (\dist(\x))^{4}.
$$
Thus, there exists $K$ depending only on $r$ such that 
\begin{align*}
|\langle
&[A_\l -D_\l F_\l (V)]\qq_{j},h\rangle|\\
&
\geq |\langle D_\l \fpr_\l(v,\x,\xi)\qq_j,h\rangle |- K \max_{j\in \zl} |\langle \qq_j ,h\rangle| - \tfrac{1}{2}c_0\, (\dist(\x))^{4}(|h_\x|+|h_\xi|),
\end{align*}
where $h_\x$ and $h_\xi$ are the $\x$- and $\xi$-components of $h \in T_V\HL$, respectively. Additional, we used the fact that $H(\x,\xi,\eta)$ has only non-trivial $\x$ and $\xi$-components. 

Therefore, from \eqref{eq:uniform_bound_below_algebraic_condition_2}, we get
\begin{equation*}
\max_{j\in \z_\l}|\langle
[A_\l -D_\l F_\l (V)]\qq_{j},h\rangle|
\geq  \tfrac{1}{2}c_0 (\dist(\x))^{4} (|h_\x|+|h_\xi|)-K \max_{j\in \zl} |\langle \qq_j ,h\rangle| .
\end{equation*}
The claim \eqref{eq:uniform_bound_below_algebraic_condition_full} now follows from \eqref{eq:uniform_bound_below_algebraic_condition_1} in Lemma \ref{l:spanning_condition1}.
\end{proof}

\section{Approximate controllability and weak irreducibility}
\label{s:irreduc_controllability}
In this section, we prove Propositions \ref{prop:approximate_controllability} and \ref{prop:weak_irreducibility}. We begin by recalling the main notation. Let $(u_t)_t = (v_t, w(v_t))_t$, where $w(v_t)$ is as defined in \eqref{eq:def_w}, represent the Markov process on $\Hs^\sigma(\T^3)$ induced by the stochastic PEs \eqref{eq:primitive_full}, as provided by Proposition \ref{prop:global_well_posedness}. Here, $\sigma$ is as in \eqref{eq:coloring_assumption_noise_2}.
Furthermore, the processes $(u_t, \x_t, \xi_t)$, $(u_t, \x_t, \wc{\xi}_t)$, and $(u_t, \x_t, \A_t)$ correspond to the projective and Jacobian processes, respectively. These are governed by the following equations:
\begin{align}
\label{eq:lagrangian_flow_second}
\partial_t \x_t&= u_t(\x_t),\\
\partial_t \A_t &=\nabla u_t(\x_t) \A_t,\\ 
\partial_t \xi_t&= \Pi_{\xi_t} (\nabla u_t(\x_t)\xi_t),\\
\partial_t \wc{\xi}_t&= -\Pi_{\wc{\xi}_t} ([\nabla u_t(\x_t)]^{\top}\wc{\xi}_t),
\end{align}
subject to the initial conditions $\x_0=\x\in \Dom$, $\xi_0=\xi\in \S^2$, $\wc{\xi}_0=\wc{\xi}\in \S^2$ and $\A_0=\Id_{\R^{3\times 3}}$. 
In these equations, $\Pi_{\xi} = \Id - (\xi \otimes \xi)$ denotes the orthogonal projection from $\R^3$ onto the tangent space of $\S^2$ at the point $\xi$, where $\xi$ is viewed as an element of $\R^3$ with unit length. Additionally, $\top$ represents the matrix transpose.

\smallskip

Similar to Subsection \ref{ss:proof_irreducibility}, the proofs of Propositions \ref{prop:approximate_controllability} and \ref{prop:weak_irreducibility} follow from controllability. We begin by looking at the projective processes.

\begin{lemma}[Controllability -- Projective processes]
\label{l:controllability}
Let $Q$ be such that Assumption \ref{ass:Q} holds. Consider the control problem
\begin{equation*}
\partial_t v_t +\p\big[(v_t\cdot\nabla_{x,y})v_t+ w(v_t)\partial_z v_t\big]= \Delta v_t + Qg_t,
\end{equation*}
where $w(v)$ is as in \eqref{eq:def_w}. Set  $u_t=(v_t,w(v_t))_t$ and
let $(\a,\eta)$, $(\a',\eta')$ be two elements of $\Tcal\times \S^{2}$. Then, for all $T\in (0,\infty)$, there exists a control $g\in C^\infty([0,T];C^\infty(\T^3;\R^2)\cap \Ls_0^2(\T^3))$ such that 
$$
(v_0,\x_0,\xi_0)=(0,\a,\eta) \quad \text{ and }\quad 
(v_T,\x_T,\xi_T)=(0,\a',\eta').
$$
The above also holds with $(v_t,\x_t,\xi_t)$ replaced by $(v_t,\x_t,\wc{\xi}_t)$.
\end{lemma}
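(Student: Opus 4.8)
\textbf{Proof strategy for Lemma \ref{l:controllability}.}

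The plan is to build the control in two stages, exploiting the fact that the Lagrangian flow map $\x\mapsto\phi^t(\x)$ and its linearization $\xi\mapsto\nabla\phi^t(\x)\xi/|\nabla\phi^t(\x)\xi|$ can be steered by suitably chosen \emph{velocity profiles}, while the horizontal velocity $v$ itself can always be brought back to $0$ at the end using the controllability result already established in Lemma \ref{l:controllability_primitive}. More precisely, I would first observe that the range of the noise operator $Q$ is dense (indeed it contains all finite Fourier modes by Assumption \ref{ass:Q}), so that for \emph{any} sufficiently smooth $v^{\mathrm{ref}}:[0,T]\to \Hs^{\sigma'}_0$ with $v^{\mathrm{ref}}_0=v^{\mathrm{ref}}_T=0$ the function
\[
g_t = Q^{-1}\big(\partial_t v^{\mathrm{ref}}_t - \Delta v^{\mathrm{ref}}_t + \p[(v^{\mathrm{ref}}_t\cdot\nabla_{x,y})v^{\mathrm{ref}}_t + w(v^{\mathrm{ref}}_t)\partial_z v^{\mathrm{ref}}_t]\big)
\]
is a smooth control realizing $v_t = v^{\mathrm{ref}}_t$ along the controlled PEs; here $Q^{-1}$ is well-defined on the range of $Q$ because of the diagonal structure in Assumption \ref{ass:Q}. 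Thus the problem reduces to a purely kinematic one: find a time-dependent divergence-free velocity field $u^{\mathrm{ref}}_t = (v^{\mathrm{ref}}_t, w(v^{\mathrm{ref}}_t))$, vanishing at $t=0$ and $t=T$, whose associated Lagrangian flow carries $(\a,\eta)$ to $(\a',\eta')$ (and $\x$-translations leave $v=0$, so the endpoint condition $v_T=0$ is automatic).

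For the kinematic step I would follow the ``shear flow'' construction of \cite[Section 7]{BBPS2022}, adapted to the PEs geometry. The key point is that for a horizontal velocity field of the form $v^{\mathrm{ref}}(x,y,z) = \g\, e_{\kk}(x,y,z)$ with $\g\cdot(k_x,k_y)=0$ one has $w(v^{\mathrm{ref}})$ computable explicitly (as in \eqref{eq:def_w}), and by superposing finitely many such modes with time-modulated amplitudes one can generate: (i) arbitrary rigid translations of $\T^3$, to move $\a$ to $\a'$; and (ii) arbitrary elements of $\mathrm{SL}_3(\R)$ acting on the Jacobian, hence any prescribed rotation on the projective sphere, to move $\eta$ to $\eta'$. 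Concretely, translations are produced by constant-in-space fields (which are however excluded from $\Ls^2_0$, so one instead uses slowly varying shear profiles that approximate translations up to a controllable correction, exactly as in \cite{BBPS2022}); rotations of the frame are produced by the commutators of two non-parallel shear directions, which is where the spanning property of the low Fourier modes — the same algebra underlying Lemma \ref{l:spanning_condition1} — is invoked. One then concatenates: first run a profile that fixes $\xi$ and sends $\a\mapsto\a'$, then (near $\a'$) run a localized profile that fixes the base point $\a'$ and rotates $\eta\mapsto\eta'$, inserting smooth cutoffs in time near $0$ and $T$ so that $v^{\mathrm{ref}}$ starts and ends at $0$ smoothly.

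I expect the main obstacle to be the vertical-component degeneracy: unlike in \cite{BBPS2022}, the third component $w$ of the velocity is not a free control but is slaved to $v$ via \eqref{eq:def_w}, so the class of admissible velocity fields $u^{\mathrm{ref}}=(v^{\mathrm{ref}},w(v^{\mathrm{ref}}))$ is constrained, and one must check that this constrained class still generates (via concatenation and commutators) a rich enough family of flow maps and Jacobians to reach any $(\a',\eta')$. This is exactly the obstruction flagged in item \eqref{it:top_positive_3} of the introduction, and it is resolved by the explicit form of $\g^{(u)}_{(\kk,\ell)} = (\g_{(\kk,\ell)}, \g^{(w)}_{(\kk,\ell)})$ computed in the proof of Lemma \ref{l:spanning_condition1}: the spanning identities
\[
\svec\big\{ e_{\kk}(\x)\,\g^{(u)}_{(\kk,\ell)} : (\kk,\ell)\in\z_\l\big\} = \R^3,
\qquad
\svec\big\{ (\kk\cdot\xi)\,e_{-\kk}(\x)\,\Pi_\xi\g^{(u)}_{(\kk,\ell)} : (\kk,\ell)\in\z_\l\big\} = T_\xi\S^2
\]
show that even with $w$ slaved to $v$ the low-mode velocity fields act transitively on positions and on projective directions. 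Granting these, the concatenation argument goes through verbatim as in \cite[Section 7]{BBPS2022}, and pulling the resulting $v^{\mathrm{ref}}$ back through $Q^{-1}$ yields the desired smooth control $g\in C^\infty([0,T];C^\infty(\T^3;\R^2)\cap\Ls^2_0(\T^3))$, with the identical argument applying to $\wc{\xi}_t$ since $(\nabla\phi^t)^{-\top}$ solves the transposed equation and the spanning statement is symmetric under $\g^{(u)}_{(\kk,\ell)}\mapsto\g^{(u)}_{(\kk,\ell)}$.
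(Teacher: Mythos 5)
Your proposal correctly identifies the overall architecture — reduce to a purely kinematic steering problem by picking a reference trajectory $v^{\mathrm{ref}}_t$ vanishing at the endpoints and inverting $Q$, then concatenate a stage that moves $\x$ from $\a$ to $\a'$ with a stage that rotates $\xi$ at the fixed target point, using shear-type profiles adapted to the PEs constraint $w=w(v)$. This matches the paper's two-step structure and the use of a control of the form $g_t=Q^{-1}(\partial_t v^{\mathrm{ref}}_t-\Delta v^{\mathrm{ref}}_t-\p[\cdots])$. However, there is a genuine gap in how you propose to realize the rotation stage.

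You write that ``rotations of the frame are produced by the commutators of two non-parallel shear directions'' and that the spanning identities from Lemma \ref{l:spanning_condition1} then allow the concatenation ``to go through verbatim.'' This is not how the paper proceeds, and as stated it does not close the argument. The spanning identities are pointwise statements about the linear span of $\{e_\kk(\x)\g^{(u)}_{(\kk,\ell)}\}$ and $\{(\kk\cdot\xi)e_{-\kk}(\x)\Pi_\xi\g^{(u)}_{(\kk,\ell)}\}$ at a single $(\x,\xi)$; they give the H\"ormander bracket condition used for the partial Malliavin matrix but do not by themselves produce exact controllability of the nonlinear flow. And a Lie-bracket / Chow-Rashevskii argument would require care: you would need the brackets to stay inside the admissible class of PEs velocity fields (where $w$ is slaved to $v$) and then show reachability, none of which is carried out. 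The paper instead \emph{constructs} a single shear profile $v_{2,t}(\x)=\psi_t\,(0,f(z)\cos(y-b'))$ whose slaved velocity field $u_{2,t}$ fixes the target point $\a'=(a',b',c')$ (which forces $f(c')=0$) and whose gradient at $\a'$ is exactly an antisymmetric generator of rotations about the $x$-axis (which forces $\dot f(c')=1$ and $F(c')=\int_0^{c'}f=1$, together with the mean-zero condition). The delicate part of the proof is precisely the verification that such an $f$ exists and that the slaved $w$ does not destroy the rotation structure — this is the point where the $w$-degeneracy, which you rightly flag, is actually resolved, and it is done by hand, not via the spanning lemma.

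A similar but milder gap appears in your translation stage: the paper does not approximate translations by slowly varying shear profiles. After reducing to motion in the vertical direction via 2D Navier–Stokes solutions (which are $z$-independent solutions of the PEs), it constructs an explicit profile $v_{1,t}(\x)=\varphi_t\,(0,f(z)\sin(y-b))$ for which the induced $z$-dynamics reduces to a scalar ODE $\dot z = \varphi_t F(z)$ that can be integrated exactly by choosing $f\equiv 1$ near $[0,c']$. So while your high-level plan is sound and you correctly anticipate where the $w$-degeneracy bites, the actual mechanism needs the explicit constructions; the proposal as written does not supply them, and the alternative (commutators plus spanning) is not developed to the point where it would substitute for them.
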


As noted in \cite[Remark 7.2]{BBPS2022}, the above also holds for the projective processes $(v_t,\x_t,\xi_t)$ and $(v_t,\x_t,\wc{\xi}_t)$ where the last components are considered as elements of the projective space $\Pr$. To see this, it suffices to use arbitrary representatives of the elements in $\Pr$ on the sphere $\S^2$.

\begin{proof}
For notational simplicity, we assume $T=1$; the general case follows similarly. Additionally, we focus on the case of $(v_t,\x_t,\xi_t)$, as the argument for the other case is analogous. 

The proof extends the ``shear flows'' method in \cite[Lemma 7.1]{BBPS2022} to the context of PEs. 
The main idea is roughly as follows. First we prove that during the time interval $[0,1/2]$ we can move the point $\x_t$ from $\a$ to $\a'$ while $\xi_t$ is moved to some $\xi'\in \S^2$, and in a second step we prove that during the time interval $[1/2,1]$ we can move $\xi_t$ from $\xi'$ to $\eta'$ without changing the position of $\x_t$. Moreover, the construction also shows that $v_0=v_1=0$.  
The proof naturally splits into two steps.

\smallskip

\emph{Step 1: There exists a smooth control $g$ such that support in time of $g$ is contained in $ (0,1/2)$ and a unit vector $\xi'\in \S^2$ such that }
$$
(u_{1/4},\x_{1/4},\xi_{1/4})=(0,\a,\eta) \quad  \text{ and }\quad
(u_{1/2},\x_{1/2},\xi_{1/2})=(0,\a',\xi'); 
$$
\emph{i.e.\ the control changes the position $\x_t$ from $\a$ to $\a'$.}

Note that solutions to the 2D Navier-Stokes equations also solve the PEs (by considering $z$ as a dummy variable). Therefore, in light of the proof of \cite[Lemma 7.1]{BBPS2022}, we can assume that during the interval $(0,1/4)$, we have moved the position of the horizontal variables. In particular, it is enough to show the claim of Step 1 whenever  
$
\Pi_{x,y}\a=\Pi_{x,y}\a',
$
where $\Pi_{x,y}$ is the projection onto the first two components and $(0,1/2)$ is replaced by $(1/4,1/2)$. 
Let $\a=(a,b,c)$ and $\a'=(a,b,c')$ for some $a,b\in\T$ and $c,c'\in (0,1)$. We can assume that either $c<c'$ or $c'<c$; otherwise, there is nothing to prove. 
Without loss of generality, we may assume $c< c'$; the other case is analogous.
Let $f:\T\to \R$ be a mean-zero smooth function, which will be fixed later. Furthermore let us fix $\varphi\in C_{{\rm c}}((1/4,1/2))$ such that 
\begin{equation}
\label{eq:choice_phi_t}
\varphi_t \geq 0 \quad \text{ and }\quad \int_{1/4}^{1/2}\varphi_t\, \dd t=\log \Big(\frac{c'}{c}\Big).
\end{equation}
Consider the flow
$$
v_{1,t}(\x)\stackrel{{\rm def}}{=}\varphi_t
\begin{pmatrix}
0\\
 f(z) \sin (y-b)
\end{pmatrix}, \qquad \x=(x,y,z)\in \T^3.
$$
Since $f$ is mean zero, we have $\int_0^1 \nabla_{x,y}\cdot v_{1,t}(\cdot,z)\,\dd z=0$ on $\T^2_{x,y}$. 

Set $F(z)\stackrel{{\rm def}}{=}\int_{0}^{z} f(z')\,\dd z'$ and note that
$$
[w(v_{1,t})](\x)=-\int_{0}^{z} (\nabla_{x,y}\cdot v_{1,t})(x,y,z') \,\dd z'
=\varphi_t\cos(y-b) F(z) .
$$
Hence, the induced vector field $u_{1,t}$ is given by 
$$
u_{1,t}(\x)=\begin{pmatrix}
v_{1,t}(\x)\\
[w(v_{1,t})](\x)
\end{pmatrix}
=\varphi_t
\begin{pmatrix}
0\\
 f(z) \sin (y-b)\\
F(z)\cos(y-b)
\end{pmatrix}, \qquad \x=(x,y,z)\in \T^3.
$$ 
Let us consider the associated flow:
$$
\dot{\x}_t= u_{1,t}(\x_t), \quad \text{with }\quad \  \x_0=\a.
$$ 
It is clear that $x_t\equiv a$ and 
\begin{align*}
\dot{y}_t &= \varphi_t f(z_t) \sin (y_t-b),  &  y_{1/4}&=b,\\
\dot{z}_t &=\varphi_t F(z_t)\cos(y_t-b), & z_{1/4}&=c.
\end{align*}
From the above, it also follows that $y_t\equiv b$, and the last equation simplifies to
\begin{equation}
\label{eq:dot_z_ode_F}
\dot{z}_t =\varphi_t F(z_t), \qquad z_{1/4}=c.
\end{equation}
Next, we choose a smooth mean-zero map $f$ for which the unique solution to the former ODE satisfies $z_{1/2}=c'$. 
To this end, recall that we are assuming $0< c<c'<1$. 
Let $f$ be such that $f|_{[0,c']}=1$ and $f$ is a mean zero smooth periodic map.
The existence of such $f$ is evident because $0\leq c<c'<1$. 
In particular, $F|_{[0,c']}=z$ and the unique solution of the ODE \eqref{eq:dot_z_ode_F} is given by 
$$
\textstyle
z_t= c \exp(\int_{1/4}^{t} \varphi_s \,\dd s )
$$ 
for all $t$ such that $z_t<1$. Since $\varphi_t$ satisfies \eqref{eq:choice_phi_t}, we have $z_{1/2}=c'$. The claim of Step 1 now follows by taking:
\begin{equation}
\label{eq:def_g1}
g_{1,t}\stackrel{{\rm def}}{=} Q^{-1} \big(\partial_t v_{1,t}-\Delta v_{1,t}-\p [(v_{1,t}\cdot\nabla_{x,y})v_{1,t}+w(v_{1,t})\partial_z v_{1,t}] \big).
\end{equation}
Indeed, since $Q^{-1}:\Hs^{r+\alpha}_0\to \Hs^{r}_0$ for all $r>0$ by Assumption \ref{ass:Q}, the above formula is well-defined due to the regularity of $(v_{1,t})_t$, the mean zero of $v_{1,t}$ and $\int_{\T^3}\p [(v_{1,t}\cdot\nabla_{x,y})v_{1,t}+w(v_{1,t})\partial_z v_{1,t}]\,\dd x=0$ as $\nabla\cdot u_{1,t}=0$.

\smallskip

\emph{Step 2: Let $\xi'$ be as in Step 1. There exists a smooth control $g_2$ such that support in time of $g_2$ is contained in $ [1/2,3/4]$ and}
$$
(u_{1/2},\x_{1/2},\xi_{1/2})=(0,\a',\xi') ,\qquad 
(u_{3/4},\x_{3/4},\xi_{3/4})=(0,\a',\eta');
$$
\emph{i.e.\ the control changes the direction $\xi_t$ from $\xi$ to $\xi'$ while keeping fixed the position $\a$.}
As in Step 1, following the proof of \cite[Lemma 7.1]{BBPS2022}, it suffices to demonstrate that we can construct flows that enable us to rotate the direction $\xi_t$ around the $x$- and $y$-axis during the time interval $(1/2,3/4)$.  We provide the details only for the $x$-axis, as the case for the $y$-axis is analogous.

Fix $\theta\in (0,2\pi)$ and pick $\psi\in C_{{\rm c}}((1/2,3/4))$ such that 
\begin{equation}
\label{eq:choice_psi_t}
\psi_t \geq 0 \quad \text{ and }\quad \int_{1/4}^{1/2}\psi_t\, \dd t=\theta.
\end{equation}
Next, we produce a flow that represents a rotation of angle $\theta$ around the $x$-axis at time $t=3/4$.
As Step 1, let us write $\a'=(a',b',c')\in \Dom$. Hence, $c'\in (0,1)$.
Let $f:\T\to \R$ be a smooth map such that 
\begin{equation}
\label{eq:condition_f_rotation}
\int_0^1 f(z)\,\dd z=0, \qquad
\int_0^{c'} f(z)\,\dd z =1,\qquad  f(c')=0,\qquad \dot{f}(c')=1.
\end{equation}
The proof of the existence of such an $f$ is elementary (indeed, it is enough to take a smooth function $f$ satisfying the last two pointwise conditions in \eqref{eq:condition_f_rotation}, and then modify appropriately the mapping far from the point $c'\in (0,1)$).
Let $F(\zeta)\stackrel{{\rm def}}{=}\int_{0}^{z} f(z)\,\dd z$. Arguing as in Step 1, for $\x=(x,y,z)\in \T^3$,
$$
v_{2,t}(\x)=\psi_t 
\begin{pmatrix}
0\\ 
f(z)\cos(y-b')
\end{pmatrix}
\quad \Longrightarrow\quad
u_{2,t}(\x)=  \psi_t
\begin{pmatrix}
0\\
f(z) \cos(y-b')\\
F(z) \sin(y-b')
\end{pmatrix}.
$$ 
As $f(c')=0$, we have $u_{2,t}(\a')=0$. Therefore, the flow induced by $u_{2,t}$ fixes the point $\a'$.
Next note, that for all $\x=(x,y,z)\in \T^3$,
$$
\nabla_{\x} u_{2,t}(\x)=  \psi_t
\begin{pmatrix}
0 & 0 & 0 \\
0&  \ \ \ f(z) \sin(y-b') & \dot{f}(z)\cos(y-b')\\
0&- F(z) \cos(y-b') & f(z)\sin(y-b')
\end{pmatrix}.
$$
In particular
$$
\nabla_{\x} u_{2,t}(\a')=  \psi_t
\begin{pmatrix}
0 & 0 & 0 \\
0& 0 & \dot{f}(c')\\
0&-F(c') & 0
\end{pmatrix}
\stackrel{\eqref{eq:condition_f_rotation}}{= }\psi_t 
\underbrace{\begin{pmatrix}
0 & 0 & 0 \\
0& 0 & 1\\
0&-1 & 0
\end{pmatrix}}_{\Gamma \stackrel{{\rm def}}{=}}.
$$
Finally, let us note that for all $\n\in\S^2$ we have $\Gamma\n=(0,-n_z,n_y)\perp \n$ and therefore
$$
\Pi_{\n} (\Gamma \n) = \Gamma\n - (\Gamma\n\cdot \n)\n = \Gamma\n . 
$$
Hence the direction flow is given by $\xi_t = \psi_t \Gamma \xi_t $ with $\xi_{1/2}=\xi'$ and therefore
$$
\xi_t=
\exp \big( \Gamma  \Psi_t \big)\xi'
= 
\begin{pmatrix}
1 & 0 &0\\
0 & \ \ \ \cos(\Psi_t ) & \sin (\Psi_t ) \\
0 & -\sin (\Psi_t )  & \cos(\Psi_t ) 
\end{pmatrix}\xi',
$$
where $\Psi_t\stackrel{{\rm def}}{=}\int_{1/2}^t \psi_s \,\dd s$.
Due to \eqref{eq:choice_psi_t},
the above shows that $\xi'$ is rotated of an angle $\theta$ around the $x$-axis. Since $\theta\in (0,2\pi)$ was arbitrary, this completes the proof of Step 2 by choosing $g_{2}$ as in \eqref{eq:def_g1} with $u_1$ replaced by $u_2$.
\end{proof}

Next, we obtain a controllability result for the Jacobian process.

\begin{lemma}[Controllability -- Jacobian processes]
\label{l:controllability_jacobian}
Let $Q$ be such that Assumption \ref{ass:Q} holds. Consider the control problem
\begin{equation}
\label{eq:controlled_jacobian}
\partial_t v_t +\p\big[(v_t\cdot\nabla_{x,y})v_t+ w(v_t)\partial_z v_t\big]= \Delta v_t + Qg_t,
\end{equation}
where $w(v)$ is as in \eqref{eq:def_w}. Fix $\a\in \Dom$ and $M\in (0,\infty)$. Then, for all $T\in (0,\infty)$, there exists a control $g\in C^\infty([0,T];C^\infty(\T^3;\R^2)\cap \Ls_0^2(\T^3))$ such that 
$$
(v_0,\x_0,\A_0)=(0,\a,\Id_{\R^{3\times 3}}) \quad \text{ and }\quad 
(v_T,\x_T)=(0,\a) \ \text{ and }\ |\A_T|>M.
$$
\end{lemma}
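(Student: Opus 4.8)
The plan is to reuse the ``shear flow'' constructions from Lemma \ref{l:controllability} together with a new building block that produces exponential growth of the Jacobian while keeping the horizontal velocity and the base point fixed, in the spirit of \cite[Lemma 7.4]{BBPS2022}. As in the proof of Lemma \ref{l:controllability}, for notational simplicity I would take $T=1$ and split $[0,1]$ into several subintervals, on each of which the control is supported in the interior, so that $v_0=v_1=0$ automatically and the Jacobian equation $\partial_t\A_t=\nabla u_t(\x_t)\A_t$ can be integrated explicitly along the constructed flows. On the first subinterval, say $[0,1/2]$, I would invoke Lemma \ref{l:controllability} verbatim to move $\x_t$ from $\a$ back to $\a$ (i.e.\ apply it with $\a'=\a$, $\eta=\eta'$ arbitrary), so that after this stage $v_{1/2}=0$, $\x_{1/2}=\a$ and $\A_{1/2}$ is some invertible matrix in $\SL_3(\R)$ — this stage is only needed if one wants to reach a prescribed position, but here since the target position equals $\a$ one may even skip it. The real work is on $[1/2,1]$.

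On $[1/2,1]$ I would construct, exactly as in Step 2 of Lemma \ref{l:controllability}, a smooth mean-free profile $f:\T\to\R$ with $f(c)=0$ at the vertical coordinate $c$ of $\a$, and set $v_t(\x)=\psi_t\,(0,f(z)\cos(y-b))^\top$ with $\psi\in C_{\rm c}((1/2,1))$, $\psi\ge 0$. As computed there, the induced three–dimensional field $u_t=(v_t,w(v_t))$ vanishes at $\a$, and the linearization $\nabla u_t(\a)=\psi_t\Gamma$ where $\Gamma$ is a fixed, nonzero, traceless $3\times 3$ matrix. However, a rotation generator $\Gamma$ (as in Lemma \ref{l:controllability}) has bounded exponential, so instead I would choose the profile $f$ so that $\nabla u_t(\a)$ equals $\psi_t\Lambda$ with $\Lambda$ a traceless matrix having a \emph{real} eigenvalue of definite sign — for instance, using a profile of the form $v_t(\x)=\psi_t(0,f(z)\cos(y-b)+ g(z)\sin(y-b))^\top$ and imposing on $(f,g)$ linear constraints at $z=c$ (values of $f,g,\dot f,\dot g$) so that $\nabla u_t(\a)=\psi_t\Lambda$ with, say, $\Lambda=\mathrm{diag}(1,-1,0)$ plus off–diagonal terms, still traceless to be compatible with incompressibility. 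Since $\x_t\equiv\a$ along this flow, the Jacobian solves $\partial_t\A_t=\psi_t\Lambda\,\A_t$, hence $\A_1=\exp\big(\Lambda\int_{1/2}^1\psi_s\,\dd s\big)\A_{1/2}$, and choosing $\int_{1/2}^1\psi_s\,\dd s$ large makes $|\A_1|$ as large as we wish; finally set $g_t$ by inverting $Q$ as in \eqref{eq:def_g1}, which is licit by Assumption \ref{ass:Q} and the smoothness of $v_t$.

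The main obstacle I anticipate is the compatibility of the three requirements on the profile at $z=c$: one needs a smooth \emph{mean-free} periodic $f$ (so that the barotropic divergence-free condition \eqref{eq:incompressibility_v} holds and the flow is admissible), a vanishing condition $f(c)=0$ (to fix the base point), \emph{and} enough freedom in the first and second derivatives at $c$ to make $\nabla u_t(\a)$ a matrix whose exponential grows — and one has to check that the resulting $\Lambda$ is compatible with $\nabla\cdot u_t=0$, i.e.\ $\operatorname{tr}\Lambda=0$, while still having an unbounded exponential (a nilpotent $\Lambda$ would only give polynomial growth, so one truly needs a nonzero real eigenvalue). This is a finite-dimensional linear-algebra/interpolation exercise: prescribing finitely many values and derivatives of a periodic function away from where it is constrained to be constant is always possible, and the traceless matrices with a positive real eigenvalue form an open set, so a small perturbation of the rotation construction of Lemma \ref{l:controllability} suffices. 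Once the profile is fixed, everything else — the explicit integration of the flow, the formula $\A_1=\exp(\Lambda\Psi_1)\A_{1/2}$, and the definition of $g$ via $Q^{-1}$ — is routine, exactly parallel to Lemma \ref{l:controllability}.
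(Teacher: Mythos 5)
Your proposal is valid in spirit but takes a more elaborate route than the paper, and one of its supporting claims is incorrect.

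The paper's proof is substantially simpler: it uses a $z$-\emph{independent} horizontal velocity
$v_t(\x)=-\psi_t\,(\sin(y-b),\,\sin(x-a))^\top$,
which is a 2D Navier--Stokes solution viewed as a primitive-equations solution. Since $v$ is $z$-independent and $\nabla_{x,y}\cdot v=0$, the vertical component $w(v)$ vanishes identically, so $u_t(\a)=0$ and the particle stays at $\a$. The linearization at $\a$ is the hyperbolic traceless matrix $\pm\psi_t\bigl(\begin{smallmatrix}0&1&0\\1&0&0\\0&0&0\end{smallmatrix}\bigr)$, whose exponential grows like $e^{\int\psi}$, and one finishes by choosing $\int_0^1\psi\,\dd r=\log(1+M)$. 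No vertical profile $f$ is needed at all. By contrast, you reuse the $z$-dependent profile of Step~2 of Lemma~\ref{l:controllability} (with the extra profile $g$) and tune finitely many derivative values at $z=c$; that can indeed produce a traceless $2\times2$ block $\bigl(\begin{smallmatrix}g(c)&\dot f(c)\\F(c)&-g(c)\end{smallmatrix}\bigr)$ with real eigenvalues, so the idea is sound, but it re-introduces the vertical degrees of freedom that the paper deliberately avoids and requires checking a handful of extra constraints ($f(c)=0$, $\int_0^c g=0$, mean-freeness) that have no counterpart in the paper's argument.

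Two of your supporting remarks are off. First, the claim that ``the traceless matrices with a positive real eigenvalue form an open set, so a small perturbation of the rotation construction of Lemma~\ref{l:controllability} suffices'' is faulty reasoning: the skew-symmetric rotation generator lies in the \emph{interior} of the elliptic (purely imaginary spectrum) region of traceless $2\times2$ matrices, so a small perturbation stays elliptic. To reach a matrix with a real eigenvalue you need a non-small change, e.g.\ to flip the sign of one off-diagonal entry (which the paper's construction effectively does). Second, the assertion that ``a nilpotent $\Lambda$ would only give polynomial growth, so one truly needs a nonzero real eigenvalue'' is unnecessary: polynomial (indeed linear) growth of $|\A_T|$ in $\int\psi$ already suffices to exceed any fixed $M$, since $\psi$ can be chosen with arbitrarily large integral. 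Neither error is fatal — the construction you actually describe does achieve a real eigenvalue — but tidying these points would be needed before the argument could replace the paper's direct 2D shear.
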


\begin{proof}
The proof of \eqref{eq:non_degeneracy_4} follows from \cite[Proposition 7.4]{BBPS2022} as solutions to the 2D Navier-Stokes equations are $z$-independent solutions to the 3D primitive equations. In other words, we consider solution $v_t$ to \eqref{eq:controlled_jacobian} that are independent of $z\in (0,1)$.
For simplicity, we take $t=1$. Fix $\psi\in C^{\infty}_{{\rm c}}(0,t)$ such that $\int_0^1 \psi_r \,\dd r=\log(1+ M)$ and let $\a=(a,b,c)$. To prove \eqref{eq:non_degeneracy_4} one takes for all $t\in [0,1]$ and $\x=(x,y,z)\in \T^3$,
$$
v_t(\x)= -\psi_t 
\begin{pmatrix}
\sin(y-b)\\
\sin(x-a)
\end{pmatrix}  \quad \Longrightarrow \quad 
u_t(\x)=- \psi_t 
\begin{pmatrix}
\sin(y-b)\\
\sin(x-a)\\
0
\end{pmatrix}.
$$
Note that $v_0=v_1=0$ and $u_t(\a)=0$. Hence, $\x_t\equiv \a$ and therefore $A_t$ satisfies $A_0=\Id$ as well as
$$
\partial_t \A_t =\psi_t \begin{pmatrix}
0 & 1 &0\\
1 & 0& 0\\
0 & 0 &0
\end{pmatrix}
\A_t \quad \Longrightarrow \quad  \A_t 
= \begin{pmatrix}
\vspace{0.1cm}
\cosh (\int_0^t \psi_r \,\dd r) & \sinh (\int_0^t \psi_r \,\dd r) &0\\
\vspace{0.1cm}
\sinh (\int_0^t \psi_r \,\dd r) & \cosh (\int_0^t \psi_r \,\dd r)& 0\\
0 & 0 &0
\end{pmatrix}.
$$
In particular $|\A_1|\geq  e^{\int_0^1 \psi_r \,\dd r}> M$, as desired.
\end{proof}

The following follows directly from Lemmas \ref{l:controllability} and \ref{l:controllability_jacobian}, along with a stability argument as the one used in the proof of \cite[Lemma 7.3]{BBPS2022} or Proposition \ref{prop:irreducibility_Appendix}.

\begin{lemma}
\label{l:non_degeneracy}
For all $T>0$ and $\varepsilon>0$, there exists $\varepsilon'>0$ such that for all elments $(\a,\eta)$ and $(\a',\eta')$ in $\Tcal\times \S^2$ and all $v\in B_{\Hs^\sigma_0}(0,\varepsilon')$,
\begin{align}
\label{eq:non_degeneracy_1}
\P\big( (v_T,\x_T)\in B_{\Hs^\sigma}(0,\varepsilon)\times B_{\Dom}(\a',\varepsilon)\,|\, (v_0,x_0)=(v,\a) \big)>0,&\\
\label{eq:non_degeneracy_2}
\P\big( (v_T,\x_T,\xi_T)\in K_{\sigma,\varepsilon}(\a',\eta')\,|\, (v_0,\x_0,\xi_0)=
(v,\a,\eta) \big)>0,&\\
\label{eq:non_degeneracy_3}
\P\big( (v_T,\x_T,\wc{\xi}_T)\in K_{\sigma,\varepsilon}(\a',\eta')\,|\, (v_0,\x_0,\wc{\xi}_0)=(v,\a,\eta) \big)>0,
\end{align}
where $K_{\sigma,\varepsilon}(\a',\eta')=B_{\Hs^\sigma}(0,\varepsilon)\times B_{\Dom}(\a',\varepsilon)\times B_{\S^2}(\eta',\varepsilon)$.
Finally, for all $M>0$, $\a\in \T^3$ and $(t,\varepsilon)$ as above and $J_{\sigma,\varepsilon,M}=B_{\Hs^\sigma}(0,\varepsilon)\times B_\Dom(\a,\varepsilon)\times (B_{\R^{3\times 3}}(0,M))^\complement$,
\begin{equation}
\label{eq:non_degeneracy_4}
\P\big((v_T,\x_T,\A_T)\in J_{\sigma,\varepsilon,M} \,|\, (0,\x,\A_0)=
(0,\a,\Id)\big)>0.
\end{equation} 
\end{lemma}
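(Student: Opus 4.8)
\textbf{Proof plan for Lemma \ref{l:non_degeneracy}.}

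The plan is to deduce the four positivity statements from the deterministic controllability results (Lemmas \ref{l:controllability} and \ref{l:controllability_jacobian}) together with a support statement for the stochastic convolution and a Gr\"onwall-type stability estimate for the controlled-versus-stochastic dynamics, exactly in the spirit of Proposition \ref{prop:irreducibility_Appendix} (and of \cite[Lemma 7.3]{BBPS2022}). The common skeleton is: fix target data and a time $T$; invoke the controllability lemma to produce a smooth control $g=(g_t)_t$ steering the deterministic controlled PEs from the prescribed initial configuration to within half the target tolerance; then observe that the event in which the stochastic convolution $\Gamma_t=\int_0^t e^{(t-s)\Delta}Q\,\dd W_s$ stays $\varepsilon''$-close (in $\XX_T=L^2(0,T;\Hs^{1+\sigma}_0)\cap C([0,T];\Hs^\sigma_0)$) to the deterministic drift correction $\int_0^t e^{(t-s)\Delta}Qg_s\,\dd s$ has strictly positive probability, by Lemma \ref{l:support_stoch_convolution_primitive} (whose hypotheses hold under Assumption \ref{ass:Q} with $\sone<\sigma+2$, cf. Example \ref{ex:operator_Q}); and finally, on that event, compare the solution of the stochastic PEs with the controlled trajectory using the a priori bound of Lemma \ref{l:pathwise_energy_estimate} and the nonlinear estimate of Lemma \ref{l:estimate_nonlinearity}, obtaining $\sup_{t\le T}\|v_t-v_t^{(g)}\|_{\Hs^\sigma}\lesssim \varepsilon''$ on that event. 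This handles \eqref{eq:non_degeneracy_1} directly; the role of the initial perturbation $v\in B_{\Hs^\sigma_0}(0,\varepsilon')$ is absorbed by the same Gr\"onwall argument, choosing $\varepsilon'$ small depending on $T,\varepsilon$, and is uniform in $(\a,\a')\in\T^3$ by compactness.

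For \eqref{eq:non_degeneracy_2} and \eqref{eq:non_degeneracy_3}, I would first note that the closeness of $v_t$ to $v_t^{(g)}$ in $\Hs^\sigma\embed C^\eta$ with $\eta=\sigma-\tfrac32>3$ (by \eqref{eq:coloring_assumption_noise_2}) gives closeness of the induced velocity fields $u_t=(v_t,w(v_t))$ and their gradients $\nabla u_t$ in $C(\T^3)$, uniformly on $[0,T]$. Since the flow map $\x_t$ solves $\dot\x_t=u_t(\x_t)$ and the projective components $\xi_t,\wc\xi_t$ solve \eqref{eq:projective_1}--\eqref{eq:projective_2} with coefficients depending continuously on $\nabla u_t(\x_t)$, a further Gr\"onwall estimate on the coupled ODE system on $\T^3\times\S^2$ shows that $(\x_t,\xi_t)$ (resp. $(\x_t,\wc\xi_t)$) stays close to the controlled triple $(\x_t^{(g)},\xi_t^{(g)})$ produced by Lemma \ref{l:controllability}; hence on the positive-probability event above the stochastic triple lands in $K_{\sigma,\varepsilon}(\a',\eta')$. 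The same reasoning applies to \eqref{eq:non_degeneracy_4}: Lemma \ref{l:controllability_jacobian} furnishes a control with $|\A_T^{(g)}|>2M$ and $(v_T^{(g)},\x_T^{(g)})=(0,\a)$, and the Jacobian equation $\partial_t\A_t=\nabla u_t(\x_t)\A_t$ depends continuously (Gr\"onwall again) on $\nabla u_t$ and $\x_t$, so on the same event one gets $|\A_T|>M$ while $(v_T,\x_T)\in B_\varepsilon(0)\times B_\varepsilon(\a)$; here the initial datum is exactly $v=0$, so no $\varepsilon'$-room is needed (one simply takes $g$ as in Lemma \ref{l:controllability_jacobian} and runs the comparison from $v_0=0$).

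The main obstacle, and the only point requiring genuine care, is the stability comparison in the high-regularity norm $\Hs^\sigma$ with $\sigma\ge 2$: one must ensure that the Gr\"onwall constant is finite and controlled uniformly on the favourable event, which relies on the a priori bound \eqref{eq:sigma_estimate_pathwise} of Lemma \ref{l:pathwise_energy_estimate} applied to \emph{both} the stochastic solution (with $v'_t=\Gamma_t$) and the controlled solution (with the extra forcing $Qg_t$, which lies in $\cap_{\sigma'}\Hs^{\sigma'}_0$ by Lemma \ref{l:controllability} / \ref{l:controllability_jacobian}), together with the bilinear estimate $\|b(v_1,v_1)-b(v_2,v_2)\|_{\Hs^{\sigma-1}}\lesssim (\|v_1\|_{\Hs^{\sigma}}+\|v_2\|_{\Hs^{\sigma}})\|v_1-v_2\|_{\Hs^{2+\delta}}$ from Lemma \ref{l:estimate_nonlinearity}\eqref{it:estimate_nonlinearity_H1}, absorbing the top-order term $\tfrac12\|v_1-v_2\|_{\Hs^{\sigma+1}}^2$ into the left-hand side via parabolic regularity, precisely as in the estimate displayed in the proof of Proposition \ref{prop:irreducibility_Appendix}. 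Everything else — the uniformity in $(\a,\eta),(\a',\eta')$ by continuity and compactness of $\T^3\times\S^2$, and the reduction of the $\Pr$-valued statements to the $\S^2$-valued ones by choosing representatives — is routine, so I would keep those parts brief and refer to \cite[Lemma 7.3]{BBPS2022} and Proposition \ref{prop:irreducibility_Appendix} for the details.
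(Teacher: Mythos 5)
Your proposal is correct and follows exactly the route the paper takes: the paper itself disposes of Lemma~\ref{l:non_degeneracy} in one sentence, citing Lemmas~\ref{l:controllability} and~\ref{l:controllability_jacobian} together with "a stability argument as the one used in the proof of \cite[Lemma 7.3]{BBPS2022} or Proposition~\ref{prop:irreducibility_Appendix}", and you have simply unpacked that stability argument (support of the stochastic convolution from Lemma~\ref{l:support_stoch_convolution_primitive}, Gr\"onwall comparison via Lemmas~\ref{l:pathwise_energy_estimate} and~\ref{l:estimate_nonlinearity}, and propagation to the auxiliary ODEs on $\T^3\times\S^2$) in a way consistent with the paper's own pointers.
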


We are ready to prove Propositions \ref{prop:approximate_controllability} and \ref{prop:weak_irreducibility}.

\begin{proof}[Proof of Proposition \ref{prop:approximate_controllability}.]
Items \eqref{it:approximate_controllability1} and \eqref{it:approximate_controllability2} correspond precisely to \eqref{eq:non_degeneracy_2} and \eqref{eq:non_degeneracy_4}, respectively.
As for \eqref{it:approximate_controllability0}, note that \eqref{eq:non_degeneracy_1} implies $\supp\Pl_t\supseteq \{0\}\times \Dom$ and therefore \eqref{it:approximate_controllability0} from the closedness of $\supp\Pl_t$.
\end{proof}

\begin{proof}[Proof of Proposition \ref{prop:weak_irreducibility}]
Here, we partially follow the proof in \cite[Proposition 2.15, pp.\ 1976]{BBPS2022}.
We only prove the assertion for the Lagrangian process; the other claim follows similarly. 
Let $\invl$ be an arbitrary invariant measure for $(u_t,\x_t)$ on $\H\times \Dom$. 
By Proposition \ref{prop:regularity_invariant_measure}, one can check that $\invl$ is concentrated on $ \Hs^{\sigma'}_0\times \Dom$ for all $\sigma'<\sigma+1$, where $\sigma$ as in \eqref{eq:coloring_assumption_noise_2}. 
Fix $\sigma'\in (\sigma,\sigma+1)$ and let $N\gg 1$ be such that 
$$
\invl(\B)>1/2\qquad \text{ and }\qquad \B\stackrel{{\rm def}}{=}\{\|v\|_{\Hs^{\sigma'}}\leq N\}\times \Dom
$$

Next, we collect some facts about the (deterministic) PEs without forcing, i.e.,
\begin{equation}
\label{eq:deterministic_primitive_proof_stability}
\partial_t \vd_t+ \p[(\vd_t\cdot \nabla_{x,y})\vd_t+w(\vd_t)\partial_z \vd_t]=\Delta \vd_t, \quad \vd_t=v.
\end{equation}
where $v\in \Hs^{\sigma'}_0(\T^3)$ satisfies $\|v\|_{H^{\sigma'}}\leq N$ and $N$ and $\sigma'\in (\sigma,\sigma+1)$ are as above.
The well-posedness of the above is standard and follows, for instance, by Proposition \ref{prop:global_well_posedness}. 
By standard energy methods, $\|\vd_t\|_{L^2}\leq e^{-t} \|v\|_{L^2}$ for all $t>0$. Moreover, by combining \cite[Proposition 3.5]{LT19} and the nonlinear estimates in Lemma \eqref{l:estimate_nonlinearity}, we get the uniform bound $\|\vd_t\|_{H^{\sigma'}}\leq C(N,\sigma')<\infty$. By interpolation we get $\|\vd_t\|_{\Hs^\sigma}\leq C(\sigma,\sigma', N) e^{-t \beta}$ for some $\beta(\sigma,\sigma')\in (0,1)$. 
Thus, the previous facts on \eqref{eq:deterministic_primitive_proof_stability} by a stability argument as the one in \cite[Lemma 7.3]{BBPS2022} or Proposition \ref{prop:irreducibility_Appendix}, one easily obtains the following assertion: For all $\delta>0$, there exists a deterministic time $T_\delta>0$ such that, for all $(v,\x)\in \B$, 
\begin{equation}
\label{eq:T_delta_proof_decay}
\P((v_{T_\delta},\x_{T_\delta})\in B_{\Hs^\sigma}(0,\delta)\times B_{\Dom}(\x,\delta) \,|\, (v_0,\x_0)=(v,\x))>0.
\end{equation}
In the following, if no confusion seems likely, we write $B(0,\delta)$ and $B(\x,\delta)$ instead of $B_{\Hs^\sigma}(0,\delta)$ and $ B_{\Dom}(\x,\delta)$, respectively.

Now, fix $\varepsilon>0$, $(v,\x)\in \B$ and $\x'\in \Dom$. 
Let $\varepsilon'>0$ be as in Lemma \ref{l:non_degeneracy} for $T=1$. Moreover, let $T_{\varepsilon'}$ be such that \eqref{eq:T_delta_proof_decay} holds with $\delta=\varepsilon'$. Finally, recall that $\Pl_t$ denotes the Markov semigroup associated with the Lagrangian process $(v_t,\x_t)$.
By the semigroup property, it follows that
\begin{align*}
&\P((v_{T_{\varepsilon'}+1}, \x_{T_{\varepsilon'}+1}) \in B(0,\varepsilon)\times B(\x',\varepsilon)\,|\, (v_0,x_0)=(v,\x))\\
& =[\Pl_{T_\delta} (\Pl_1(\one_{ B(0,\varepsilon)\times B(\x',\varepsilon)}))](v,\x)\\
& \textstyle
=\int_{\O} \P\big((v_1,\x_1)\in  B(0,\varepsilon)\times B(\x',\varepsilon) \, | \,v_0=v_{T_{\varepsilon'}}(\wt{\om}),\x_0=\x_{T_{\varepsilon'}}(\wt{\om})\big)\,\dd \P(\wt{\om})\stackrel{(i)}{>}0.
\end{align*}
where $v_{T_{\varepsilon'}}(\wt{\om})$ and $\x_{T_{\varepsilon'}}(\wt{\om})$ are the solution to the stochastic PEs \eqref{eq:primitive_full} with initial data $(v,\x)$ for the noise realization $\wt{\om}\in \O$ and the associated solution to the Lagrangian flow \eqref{eq:lagrangian_flow_second}. Moreover, in $(i)$ we used \eqref{eq:T_delta_proof_decay} as well as \eqref{eq:non_degeneracy_1}.

Hence, by invariance of $\invl$,
$$\textstyle
\invl(B(0,\varepsilon)\times B(\x',\varepsilon))
\geq \int_{\B} \Pl_{T_{\varepsilon'+1}}((v,\x),\, B(0,\varepsilon)\times B(\x',\varepsilon))\,\dd \invl(v,\x)>0.
$$
Hence, the arbitrariness of $\varepsilon$ and $\x'\in \Dom$ implies $\supp\,\wt{\mu}\supseteq \{0\}\times \Dom$.
\end{proof}

\appendix

\section{Some useful results on Feller Markov semigroups}
\label{app:useful_results}
Here, we gathered some results on the general theory of Markov processes, which were used in the proof of our main results in Subsection \ref{ss:proofs_further_results}. 
The reader is referred to the text before Proposition \ref{prop:strong_feller_projective} for the definition of the strong Feller property. 

\begin{lemma}
\label{l:ergodicity_supp_product}
Let $Z$ and $W$ be Polish spaces.  
Let $(z_t)_t$ and $(z_t,w_t)_t$ be Markov processes on $Z$ and $Z\times W$, respectively. Assume that $(z_t,w_t)_t$ is strongly Feller on $Z\times W$. Let $\nu$ be an ergodic measure for $(z_t)_t$. Suppose there exists a measure $\mu$ on $W$ with connected support and such that $\nu\times \mu$ is invariant for $(z_t,w_t)_t$. Then $\nu \otimes \mu$ is ergodic.
\end{lemma}

As the case analysed in Subsection \ref{ss:proofs_further_results}, the above can be useful in case the uniqueness of invariant measures (and therefore ergodicity) cannot be easily proven via the Krein-Milman and a variant of the Khasminskii's theorems (see \cite[Proposition 4.1.1]{DPZ_ergodicity} for the latter). 

The above result immediately implies

\begin{corollary}
\label{cor:ergodicity_supp}
Let $(z_t)_t$ be a strongly Feller Markov process on a Polish space $Z$. Let $\nu$ be an invariant measure for $(z_t)_t$ on $Z$ such that $\supp\nu$ is connected. Then $\mu$ is ergodic.
\end{corollary}

The connectedness assumption cannot be omitted in the above results.
Indeed, in the situation of Corollary \ref{cor:ergodicity_supp}, if a Feller Markov process $(z_t)_t$ has two dinstict ergodic measures, then $\mu=\frac{1}{2}(\mu_1+\mu_2)$ is invariant but not ergodic. However, if the strong Feller property holds, it has disconnected support as $\supp\mu_1\cap \supp\mu_2=\emptyset$.

\begin{proof}[Proof of Lemma \ref{l:ergodicity_supp_product}]
By a well-known characterisation of ergodic measures (see e.g., \cite[Theorem 3.2.4]{DPZ_ergodicity}), it is sufficient to show that if a set $E\subseteq Z$ satisfies
\begin{equation}
\label{eq:P1_one_E_product}
P_t \one_{E} (z,w)= \one_{E}(z,w)
\end{equation}
for all $t>0$ and $\nu\otimes \mu$-almost all $(z,w)$, then either $\nu\otimes \mu(E)=0$ or $\nu\otimes \mu(E)=1$. In the previous, $(P_t)_t$ denotes the semigroup associated to the Markov semigroup $(z_t,w_t)_t$, i.e., $P_t \one_{E} (z,w)=\E[\one_{E}(z_t,w_t)]$.

Next, we assume that \eqref{eq:P1_one_E_product} holds. 
Let $F_0\subseteq Z\times W$ be a full measure set on which the equality \eqref{eq:P1_one_E_product} holds for $t=1$. 
Clearly, $F_0$ is dense in $\supp\nu\times \supp\mu$. 
By the strong Feller and \eqref{eq:P1_one_E_product} with $t=1$, the mapping $F_0\ni (z,w)\mapsto P_1 \one_{E} (z,w)=\one_{E}(z,w)$ can be extended to a continuous mapping on $\supp\nu\times\supp\mu$; which we denoted by $\psi_E$. From the equality \eqref{eq:P1_one_E_product} with $t=1$, the mapping $\psi_E$ takes values in $\{0,1\}$. In particular, the sets $A_j=\psi_{E}^{-1}(\{j\})$ for $j\in \{0,1\}$ are open and closed in $\supp\nu\times\supp\mu$, $A_0\cup A_1=\supp\nu\times\supp\mu$ and 
\begin{equation}
\label{eq:A1_E_approximate}
\nu\otimes \mu(A_1\Delta E)=0 \ \ \text{ where }\ \ A_1\Delta E\stackrel{{\rm def}}{=} A_1\setminus E\cup E\setminus A_1.
\end{equation}
By connectedness of $\supp\mu$, it follows that $A_j=\wt{A}_j\times \supp\mu$ where $\wt{A}_j\in \Borel(Z)$. Indeed, if for instance, $A_0$ has not such a product structure, then there exist $z\in Z$ and $w,w'\in W$ such that $(z,w)\in A_0$ and $(z,w')\in A_1$. Since $A_0,A_1$ are open and closed in $\supp\nu\times \supp\mu$, it follows that the sections $A_j^z\stackrel{{\rm def}}{=}\{w\in \supp\mu\,:\, (z,w)\in A_j\}$ are also open and closed. By connectedness of $\supp\mu$, either $A_0^z=\supp\mu$ or $A_0^z=\emptyset$; this contradicts the fact that $A_0$ does not have a product structure. 

Hence, we proved that $A_j=\wt{A}_j\times \supp\mu$ for some $\wt{A}_j\in \Borel (Z)$. Due to \eqref{eq:A1_E_approximate} and the invariance of $\nu\otimes \mu$, it follows that $P_t \one_{A_1\Delta E}= 0$, $\nu\otimes \mu$-almost everywhere. Therefore, \eqref{eq:P1_one_E_product} implies $\wt{P}_t \one_{\wt{A}_1} = \one_{\wt{A}_1}$ for all $t>0$ and $\nu$-almost everywhere. Here, $(\wt{P}_t)_t$ denotes the Markov semigroup induced by $(z_t)_t$. The ergodicity of $\nu$ implies either $\nu(\wt{A}_1)=0$ or $\nu(\wt{A}_1)=1$; and therefore either $\nu\otimes \mu(E)=0$ or $\nu\otimes \mu(E)=1$ due to  \eqref{eq:A1_E_approximate}, as desired.
\end{proof}

Finally, we prove a uniqueness result of invariant measures with the same marginals. With a slight abuse of notation, for a measure $\mu$ on the product space $Z\times W$, we denote by $\mu|_{Z}$ the marginal of $\mu$ on $Z$, i.e., $\mu|_Z (A)= \mu(A\times W)$ for all $A\in \Borel (Z)$.

\begin{lemma}
\label{l:uniqueness_z_factor}
Let $Z$ and $W$ be Polish spaces.  
Let $(z_t)_t$ and $(z_t,w_t)_t$ be Markov processes on $Z$ and $Z\times W$, respectively. 
Let $\nu$ be an ergodic measure for the Markov process $(z_t)_t$, and let $\mu$ be an extreme point of the convex set
$$
\Set=\{\mu\text{ is an invariant measure for $(z_t,w_t)_t$ such that $  \mu|_{Z}=\nu$}\}.
$$ 
Then $\mu$ is ergodic. 
\end{lemma}

By the Krein-Milman theorem, the extremality of $\mu$ in $\Set$ follows trivially in case $W$ is compact. This situation appears in our application of Lemma \ref{l:uniqueness_z_factor} where $W=\Pr$ is the two-dimensional projective space, see Subsection \ref{ss:proofs_further_results}.

\begin{proof}
Here, we employ the characterisation of ergodic measures as the extreme point of invariant measures, see e.g., 
\cite[Proposition 3.2.7]{DPZ_ergodicity}. By contradiction, assume that $\mu$ is not ergodic. Hence, there exist $t\in (0,1)$ and distinct invariant measures $\mu_1$ and $\mu_2$ of $(z_t,w_t)_t$ such that 
$
\mu=(1-t)\mu_1+t\mu_2.
$
It is clear that the marginals measures $\mu_1|_Z$ and $\mu_2|_Z$ are invariant for $(z_t)_t$. Moreover, since $\mu|_Z=\nu$, the marginals measures $\mu_1|_Z$ and $\mu_2|_Z$ satisfy $
\nu=(1-t)\mu_1|_Z+t\mu_2|_Z$.
The ergodicity of $\nu$ implies $\nu=\mu_1|_Z=\mu_2|_Z$ (\cite[Proposition 3.2.7]{DPZ_ergodicity}). Hence, $\mu_1,\mu_2\in \Set$ and the assumption $\mu=t\mu_1+(1-t)\mu_2$ with $\mu_1\neq \mu_2$ and $t\in(0,1)$ contradicts the extremality of $\mu$ in $\Set$.
The proof is therefore complete. 
\end{proof}

\subsubsection*{Acknowledgments.}
The author thanks Jacob Bedrossian for his valuable comments and encouragement in writing this manuscript.
The author thanks Lucio Galeati, Umberto Pappalettera and Joris van Winden for fruitful discussions.
The author is grateful for the hospitality provided by the Math Department of UCLA during his stay in October 2024. 

\medskip

\noindent
\textbf{Data availability.} This manuscript has no associated data.

\smallskip

\noindent
\textbf{Declaration -- Conflict of interest.} The author has no conflict of interest.

\def\polhk#1{\setbox0=\hbox{#1}{\ooalign{\hidewidth
  \lower1.5ex\hbox{`}\hidewidth\crcr\unhbox0}}} \def\cprime{$'$}


\begin{thebibliography}{10}

\bibitem{abraham2002chaotic}
E.R. Abraham and M.M. Bowen.
\newblock Chaotic stirring by a mesoscale surface-ocean flow.
\newblock {\em Chaos: An Interdisciplinary Journal of Nonlinear Science},
  12(2):373--381, 2002.

\bibitem{A23_primitive3}
A.~Agresti.
\newblock The primitive equations with rough transport noise: Global
  well-posedness and regularity.
\newblock {\em arXiv preprint arXiv:2310.01193}, 2023.

\bibitem{ABL24_boundary}
A.~Agresti, A.~Blessing, and E.~Luongo.
\newblock Global well-posedness of 2{D} {N}avier-{S}tokes with {D}irichlet
  boundary fractional noise.
\newblock {\em arXiv preprint arXiv:2407.03988}, 2024.
\newblock In minor revision in Nonlinearity.

\bibitem{primitive1}
A.~Agresti, M.~Hieber, A.~Hussein, and M.~Saal.
\newblock The stochastic primitive equations with transport noise and turbulent
  pressure.
\newblock {\em Stoch. Partial Differ. Equ. Anal. Comput.}, 12(1):53--133, 2024.

\bibitem{primitive2}
A.~Agresti, M.~Hieber, A.~Hussein, and M.~Saal.
\newblock The stochastic primitive equations with nonisothermal turbulent
  pressure.
\newblock {\em Ann. Appl. Probab.}, 35(1):635--700, 2025.

\bibitem{AL24_boundary}
A.~Agresti and E.~Luongo.
\newblock Global well-posedness and interior regularity of 2{D}
  {N}avier--{S}tokes equations with stochastic boundary conditions.
\newblock {\em Math. Ann.}, 390(2):2727--2766, 2024.

\bibitem{AV19}
A.~Agresti and M.C. Veraar.
\newblock Stability properties of stochastic maximal {$L^p$}-regularity.
\newblock {\em J. Math. Anal. Appl.}, 482(2):123553, 35, 2020.

\bibitem{AV19_QSEE_1}
A.~Agresti and M.C. Veraar.
\newblock Nonlinear parabolic stochastic evolution equations in critical spaces
  {P}art {I}. {S}tochastic maximal regularity and local existence.
\newblock {\em Nonlinearity}, 35(8):4100, 2022.

\bibitem{AV19_QSEE_2}
A.~Agresti and M.C. Veraar.
\newblock Nonlinear parabolic stochastic evolution equations in critical spaces
  part {II}.
\newblock {\em Journal of Evolution Equations}, 22(2):1--96, 2022.

\bibitem{AV_variational}
A.~Agresti and M.C. Veraar.
\newblock The critical variational setting for stochastic evolution equations.
\newblock {\em Probab. Theory Related Fields}, 188(3-4):957--1015, 2024.

\bibitem{A98_Random}
L.~Arnold.
\newblock {\em Random dynamical systems}.
\newblock Springer Monographs in Mathematics. Springer-Verlag, Berlin, 1998.

\bibitem{MR1884725}
P.~Az\'{e}rad and F.~Guill\'{e}n.
\newblock Mathematical justification of the hydrostatic approximation in the
  primitive equations of geophysical fluid dynamics.
\newblock {\em SIAM J. Math. Anal.}, 33(4):847--859, 2001.

\bibitem{BBPS2022_PTRF}
J.~Bedrossian, A.~Blumenthal, and S.~Punshon-Smith.
\newblock Almost-sure enhanced dissipation and uniform-in-diffusivity
  exponential mixing for advection--diffusion by stochastic navier--stokes.
\newblock {\em Probability Theory and Related Fields}, 179:777--834, 2021.

\bibitem{BBPS2022_AOP}
J.~Bedrossian, A.~Blumenthal, and S.~Punshon-Smith.
\newblock Almost-sure exponential mixing of passive scalars by the stochastic
  navier--stokes equations.
\newblock {\em The Annals of Probability}, 50(1):241--303, 2022.

\bibitem{BBPS2022_CPAM}
J.~Bedrossian, A.~Blumenthal, and S.~Punshon-Smith.
\newblock The {B}atchelor spectrum of passive scalar turbulence in stochastic
  fluid mechanics at fixed {R}eynolds number.
\newblock {\em Communications on Pure and Applied Mathematics},
  75(6):1237--1291, 2022.

\bibitem{BBPS2022}
J.~Bedrossian, A.~Blumenthal, and S.~Punshon-Smith.
\newblock Lagrangian chaos and scalar advection in stochastic fluid mechanics.
\newblock {\em J. Eur. Math. Soc. (JEMS)}, 24(6):1893--1990, 2022.

\bibitem{BeLo}
J.~Bergh and J.~L{\"o}fstr{\"o}m.
\newblock {\em Interpolation spaces. {A}n introduction}.
\newblock Springer-Verlag, Berlin, 1976.
\newblock Grundlehren der Mathematischen Wissenschaften, No. 223.

\bibitem{MR4701784}
T.~Binz, M.~Hieber, A.~Hussein, and M.~Saal.
\newblock The primitive equations with stochastic wind driven boundary
  conditions.
\newblock {\em J. Math. Pures Appl. (9)}, 183:76--101, 2024.

\bibitem{BGN23}
P.-M. Boulvard, P.~Gao, and V.~Nersesyan.
\newblock Controllability and ergodicity of three dimensional primitive
  equations driven by a finite-dimensional force.
\newblock {\em Archive for Rational Mechanics and Analysis}, 247(1):2, 2023.

\bibitem{brown1991ocean}
M.G. Brown and K.B. Smith.
\newblock Ocean stirring and chaotic low-order dynamics.
\newblock {\em Physics of Fluids A: Fluid Dynamics}, 3(5):1186--1192, 1991.

\bibitem{BS21}
Z.~Brze\'{z}niak and J.~Slavík.
\newblock Well-posedness of the {3D} stochastic primitive equations with
  multiplicative and transport noise.
\newblock {\em Journal of Differential Equations}, 296:617--676, 09 2021.

\bibitem{CT07}
C.~Cao and E.S. Titi.
\newblock Global well-posedness of the three-dimensional viscous primitive
  equations of large scale ocean and atmosphere dynamics.
\newblock {\em Ann. of Math. (2)}, 166(1):245--267, 2007.

\bibitem{CR24_degenerate}
W.~Cooperman and K.~Rowan.
\newblock Exponential scalar mixing for the 2d {N}avier-{S}tokes equations with
  degenerate stochastic forcing.
\newblock {\em arXiv preprint arXiv:2408.02459}, 2024.

\bibitem{DPZ}
G.~Da~Prato and J.~Zabczyk.
\newblock {\em Stochastic equations in infinite dimensions}, volume~44 of {\em
  Encyclopedia of Mathematics and its Applications}.
\newblock Cambridge University Press, Cambridge, 1992.

\bibitem{DPZ_ergodicity}
G.~Da~Prato and J.~Zabczyk.
\newblock {\em Ergodicity for infinite dimensional systems}, volume 229.
\newblock Cambridge University Press, 1996.

\bibitem{DEBUSSCHE20111123}
A.~Debussche, N.~Glatt-Holtz, and R.~Temam.
\newblock Local martingale and pathwise solutions for an abstract fluids model.
\newblock {\em Physica D: Nonlinear Phenomena}, 240(14):1123--1144, 2011.

\bibitem{Debussche_2012}
A.~Debussche, N.~Glatt-Holtz, R.~Temam, and M.~Ziane.
\newblock Global existence and regularity for the 3{D} stochastic primitive
  equations of the ocean and atmosphere with multiplicative white noise.
\newblock {\em Nonlinearity}, 25(7):2093--2118, 2012.

\bibitem{debussche2025stochastic}
A.~Debussche, {\'E}.~M{\'e}min, and A.~Moneyron.
\newblock Stochastic interpretations of the oceanic primitive equations with
  relaxed hydrostatic assumptions.
\newblock {\em arXiv preprint arXiv:2502.14946}, 2025.

\bibitem{EckHai2001}
J.P. Eckmann and M.~Hairer.
\newblock Uniqueness of the invariant measure for a stochastic {PDE} driven by
  degenerate noise.
\newblock {\em Comm. Math. Phys.}, 219(3):523--565, 2001.

\bibitem{F99}
B.~Ferrario.
\newblock Stochastic {N}avier-{S}tokes equations: analysis of the noise to have
  a unique invariant measure.
\newblock {\em Ann. Mat. Pura Appl. (4)}, 177:331--347, 1999.

\bibitem{FlaMas1995}
F.~Flandoli and B.~Maslowski.
\newblock Ergodicity of the {$2$}-{D} {N}avier-{S}tokes equation under random
  perturbations.
\newblock {\em Comm. Math. Phys.}, 172(1):119--141, 1995.

\bibitem{KGHHKW20}
K.~Furukawa, Y.~Giga, M.~Hieber, A.~Hussein, T.~Kashiwabara, and M.~Wrona.
\newblock Rigorous justification of the hydrostatic approximation for the
  primitive equations by scaled {N}avier-{S}tokes equations.
\newblock {\em Nonlinearity}, 33(12):6502--6516, 2020.

\bibitem{GGHHK20_analytic}
Y.~Giga, M.~Gries, M.~Hieber, A.~Hussein, and T.~Kashiwabara.
\newblock Analyticity of solutions to the primitive equations.
\newblock {\em Math. Nachr.}, 293(2):284--304, 2020.

\bibitem{GGHHK20_bounded}
Y.~Giga, M.~Gries, M.~Hieber, A.~Hussein, and T.~Kashiwabara.
\newblock The hydrostatic {S}tokes semigroup and well-posedness of the
  primitive equations on spaces of bounded functions.
\newblock {\em J. Funct. Anal.}, 279(3):108561, 46, 2020.

\bibitem{GGHK21_scaling}
Y.~Giga, M.~Gries, M.~Hieber, Amru Hussein, and Takahito Kashiwabara.
\newblock The primitive equations in the scaling-invariant space
  {${L}^{\infty}({L}^1)$}.
\newblock {\em J. Evol. Equ.}, 21(4):4145--4169, 2021.

\bibitem{gill2016atmosphere}
A.E. Gill.
\newblock {\em Atmosphere—ocean dynamics}.
\newblock Elsevier, 2016.

\bibitem{GHKVZ14}
N.~Glatt-Holtz, I.~Kukavica, V.~Vicol, and M.~Ziane.
\newblock Existence and regularity of invariant measures for the three
  dimensional stochastic primitive equations.
\newblock {\em Journal of Mathematical Physics}, 55(5):051504, 2014.

\bibitem{GHMR17}
N.~Glatt-Holtz, J.C. Mattingly, and G.~Richards.
\newblock On unique ergodicity in nonlinear stochastic partial differential
  equations.
\newblock {\em Journal of Statistical Physics}, 166:618--649, 2017.

\bibitem{Hai11}
M.~Hairer.
\newblock On {M}alliavin's proof of {H}\"{o}rmander's theorem.
\newblock {\em Bull. Sci. Math.}, 135(6-7):650--666, 2011.

\bibitem{HM06_annals}
M.~Hairer and J.C. Mattingly.
\newblock Ergodicity of the {2D} {N}avier-{S}tokes equations with degenerate
  stochastic forcing.
\newblock {\em Annals of Mathematics}, pages 993--1032, 2006.

\bibitem{HM11_theory}
M.~Hairer and J.C. Mattingly.
\newblock A theory of hypoellipticity and unique ergodicity for semilinear
  stochastic {PDE}s.
\newblock {\em Electron. J. Probab.}, 16:no. 23, 658--738, 2011.

\bibitem{hairer2011asymptotic}
M.~Hairer, J.C. Mattingly, and M.~Scheutzow.
\newblock Asymptotic coupling and a general form of {H}arris’ theorem with
  applications to stochastic delay equations.
\newblock {\em Probability theory and related fields}, 149:223--259, 2011.

\bibitem{HH20_fluids_pressure}
M.~Hieber and A.~Hussein.
\newblock {\em An Approach to the Primitive Equations for Oceanic and
  Atmospheric Dynamics by Evolution Equations}, pages 1--109.
\newblock Springer International Publishing, Cham, 2020.

\bibitem{HK16}
M.~Hieber and T.~Kashiwabara.
\newblock Global strong well-posedness of the three dimensional primitive
  equations in {$L^p$}-spaces.
\newblock {\em Arch. Ration. Mech. Anal.}, 221(3):1077--1115, 2016.

\bibitem{hu2025local}
R.~Hu, Q.~Lin, and R.~Liu.
\newblock On the local well-posedness of fractionally dissipated primitive
  equations with transport noise.
\newblock {\em arXiv preprint arXiv:2501.09956}, 2025.

\bibitem{Analysis3}
T.~Hyt{\"o}nen, J.~van Neerven, M.~Veraar, and L.~Weis.
\newblock {\em Analysis in Banach Spaces: Volume III: Harmonic Analysis and
  Spectral Theory}, volume~76.
\newblock Springer Nature, 2023.

\bibitem{Analysis1}
T.P. Hyt\"onen, J.M.A.M.~van Neerven, M.C. Veraar, and L.~Weis.
\newblock {\em Analysis in {B}anach spaces. {V}ol. {I}. {M}artingales and
  {L}ittlewood-{P}aley theory}, volume~63 of {\em Ergebnisse der Mathematik und
  ihrer Grenzgebiete. 3. Folge.}
\newblock Springer, 2016.

\bibitem{Analysis2}
T.P. Hyt\"onen, J.M.A.M.~van Neerven, M.C. Veraar, and L.~Weis.
\newblock {\em Analysis in {B}anach spaces. {V}ol. {II}. {P}robabilistic
  {M}ethods and {O}perator {T}heory.}, volume~67 of {\em Ergebnisse der
  Mathematik und ihrer Grenzgebiete. 3. Folge.}
\newblock Springer, 2017.

\bibitem{Ju17}
N.~Ju.
\newblock On {$H^2$}-solutions and z-weak solutions of the {3D} primitive
  equations.
\newblock {\em Indiana University Mathematics Journal}, 66(3):973--996, 2017.

\bibitem{K86_ergodic_theory}
Y.~Kifer.
\newblock {\em Ergodic theory of random transformations}, volume~10 of {\em
  Progress in Probability and Statistics}.
\newblock Birkh\"{a}user Boston, Inc., Boston, MA, 1986.

\bibitem{MR2663632}
T.~Komorowski, S.~Peszat, and T.~Szarek.
\newblock On ergodicity of some {M}arkov processes.
\newblock {\em Ann. Probab.}, 38(4):1401--1443, 2010.

\bibitem{K21_global}
P.~Korn.
\newblock Global well-posedness of the ocean primitive equations with nonlinear
  thermodynamics.
\newblock {\em Journal of Mathematical Fluid Mechanics}, 23(3):71, 2021.

\bibitem{Kukavica_2007}
I.~Kukavica and M.~Ziane.
\newblock On the regularity of the primitive equations of the ocean.
\newblock {\em Nonlinearity}, 20(12):2739--2753, oct 2007.

\bibitem{KS12_2d}
S.~Kuksin and A.~Shirikyan.
\newblock {\em Mathematics of two-dimensional turbulence}, volume 194.
\newblock Cambridge University Press, 2012.

\bibitem{L77_domination}
E.~Lenglart.
\newblock Relation de domination entre deux processus.
\newblock {\em Ann. Inst. H. Poincar\'{e} Sect. B (N.S.)}, 13(2):171--179,
  1977.

\bibitem{LT19}
J.~Li and E.S. Titi.
\newblock The primitive equations as the small aspect ratio limit of the
  {N}avier--{S}tokes equations: Rigorous justification of the hydrostatic
  approximation.
\newblock {\em {J}ournal de {M}ath{\'e}matiques {P}ures et {A}ppliqu{\'e}es},
  124:30--58, 2019.

\bibitem{lin2025averaging}
Q.~Lin, R.~Liu, and V.R. Martinez.
\newblock Averaging principle for the stochastic primitive equations in the
  large rotation limit.
\newblock {\em arXiv preprint arXiv:2503.00991}, 2025.

\bibitem{LiTeWa1}
J.L. Lions, R.~Temam, and S.~Wang.
\newblock New formulations of the primitive equations of atmosphere and
  applications.
\newblock {\em Nonlinearity}, 5(2):237--288, mar 1992.

\bibitem{LiTeWa2}
J.L. Lions, R.~Temam, and S.~Wang.
\newblock On the equations of large-scale ocean.
\newblock {\em Nonlinearity}, 5:1007--1053, 09 1992.

\bibitem{LR15}
W.~Liu and M.~R\"{o}ckner.
\newblock {\em Stochastic partial differential equations: an introduction}.
\newblock Universitext. Springer, Cham, 2015.

\bibitem{MV12}
M.~Meyries and M.C. Veraar.
\newblock Sharp embedding results for spaces of smooth functions with power
  weights.
\newblock {\em Studia Math.}, 208(3):257--293, 2012.

\bibitem{Nualart}
D.~Nualart.
\newblock {\em The {M}alliavin calculus and related topics}.
\newblock Probability and its Applications (New York). Springer-Verlag, Berlin,
  second edition, 2006.

\bibitem{Ped}
J.~Pedlosky.
\newblock {\em Geophysical Fluid Dynamics}.
\newblock Springer-Verlag, New York, 1987.

\bibitem{pedlosky1996ocean}
J.~Pedlosky.
\newblock {\em Ocean circulation theory}.
\newblock Springer Science \& Business Media, 1996.

\bibitem{prants2014chaotic}
S.V. Prants.
\newblock Chaotic {L}agrangian transport and mixing in the ocean.
\newblock {\em The European Physical Journal Special Topics},
  223(13):2723--2743, 2014.

\bibitem{PV14_Malliavin}
M.~Pronk and M.C. Veraar.
\newblock Tools for {M}alliavin calculus in {UMD} {B}anach spaces.
\newblock {\em Potential Anal.}, 40(4):307--344, 2014.

\bibitem{pruss2016moving}
J.~Pr\"{u}ss and G.~Simonett.
\newblock {\em Moving interfaces and quasilinear parabolic evolution
  equations}, volume 105 of {\em Monographs in Mathematics}.
\newblock Birkh\"{a}user/Springer, 2016.

\bibitem{RX11}
M.~Romito and L.~Xu.
\newblock Ergodicity of the 3{D} stochastic {N}avier-{S}tokes equations driven
  by mildly degenerate noise.
\newblock {\em Stochastic Process. Appl.}, 121(4):673--700, 2011.

\bibitem{sivakumar2004chaos}
B.~Sivakumar.
\newblock Chaos theory in geophysics: past, present and future.
\newblock {\em Chaos, Solitons \& Fractals}, 19(2):441--462, 2004.

\bibitem{Vallis06}
G.K. Vallis.
\newblock {\em Atmospheric and Oceanic Fluid Dynamics}.
\newblock Cambridge University Press, Cambridge, U.K., 2006.

\bibitem{EM01_finite}
E~W. and J.C. Mattingly.
\newblock Ergodicity for the {N}avier-{S}tokes equation with degenerate random
  forcing: finite-dimensional approximation.
\newblock {\em Comm. Pure Appl. Math}, 54(11):1386--1402, 2001.

\bibitem{We}
L.W. Weis.
\newblock Operator-valued {F}ourier multiplier theorems and maximal {$L\sb
  p$}-regularity.
\newblock {\em Math. Ann.}, 319(4):735--758, 2001.

\bibitem{yang1996chaotic}
H.~Yang.
\newblock Chaotic transport and mixing by ocean gyre circulation.
\newblock {\em Stochastic Modelling in Physical Oceanography}, pages 439--466,
  1996.

\bibitem{yang1997three}
H.~Yang and Z.~Liu.
\newblock The three-dimensional chaotic transport and the great ocean barrier.
\newblock {\em Journal of physical oceanography}, 27(7):1258--1273, 1997.

\end{thebibliography}
\end{document}